\documentclass[reqno]{amsart}
\usepackage[utf8]{inputenc}
\usepackage{latexsym,amssymb,amsmath,amsthm,amscd,graphicx,enumerate}
\usepackage{amsfonts}
\usepackage{xcolor}
\usepackage{tikz}
\usepackage[english]{babel}
\usepackage[a4paper,bindingoffset=0.2in,%
left=1in,right=1in,top=1in,bottom=1in,%
footskip=.25in]{geometry}

\newtheorem{theorem}{Theorem}[section]
\newtheorem{lemma}[theorem]{Lemma}
\newtheorem{prop}[theorem]{Proposition}
\newtheorem{cor}[theorem]{Corollary}
\theoremstyle{definition}
\newtheorem{defn}[theorem]{Definition}

\newtheorem{remark}[theorem]{Remark}
\numberwithin{equation}{section}

\DeclareMathOperator*{\essinf}{ess\, inf}

\pagestyle{myheadings}

\begin{document}
	
\title[Two-weighted estimates for some sublinear operators and applications]{Two-weighted estimates for some sublinear operators on generalized weighted Morrey Spaces and applications}
	
\author[Y.~Ramadana]{Yusuf Ramadana}
\address{Faculty of Mathematics and Natural Sciences, State University of Makassar, Makassar 90221, Indonesia}
\email{yusuframadana.96@gmail.com}
	
\author[H.~Gunawan]{Hendra Gunawan}
\address{Faculty of Mathematics and Natural Sciences, Bandung Institute of Technology, Bandung 40132, Indonesia}
\email{hgunawan@itb.ac.id}
	
\subjclass[2020]{42B25; 42B20}
	
\keywords{sublinear operators, fractional integrals, Calder\'on-Zygmund operator, commutators, $A_{p,q}$ weights, generalized weighted Morrey spaces.}
	
\begin{abstract}
In this paper we investigate the boundedness of sublinear operators generated by fractional 
integrals as well as sublinear operators generated by Calder\`on-Zygmund operators on generalized weighted Morrey spaces 
and generalized weighted mixed-Morrey spaces. In particular we are interested in the 
strong-type estimate for $1<p<\infty$ and the weak estimate for $p=1$. Under some 
assumptions, we prove that the operators and their commutator with a BMO function are 
bounded on those function spaces with different weights. The results then imply the 
boundedness of fractional integrals with Gaussian kernel bounds as well as with rough 
kernels, fractional maximal integrals with rough kernels, and sublinear operators with rough 
kernels generated by Calder\`on-Zygmund operators. Using the results, we obtain some 
regularity properties of the solution of some partial differential equations.
\end{abstract}
	
\maketitle
	
\section{Introduction}

We shall discuss the boundedness of sublinear operators on generalized weighted Morrey 
spaces and generalized weighted mixed-Morrey spaces with different weights and then apply 
the results to the regularity properties of the solution of some elliptical differential 
equations.

The classical Morrey spaces $L^{p,\lambda}$ were first introduced by C.B. Morrey in 1938 
\cite{Morrey}. After the WW II, a large number of investigations regarding the functions 
spaces have been carried out. The reader may find some recent works in 
\cite{Aykol, Guliyev2014, Ho, Kucukaslan, Ramadana} and the references therein.

Throughout this paper, we denote by $B(a,r)$ an open ball centered at $a\in\mathbb{R}^n$ 
with radius $r>0$. For a set $E$ in $\mathbb{R}^n$, we denote by $E^{\rm c}$ the complement 
of $E$. Moreover, if $E$ is a measurable set in $\mathbb{R}^n$, then $|E|$ denotes the 
Lebesgue measure of $E$.

Let $0 \le \alpha <n, 1 \leq  p < n/\alpha,$ and $1/q = 1/p - \alpha/n.$ We consider a 
sublinear operator $S_\alpha$ which satisfies the inequality
\begin{equation} \label{sublinear}
    \sup_{x \in B(a,r)} \left|S_\alpha \left( f \cdot \mathcal{X}_{B(a,2r)^c} \right)(x)\right| \leq
C \sum_{k=1}^\infty (2^{k+1}r)^{-\frac{n}{s}+\alpha} \left(\int_{B(a,2^{k+1}r)} |f(y)|^{s} dy \right)^{1/s},
\end{equation}
for any $(a,r) \in \mathbb{R}^n \times \mathbb{R}^+$, where $1<s<p$ for $1<p<n/\alpha$
and $s=1$ for $p=1$. This operator was introduced by H.~Wang 
\cite{Wang}. Moreover, the commutator of the sublinear operator is defined by
$$
[b,S_\alpha] (f) (x) = (bS_\alpha (f))(x) - S_\alpha (bf) (x), \quad x \in \mathbb{R}^n.
$$
We see that
\begin{equation}\label{commutator}
|S_\alpha (f)(x)| \leq |b(x) - b_B| |S_\alpha(f)(x)| + |S_\alpha ((b-b_B)f) (x)|, 
\quad x \in B,
\end{equation}
for any ball $B$ in $\mathbb{R}^n.$ Assuming that this operator and its commutator are 
bounded on weighted Lebesgue spaces (or weighted weak Lebesgue spaces), Wang obtained the 
following theorem on their boundedness property on generalized weighted Morrey spaces (or 
generalized weighted weak Morrey spaces).

\begin{theorem} \cite{Wang}
Let $0<\alpha<n,$ $1\le p<n/\alpha, 1/q = 1/p - \alpha/n,$ $w^{s} \in A_{p/s,q/s}$ where
$1\le s<p$ for $1<p<n/\alpha$ and $s=1$ for $p=1$, and 
$S_\alpha$ be a sublinear operator satisying (\ref{sublinear}) for any $(a,r) \in 
\mathbb{R}^n \in \mathbb{R}^+.$
\begin{enumerate}
\item Suppose that $(\psi_1, \psi_2)$ satisfies the condition
$$
\int_r^\infty \frac{\essinf_{t<l<\infty} \psi_1(a,l) w^p(B(a,l))^\frac1p}
{w^p(B(a,t))^\frac1p} \frac{dt}t \leq C \psi_2(a,r), \quad (a,r) 
\in \mathbb{R}^n \times \mathbb{R}^+.
$$
If $S_\alpha$ is bounded from $L^{p,w^p}$ to $L^{q,w^q}$ for $1<p<n/\alpha$ and from 
$L^{1,w}$ to $WL^{1,w}$, then $S_\alpha$ is bounded from $\mathcal{M}^{p,w^p}_{\psi_1}$ 
to $\mathcal{M}^{q,w^q}_{\psi_2}$ for $1<p<n/\alpha$ and from $\mathcal{M}^{1,w}_{\psi_1}$ 
to $W\mathcal{M}^{q,w^q}_{\psi_2}$.
\item Suppose that $(\psi_1, \psi_2)$ satisfies the condition
$$
\int_r^\infty \left(1 + \ln \frac{r}{t}\right) \frac{\essinf_{t<l<\infty} \psi_1(a,l) 
w^p(B(a,l))^\frac1p}{w^p(B(a,t))^\frac1p} \frac{dt}t \leq C \psi_2(a,r), \quad (a,r) 
\in \mathbb{R}^n \times \mathbb{R}^+.
$$
If $b \in BMO$ and $[b, S_\alpha]$ is bounded from $L^{p,w^p}$ to $L^{q,w^q}$ for 
$1<p<n/\alpha$ and from $L^{1,w}$ to $WL^{1,w}$, then $[b,S_\alpha]$ is bounded from 
$\mathcal{M}^{p,w^p}_{\psi_1}$ to $\mathcal{M}^{q,w^q}_{\psi_2}$ for $1<p<n/\alpha$ and 
from $\mathcal{M}^{1,w}_{\psi_1}$ to $W\mathcal{M}^{q,w^q}_{\psi_2}$.
\end{enumerate}
\end{theorem}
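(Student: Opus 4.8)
The plan is to prove a single-ball (local) estimate and then integrate it against the hypotheses on $(\psi_1,\psi_2)$, following the local-to-global scheme of Guliyev. Fix $B=B(a,r)$ and write $f=f_0+f_\infty$ with $f_0:=f\,\mathcal{X}_{B(a,2r)}$ and $f_\infty:=f\,\mathcal{X}_{B(a,2r)^{\rm c}}$; by sublinearity $|S_\alpha f|\le|S_\alpha f_0|+|S_\alpha f_\infty|$. For $f_0$ I would invoke the assumed boundedness of $S_\alpha$ on the weighted Lebesgue space (or the weak one for $p=1$), giving $\|(S_\alpha f_0)w\|_{L^q(B)}\le\|(S_\alpha f_0)w\|_{L^q(\mathbb{R}^n)}\lesssim\|f_0w\|_{L^p(\mathbb{R}^n)}=\|fw\|_{L^p(B(a,2r))}$; since $w^q\in A_\infty$ is doubling and $\|fw\|_{L^p(B(a,t))}$, $w^q(B(a,t))$ are nondecreasing in $t$ while $\int_{2r}^{4r}\tfrac{dt}{t}=\ln2$, this contribution is absorbed into a tail of $\int_{r}^{\infty}w^q(B(a,t))^{-1/q}\|fw\|_{L^p(B(a,t))}\,\tfrac{dt}{t}$.

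For $f_\infty$ I would feed the localization in \eqref{sublinear} to get, for $B_k:=B(a,2^{k+1}r)$, the pointwise bound $\sup_{x\in B(a,r)}|S_\alpha f_\infty(x)|\lesssim\sum_{k\ge1}(2^{k+1}r)^{-n/s+\alpha}\|f\|_{L^s(B_k)}$. Hölder's inequality on $B_k$ with exponents $p/s$ and $(p/s)'$ detaches $w$: $\|f\|_{L^s(B_k)}\le\|fw\|_{L^p(B_k)}\,\|w^{-1}\|_{L^{sp/(p-s)}(B_k)}$, and the hypothesis $w^s\in A_{p/s,q/s}$ is precisely what yields $(2^{k+1}r)^{-n/s+\alpha}\|w^{-1}\|_{L^{sp/(p-s)}(B_k)}\lesssim w^q(B_k)^{-1/q}$ (for $p=s=1$ one uses instead $w\in A_{1,q}$, i.e. $w^q(B_k)^{1/q}\lesssim|B_k|^{1/q}\inf_{B_k}w$, to replace $\int_{B_k}|f|$ by $w^q(B_k)^{-1/q}\|fw\|_{L^1(B_k)}$ up to the correct power of $2^{k+1}r$). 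Summing the series and comparing each $B_k$ with $\{B(a,t):2^{k+1}r\le t\le 2^{k+2}r\}$ (doubling of $w^q$ once more) one obtains, together with the $f_0$ bound, the local estimate
\[
w^q(B(a,r))^{-1/q}\,\bigl\|(S_\alpha f)\,w\bigr\|_{L^q(B(a,r))}\;\lesssim\;\int_{r}^{\infty}\frac{\|fw\|_{L^p(B(a,t))}}{w^q(B(a,t))^{1/q}}\,\frac{dt}{t},
\]
with the left-hand side replaced by its weak-$L^q$ version when $p=1$ (the $f_\infty$ contribution is a uniform bound on $B(a,r)$, hence harmless for the weak norm). To finish part (1) I would bound $\|fw\|_{L^p(B(a,t))}$ by $\|f\|_{\mathcal{M}^{p,w^p}_{\psi_1}}$ times the essential infimum over $l\in(t,\infty)$ of $\psi_1(a,l)w^p(B(a,l))^{1/p}$ (again by monotonicity in $l$), substitute into the integral, apply the first condition on $(\psi_1,\psi_2)$, and take the supremum over $(a,r)$.

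For the commutator I would run the same machine through \eqref{commutator}: on $B=B(a,r)$, $|[b,S_\alpha]f|\le|b-b_B|\,|S_\alpha f|+|S_\alpha((b-b_B)f)|$, and I decompose $f$ (hence $(b-b_B)f$) into $f_0+f_\infty$. The near contributions are absorbed by the hypothesized weighted $L^p\to L^q$ (resp. $L^1\to WL^1$) boundedness of $[b,S_\alpha]$ and of $S_\alpha$, together with a weighted John--Nirenberg inequality to absorb the BMO oscillation factors (legitimate since $w^q$, $w^p=(w^q)^{p/q}$ and $w^{-sp/(p-s)}$ all lie in $A_\infty$, which follows from $w^s\in A_{p/s,q/s}$). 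The far contributions are treated as in part (1) with \eqref{sublinear} applied to $(b-b_B)f$; the single new feature is that estimating $\|(b-b_B)f\|_{L^s(B_k)}$ brings in the term $|b_{B_k}-b_B|\lesssim(k+1)\|b\|_{BMO}$ from telescoping BMO averages, so the $k$-th dyadic term acquires a factor linear in $k$. Summing $\sum_{k\ge1}(k+1)(\cdots)$ and converting the series to an integral produces exactly the logarithmic weight $1+\ln\tfrac{r}{t}$ appearing in the second condition on $(\psi_1,\psi_2)$, and one concludes as before.

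The routine work is the $A_{p,q}$-exponent arithmetic and the doubling bookkeeping. The substantive points are two: first, extracting from $w^s\in A_{p/s,q/s}$ the sharp inequality $(2^{k+1}r)^{-n/s+\alpha}\|w^{-1}\|_{L^{sp/(p-s)}(B_k)}\lesssim w^q(B_k)^{-1/q}$ (and its $p=1$ counterpart), which is what collapses each dyadic term to the correct local quantity; second --- and this is where I expect the real effort --- the commutator far part, where one must control the BMO oscillation of $b$ simultaneously across all annuli $B_k$, verify that the weighted John--Nirenberg bounds survive the Hölder splitting that detaches $w$, and check that the $(k+1)$-weighted summation reproduces precisely the logarithmic factor demanded by the hypothesis of part (2).
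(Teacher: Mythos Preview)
Your proposal is correct and matches the paper's approach essentially verbatim: the paper does not prove this cited theorem directly, but its proofs of the two-weight generalizations (Propositions~3.1--3.2 feeding into Theorems~1.2--1.3, via Lemma~2.20) use exactly your $f_0+f_\infty$ decomposition, the same H\"older/$A_{p/s,q/s}$ step collapsing each dyadic term to $w^q(B_k)^{-1/q}\|fw\|_{L^p(B_k)}$, the same sum-to-integral conversion, and the same BMO telescoping producing the $(1+\ln\tfrac{t}{r})$ factor in the commutator case. The only cosmetic difference is that the paper absorbs the near part $f_0$ into the integral tail via Lemma~2.21 rather than a bare doubling argument, and its integral condition (1.4) carries $w^q(B(a,t))^{1/q}$ in the denominator rather than the $w^p(B(a,t))^{1/p}$ appearing in the Wang statement as quoted.
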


In this paper, we investigate the boundedness of the sublinear operators on generalized
weighted Morrey spaces and generalized weighted mixed-Morrey spaces with different weights 
(which we shall define in Section 2), where $w^s\in A_{p/s,q/s}$ is replaced by 
$(w_1^s,w_2^s)\in A_{p/s,q/s}$ for $1\le p<n/\alpha$. Moreover, we also investigate 
the operator $S_0$ for the case $\alpha=0$ to extend the known results for the sublinear 
operators generated by Calder\`on-Zygmund operators on generalized weighted Morrey spaces 
and generalized weighted mixed-Morrey spaces. Recall that $S$ is a sublinear operator 
generated by Calder\'on-Zygmund operator if $S$ satisfies 
\begin{equation}\label{sublinear-calder-1}
|S(f)(x)| \leq C \int_{\mathbb{R}^n} \frac{|f(y)|}{|x-y|^n} dy
\end{equation}
for $f\in L^1$ with compact support and $x \notin \rm{supp}$ $(f)$. We note such an operator satisfies 
\begin{equation}\label{sublinear-calder}
\sup_{x\in B(a,r)} |S(f\cdot\mathcal{X}_{B(a,2r)^c})(x)|\leq C \sum_{k=1}^\infty (2^{k+1}r)^{-\frac{n}s}\left(\int_{B(a,2^{k+1}r)}|f(y)|^s dy\right)^\frac1s,
\end{equation}
which is the inequality (\ref{sublinear}) for $S_0$. In other words, the inequality 
(\ref{sublinear-calder}) is more general than (\ref{sublinear-calder-1}). Hereafter, 
without confusion, we will write $S:=S_0.$

This paper is quite long. For convenience, we state our main results below. The definitions
and the properties of $A_p$ and $A_{p,q}$ weights will be presented in Section 2, along with 
the definitions of the function spaces that we are working on. The proofs of the theorems 
will be presented in Section 3.

\begin{theorem}\label{Main-Riesz}
Let $0 < \alpha < n, 1 \le p < n/\alpha, 1/q = 1/p - \alpha/n$, $(w_1^s, w_2^s) \in 
A_{p/s,q/s}$ and $w_2^s \in A_{p/s, q/s}$ where $1\le s<p$ for $1<p<n/\alpha$ and $s=1$ for 
$p=1$. Suppose that the two positive functions $\psi_1$ 
and $\psi_2$ on $\mathbb{R}^n \times \mathbb{R}^+$ satisfy
\begin{equation}\label{inequal-Riesz}
\sup_{(a,r) \in \mathbb{R}^n \times \mathbb{R}^+} \frac{1}{\psi_2 (a,r)}\int_r^\infty 
\frac{\inf_{t\leq l<\infty} \psi_1(a,l)w_1^p(B(a,l))^\frac1p}{w_2^q(B(a,t))^\frac1q} 
\frac{dt}{t} \leq C < \infty.
\end{equation}
If $S_\alpha$ is bounded from $L^{p,w_1^p}$ to $L^{q,w_2^q}$ for $1<p<n/\alpha$ and 
from $L^{1,w_1}$ to $WL^{q,w_2^q}$, then $S_\alpha $ is bounded from 
$\mathcal{M}^{p,w_1^p}_{\psi_1}$ to $\mathcal{M}^{q,w_2^q}_{\psi_2}$ for $1<p<n/\alpha$ 
and from $\mathcal{M}^{1,w_1}_{\psi_1}$ to $W\mathcal{M}^{q,w_2^q}_{\psi_2}$,
\end{theorem}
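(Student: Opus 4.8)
The plan is to follow the now-standard Guliyev-type decomposition argument adapted to the two-weight setting. Fix a ball $B=B(a,r)$ and split $f = f_1 + f_2$ where $f_1 = f\cdot\mathcal{X}_{B(a,2r)}$ and $f_2 = f\cdot\mathcal{X}_{B(a,2r)^c}$. By sublinearity, $|S_\alpha f(x)| \le |S_\alpha f_1(x)| + |S_\alpha f_2(x)|$, so it suffices to estimate the local $L^{q,w_2^q}(B)$ (resp.\ weak) norm of each piece separately and then take the supremum defining the Morrey norm. For the local part $S_\alpha f_1$, I would simply invoke the assumed $L^{p,w_1^p}\to L^{q,w_2^q}$ (resp.\ $L^{1,w_1}\to WL^{q,w_2^q}$) boundedness: $\|S_\alpha f_1\|_{L^{q,w_2^q}(B)} \lesssim \|f_1\|_{L^{p,w_1^p}} = \|f\|_{L^{p,w_1^p}(B(a,2r))}$, which is controlled by $\psi_1(a,2r)\,w_1^p(B(a,2r))^{1/p}\,\|f\|_{\mathcal{M}^{p,w_1^p}_{\psi_1}}$.

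The nonlocal part is where the hypotheses (\ref{sublinear}) and (\ref{inequal-Riesz}) do the real work. Starting from (\ref{sublinear}), I would write the right-hand side sum over dyadic annuli and, on each annulus $B(a,2^{k+1}r)$, apply Hölder's inequality with exponents $p/s$ and $(p/s)'$ against the weights $w_1^p$ and $w_1^{-p(s/p)'/\ldots}$ — more precisely, estimate $\int_{B(a,2^{k+1}r)}|f|^s \le \big(\int |f|^p w_1^p\big)^{s/p}\big(\int w_1^{-p'/(p/s)'\cdot\ldots}\big)^{1/(p/s)'}$, and then recognize the $w_1^{-\cdot}$ integral via the $A_{p/s,q/s}$ condition on $(w_1^s,w_2^s)$ in terms of $w_2^q(B(a,2^{k+1}r))$ and a power of $|B(a,2^{k+1}r)|$; the normalization $1/q = 1/p-\alpha/n$ together with the exponent $-n/s+\alpha$ in (\ref{sublinear}) is exactly what makes the powers of $|B|$ cancel. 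The outcome should be a bound of the form
$$
\sup_{x\in B}|S_\alpha f_2(x)| \lesssim \sum_{k=1}^\infty \frac{\|f\|_{L^{p,w_1^p}(B(a,2^{k+1}r))}}{w_2^q(B(a,2^{k+1}r))^{1/q}},
$$
and then, bounding $\|f\|_{L^{p,w_1^p}(B(a,2^{k+1}r))} \le \psi_1(a,2^{k+1}r)\,w_1^p(B(a,2^{k+1}r))^{1/p}\,\|f\|_{\mathcal{M}^{p,w_1^p}_{\psi_1}}$ and passing from the dyadic sum to an integral $\int_r^\infty(\cdots)\frac{dt}{t}$ (using that $w_2^q(B(a,\cdot))$ is doubling, since $w_2^s\in A_{p/s,q/s}$ forces $w_2^q\in A_\infty$, hence the geometric-to-integral comparison is legitimate), one arrives at precisely the integral appearing in (\ref{inequal-Riesz}). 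Multiplying through by $w_2^q(B)^{1/q}$ (which the weak/strong $L^q$ estimate on the single ball $B$ produces for the constant function) and dividing by $\psi_2(a,r)$ then gives the Morrey bound.

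I expect the main obstacle to be the bookkeeping in the nonlocal estimate: getting the Hölder exponents and the weight exponents in the $A_{p/s,q/s}$ condition to line up so that all powers of the Lebesgue measure $|B(a,2^{k+1}r)|$ cancel, and verifying that the per-annulus constants are uniform in $k$. A secondary technical point is the replacement of $\essinf$ in the hypothesis of Wang's theorem by $\inf$ in (\ref{inequal-Riesz}): this is harmless because $\psi_1(a,l)w_1^p(B(a,l))^{1/p}$ may be taken (or is automatically, after the standard modification) lower semicontinuous-enough in $l$ for the infimum to agree with the essential infimum on the relevant range, and in any case the estimate $\|f\|_{L^{p,w_1^p}(B(a,2^{k+1}r))} \le \inf_{2^{k+1}r \le l} \psi_1(a,l) w_1^p(B(a,l))^{1/p}\,\|f\|_{\mathcal{M}^{p,w_1^p}_{\psi_1}}$ holds since the left side is a fixed number dominated by $\psi_1(a,l)w_1^p(B(a,l))^{1/p}\|f\|$ for every $l\ge 2^{k+1}r$. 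The weak-type case $p=1$ runs in parallel, using the assumed $L^{1,w_1}\to WL^{q,w_2^q}$ bound in place of the strong bound and Chebyshev on the local term; the nonlocal term is a genuine pointwise bound so it contributes to the weak norm trivially.
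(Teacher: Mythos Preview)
Your overall strategy matches the paper's: decompose $f=f_1+f_2$ with $f_1=f\cdot\mathcal{X}_{B(a,2r)}$, use the assumed weighted Lebesgue boundedness on $f_1$, use hypothesis~(\ref{sublinear}) together with H\"older and the $A_{p/s,q/s}$ condition on $f_2$, and then feed everything into condition~(\ref{inequal-Riesz}). The $f_2$ estimate you sketch is exactly the paper's Proposition~\ref{est-sub-Riesz}, and your passage from the dyadic sum to the integral via doubling of $w_2^q$ is correct.

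There is, however, a genuine gap in your treatment of $f_1$. You stop at
\[
\|S_\alpha f_1\|_{L^{q,w_2^q}(B(a,r))}\lesssim \|f\|_{L^{p,w_1^p}(B(a,2r))}\le \psi_1(a,2r)\,w_1^p(B(a,2r))^{1/p}\,\|f\|_{\mathcal{M}^{p,w_1^p}_{\psi_1}},
\]
but to finish the Morrey estimate you must divide by $\psi_2(a,r)\,w_2^q(B(a,r))^{1/q}$, and condition~(\ref{inequal-Riesz}) does \emph{not} give a uniform bound on the pointwise ratio $\psi_1(a,2r)w_1^p(B(a,2r))^{1/p}\big/\big(\psi_2(a,r)w_2^q(B(a,r))^{1/q}\big)$: the integral condition involves $\inf_{t\le l<\infty}\psi_1(a,l)w_1^p(B(a,l))^{1/p}$, which can be much smaller than the value at any fixed $l=2r$. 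The paper avoids this by also converting the $f_1$ bound into an integral: using Lemma~\ref{f-1} (which is where the hypothesis $w_2^s\in A_{p/s,q/s}$ is used) one writes
\[
\|f\|_{L^{p,w_1^p}(B(a,2r))}\le C\,w_2^q(B(a,r))^{1/q}\int_{2r}^\infty \|f\|_{L^{p,w_1^p}(B(a,t))}\,w_2^q(B(a,t))^{-1/q}\,\frac{dt}{t},
\]
so that $f_1$ and $f_2$ produce the \emph{same} integral expression, and then a single application of Lemma~\ref{lemma-boundedness} together with~(\ref{inequal-Riesz}) finishes the proof. Alternatively, you could repair your argument by using the infimum bound $\|f\|_{L^{p,w_1^p}(B(a,2r))}\le \inf_{l\ge 2r}\psi_1(a,l)w_1^p(B(a,l))^{1/p}\|f\|_{\mathcal{M}}$ (the trick you already noted for $f_2$) and then extracting the required pointwise bound from~(\ref{inequal-Riesz}) by restricting the integral to $[2r,4r]$ and using doubling of $w_2^q$; but this extra step must be made explicit.
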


\begin{theorem}\label{Main-Comm-Riesz}
Let $0 < \alpha < n, 1 < p < n/\alpha, 1/q = 1/p - \alpha/n$, $(w_1^s, w_2^s) \in 
A_{p/s,q/s}$ and $w_2^s \in A_{p/s, q/s}$ where $1\le s<p$, and $w_1 \in A_\infty$. Suppose that the two 
positive functions $\psi_1$ and $\psi_2$ on $\mathbb{R}^n \times \mathbb{R}^+$ satisfy
\begin{equation}\label{inequal-comm-riesz}
\sup_{(a,r) \in \mathbb{R}^n \times \mathbb{R}^+} \frac{1}{\psi_2 (a,r)}\int_r^\infty 
\left(1+\ln \frac{t}{r}\right) \frac{\inf_{t\leq l<\infty} \psi_1(a,l)w_1^p(B(a,l))^\frac1p}
{w_2^q(B(a,t))^\frac1q} \frac{dt}{t} \leq C < \infty.
\end{equation}
If $[b, S_\alpha]$ is bounded from $L^{p,w_1^p}$ to $L^{q,w_2^q}$, then 
$[b, S_\alpha]$ is bounded from $\mathcal{M}^{p,w_1^p}_{\psi_1}$ to 
$\mathcal{M}^{q,w_2^q}_{\psi_2}$. Moreover,
$$
\|[b, S_\alpha]f\|_{\mathcal{M}^{q,w_2^q}_{\psi_1}} \leq C \|b\|_* 
\|f\|_{\mathcal{M}^{p,w_1^p}_{\psi_1}}, \quad f\in \mathcal{M}^{p,w^p}_{\psi_1}.
$$
\end{theorem}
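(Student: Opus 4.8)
The plan is to follow the standard dyadic-decomposition scheme for generalized weighted Morrey spaces, but carried out for the commutator $[b,S_\alpha]$, so that the extra logarithmic factor in \eqref{inequal-comm-riesz} is consumed by the $BMO$ seminorm of $b$. Fix a ball $B=B(a,r)$ and split $f = f_1 + f_2$ where $f_1 = f\cdot\mathcal{X}_{B(a,2r)}$ and $f_2 = f\cdot\mathcal{X}_{B(a,2r)^c}$. For the local part, by hypothesis $[b,S_\alpha]$ is bounded from $L^{p,w_1^p}$ to $L^{q,w_2^q}$, so $\|[b,S_\alpha]f_1\|_{L^{q,w_2^q}(B)} \le C\|b\|_*\|f\|_{L^{p,w_1^p}(B(a,2r))}$; to turn this into the required bound one divides by $w_2^q(B(a,r))^{1/q}$, inserts the trivial estimate $\|f\|_{L^{p,w_1^p}(B(a,2r))} \le \psi_1(a,2r)w_1^p(B(a,2r))^{1/p}\|f\|_{\mathcal{M}^{p,w_1^p}_{\psi_1}}$, and checks that the resulting quantity is controlled by a single term of the integral in \eqref{inequal-comm-riesz} (the region $t\sim r$, where $1+\ln(t/r)\sim 1$), using the doubling of $w_2^q$ coming from $w_2^s\in A_{p/s,q/s}$.

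For the non-local part one uses \eqref{commutator} to write, for $x\in B$,
$$
|[b,S_\alpha]f_2(x)| \le |b(x)-b_B|\,|S_\alpha f_2(x)| + |S_\alpha((b-b_B)f_2)(x)|,
$$
and then applies the pointwise bound \eqref{sublinear} to $S_\alpha f_2$ and to $S_\alpha((b-b_B)f_2)$. This produces two sums over dyadic annuli $B(a,2^{k+1}r)$. In each annulus one applies Hölder's inequality with exponents governed by $s$ and by the $A_{p/s,q/s}$ condition on $(w_1^s,w_2^s)$ to pass from the $L^s$-average of $|f|$ (resp. $|b-b_B||f|$) to the weighted $L^p$-norm $\|f\|_{L^{p,w_1^p}(B(a,2^{k+1}r))}$; for the term carrying $|b-b_B|$ one splits $b-b_B = (b-b_{B(a,2^{k+1}r)}) + (b_{B(a,2^{k+1}r)} - b_B)$, uses that $|b_{B(a,2^{k+1}r)}-b_B| \le C(k+1)\|b\|_*$, and uses a weighted John--Nirenberg estimate (valid because $w_1\in A_\infty$, equivalently $w_1^p\in A_\infty$) to control the first piece. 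The factor $|b(x)-b_B|$ in front of $|S_\alpha f_2(x)|$ is handled after dividing by $w_2^q(B)^{1/q}$ by the same weighted $BMO$-mean-oscillation estimate. Summing the geometric-type series in $k$ and converting the sum over $k$ back into the integral $\int_r^\infty\frac{dt}{t}$, each appearance of a factor $(k+1)\sim 1+\ln(2^{k+1}r/r)$ becomes exactly the weight $1+\ln(t/r)$ in \eqref{inequal-comm-riesz}, after which one divides by $\psi_2(a,r)$ and takes the supremum over $(a,r)$.

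The main obstacle I anticipate is bookkeeping the weights correctly in the dyadic sum: one must relate $\|f\|_{L^{p,w_1^p}(B(a,2^{k+1}r))}$, the factor $(2^{k+1}r)^{-n/s+\alpha}$, the $L^{s'}$-norm of $w_1^{-1}$ over the annulus, and $w_2^q(B(a,2^{k+1}r))^{1/q}$, and show their product is dominated by $\inf_{t\le l<\infty}\psi_1(a,l)w_1^p(B(a,l))^{1/p}\big/ w_2^q(B(a,t))^{1/q}$ integrated over $t\sim 2^{k+1}r$ — this is where the hypothesis $(w_1^s,w_2^s)\in A_{p/s,q/s}$ together with $w_2^s\in A_{p/s,q/s}$ is used, via the $A_{p,q}$-type inequality $\big(\frac{1}{|Q|}\int_Q w_2^q\big)^{1/q}\big(\frac{1}{|Q|}\int_Q w_1^{-p'}\big)^{1/p'} \le C$ and the doubling of $w_2^q$. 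The $p=1$ case is excluded from this theorem, so no weak-type endpoint argument is needed; the only genuinely new ingredient compared with Theorem \ref{Main-Riesz} is the insertion of the weighted John--Nirenberg inequality and the tracking of the logarithmic factors, both of which are routine once the weight estimates from the non-commutator case are in hand.
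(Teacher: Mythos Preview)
Your approach is essentially the paper's: it too splits $f=f_1+f_2$, handles $f_1$ by the assumed $L^{p,w_1^p}\to L^{q,w_2^q}$ boundedness, and handles $f_2$ via \eqref{commutator} split into $J_1=|b-b_B|\,|S_\alpha f_2|$ and $J_2=|S_\alpha((b_B-b)f_2)|$, using Corollary~\ref{BMO3} for $J_1$ and Lemma~\ref{BMO2} (which already packages your splitting $b-b_B=(b-b_{2^{k+1}B})+(b_{2^{k+1}B}-b_B)$ plus weighted John--Nirenberg) for $J_2$. The only organizational difference is that the paper records the outcome as a standalone local estimate (Proposition~\ref{est-comm-sub-riesz}) and then applies the abstract Lemma~\ref{lemma-boundedness} to pass to the Morrey norm, whereas you do both at once.

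One small correction to your $f_1$ step: the bound $\|f\|_{L^{p,w_1^p}(B(a,2r))}\le \psi_1(a,2r)w_1^p(B(a,2r))^{1/p}\|f\|_{\mathcal M_{\psi_1}^{p,w_1^p}}$ is \emph{not} directly comparable to a piece of the integral in \eqref{inequal-comm-riesz}, because the integrand carries $\inf_{t\le l<\infty}\psi_1(a,l)w_1^p(B(a,l))^{1/p}$, not the value at $l=2r$, and there is no a~priori monotonicity on $\psi_1$. The paper avoids this by using Lemma~\ref{f-1} and the condition $w_2^s\in A_{p/s,q/s}$ to rewrite $\|f\|_{L^{p,w_1^p}(B(a,2r))}$ itself as $C\,w_2^q(B(a,r))^{1/q}\int_{2r}^\infty \|f\|_{L^{p,w_1^p}(B(a,t))}\,w_2^q(B(a,t))^{-1/q}\,\frac{dt}{t}$, so that $f_1$ and $f_2$ feed into the same integral and Lemma~\ref{lemma-boundedness} applies uniformly. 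Your direct route can also be repaired: sharpen the trivial estimate to $\|f\|_{L^{p,w_1^p}(B(a,2r))}\le\bigl(\inf_{l\ge 2r}\psi_1(a,l)w_1^p(B(a,l))^{1/p}\bigr)\|f\|_{\mathcal M_{\psi_1}^{p,w_1^p}}$ (valid since $\|f\|_{L^{p,w_1^p}(B(a,l))}$ is nondecreasing in $l$), and then compare with the region $t\in[2r,4r]$ using doubling of $w_2^q$.
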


\begin{theorem}\label{Main-Riesz-Mixed}
Let $\psi$ be a positive function on $\mathbb{R}^n\times (0,\infty)$, $1 \leq p_0 < \infty,
\ 0 < \alpha < n,\ 1 \le p < n/\alpha,\ 1/q = 1/p - \alpha/n$, $(w_1^s, w_2^s)\in 
A_{p/s, q/s}$ and $w_2^s \in A_{p/s, q/s}$ where $1\le s<p$ for $1<p<n/\alpha$ and $s=1$ for
$p=1$. Suppose that the two positive functions $\psi_1$
and $\psi_2$ on $\mathbb{R}^n \times \mathbb{R}^+$ satisfy (\ref{inequal-Riesz}).
If $S_\alpha$ is bounded from $L^{p,w_1^p}$ to $L^{q,w_2^q}$ for $1<p<n/\alpha$ and from 
$L^{1,w_1}$ to $WL^{q,w_2^q}$, then $S_\alpha $ is bounded from 
$\mathcal{M}_\psi^{p_0,w}(0,T,\mathcal{M}^{p,w_1^p}_{\psi_1})$ to
$\mathcal{M}_\psi^{p_0,w}(0,T,\mathcal{M}^{q,w_2^q}_{\psi_2})$ for $1<p<n/\alpha$ and from
$\mathcal{M}_\psi^{p_0,w}(0,T,\mathcal{M}^{1,w_1}_{\psi_1})$ to 
$\mathcal{M}_\psi^{p_0,w}(0,T,W\mathcal{M}^{q,w_2^q}_{\psi_2})$.
\end{theorem}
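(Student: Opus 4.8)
The plan is to deduce Theorem~\ref{Main-Riesz-Mixed} from Theorem~\ref{Main-Riesz} by exploiting the product structure of the mixed-Morrey space: the operator $S_\alpha$ acts only on the spatial variable $x\in\mathbb{R}^n$, whereas the outer norm defining $\mathcal{M}_\psi^{p_0,w}(0,T,\cdot)$ involves only the time variable $t\in(0,T)$. So the strategy is to apply the spatial estimate from Theorem~\ref{Main-Riesz} fiberwise in $t$ and then take the outer $\mathcal{M}_\psi^{p_0,w}$-norm; in effect this is a ``vector-valued lift'' of the spatial result, encapsulating the principle that a sublinear map bounded from $X$ to $Y$ is automatically bounded from $\mathcal{M}_\psi^{p_0,w}(0,T,X)$ to $\mathcal{M}_\psi^{p_0,w}(0,T,Y)$.

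First I would record that if $f\in\mathcal{M}_\psi^{p_0,w}(0,T,\mathcal{M}^{p,w_1^p}_{\psi_1})$, then the function $t\mapsto \|f(t,\cdot)\|_{\mathcal{M}^{p,w_1^p}_{\psi_1}}$ is measurable and finite for a.e.\ $t\in(0,T)$; in particular $f(t,\cdot)\in\mathcal{M}^{p,w_1^p}_{\psi_1}$ for a.e.\ $t$, so that $S_\alpha(f(t,\cdot))$ is well-defined a.e.\ (here sublinearity of $S_\alpha$ together with \eqref{sublinear} guarantees $S_\alpha(f(t,\cdot))$ makes sense). The hypotheses of Theorem~\ref{Main-Riesz} on $\alpha,p,q,s$, on the weights $(w_1^s,w_2^s),\,w_2^s\in A_{p/s,q/s}$, and condition \eqref{inequal-Riesz} are exactly those assumed here, and the constant in that theorem depends only on $n,\alpha,p,s$, the weights, and the constant in \eqref{inequal-Riesz}, hence not on the input function. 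Applying it to $f(t,\cdot)$ for each such $t$ yields, for $1<p<n/\alpha$,
$$
\|S_\alpha(f(t,\cdot))\|_{\mathcal{M}^{q,w_2^q}_{\psi_2}}\le C\,\|f(t,\cdot)\|_{\mathcal{M}^{p,w_1^p}_{\psi_1}}\qquad\text{for a.e. }t\in(0,T),
$$
and in the endpoint case $p=1$,
$$
\|S_\alpha(f(t,\cdot))\|_{W\mathcal{M}^{q,w_2^q}_{\psi_2}}\le C\,\|f(t,\cdot)\|_{\mathcal{M}^{1,w_1}_{\psi_1}}\qquad\text{for a.e. }t\in(0,T),
$$
with $C$ independent of $t$.

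Next I would insert these pointwise-in-$t$ inequalities into the definition of the mixed-Morrey norm. For every admissible pair $(\tau,\rho)$, write $I=I(\tau,\rho)\subset(0,T)$ for the associated time interval; by monotonicity of the weighted integral over $I$ and of the power $(\cdot)^{1/p_0}$, the corresponding inner average of $\|S_\alpha(f(t,\cdot))\|_{\mathcal{M}^{q,w_2^q}_{\psi_2}}$ is bounded by $C$ times the inner average of $\|f(t,\cdot)\|_{\mathcal{M}^{p,w_1^p}_{\psi_1}}$; dividing by $\psi(\tau,\rho)$ and taking the supremum over $(\tau,\rho)$ gives
$$
\|S_\alpha f\|_{\mathcal{M}_\psi^{p_0,w}(0,T,\mathcal{M}^{q,w_2^q}_{\psi_2})}\le C\,\|f\|_{\mathcal{M}_\psi^{p_0,w}(0,T,\mathcal{M}^{p,w_1^p}_{\psi_1})}.
$$
The endpoint case is handled identically, replacing the inner strong norm by $\|\cdot\|_{W\mathcal{M}^{q,w_2^q}_{\psi_2}}$ and using that this quasi-norm is monotone under pointwise domination of functions, which yields boundedness from $\mathcal{M}_\psi^{p_0,w}(0,T,\mathcal{M}^{1,w_1}_{\psi_1})$ into $\mathcal{M}_\psi^{p_0,w}(0,T,W\mathcal{M}^{q,w_2^q}_{\psi_2})$.

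I do not expect any serious analytic obstacle here beyond what is already contained in Theorem~\ref{Main-Riesz}; the only points requiring a little care are (i) the measurability in $t$ of the spatial (quasi-)Morrey norm of $f(t,\cdot)$ and of $S_\alpha(f(t,\cdot))$, which can be obtained by approximating $f$ by $\mathcal{M}^{p,w_1^p}_{\psi_1}$-valued simple functions, and (ii) confirming that the constant furnished by Theorem~\ref{Main-Riesz} is genuinely uniform over the fiber variable $t$, which it is for the reason noted above. Consequently the additional data $p_0$, $w$, $\psi$, and $T$ play the role of inert ``outer'' parameters that do not interact with $S_\alpha$ at all.
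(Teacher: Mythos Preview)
Your proposal is correct and follows essentially the same approach as the paper: apply the spatial Morrey estimate of Theorem~\ref{Main-Riesz} fiberwise in $t$ to obtain $\|S_\alpha f(\cdot,t)\|_{\mathcal{M}^{q,w_2^q}_{\psi_2}}\le C\|f(\cdot,t)\|_{\mathcal{M}^{p,w_1^p}_{\psi_1}}$ (resp.\ the weak version for $p=1$) with $C$ independent of $t$, and then lift this to the mixed norm via Remark~\ref{remark-mixed-morrey}. Your added care about measurability in $t$ and uniformity of the constant is more than the paper spells out, but the underlying argument is the same.
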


\begin{theorem}\label{Main-Comm-Riesz-Mixed}
Let $\psi$ be a positive function on $\mathbb{R}^n\times (0,\infty)$, $1 \leq p_0 < \infty,\ 
0 < \alpha < n,\ 1 < p < n/\alpha,\ 1/q = 1/p - \alpha/n$, $(w_1^s, w_2^s) \in 
A_{p/s,q/s}$ and $w_2^s \in A_{p/s, q/s}$ where $1\le s<p$, and $w_1 \in A_\infty$. 
Suppose that the two positive functions $\psi_1$ and $\psi_2$ on $\mathbb{R}^n \times 
\mathbb{R}^+$ satisfy (\ref{inequal-comm-riesz}).
If $[b, S_\alpha]$ is bounded from $L^{p,w_1^p}$ to $L^{q,w_2^q}$, then $[b, S_\alpha]$ is 
bounded from $\mathcal{M}_\psi^{p_0,w}(0,T,\mathcal{M}^{p,w_1^p}_{\psi_1})$ to 
$\mathcal{M}_\psi^{p_0, w}(0,T,\mathcal{M}^{q,w_2^q}_{\psi_2})$. Moreover,
$$
\|[b, S_\alpha]f\|_{\mathcal{M}_\psi^{p_0, w}(0, T, \mathcal{M}^{q,w_2^q}_{\psi_1})} \leq 
C \|b\|_*\|f\|_{\mathcal{M}_\psi^{p_0, w}(0, T, \mathcal{M}^{p,w_1^p}_{\psi_1})},\quad f\in
\mathcal{M}_\psi^{p_0, w}(0, T, \mathcal{M}^{p,w_1^p}_{\psi_1}).
$$
\end{theorem}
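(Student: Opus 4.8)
\emph{Proof proposal for Theorem~\ref{Main-Comm-Riesz-Mixed}.}
The plan is to deduce the mixed-space estimate from the non-mixed commutator estimate of Theorem~\ref{Main-Comm-Riesz} by a slice-wise argument in the time variable, exactly parallel to the passage from Theorem~\ref{Main-Riesz} to Theorem~\ref{Main-Riesz-Mixed}. Since $S_\alpha$, and hence $[b,S_\alpha]$, acts only in the spatial variable, for almost every $t\in(0,T)$ we have $[b,S_\alpha]f(\cdot,t)=[b,S_\alpha]\bigl(f(\cdot,t)\bigr)$, and $f(\cdot,t)\in\mathcal{M}^{p,w_1^p}_{\psi_1}$ for a.e.\ such $t$ whenever $f\in\mathcal{M}_\psi^{p_0,w}(0,T,\mathcal{M}^{p,w_1^p}_{\psi_1})$. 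The hypotheses imposed here — $(w_1^s,w_2^s)\in A_{p/s,q/s}$ and $w_2^s\in A_{p/s,q/s}$ with $1\le s<p$, $w_1\in A_\infty$, the pair $(\psi_1,\psi_2)$ satisfying \eqref{inequal-comm-riesz}, and $[b,S_\alpha]\colon L^{p,w_1^p}\to L^{q,w_2^q}$ bounded — are precisely the standing hypotheses of Theorem~\ref{Main-Comm-Riesz}, so that theorem applies to each slice and gives
$$
\bigl\|[b,S_\alpha]f(\cdot,t)\bigr\|_{\mathcal{M}^{q,w_2^q}_{\psi_2}}\le C\,\|b\|_*\,\bigl\|f(\cdot,t)\bigr\|_{\mathcal{M}^{p,w_1^p}_{\psi_1}},\qquad\text{a.e. }t\in(0,T),
$$
with a constant $C$ depending only on $n,p,q,\alpha,s$, the weights, and the structural constant in \eqref{inequal-comm-riesz}, but \emph{not} on $t$.

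With this uniform-in-$t$ pointwise bound in hand, the proof is completed by the monotonicity (solidity) of the outer norm: $\|\cdot\|_{\mathcal{M}_\psi^{p_0,w}(0,T,\,\cdot\,)}$ is, by its very definition, an outer generalized weighted Morrey norm in the time variable applied to the Banach-space-valued map $t\mapsto g(t)$ — a supremum over time-intervals of weighted $L^{p_0}$-averages of the inner (generalized weighted Morrey) norm of the argument — hence any a.e.\ domination $\|g(t)\|_Y\le A\,\|h(t)\|_X$ of inner norms passes to $\|g\|_{\mathcal{M}_\psi^{p_0,w}(0,T,Y)}\le A\,\|h\|_{\mathcal{M}_\psi^{p_0,w}(0,T,X)}$. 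Applying this with $g=[b,S_\alpha]f$, $h=f$, $Y=\mathcal{M}^{q,w_2^q}_{\psi_2}$, $X=\mathcal{M}^{p,w_1^p}_{\psi_1}$, and $A=C\|b\|_*$ yields
$$
\|[b,S_\alpha]f\|_{\mathcal{M}_\psi^{p_0,w}(0,T,\mathcal{M}^{q,w_2^q}_{\psi_2})}\le C\,\|b\|_*\,\|f\|_{\mathcal{M}_\psi^{p_0,w}(0,T,\mathcal{M}^{p,w_1^p}_{\psi_1})},
$$
which is the asserted estimate and, in particular, the claimed boundedness. (The occurrence of $\psi_1$ in the inner subscript on the left-hand sides of the displayed inequalities in the statement is a typographical slip; the correct inner subscript on the target side is $\psi_2$, consistent with Theorem~\ref{Main-Comm-Riesz}.)

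Since the genuinely analytic content — the dyadic decomposition $f=f\mathcal{X}_{B(a,2r)}+f\mathcal{X}_{B(a,2r)^{\rm c}}$, the use of \eqref{sublinear} and \eqref{commutator}, the $BMO$ oscillation estimates, and the $A_{p,q}$/$A_\infty$ weight manipulations — is already carried out in the proof of Theorem~\ref{Main-Comm-Riesz}, no deep obstacle remains at this stage: the task is purely to ``lift'' a scalar inequality to a Banach-space-valued Morrey setting. The only points that require attention are the Fubini-type measurability facts needed to justify the slice-wise identity and to guarantee that $t\mapsto\|[b,S_\alpha]f(\cdot,t)\|_{\mathcal{M}^{q,w_2^q}_{\psi_2}}$ is measurable, together with the (trivial, but worth recording) observation that the endpoint $p=1$ is excluded from the commutator theorems, so no weak-type version has to be lifted. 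I therefore expect the write-up to be short, mirroring the derivation of Theorem~\ref{Main-Riesz-Mixed} from Theorem~\ref{Main-Riesz}.
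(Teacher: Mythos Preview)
Your proposal is correct and matches the paper's own argument essentially line for line: the paper replaces $f(\cdot)$ by $f(\cdot,t)$, invokes the slice-wise Morrey bound (i.e., Theorem~\ref{Main-Comm-Riesz}, via Proposition~\ref{est-comm-sub-riesz}) with a constant independent of $t$, and then appeals to Remark~\ref{remark-mixed-morrey}, which is precisely the solidity/monotonicity of the outer mixed-Morrey norm that you spell out. Your observation about the $\psi_1$/$\psi_2$ typo in the target subscript is also correct.
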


For the case $\alpha=0$, we have the following results for the operator $S=S_0$.

\begin{theorem}\label{Main-Calder}
Let $1 \le p < \infty$, $(w_1^s,w_2^s)\in A_{p/s}$ and $w_2^s \in A_{p/s}$ where $1\le s<p$
for $1<p<\infty$ and $s=1$ for $p=1$. Suppose that the two positive functions $\psi_1$ and 
$\psi_2$ on $\mathbb{R}^n \times \mathbb{R}^+$ satisfy
\begin{equation}\label{inequal-calder}
\sup_{(a,r) \in \mathbb{R}^n \times \mathbb{R}^+} \frac{1}{\psi_2 (a,r)}\int_r^\infty 
\frac{\inf_{t\leq l<\infty}\psi_1(a,l)w_1(B(a,l))^\frac1p}{w_2(B(a,t))^\frac1p} \frac{dt}{t} 
\leq C < \infty.
\end{equation}
If $S$ is bounded from $L^{p,w_1}$ to $L^{p,w_2}$ for $1<p<\infty$ and from $L^{1,w_1}$ to 
$WL^{1,w_2}$, then $S$ is bounded from $\mathcal{M}^{p,w_1}_{\psi_1}$ to 
$\mathcal{M}^{p,w_2}_{\psi_2}$ for $1<p<\infty$ and from $\mathcal{M}^{1,w_1}_{\psi_1}$
to $W\mathcal{M}^{1,w_2}_{\psi_2}$.
\end{theorem}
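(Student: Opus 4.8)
My plan is to mimic the proof of Theorem~\ref{Main-Riesz} in the degenerate case $\alpha=0$, $q=p$: reduce the Morrey bound to a pointwise-in-$(a,r)$ ``local'' estimate, establish that estimate by a dyadic annular decomposition, and then integrate it against $\psi_1,\psi_2$ via hypothesis~(\ref{inequal-calder}). First I would note that $S=S_0$ obeys the pointwise bound~(\ref{sublinear-calder}), i.e.\ (\ref{sublinear}) with $\alpha=0$. Fixing a ball $B=B(a,r)$, I would split $f=f_1+f_2$ with $f_1=f\mathcal{X}_{B(a,2r)}$ and $f_2=f\mathcal{X}_{B(a,2r)^c}$. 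The near part is immediate from the assumed boundedness of $S$: for $1<p<\infty$,
\[
\|Sf_1\|_{L^{p,w_2}(B)}\le\|Sf_1\|_{L^{p,w_2}(\mathbb{R}^n)}\le C\,\|f\|_{L^{p,w_1}(B(a,2r))},
\]
with the weak space $WL^{1,w_2}$ on the left when $p=1$ (all these norms being the weighted norms restricted to the indicated balls).

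For the far part I would feed $f_2$ into~(\ref{sublinear-calder}) to get $\sup_{x\in B}|Sf_2(x)|\le C\sum_{k\ge1}(2^{k+1}r)^{-n/s}\bigl(\int_{B_k}|f|^{s}\bigr)^{1/s}$ with $B_k:=B(a,2^{k+1}r)$, hence $\|Sf_2\|_{L^{p,w_2}(B)}\le w_2(B)^{1/p}\sup_{x\in B}|Sf_2(x)|$. On each annulus I would bound $\bigl(\int_{B_k}|f|^{s}\bigr)^{1/s}$ by Hölder's inequality with exponent $p/s$ (exponent $p$ when $s=1$), introducing the weight $w_1$, and then invoke the two-weight condition $(w_1^{s},w_2^{s})\in A_{p/s}$ together with the doubling of $w_2^{s}$ (valid since $w_2^{s}\in A_{p/s}\subset A_\infty$), matching the Hölder exponents to the precise form of the $A_{p/s}$ condition so that exactly the factor $w_2(B_k)^{-1/p}$ is produced:
\[
(2^{k+1}r)^{-n/s}\Bigl(\int_{B_k}|f(y)|^{s}\,dy\Bigr)^{1/s}\le\frac{C}{w_2(B_k)^{1/p}}\,\|f\|_{L^{p,w_1}(B_k)}.
\]
Summing over $k$, converting the resulting series into an integral by the usual discretization (legitimate because the $w_1$- and $w_2$-volumes of concentric balls are doubling), and absorbing the near part in the same way, I would reach the local estimate
\[
\|Sf\|_{L^{p,w_2}(B(a,r))}\le C\,w_2(B(a,r))^{1/p}\int_{r}^{\infty}\frac{\|f\|_{L^{p,w_1}(B(a,t))}}{w_2(B(a,t))^{1/p}}\,\frac{dt}{t},
\]
with the evident weak-type analogue when $p=1$.

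Finally I would pass to the Morrey norms. Since $B(a,t)\subseteq B(a,l)$ for $l\ge t$, one has $\|f\|_{L^{p,w_1}(B(a,t))}\le\bigl(\inf_{t\le l<\infty}\psi_1(a,l)\,w_1(B(a,l))^{1/p}\bigr)\|f\|_{\mathcal{M}^{p,w_1}_{\psi_1}}$; inserting this into the local estimate, dividing by $\psi_2(a,r)\,w_2(B(a,r))^{1/p}$, and taking the supremum over $(a,r)\in\mathbb{R}^n\times\mathbb{R}^+$, hypothesis~(\ref{inequal-calder}) yields $\|Sf\|_{\mathcal{M}^{p,w_2}_{\psi_2}}\le C\|f\|_{\mathcal{M}^{p,w_1}_{\psi_1}}$ for $1<p<\infty$, and the same argument with $W\mathcal{M}^{1,w_2}_{\psi_2}$ in place of $\mathcal{M}^{p,w_2}_{\psi_2}$ settles $p=1$. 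I expect the main obstacle to be exactly the weighted annular estimate: one must arrange the Hölder exponents so that the two-weight $A_{p/s}$ inequality delivers precisely $w_2(B_k)^{-1/p}$ (rather than a mismatched power of $|B_k|$ and the weights), and one must know that $w_2$ — a power of the $A_\infty$ weight $w_2^{s}$ — is doubling, which is the role of the separate hypothesis $w_2^{s}\in A_{p/s}$. Everything else is bookkeeping, and indeed this theorem is essentially the $\alpha=0$, $q=p$ specialization of Theorem~\ref{Main-Riesz}, so no new ideas beyond that proof are required.
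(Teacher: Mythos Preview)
Your proposal is correct and follows essentially the same route as the paper: the paper first proves the local estimate (Proposition~\ref{est-sub-calder}) via the same near/far split, H\"older with exponent $p/s$, and the two-weight $A_{p/s}$ condition to extract $w_2(B_k)^{-1/p}$, and then combines it with Lemma~\ref{lemma-boundedness} (which is exactly your final ``$\inf_{t\le l<\infty}$'' step) to conclude. The only cosmetic difference is that the paper converts the near-part bound $\|f\|_{L^{p,w_1}(B(a,2r))}$ into the integral form by writing $r^{n/s}=C\,r^{n/s}\int_{2r}^\infty t^{-n/s-1}\,dt$ and invoking the one-weight condition $w_2^s\in A_{p/s}$ (via Lemma~\ref{f-1}) rather than appealing to doubling directly, but this is the same mechanism you describe.
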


\begin{theorem}\label{Main-Comm-Calder}
Let $1 < p<\infty$, $(w_1^s,w_2^s) \in A_{p/s}$ and $w_2^s \in A_{p/s}$ where $1\le s<p$, 
and $w_1\in A_\infty$. Suppose that the two positive 
functions $\psi_1$ and $\psi_2$ on $\mathbb{R}^n \times \mathbb{R}^+$ satisfy
\begin{equation}\label{inequal-comm-calder}
\sup_{(a,r) \in \mathbb{R}^n \times \mathbb{R}^+} \frac{1}{\psi_2 (a,r)}\int_r^\infty 
\left(1+\ln \frac{t}{r}\right) \frac{\inf_{t\leq l<\infty} \psi_1(a,l)w_1(B(a,l))^\frac1p}
{w_2(B(a,l))^\frac1p} \frac{dt}{t} \leq C < \infty.
\end{equation}
If $[b, S]$ is bounded from $L^{p,w_1}$ to $L^{p,w_2}$, then $[b, S]$ is bounded from 
$\mathcal{M}^{p,w_1}_{\psi_1}$ to $\mathcal{M}^{p,w_2}_{\psi_2}$. Moreover,
$$
\|[b, S]f\|_{\mathcal{M}^{p,w_2}_{\psi_1}} \leq C \|b\|_* \|f\|_{\mathcal{M}^{p,w_1}_{\psi_1}}, \quad f\in \mathcal{M}^{p,w^p}_{\psi_1}.
$$
\end{theorem}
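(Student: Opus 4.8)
The plan is to run the two–step scheme that already proves Theorems~\ref{Main-Calder} and~\ref{Main-Comm-Riesz}, specialised to $\alpha=0$ (so $q=p$, and with $1<p<\infty$ throughout there is no weak-type endpoint to worry about); since the weight classes here are $A_{p/s}$ rather than $A_{p/s,q/s}$, the only genuine work is the weight bookkeeping, so I would first record the facts that drive it. Because $w_2^s\in A_{p/s}$ and $1/s\le 1$, a Muckenhoupt weight raised to a power in $(0,1]$ stays in a Muckenhoupt class (a standard property of $A_p$ weights, cf.\ Section~2), hence $w_2\in A_\infty$; in particular $w_2\,dx$ is doubling and the weighted John--Nirenberg inequality gives $\|b-b_B\|_{L^{p,w_2}(B)}\lesssim\|b\|_*\,w_2(B)^{1/p}$ for every ball $B$, while $w_1\in A_\infty$ gives $\|b-b_B\|_{L^{q,w_1}(B)}\lesssim\|b\|_*\,w_1(B)^{1/q}$ for every finite $q$. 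Secondly, from $(w_1^s,w_2^s)\in A_{p/s}$ and Jensen's inequality one obtains, for every ball $B$ and every $0<\beta\le s^2/(p-s)$,
\[
\Big(\frac1{|B|}\int_B w_1^{-\beta}\Big)^{1/\beta}\lesssim\Big(\frac1{|B|}\int_B w_2^s\Big)^{-1/s}\le\Big(\frac1{|B|}\int_B w_2\Big)^{-1},
\]
which is how the negative powers of $w_1$ produced by Hölder's inequality get converted into $w_2$–averages.

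The heart of the matter is the local estimate: for every ball $B_0=B(x_0,r_0)$ and $f\in\mathcal{M}^{p,w_1}_{\psi_1}$,
\[
\|[b,S]f\|_{L^{p,w_2}(B_0)}\lesssim\|b\|_*\,w_2(B_0)^{1/p}\int_{2r_0}^{\infty}\Big(1+\ln\frac{t}{r_0}\Big)\frac{\|f\|_{L^{p,w_1}(B(x_0,t))}}{w_2(B(x_0,t))^{1/p}}\,\frac{dt}{t}.
\]
Granting it, divide by $\psi_2(x_0,r_0)\,w_2(B_0)^{1/p}$, bound $\|f\|_{L^{p,w_1}(B(x_0,t))}$ by $\big(\inf_{l\ge t}\psi_1(x_0,l)\,w_1(B(x_0,l))^{1/p}\big)\|f\|_{\mathcal{M}^{p,w_1}_{\psi_1}}$, apply (\ref{inequal-comm-calder}), and take the supremum over $(x_0,r_0)$; this yields the boundedness together with the quantitative bound, the constant being controlled by $n,p,s$, the $A$–constants of $w_1,w_2$, the constant in (\ref{inequal-comm-calder}), and the norm of $[b,S]\colon L^{p,w_1}\to L^{p,w_2}$. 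To prove the local estimate, split $f=f_1+f_2$ with $f_1=f\,\mathcal{X}_{B(x_0,2r_0)}$. For $f_1$ the assumed boundedness gives $\|[b,S]f_1\|_{L^{p,w_2}(B_0)}\le\|[b,S]f_1\|_{L^{p,w_2}}\lesssim\|b\|_*\,\|f\|_{L^{p,w_1}(B(x_0,2r_0))}$, which is dominated by the right–hand side because the integrand on $t\in[2r_0,4r_0]$ already exceeds a constant times $w_2(B_0)^{-1/p}\|f\|_{L^{p,w_1}(B(x_0,2r_0))}$, by the doubling of $w_2$.

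For $f_2$, use $[b,S]f_2=(b-b_{B_0})Sf_2-S\big((b-b_{B_0})f_2\big)$ together with (\ref{sublinear-calder}). Writing $B_k:=B(x_0,2^{k+1}r_0)$, (\ref{sublinear-calder}) and Hölder with exponent $p/s$ give, for $x\in B_0$, $|Sf_2(x)|\lesssim\sum_{k\ge1}(2^{k+1}r_0)^{-n/s}\big(\int_{B_k}|f|^s\big)^{1/s}\lesssim\sum_{k\ge1}w_2(B_k)^{-1/p}\|f\|_{L^{p,w_1}(B_k)}$, the second step using the displayed weight estimate (with $\beta=s/(p-s)$), after which the power of $|B_k|$ cancels $(2^{k+1}r_0)^{-n/s}$; multiplying by $\|b-b_{B_0}\|_{L^{p,w_2}(B_0)}\lesssim\|b\|_*\,w_2(B_0)^{1/p}$ disposes of the first term. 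For the second term, apply (\ref{sublinear-calder}) to $(b-b_{B_0})f_2$ and on each $B_k$ write $b-b_{B_0}=(b-b_{B_k})+(b_{B_k}-b_{B_0})$ with $|b_{B_k}-b_{B_0}|\lesssim(k+1)\|b\|_*$; the $(b_{B_k}-b_{B_0})$ piece is estimated exactly as $|Sf_2|$ with the extra $(k+1)\|b\|_*$, and the $(b-b_{B_k})$ piece by a three–term Hölder inequality, splitting $|(b-b_{B_k})f|^s$ as $\big(|f|^sw_1^{s/p}\big)\big(|b-b_{B_k}|^sw_1^{s\theta}\big)\big(w_1^{-s/p-s\theta}\big)$ with exponents $p/s,a_2,a_3$ and $\theta=1/(sa_2)$, where $1/a_2$ is taken so small that the residual negative power of $w_1$ does not exceed $s^2/(p-s)$. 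Then $|f|$ yields $\|f\|_{L^{p,w_1}(B_k)}$, $|b-b_{B_k}|$ yields $\|b-b_{B_k}\|_{L^{sa_2,w_1}(B_k)}\lesssim\|b\|_*\,w_1(B_k)^{1/(sa_2)}$ by weighted John--Nirenberg, and the negative power of $w_1$ is absorbed via the displayed weight estimate, the powers of $|B_k|$ and of $w_1(B_k)$ cancelling as before. Summing, $\|[b,S]f_2\|_{L^{p,w_2}(B_0)}\lesssim\|b\|_*\,w_2(B_0)^{1/p}\sum_{k\ge1}(k+1)\,w_2(B_k)^{-1/p}\|f\|_{L^{p,w_1}(B_k)}$, and this sum is $\lesssim\int_{2r_0}^{\infty}(1+\ln\frac{t}{r_0})\,w_2(B(x_0,t))^{-1/p}\|f\|_{L^{p,w_1}(B(x_0,t))}\,\frac{dt}{t}$ by the usual dyadic comparison (monotonicity of $t\mapsto\|f\|_{L^{p,w_1}(B(x_0,t))}$ and doubling of $w_2$). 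Together with the $f_1$ bound this gives the local estimate.

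The step I expect to be the main obstacle is precisely this far–term estimate for the commutator: in the two–weight setting one must simultaneously route $|b-b_{B_k}|$ into an $L^{sa_2,w_1}(B_k)$–norm — which is where the hypothesis $w_1\in A_\infty$ is genuinely used — and absorb the leftover negative power of $w_1$ through the $A_{p/s}$ relation between $w_1^s$ and $w_2^s$, so the Hölder exponents have to be tuned so that both mechanisms are available at once; the logarithmic weight $1+\ln(t/r_0)$ in (\ref{inequal-comm-calder}) is exactly the output of the telescoping $|b_{B_k}-b_{B_0}|\lesssim(k+1)\|b\|_*$. The $f_1$ term, the passage from sums to integrals, and the final reduction through (\ref{inequal-comm-calder}) are the same as in Theorems~\ref{Main-Calder} and~\ref{Main-Comm-Riesz}.
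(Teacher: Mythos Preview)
Your proposal is correct and follows essentially the same two-step scheme as the paper: first establish the local estimate (this is Proposition~\ref{est-comm-sub-calder} in the paper, which in its proof produces exactly your displayed inequality with the $(1+\ln(t/r))$ factor), then feed it into the Morrey-norm supremum via condition~(\ref{inequal-comm-calder}) and Lemma~\ref{lemma-boundedness}, precisely as you describe. The decomposition $f=f_1+f_2$, the treatment of $f_1$ by the assumed $L^{p,w_1}\to L^{p,w_2}$ boundedness, and the split $[b,S]f_2=(b-b_{B_0})Sf_2-S((b-b_{B_0})f_2)$ all match the paper line for line.

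The only noteworthy difference is in the bookkeeping for the far term $S((b-b_{B_0})f_2)$. The paper uses a two-term H\"older, grouping $(b-b_{B(a,r)})$ with $w_1^{-1/p}$ and then applying Lemma~\ref{BMO2} with the weight $w_1^{-s/(p-s)}$, which tacitly needs that negative power of $w_1$ to lie in $A_\infty$. You instead run a three-term H\"older that places the BMO factor against a \emph{positive} power of $w_1$, so that the hypothesis $w_1\in A_\infty$ feeds directly into weighted John--Nirenberg, and you make the telescoping $b-b_{B_0}=(b-b_{B_k})+(b_{B_k}-b_{B_0})$ explicit rather than hiding it inside Lemma~\ref{BMO2}. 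This is a cleaner justification of exactly the same estimate, not a different route.
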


\begin{theorem}\label{Main-Calder-Weak}
Let $(w_1,w_2)\in A_1$, $w_2 \in A_1$ and $w_1 \in A_\infty$. Suppose that the two positive 
functions $\psi_1$ and $\psi_2$ on $\mathbb{R}^n \times \mathbb{R}^+$ satisfy
\begin{equation}\label{weak-inequal-comm-calder}
\sup_{(a,r) \in \mathbb{R}^n \times \mathbb{R}^+} \frac{1}{\psi_2 (a,r)}\int_r^{\infty} 
\left(1+\ln \frac{t}r \right)\frac{w_1(B(a,t))}{w_2(B(a,t))}\inf_{t\leq s<\infty} 
\psi_1(a,s) \frac{dt}{t} \leq C < \infty.
\end{equation}
If $\Phi(t) = t (1+\log^+ t)$ and $S$ satisfies
\begin{equation}\label{weak-commutator-calderon}
w_2(\{x \in \mathbb{R}^n: |[b,S](f)(x)|>\sigma\}) \leq C \int_{\mathbb{R}^n} \Phi \left(\frac{|f(x)|}{\sigma}\right) w_1 (x) dx,
\end{equation}
where $C>0$ is independent of $f$ and $\sigma$, then there exists a constant $C>0$ such that
$$
\|[b,S](f)\|_{W\mathcal{M}_{\psi_2}^{1,w_2}} \leq C \|b\|_* \sup_{\sigma>0} \sigma 
\left\|\Phi \left(\frac{|f|}{\sigma}\right)\right\|_{\mathcal{M}_{\psi_1}^{L \log L, w_1}}.
$$
\end{theorem}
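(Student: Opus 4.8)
The plan is to run the standard local-estimate argument for generalized weighted Morrey spaces, now carried out at the $L\log L$ endpoint. Fix $(a,r)\in\mathbb{R}^n\times\mathbb{R}^+$ and $\sigma>0$, and split $f=f_1+f_2$ with $f_1:=f\cdot\mathcal{X}_{B(a,2r)}$ and $f_2:=f\cdot\mathcal{X}_{B(a,2r)^c}$. Using the sublinearity of $[b,S]$,
\[
w_2\big(\{x\in B(a,r):|[b,S]f(x)|>\sigma\}\big)\le w_2\big(\{x\in B(a,r):|[b,S]f_1(x)|>\tfrac{\sigma}{2}\}\big)+w_2\big(\{x\in B(a,r):|[b,S]f_2(x)|>\tfrac{\sigma}{2}\}\big),
\]
so it suffices to bound the $\sigma$-weighted $w_2$-measure of each of the two sets, then take the supremum over $\sigma$, divide by $\psi_2(a,r)$, and take the supremum over $(a,r)$.

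For the near term one applies the hypothesis \eqref{weak-commutator-calderon} to $f_1$ at the level $\sigma/2$. Since $\Phi(2t)=2t(1+\log^+2t)\le C\,\Phi(t)$, i.e.\ $\Phi\in\Delta_2$, this gives
\[
\sigma\, w_2\big(\{x:|[b,S]f_1(x)|>\tfrac{\sigma}{2}\}\big)\le C\,\sigma\int_{B(a,2r)}\Phi\!\left(\frac{|f(y)|}{\sigma}\right)w_1(y)\,dy,
\]
a local $L\log L$-quantity of $f/\sigma$ on $B(a,2r)$; after dividing by $\psi_2(a,r)$ and using the contribution of the range $t\in(r,2r)$ in \eqref{weak-inequal-comm-calder} — together with $w_1(B(a,2r))\le Cw_2(B(a,2r))$, a consequence of the two-weight $A_1$ conditions — this is controlled by $C\|b\|_*\sup_{\sigma>0}\sigma\big\|\Phi(|f|/\sigma)\big\|_{\mathcal{M}_{\psi_1}^{L\log L,w_1}}$.

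For the far term I would use the pointwise inequality \eqref{commutator}: for $x\in B(a,r)$, writing $b_B:=|B(a,r)|^{-1}\int_{B(a,r)}b$,
\[
|[b,S]f_2(x)|\le |b(x)-b_B|\,|S(f_2)(x)|+|S((b-b_B)f_2)(x)|.
\]
Applying \eqref{sublinear-calder} with $s=1$ (recall $S=S_0$) to both sublinear terms and writing $B_k:=B(a,2^{k+1}r)$,
\[
\sup_{x\in B(a,r)}|S(f_2)(x)|\le C\sum_{k=1}^\infty (2^{k+1}r)^{-n}\!\int_{B_k}|f|,\qquad
\sup_{x\in B(a,r)}|S((b-b_B)f_2)(x)|\le C\sum_{k=1}^\infty (2^{k+1}r)^{-n}\!\int_{B_k}|b-b_B|\,|f|.
\]
In the second series I would split $b-b_B=(b-b_{B_k})+(b_{B_k}-b_B)$, use $|b_{B_k}-b_B|\le Ck\|b\|_*$, and bound $\int_{B_k}|b-b_{B_k}|\,|f|\le C|B_k|\,\|b-b_{B_k}\|_{\exp L,B_k}\,\|f\|_{L\log L,B_k}\le C|B_k|\,\|b\|_*\,\|f\|_{L\log L,B_k}$ by the generalized Hölder inequality in Orlicz spaces and the John--Nirenberg inequality; the first series is likewise $\le C\sum_k\|f\|_{L\log L,B_k}$. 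This gives, for $x\in B(a,r)$,
\[
|[b,S]f_2(x)|\le C\,\|b\|_*\,\mathcal{C}_B+C\,|b(x)-b_B|\,\mathcal{C}_B,\qquad \mathcal{C}_B:=\sum_{k=1}^\infty(1+k)\,\|f\|_{L\log L,B_k}.
\]
The first summand is constant on $B(a,r)$, so $\sigma\,w_2\big(\{x\in B(a,r):C\|b\|_*\mathcal{C}_B>\tfrac{\sigma}{4}\}\big)\le C\|b\|_*\,\mathcal{C}_B\,w_2(B(a,r))$; the second is handled with the weighted John--Nirenberg inequality (available since $w_2\in A_1\subset A_\infty$), $w_2\big(\{x\in B(a,r):|b(x)-b_B|>\lambda\}\big)\le C\,w_2(B(a,r))\,e^{-c\lambda/\|b\|_*}$, together with $\sup_{\sigma>0}\sigma\,e^{-c\sigma/(\mathcal{C}_B\|b\|_*)}\le C\,\mathcal{C}_B\|b\|_*$, which again gives $\le C\|b\|_*\,\mathcal{C}_B\,w_2(B(a,r))$. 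It then remains to estimate $w_2(B(a,r))\,\mathcal{C}_B$: I would pass from the Lebesgue $L\log L$-averages on $B_k$ to $w_1$-weighted ones using the two-weight $A_1$ hypotheses on $(w_1,w_2)$ (which for $\alpha=0$ force $w_2(B)/|B|\le C\essinf_B w_1$, hence $w_1^{-1}\le C|B|/w_2(B)$ a.e.\ on $B$), bound the resulting $w_1$-weighted $L\log L$-average on $B_k$ by $\psi_1$ at scale $2^{k+1}r$ times $\big\|\Phi(|f|/\sigma)\big\|_{\mathcal{M}_{\psi_1}^{L\log L,w_1}}$, and finally sum the series by comparing $\sum_{k}(1+k)\frac{w_1(B_k)}{w_2(B_k)}\inf_{s\ge 2^{k+1}r}\psi_1(a,s)$ with the integral in \eqref{weak-inequal-comm-calder}. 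Dividing by $\psi_2(a,r)\,w_2(B(a,r))$ and taking suprema finishes the proof.

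The step I expect to be the main obstacle is the weak-type accounting of the far term in the presence of the unbounded factor $|b(x)-b_B|$: one cannot reduce it to a distributional estimate for a constant function, and the right mechanism is the $w_2$-weighted John--Nirenberg inequality combined with the elementary optimization $\sup_{\sigma>0}\sigma e^{-c\sigma}<\infty$ — it is precisely this that produces, and is matched by, the logarithmic factor $1+\ln(t/r)$ in \eqref{weak-inequal-comm-calder}. A secondary delicate point is the passage from Lebesgue $L\log L$-averages over the balls $B_k$ to the $w_1$-weighted ones on the right-hand side, which must be organized through the two-weight $A_1$ conditions so that exactly the ratio $w_1(B_k)/w_2(B_k)$ appears and the dyadic series matches \eqref{weak-inequal-comm-calder}.
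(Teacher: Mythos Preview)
Your proposal is essentially correct and follows the same overall scheme as the paper: near/far split $f=f_1+f_2$, the endpoint hypothesis \eqref{weak-commutator-calderon} for $f_1$, and for $f_2$ the commutator splitting \eqref{commutator} into $|b(x)-b_B|\,|S(f_2)(x)|$ and $|S((b-b_B)f_2)(x)|$. The difference lies in how the two pieces of the far term are handled.

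For the piece $|b(x)-b_B|\,|S(f_2)(x)|$ you invoke the weighted John--Nirenberg inequality and the optimization $\sup_{\sigma>0}\sigma e^{-c\sigma}<\infty$. This works, but the paper takes a shorter path: a direct Chebyshev inequality
\[
w_2\bigl(\{x\in B(a,r):|b(x)-b_B|\,|S(f_2)(x)|>\tfrac{\sigma}{4}\}\bigr)\le \frac{4}{\sigma}\int_{B(a,r)}|b(x)-b_B|\,|S(f_2)(x)|\,w_2(x)\,dx,
\]
followed by the pointwise bound on $|S(f_2)|$ and Corollary~\ref{BMO3}, $\int_{B}|b-b_B|\,w_2\le C\|b\|_*\,w_2(B)$. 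No exponential tail estimate is needed.

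For the piece $|S((b-b_B)f_2)(x)|$ you work with \emph{unweighted} Luxemburg averages $\|f\|_{L\log L,B_k}$ and postpone the passage to $w_1$-weighted quantities. The paper instead inserts $w_1\cdot w_1^{-1}$ at the outset, uses $(w_1,w_2)\in A_1$ to replace $\|w_1^{-1}\|_{L^\infty(B(a,t))}\,t^{-n}$ by $C\,w_2(B(a,t))^{-1}$, and applies the \emph{weighted} generalized H\"older inequality \eqref{GenHolder} together with Lemma~\ref{Zhang}, $\|b-b_{B}\|_{\exp L^{w_1}(B)}\le C\|b\|_*$. This produces $\|\Phi(|f|/\sigma)\|_{L\log L^{w_1}(B(a,t))}\cdot\dfrac{w_1(B(a,t))}{w_2(B(a,t))}$ directly, matching \eqref{weak-inequal-comm-calder} without any conversion step. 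Your conversion step is the place where details would need to be checked carefully; the paper's route avoids it entirely.
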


\begin{remark}
If $w_1 = w_2 \in A_1$, $b\in BMO$, then the generalized Calder\`on-Zygmund operator 
$T_\theta$ which is introduced by Yabuta \cite{Yabuta} satisfies 
(\ref{weak-commutator-calderon}).
\end{remark}

\begin{theorem}\label{Main-Calder-Mixed}
Let $\psi$ be a positive function on $\mathbb{R}^n \times \mathbb{R}^+$, $1\le p_0<\infty$, 
$1 \leq  p < \infty$, $(w_1^s, w_2^s)\in A_{p/s}$ and $w_2^s \in A_{p/s}$ where $1\le s<p$ 
for $1<p<\infty$ and $s=1$ for $p=1$. Suppose that the two positive functions $\psi_1$ and 
$\psi_2$ on $\mathbb{R}^n \times \mathbb{R}^+$ satisfy (\ref{inequal-calder}).
If $S$ is bounded from $L^{p,w_1}$ to $L^{q,w_2}$ for $1<p<\infty$ and from $L^{1,w_1}$ to 
$WL^{1,w_2}$, then $S$ is bounded from 
$\mathcal{M}_\psi^{p_0,w}(0,T,\mathcal{M}^{p,w_1}_{\psi_1})$ to
$\mathcal{M}_\psi^{p_0,w}(0,T,\mathcal{M}^{p,w_2}_{\psi_2})$ for $1<p<\infty$ and from 
$\mathcal{M}_\psi^{p_0,w}(0,T,\mathcal{M}^{1,w_1}_{\psi_1})$ to 
$\mathcal{M}_\psi^{p_0,w}(0,T,W\mathcal{M}^{1,w_2}_{\psi_2})$.
\end{theorem}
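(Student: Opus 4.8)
The plan is to deduce the statement from Theorem~\ref{Main-Calder} by freezing the time variable, the standard device for passing to mixed-Morrey spaces (cf.\ the relation of Theorem~\ref{Main-Riesz-Mixed} to Theorem~\ref{Main-Riesz}). Since the operator $S=S_0$ acts only in the spatial variable, for $F \in \mathcal{M}_\psi^{p_0,w}(0,T,\mathcal{M}^{p,w_1}_{\psi_1})$ one has $(SF)(\cdot,t)=S\bigl(F(\cdot,t)\bigr)$, and from the Section~2 definition of the mixed space the slice $F(\cdot,t)$ belongs to $\mathcal{M}^{p,w_1}_{\psi_1}$ for a.e.\ $t\in(0,T)$. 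The hypotheses on $p$, $s$, $w_1$, $w_2$, $\psi_1$, $\psi_2$ are precisely those of Theorem~\ref{Main-Calder}, whose constant is independent of the input function; so for $1<p<\infty$ I would first record the pointwise-in-$t$ bound
$$
\bigl\| S\bigl(F(\cdot,t)\bigr)\bigr\|_{\mathcal{M}^{p,w_2}_{\psi_2}} \le C\,\bigl\| F(\cdot,t)\bigr\|_{\mathcal{M}^{p,w_1}_{\psi_1}},\qquad \text{a.e.\ } t\in(0,T),
$$
with $C$ independent of $t$.

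Then I would apply the outer norm $\|\cdot\|_{\mathcal{M}_\psi^{p_0,w}(0,T,\cdot)}$ in the variable $t$. Since this norm is monotone in the nonnegative scalar function of $t$ on which it acts, the pointwise inequality immediately yields
$$
\|SF\|_{\mathcal{M}_\psi^{p_0,w}(0,T,\mathcal{M}^{p,w_2}_{\psi_2})} \le C\,\|F\|_{\mathcal{M}_\psi^{p_0,w}(0,T,\mathcal{M}^{p,w_1}_{\psi_1})},
$$
the claimed strong-type estimate. For $p=1$ the same scheme works verbatim, now using the weak endpoint of Theorem~\ref{Main-Calder}, i.e.\ $\bigl\| S(F(\cdot,t))\bigr\|_{W\mathcal{M}^{1,w_2}_{\psi_2}}\le C\,\|F(\cdot,t)\|_{\mathcal{M}^{1,w_1}_{\psi_1}}$ a.e.\ in $t$, followed by the monotone outer norm, to get boundedness from $\mathcal{M}_\psi^{p_0,w}(0,T,\mathcal{M}^{1,w_1}_{\psi_1})$ into $\mathcal{M}_\psi^{p_0,w}(0,T,W\mathcal{M}^{1,w_2}_{\psi_2})$.

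The main (and essentially only) obstacle is bookkeeping rather than analysis: one must check that the slicing $F\mapsto F(\cdot,t)$ is legitimate --- namely that $F(\cdot,t)$ lies in the inner Morrey or weak-Morrey space for a.e.\ $t$, and that the maps $t\mapsto \|F(\cdot,t)\|_{\mathcal{M}^{p,w_1}_{\psi_1}}$ and $t\mapsto \|S(F(\cdot,t))\|_{\mathcal{M}^{p,w_2}_{\psi_2}}$ are measurable --- so that the outer norm is well defined and the monotonicity step is justified. This is a routine Fubini/Fatou verification once the definition of $\mathcal{M}_\psi^{p_0,w}(0,T,X)$ in Section~2 is unwound; beyond it, no idea is required that is not already present in Theorem~\ref{Main-Calder}, since $S$ never couples the space and time variables.
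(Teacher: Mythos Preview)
Your proposal is correct and matches the paper's own argument: the paper likewise freezes $t$, applies the already-proved inner Morrey bound (via Proposition~\ref{est-sub-calder}/Theorem~\ref{Main-Calder}) to obtain $\|S f(\cdot,t)\|_{\mathcal{M}^{p,w_2}_{\psi_2}}\le C\|f(\cdot,t)\|_{\mathcal{M}^{p,w_1}_{\psi_1}}$ with $C$ independent of $t$, and then invokes Remark~\ref{remark-mixed-morrey} (which is exactly your monotonicity-of-the-outer-norm step) to pass to the mixed space. The weak endpoint $p=1$ is handled identically.
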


\begin{theorem}\label{Main-Comm-Calder-Mixed}
Let $\psi$ be a positive function on $\mathbb{R}^n \times \mathbb{R}^+$, $1\le p_0<\infty$, 
$1 \leq  p<\infty$, $(w_1^s,w_2^s)\in A_{p/s}$ and $w_2^s \in A_{p/s}$ where $1<s<p$ for
$1<p<\infty$ and $s=1$ for $p=1$, and $w_1 \in A_\infty$. 
Suppose that the two positive functions $\psi_1$ and $\psi_2$ on $\mathbb{R}^n \times 
\mathbb{R}^+$ satisfy (\ref{inequal-comm-calder}).
If $[b, S]$ is bounded from $L^{p,w_1}$ to $L^{p,w_2}$, then $[b, S]$ is bounded from
$\mathcal{M}_\psi^{p_0,w}(0,T,\mathcal{M}^{p,w_1}_{\psi_1})$ to 
$\mathcal{M}_\psi^{p_0,w}(0,T,\mathcal{M}^{q,w_2}_{\psi_2})$. Moreover,
$$
\|[b, S]f\|_{\mathcal{M}_\psi^{p_0,w}(0, T, \mathcal{M}^{p,w_2}_{\psi_2})} \leq C \|b\|_*
\|f\|_{\mathcal{M}_\psi^{p_0,w}(0, T, \mathcal{M}^{p,w_1}_{\psi_1})}, \quad f\in
\mathcal{M}_\psi^{p_0,w}(0,T,\mathcal{M}^{p,w_1}_{\psi_1}).
$$
\end{theorem}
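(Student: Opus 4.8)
The plan is to deduce Theorem~\ref{Main-Comm-Calder-Mixed} from the non-mixed statement, Theorem~\ref{Main-Comm-Calder}, by freezing the ``interval'' variable and then applying the outer $\mathcal{M}_\psi^{p_0,w}(0,T,\cdot)$ functional. First I would recall from Section~2 that, for a function $F$ on $(0,T)\times\mathbb{R}^n$ and a (quasi-)Banach function space $X$ of functions on $\mathbb{R}^n$, the norm $\|F\|_{\mathcal{M}_\psi^{p_0,w}(0,T,X)}$ is obtained by forming, for each $\tau\in(0,T)$, the scalar $g(\tau):=\|F(\tau,\cdot)\|_X$ and then applying to $g$ a fixed weighted Morrey-type functional in $\tau$ (with parameters $p_0$, $w$, $\psi$). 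The only structural property needed is the monotonicity of this construction: if $g_1(\tau)\le C\,g_2(\tau)$ for a.e.\ $\tau\in(0,T)$, with $g_i(\tau)=\|F_i(\tau,\cdot)\|_{X_i}$, then $\|F_1\|_{\mathcal{M}_\psi^{p_0,w}(0,T,X_1)}\le C\,\|F_2\|_{\mathcal{M}_\psi^{p_0,w}(0,T,X_2)}$.

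Next I would fix $f\in\mathcal{M}_\psi^{p_0,w}(0,T,\mathcal{M}^{p,w_1}_{\psi_1})$. By the definition of the mixed space, $f(\tau,\cdot)\in\mathcal{M}^{p,w_1}_{\psi_1}$ for a.e.\ $\tau\in(0,T)$, and since $S$ (hence $[b,S]$) acts only in the spatial variable, $([b,S]f)(\tau,\cdot)=[b,S]\bigl(f(\tau,\cdot)\bigr)$ for a.e.\ $\tau$. The hypotheses imposed on $p,s,w_1,w_2,\psi_1,\psi_2$ are exactly those of Theorem~\ref{Main-Comm-Calder} (for $p=1$ one invokes the corresponding $L^1$-based version, whose proof is identical once the strong $L^{1,w_1}\to L^{1,w_2}$ bound for $[b,S]$ is assumed), so for a.e.\ $\tau$,
\[
\bigl\|([b,S]f)(\tau,\cdot)\bigr\|_{\mathcal{M}^{p,w_2}_{\psi_2}}
\le C\,\|b\|_*\,\bigl\|f(\tau,\cdot)\bigr\|_{\mathcal{M}^{p,w_1}_{\psi_1}},
\]
where $C$ depends only on $n,p,s,w_1,w_2$ and the constant in (\ref{inequal-comm-calder}), and in particular neither on $\tau$ nor on $f$. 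Applying the monotonicity recalled above with $g_1(\tau)=\|([b,S]f)(\tau,\cdot)\|_{\mathcal{M}^{p,w_2}_{\psi_2}}$ and $g_2(\tau)=\|f(\tau,\cdot)\|_{\mathcal{M}^{p,w_1}_{\psi_1}}$ then gives
\[
\|[b,S]f\|_{\mathcal{M}_\psi^{p_0,w}(0,T,\mathcal{M}^{p,w_2}_{\psi_2})}
\le C\,\|b\|_*\,\|f\|_{\mathcal{M}_\psi^{p_0,w}(0,T,\mathcal{M}^{p,w_1}_{\psi_1})},
\]
which is the asserted estimate (and in particular the claimed boundedness).

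The only point that requires a little care — and the step I expect to be the main technical obstacle — is the measurability of the scalar function $\tau\mapsto\|([b,S]f)(\tau,\cdot)\|_{\mathcal{M}^{p,w_2}_{\psi_2}}$, so that the outer functional may legitimately be applied; I would handle this by first treating $f$ of the form $\sum_j \chi_{E_j}(\tau)\,h_j(x)$ with $E_j\subset(0,T)$ measurable and $h_j\in\mathcal{M}^{p,w_1}_{\psi_1}$, for which the claim is immediate, then passing to general $f$ by the density of such functions together with the continuity of $[b,S]:\mathcal{M}^{p,w_1}_{\psi_1}\to\mathcal{M}^{p,w_2}_{\psi_2}$ supplied by Theorem~\ref{Main-Comm-Calder} and the fact that the Morrey norm is a countable supremum of measurable quantities. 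No Fubini-type interchange between $[b,S]$ and the $t$-integration is needed, since the whole argument is driven by the pointwise-in-$\tau$ estimate above; the same reduction also yields Theorems~\ref{Main-Riesz-Mixed}, \ref{Main-Comm-Riesz-Mixed} and \ref{Main-Calder-Mixed} from their non-mixed counterparts, the only change being that in the weak-type case one transfers the corresponding weak Morrey estimate for $S_\alpha$ (respectively $S$) through the outer functional in the same monotone fashion.
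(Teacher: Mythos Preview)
Your proposal is correct and follows essentially the same route as the paper: the paper's proof simply replaces $f(\cdot)$ by $f(\cdot,t)$, invokes the non-mixed bound (Theorem~\ref{Main-Comm-Calder}, via Proposition~\ref{est-comm-sub-calder}) at each fixed $t$ to get $\|[b,S]f(\cdot,t)\|_{\mathcal{M}^{p,w_2}_{\psi_2}}\le C\|b\|_*\|f(\cdot,t)\|_{\mathcal{M}^{p,w_1}_{\psi_1}}$, and then appeals to Remark~\ref{remark-mixed-morrey}, which is exactly your monotonicity observation for the outer functional. Your additional care with the measurability of $\tau\mapsto\|([b,S]f)(\tau,\cdot)\|_{\mathcal{M}^{p,w_2}_{\psi_2}}$ is not addressed in the paper at all, so if anything your argument is more complete.
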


\begin{theorem}\label{Main-Comm-Calder-Mixed-Weak}
Let $\psi$ be a positive function on $\mathbb{R}^n \times \mathbb{R}^+$, $1\le p_0<\infty$, 
$(w_1,w_2)\in A_1$, $w_2 \in A_1$, and $w_1 \in A_\infty$. Suppose that the two positive 
functions $\psi_1$ and $\psi_2$ on $\mathbb{R}^n \times \mathbb{R}^+$ satisfy
(\ref{weak-inequal-comm-calder}).
If $S$ satisfies (\ref{weak-commutator-calderon}), then there exists a constant $C>0$ such 
that for any suitable function $f,$ we have
$$
\|[b,S](f)\|_{\mathcal{M}_\psi^{p_0,w}(0, T, W\mathcal{M}_{\psi_2}^{1,w_2})} \leq C \|b\|_* 
\sup_{\sigma>0} \sigma \left\|\Phi \left(\frac{|f|}{\sigma}\right)
\right\|_{\mathcal{M}_\psi^{p_0,w}(0, T, \mathcal{M}_{\psi_1}^{L \log L, w_1})}.
$$
\end{theorem}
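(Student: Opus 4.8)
The plan is to derive Theorem~\ref{Main-Comm-Calder-Mixed-Weak} from the one-parameter result Theorem~\ref{Main-Calder-Weak}, exploiting that the commutator $[b,S]$ acts only on the space variable: for $f=f(x,t)$ on $\mathbb{R}^n\times(0,T)$ we have $([b,S]f)(x,t)=[b,S]\bigl(f(\cdot,t)\bigr)(x)$, so the time variable $t$, the weight $w$ and the function $\psi$ enter only as parameters. First I would fix $t\in(0,T)$ for which $f(\cdot,t)$ is suitable (this holds for a.e.\ $t$ when $f$ is suitable in the mixed sense) and apply Theorem~\ref{Main-Calder-Weak}: since $(w_1,w_2)\in A_1$, $w_2\in A_1$, $w_1\in A_\infty$, since $(\psi_1,\psi_2)$ satisfies (\ref{weak-inequal-comm-calder}), and since $S$ satisfies (\ref{weak-commutator-calderon}), this gives, with $C$ independent of $t$,
\[
\bigl\|[b,S]\bigl(f(\cdot,t)\bigr)\bigr\|_{W\mathcal{M}_{\psi_2}^{1,w_2}}\le C\|b\|_*\sup_{\sigma>0}\sigma\Bigl\|\Phi\Bigl(\frac{|f(\cdot,t)|}{\sigma}\Bigr)\Bigr\|_{\mathcal{M}_{\psi_1}^{L\log L,w_1}}.
\]
Now take the $\mathcal{M}_\psi^{p_0,w}(0,T)$-norm in $t$ of both sides; since this norm is monotone on nonnegative functions and, by the definition of the mixed space in Section~2, $\sigma\,\|\Phi(|f|/\sigma)\|_{\mathcal{M}_\psi^{p_0,w}(0,T,\mathcal{M}_{\psi_1}^{L\log L,w_1})}=\|H_\sigma\|_{\mathcal{M}_\psi^{p_0,w}(0,T)}$ with $H_\sigma(t):=\sigma\|\Phi(|f(\cdot,t)|/\sigma)\|_{\mathcal{M}_{\psi_1}^{L\log L,w_1}}$, the theorem is reduced to the inequality $\bigl\|\sup_{\sigma>0}H_\sigma\bigr\|_{\mathcal{M}_\psi^{p_0,w}(0,T)}\le\sup_{\sigma>0}\|H_\sigma\|_{\mathcal{M}_\psi^{p_0,w}(0,T)}$.

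The heart of the matter is this interchange of $\sup_{\sigma>0}$ with the $L^{p_0}$-in-$t$ averaging built into the left-hand norm, which is illegitimate for an arbitrary family and must be forced by monotonicity in $\sigma$. Since $\Phi(u)=u(1+\log^+u)$, one has $\sigma\Phi(s/\sigma)=s\bigl(1+\log^+(s/\sigma)\bigr)$, which is nonincreasing in $\sigma$ for each fixed $s\ge0$; hence, by monotonicity of the generalized weighted Orlicz-Morrey norm, $\sigma\mapsto H_\sigma(t)$ is nonincreasing for each $t$, so $\sup_{\sigma>0}H_\sigma(t)$ is the increasing limit of $H_{\sigma_k}(t)$ along any sequence $\sigma_k\downarrow0$. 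Because $\mathcal{M}_\psi^{p_0,w}(0,T)$ has the Fatou property — if $0\le g_k\uparrow g$ then $\|g_k\|\uparrow\|g\|$, which holds since its norm is a supremum of weighted $L^{p_0}$ integrals — the limit passes through the time-norm, giving in fact the equality $\bigl\|\sup_{\sigma>0}H_\sigma\bigr\|_{\mathcal{M}_\psi^{p_0,w}(0,T)}=\sup_{\sigma>0}\|H_\sigma\|_{\mathcal{M}_\psi^{p_0,w}(0,T)}$; when both sides are $+\infty$ the assertion is vacuous, so only the finite case matters. Combining this with the displayed pointwise bound finishes the argument.

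A variant that does not invoke Theorem~\ref{Main-Calder-Weak} as a black box is to replay its proof in the mixed setting: for each $(a,r)\in\mathbb{R}^n\times\mathbb{R}^+$ split $f(\cdot,t)=f(\cdot,t)\mathcal{X}_{B(a,2r)}+f(\cdot,t)\mathcal{X}_{B(a,2r)^{\rm c}}$, treat the local piece directly through (\ref{weak-commutator-calderon}) on $B(a,2r)$ with a constant uniform in $t$, and treat the tail piece through the pointwise estimate (\ref{sublinear-calder}), the commutator splitting (\ref{commutator}), $BMO$/John--Nirenberg estimates over the dyadic annuli $B(a,2^{k+1}r)\setminus B(a,2^{k}r)$ and the $A_1$, $A_\infty$ properties of $w_1,w_2$; one then carries the $\mathcal{M}_\psi^{p_0,w}(0,T)$-norm through the resulting dyadic sum by Minkowski's integral inequality and sums it using (\ref{weak-inequal-comm-calder}). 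In either approach the only genuine obstacle beyond the bookkeeping inherited from the one-parameter theorems is the commutation of $\sup_{\sigma>0}$ with the $L^{p_0}$ integration in $t$; the monotonicity of $\sigma\mapsto\sigma\Phi(\cdot/\sigma)$ together with the Fatou property of the mixed norm is precisely what resolves it, and it is the point I would check with the most care.
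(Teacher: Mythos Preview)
Your approach coincides with the paper's: fix $t$, apply the one-parameter weak endpoint estimate, and then pass to the mixed norm via the observation in Remark~\ref{remark-mixed-morrey}. The paper's written proof is in fact terser than yours---it simply says to replace $f(\cdot)$ by $f(\cdot,t)$ in Proposition~\ref{weak-est-sub-calder} and then invoke Definition~\ref{Mixed-wgwms} and Remark~\ref{remark-mixed-morrey}.

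Where you go further than the paper is in confronting the commutation of $\sup_{\sigma>0}$ with the $L^{p_0}$-in-$t$ norm. The paper glosses over this: Remark~\ref{remark-mixed-morrey} is stated only for bounds of the shape $\|Nf(\cdot,t)\|_Y\le C\|f(\cdot,t)\|_X$, and the conclusion of Theorem~\ref{Main-Calder-Weak} is not literally of that form because of the $\sup_\sigma$ on the right. Your monotonicity observation---that $\sigma\Phi(s/\sigma)=s(1+\log^+(s/\sigma))$ is nonincreasing in $\sigma$, hence $\sigma\mapsto H_\sigma(t)$ is nonincreasing by positive homogeneity and monotonicity of the Luxemburg norm---together with the Fatou property of the mixed norm, is a correct and clean way to justify $\bigl\|\sup_\sigma H_\sigma\bigr\|=\sup_\sigma\|H_\sigma\|$. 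So your argument is essentially the paper's, but with the one genuinely nontrivial step made explicit.
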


\begin{remark}
Our results generalize the results in \cite{Guliyev2014-1, Guliyev2014, Guliyev2019, Ho, Ramadana, Wang} among others.
\end{remark}

\section{Preliminaries: Definitions and Lemmas}

In this section, we discuss $A_p$ and $A_{p,q}$ weights and the weighted Lebesgue spaces. 
We also present the definitions of generalized weighted Morrey spaces, generalized weighted 
weak Morrey spaces, generalized weighted Morrey spaces of $L \log L$ type, generalized 
weighted mixed-Morrey spaces, generalized weighted mixed-Morrey spaces, generalized weighted 
weak mixed-Morrey spaces, and generalized weighted mixed Morrey spaces of $L \log L$ type. 
At the end of this section, some lemmas that we shall use to prove the main results about 
the boundedness of sublinear operators on generalized weighted Morrey spaces and generalized 
weighted mixed-Morrey spaces are provided.

A weight $w$ is a nonnegative locally integrable function on $\mathbb{R}^n$ taking values 
in the interval $(0,\infty)$ almost everywhere. For a weight $w$ and any measurable set 
$E\subseteq \mathbb{R}^n$, we write $w(E) = \int_E w(x) dx$. The weights that we discuss in 
this paper belongs to the Muckenhoupt class $A_p$ or $A_{p,q}$.

\begin{defn}\cite{Garcia}
For $1 < p < \infty,$ we denote by $A_p$ the set of all weights $w$ on $\mathbb{R}^n$ for 
which there exists a constant $C>0$ such that
$$
\left(\frac{1}{|B(a,r)|} \int_{B(a,r)} w(x) dx\right) \left(\frac{1}{|B(a,r)|} 
\int_{B(a,r)} 
w(x)^{-\frac{1}{p-1}} dx\right)^{p-1}\leq C, \quad (a,r)\in\mathbb{R}^n \times\mathbb{R}^+.
$$
For $p=1$, we denote by $A_1$ the set of all weights $w$ for which there exists a
constant $C>0$ such that
$$
\frac{1}{|B(a,r)|} \int_{B(a,r)} w(x) dx \leq C \|w\|_{L^{\infty}(B(a,r))}, \quad 
(a,r)\in\mathbb{R}^n \times\mathbb{R}^+.
$$
\end{defn}

We define $A_\infty := \cup_{p \geq 1} A_p.$ Note that the last inequality is equivalent to 
$$
\left(\frac{1}{|B(a,r)|} \int_{B(a,r)} w(x) dx)\right) \cdot \|w^{-1}\|_{L^{\infty}
(B(a,r))} \leq C, \quad (a,r)\in\mathbb{R}^n \times\mathbb{R}^+.
$$

\begin{theorem}\label{garcia}\cite{Garcia}
For each $1\leq p < \infty$ and $w \in A_p$, there exists $C>0$ such that $w(B)/w(E) \leq C 
(|B|/|E|)^p$ for every ball $B$ and measurable sets $E \subseteq B.$
\end{theorem}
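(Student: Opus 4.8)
The plan is to treat the cases $p=1$ and $1<p<\infty$ separately, in each case applying the defining $A_p$ inequality to the single ball $B$ and then relating $B$ to the subset $E\subseteq B$: by monotonicity of the essential infimum when $p=1$, and by Hölder's inequality together with monotonicity of the integral when $p>1$. The $A_p$ constant is attached to balls, so the whole point is to reduce every statement about $E$ to one about the enclosing ball $B$.

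For $p=1$ I would use the $A_1$ condition in the equivalent form $w(B)\le C\,|B|\,\essinf_{x\in B}w(x)$ recorded after the definition. Since $E\subseteq B$ one has $\essinf_{x\in B}w(x)\le\essinf_{x\in E}w(x)\le|E|^{-1}\int_E w\,dx=w(E)/|E|$, and multiplying these bounds gives $w(B)\le C\,(|B|/|E|)\,w(E)$, which is the assertion with exponent $p=1$.

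For $1<p<\infty$ the key step is to write $1=w^{1/p}\cdot w^{-1/p}$ on $E$, apply Hölder's inequality with exponents $p$ and $p'=p/(p-1)$, and then enlarge the domain of integration from $E$ to $B$ in the factor carrying the dual weight (legitimate since $w^{-1/(p-1)}\ge0$):
$$
|E|=\int_E w^{1/p}w^{-1/p}\,dx\le\Big(\int_E w\,dx\Big)^{1/p}\Big(\int_E w^{-\frac1{p-1}}\,dx\Big)^{\frac{p-1}{p}}\le w(E)^{1/p}\Big(\int_B w^{-\frac1{p-1}}\,dx\Big)^{\frac{p-1}{p}}.
$$
Rearranging the $A_p$ inequality for $B$, namely $\big(|B|^{-1}w(B)\big)\big(|B|^{-1}\int_B w^{-1/(p-1)}\,dx\big)^{p-1}\le C$, yields $\int_B w^{-1/(p-1)}\,dx\le C^{1/(p-1)}\,|B|^{p/(p-1)}\,w(B)^{-1/(p-1)}$; substituting this into the display above gives $|E|\le C^{1/p}\,w(E)^{1/p}\,|B|\,w(B)^{-1/p}$, and raising to the $p$-th power and rearranging produces $w(B)/w(E)\le C\,(|B|/|E|)^p$.

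I do not expect a genuine obstacle here; the argument is a short computation. The only points deserving care are the bookkeeping of the Hölder exponents (checking $p'/p=1/(p-1)$ and $1/p'=(p-1)/p$) and, conceptually, remembering to pass from $E$ up to $B$ before the hypothesis can be used. The resulting constant $C$ depends only on $n$, $p$, and the $A_p$ constant of $w$, hence is uniform in $B$ and in $E\subseteq B$.
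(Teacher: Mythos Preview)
Your argument is correct and is the standard textbook proof of this inequality. The paper does not supply its own proof of this statement; it merely cites it from Garc\'{\i}a-Cuerva and Rubio de Francia, and your reasoning is precisely the classical argument one finds there.
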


\begin{defn}
For $1 < p < \infty,$ the pair of weights $(w_1,w_2)$ belongs to $A_p$ if there exists a 
constant $C>0$ such that
$$
\left(\frac{1}{|B(a,r)|} \int_{B(a,r)} w_2(x) dx\right) \left(\frac{1}{|B(a,r)|} 
\int_{B(a,r)} w_1(x)^{-\frac{1}{p-1}} dx\right)^{p-1} \leq C, \quad (a,r)\in\mathbb{R}^n 
\times\mathbb{R}^+.
$$
For $p=1$, the pair of weights $(w_1,w_2)$ belongs to $A_1$ if there exists a positive 
constant $C>0$ such that
$$
\frac{1}{|B(a,r)|} \int_{B(a,r)} w_2(x) dx \leq C \|w_1\|_{L^{\infty}(B(a,r))}, \quad 
(a,r)\in\mathbb{R}^n \times\mathbb{R}^+.
$$
\end{defn}

Associated to a weight $w\in A_p$ with $1 \leq p < \infty$, the weighted Lebesgue space 
$L^{p,w}(\Omega)$ on a measurable subset $\Omega$ is the set of all measurable functions 
$f$ on $\Omega$ for which
$$
\|f\|_{L^{p,w}(\Omega)} := \left(\int_{\Omega} |f(x)|^p w(x) dx\right)^\frac1p < \infty.
$$
In addition, $WL^{p,w}(\Omega)$ consists of all measurable functions $f$ on $\Omega$ for 
which
$$
\|f\|_{WL^{p,w}} := \sup_{\gamma>0} \gamma w(\{x\in \Omega : |f(x)|>\gamma\})^{\frac{1}{p}} 
< \infty.
$$

If $\Omega = \mathbb{R}^n,$ we write $L^{p,w}=L^{p,w}(\mathbb{R}^n)$ and $WL^{p,w}=
WL^{p,w}(\mathbb{R}^n)$. Notice that if $w$ is constant a.e., then we find that $L^{p,w} = 
L^p$ and $WL^{p,w} = WL^p$.
Moreover, we denote $L_{\rm{loc}}^{p,w}$ by the set of any measurable function $f$ such that 
$\|f\cdot\mathcal{X}_{B(a,r)}\|_{L^{p,w}}$ is finite for any ball $B.$

Related to the discussion of fractional integral operators $I_\alpha,$ we have another 
class of weights denoted by $A_{p,q}$.

\begin{defn} \cite{Wheeden, Sawano}
Let $1 < p < q < \infty$ and $p'$ satisfies $1/p+1/p'=1.$ We denote by $A_{p,q}$ the 
collection of all weights $w$ satisfying
$$
\left(\frac{1}{|B(a,r)|} \int_{B(a,r)} w(x)^q dx\right)^{\frac{1}{q}} \left(\frac{1}
{|B(a,r)|} \int_{B(a,r)} w(x)^{-p'} dx \right)^{\frac{1}{p'}} \leq C, \quad (a,r)\in 
\mathbb{R}^n \times (0,\infty)
$$
where $C$ is a constant independent of $a$ and $r.$
For $p=1$ and $q>1$, we denote by $A_{1,q}$ the collection of all weights $w$ for 
which there exists a constant $C>0$ such that for every $(a,r)\in \mathbb{R}^n\times 
(0,\infty)$,
$$
\left(\frac{1}{|B(a,r)|} \int_{B(a,r)} w(x)^q dx\right)^{1/q} \leq C \|w\|_{L^\infty(B(a,r))}
$$
\end{defn}

One may observe that $w \in A_{p,q}$ if and only if $w^q \in A_{q/p'+1}$ for 
$1\le p<q<\infty$ (see \cite{CGG}). Moreover, we have the following lemma.

\begin{lemma}\label{thm2.5} \cite[Theorem 3.2.2]{Lu}
Let $1 \leq p < q<\infty.$ If $w\in A_{p,q}$, then $w^p\in A_p$ and $w^q \in A_q$.
\end{lemma}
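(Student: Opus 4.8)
The plan is to derive both conclusions directly from the defining inequality of $A_{p,q}$ by means of the power-mean inequality: on the ball $B=B(a,r)$ regarded as a probability space with the normalized measure $dx/|B|$, one has $\left(\frac{1}{|B|}\int_B g^\rho\,dx\right)^{1/\rho}\le\left(\frac{1}{|B|}\int_B g^\sigma\,dx\right)^{1/\sigma}$ whenever $0<\rho\le\sigma$ and $g\ge 0$; equivalently, the $L^\rho$-average of $g$ is at most its $L^\sigma$-average.

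First I would prove $w^p\in A_p$. Let $1<p<q<\infty$, $w\in A_{p,q}$, and fix a ball $B=B(a,r)$. Applying the power-mean inequality with $g=w$, $\rho=p$, $\sigma=q$ gives $\left(\frac{1}{|B|}\int_B w^p\right)^{1/p}\le\left(\frac{1}{|B|}\int_B w^q\right)^{1/q}$. Multiplying both sides by $\left(\frac{1}{|B|}\int_B w^{-p'}\right)^{1/p'}$ and using the $A_{p,q}$ bound, we obtain
$$
\left(\frac{1}{|B|}\int_B w^p\right)^{1/p}\left(\frac{1}{|B|}\int_B w^{-p'}\right)^{1/p'}\le C.
$$
Raising to the power $p$ and noting $p/p'=p-1$ together with $w^{-p'}=(w^p)^{-1/(p-1)}$, the right-hand side becomes $C^p$ and the left-hand side is precisely the $A_p$ characteristic of the weight $w^p$. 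For $p=1$ and $q>1$, the definition of $A_{1,q}$ gives $\left(\frac{1}{|B|}\int_B w^q\right)^{1/q}\le C\|w\|_{L^\infty(B)}$, and since the $L^1$-average of $w$ over $B$ does not exceed its $L^q$-average, $\frac{1}{|B|}\int_B w\le C\|w\|_{L^\infty(B)}$, i.e.\ $w=w^1\in A_1$.

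Next, for $w^q\in A_q$ the quickest route uses the equivalence recalled just above the statement, $w\in A_{p,q}\iff w^q\in A_{q/p'+1}$: one checks that $q/p'+1\le q$ — equivalently $\frac{1}{p'}+\frac{1}{q}\le 1$, which holds because $p<q$ forces $\frac{1}{p'}=1-\frac1p<1-\frac1q$ — and then invokes the nesting $A_s\subseteq A_q$ valid for $1\le s\le q$. Alternatively one argues directly: since $p<q$ we have $q'<p'$, so the power-mean inequality with $g=w^{-1}$, $\rho=q'$, $\sigma=p'$ gives $\left(\frac{1}{|B|}\int_B w^{-q'}\right)^{1/q'}\le\left(\frac{1}{|B|}\int_B w^{-p'}\right)^{1/p'}$; raising this to the power $q-1$, using $(q-1)q'=q$, and multiplying by $\frac{1}{|B|}\int_B w^q$, the $A_{p,q}$ bound raised to the power $q$ yields $\left(\frac{1}{|B|}\int_B w^q\right)\left(\frac{1}{|B|}\int_B w^{-q'}\right)^{q-1}\le C^q$, which is the $A_q$ characteristic of $w^q$. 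For $p=1$, raising the $A_{1,q}$ inequality to the power $q$ gives $\frac{1}{|B|}\int_B w^q\le C^q\|w^q\|_{L^\infty(B)}$, so $w^q\in A_1\subseteq A_q$.

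The argument is essentially mechanical once the comparisons are set up in the right direction; the only points requiring attention are the bookkeeping of conjugate exponents (that $p<q$ forces $q'<p'$, and the identities $p/p'=p-1$ and $(q-1)q'=q$) and the endpoint $p=1$, where the $A_1$- and $A_{1,q}$-type conditions are phrased with $\|\cdot\|_{L^\infty(B)}$ rather than an integral average. I do not anticipate any genuine obstacle.
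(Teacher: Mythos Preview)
Your argument is correct. The paper does not supply its own proof of this lemma---it is simply quoted from \cite[Theorem~3.2.2]{Lu}---so there is nothing in the paper to compare your approach against. Your derivation via the power-mean (H\"older) inequality on normalized balls is the standard route: lowering the exponent on the positive side ($q\to p$) gives $w^p\in A_p$, and lowering it on the negative side ($p'\to q'$, using $p<q\Rightarrow q'<p'$) gives $w^q\in A_q$; the alternative you mention through the equivalence $w\in A_{p,q}\iff w^q\in A_{q/p'+1}$ together with $A_s\subseteq A_q$ for $s\le q$ is also valid. The endpoint $p=1$ is handled correctly. One minor remark: the paper's stated $A_1$ and $A_{1,q}$ conditions write $\|w\|_{L^\infty(B)}$ on the right where the intended quantity is the essential infimum (equivalently, $\|w^{-1}\|_{L^\infty(B)}^{-1}$), as the paper itself notes in the reformulation just below its Definition of $A_1$; your $p=1$ argument is consistent with either reading, but you may wish to flag this typographical slip if you write it up.
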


\begin{defn} \cite{Aykol}
Let $1 < p < q < \infty$ and $p'$ satisfies $1/p+1/p'=1.$ The pair of weights 
$(w_1,w_2)$ belongs to $A_{p,q}$ if for some $C>0$ it holds that
$$
\left(\frac{1}{|B(a,r)|} \int_{B(a,r)} w_2(x)^q dx\right)^{\frac{1}{q}} \left(\frac{1}
{|B(a,r)|}\int_{B(a,r)} w_1(x)^{-p'} dx \right)^{\frac{1}{p'}} \leq C, \quad (a,r)\in 
\mathbb{R}^n \times \mathbb{R}^+.
$$
For $p=1$, the pair of weights $(w_1,w_2)$ belongs to $A_{1,q}$ if there is a 
constant $C>0$ such that
$$
\left(\frac{1}{|B(a,r)|} \int_{B(a,r)} w_2(x)^q dx\right)^{1/q} \leq C 
\|w_1\|_{L^\infty(B(a,r))}, \quad (a,r)\in \mathbb{R}^n\times  \mathbb{R}^+.
$$
\end{defn}

We rewrite the following results of Muckenhoupt \cite{Muckenhoupt} for the Hardy-Littlewood 
maximal operator $M$, Muckenhoupt and Wheeden \cite{Wheeden} for the Riesz potentials or the
fractional integral operators $I_\alpha$, Coifman and Fefferman \cite{Coifman-Fefferman}, 
Garcia-Cuerva and Rubio de Francia \cite{Garcia}, and E.~Sawyer \cite{Sawyer} for the 
Calder\`on-Zgymund operators $S$ on weighted Lebesgue spaces:

\begin{theorem}\label{thm3.1} \cite{Garcia}
For $1<p<\infty,$ the Hardy-Littlewood maximal operator $M$ is bounded on $L^{p,w}$ if (and
only if) $w\in A_p.$ Moreover, $M$ is bounded from $L^{1,w}$ to $WL^{1,w}$ if $w\in A_1$.
\end{theorem}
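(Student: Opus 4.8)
The plan is to handle the weak-type bounds by a Vitali covering argument combined with Hölder's inequality, to upgrade to the strong bound for $1<p<\infty$ using the self-improving (``openness'') property of $A_p$ together with Marcinkiewicz interpolation in the weighted setting, and to obtain the necessity of the Muckenhoupt condition by testing the inequality on an explicit function.

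First I would prove the weak-type $(1,1)$ bound for $w\in A_1$. Fix $\gamma>0$; each point of $\{Mf>\gamma\}$ lies in a ball $B$ with $|B|^{-1}\int_B|f|>\gamma$, and a Vitali covering lemma yields a countable disjoint subfamily $\{B_j\}$ whose fivefold dilates cover $\{Mf>\gamma\}$. By Theorem \ref{garcia} (applied with exponent $1$) the weight $w$ is doubling, so $w(5B_j)\le C\,w(B_j)$, while the defining inequality of $A_1$ gives $w(B_j)\le C\,|B_j|\,\essinf_{B_j}w\le (C/\gamma)\int_{B_j}|f(y)|w(y)\,dy$; summing over the disjoint $B_j$ produces $w(\{Mf>\gamma\})\le(C/\gamma)\|f\|_{L^{1,w}}$. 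The same covering gives the weak $(p,p)$ bound for $w\in A_p$, $1<p<\infty$: estimating $|B_j|<\gamma^{-1}\int_{B_j}|f|\le\gamma^{-1}(\int_{B_j}|f|^pw)^{1/p}(\int_{B_j}w^{-1/(p-1)})^{1/p'}$ by Hölder, raising to the $p$th power, and using $w(B_j)(\int_{B_j}w^{-1/(p-1)})^{p-1}\le C\,|B_j|^p$ (which is the $A_p$ condition rewritten) to cancel the factor containing $w^{-1/(p-1)}$ gives $w(B_j)\le(C/\gamma^p)\int_{B_j}|f|^pw$, and summation yields $w(\{Mf>\gamma\})\le(C/\gamma^p)\|f\|_{L^{p,w}}^p$.

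To pass from weak to strong type for $1<p<\infty$ I would use that $A_p$ is an open condition: by the reverse Hölder inequality for Muckenhoupt weights there is $\varepsilon\in(0,p-1)$ with $w\in A_{p-\varepsilon}$, so by the previous step $M$ is of weak type $(p-\varepsilon,p-\varepsilon)$ for the measure $w\,dx$; since $M$ is trivially bounded on $L^\infty$, Marcinkiewicz interpolation with respect to $w\,dx$ between these two endpoints shows $M$ is bounded on $L^{q,w}$ for every $q>p-\varepsilon$, in particular on $L^{p,w}$. For the converse, assume $M$ is bounded on $L^{p,w}$, fix a ball $B$, and apply the inequality to $f=w^{-1/(p-1)}\chi_B$ (truncating $w^{-1/(p-1)}$ to ensure integrability and then letting the truncation level tend to $\infty$); since $Mf(x)\ge|B|^{-1}\int_Bw^{-1/(p-1)}$ for $x\in B$, boundedness forces $(|B|^{-1}\int_Bw^{-1/(p-1)})^p\,w(B)\le C\int_Bw^{-1/(p-1)}$, which rearranges to $w\in A_p$; the weak-type boundedness in the case $p=1$ similarly yields $w\in A_1$ after testing on $f=\chi_E$ with $E\subseteq B$.

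I expect the only non-routine ingredient to be the openness of $A_p$ invoked in the third step, which itself rests on the reverse Hölder inequality for Muckenhoupt weights; everything else is the standard Vitali-covering estimate and a change-of-measure interpolation. Since this theorem enters the present paper only as a known fact, in practice I would simply cite \cite{Garcia} rather than reproduce the argument.
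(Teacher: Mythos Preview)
Your sketch is the standard Muckenhoupt argument and is correct, but there is nothing to compare against: the paper does not prove this theorem at all. It is stated in Section~2 as a known result and attributed to \cite{Garcia} (and to Muckenhoupt \cite{Muckenhoupt} in the preceding paragraph), with no proof given. Your final remark is therefore exactly right---in this paper the appropriate ``proof'' is simply the citation.
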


\begin{theorem}\label{thm4.1} \cite{Wheeden}
Let $0<\alpha<n, 1<p<n/\alpha, 1/q=1/p-\alpha/n,$ and $w\in A_{p,q}$. Then, the fractional 
integral operator $I_\alpha$ is bounded from $L^{p,w^p}$ to $L^{q,w^q}$. Moreover, if 
$w\in A_{1,p}$ with $1/q = 1-\alpha/n,$ then $I_\alpha$ is bounded from $L^{1,w}$ to 
$WL^{q,w^q}$.
\end{theorem}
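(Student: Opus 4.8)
The plan is to reduce both assertions to the corresponding mapping properties of the fractional maximal operator $M_\alpha f(x)=\sup_{B\ni x}|B|^{\alpha/n-1}\int_B|f(y)|\,dy$ and then to treat $M_\alpha$ by hand. The bridge is the classical good-$\lambda$ inequality of Coifman--Fefferman / Muckenhoupt--Wheeden type, which yields, for every weight $v\in A_\infty$ and every $0<r<\infty$, the norm comparisons $\|I_\alpha f\|_{WL^{r,v}}\le C\|M_\alpha f\|_{WL^{r,v}}$ and, after a routine truncation of $I_\alpha$ to make the left side a priori finite, $\|I_\alpha f\|_{L^{r,v}}\le C\|M_\alpha f\|_{L^{r,v}}$. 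Since $w\in A_{p,q}$ forces $w^q\in A_{q/p'+1}\subseteq A_\infty$ and $w\in A_{1,q}$ forces $w^q\in A_1\subseteq A_\infty$ (Section~2), it suffices to prove $M_\alpha\colon L^{p,w^p}\to L^{q,w^q}$ for $1<p<n/\alpha$ and $M_\alpha\colon L^{1,w}\to WL^{q,w^q}$ for $p=1$.

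First I would establish the weak-type bound for $M_\alpha$ for every $1\le p<n/\alpha$ by a covering argument. Fix $\lambda>0$; for each $x\in\{M_\alpha f>\lambda\}$ pick a ball $B_x\ni x$ with $\int_{B_x}|f|>\lambda|B_x|^{1-\alpha/n}$, pass to a pairwise disjoint subfamily $\{B_i\}$ whose fixed dilates still cover the level set, and use the doubling of $w^q$ to get $w^q(\{M_\alpha f>\lambda\})\le C\sum_i w^q(B_i)$. On each $B_i$, Hölder's inequality with exponents $p,p'$ together with the $A_{p,q}$ condition for $w$, written as $\big(\int_{B_i}w^{-p'}\big)^{1/p'}\le C|B_i|^{1-\alpha/n}\,w^q(B_i)^{-1/q}$ (recall $1/p'+1/q=1-\alpha/n$), gives $\lambda\,w^q(B_i)^{1/q}\le C\big(\int_{B_i}|f|^pw^p\big)^{1/p}$; since $q/p\ge 1$ one may sum these using $\sum_i a_i^{q/p}\le\big(\sum_i a_i\big)^{q/p}$ and the disjointness of the $B_i$ to arrive at $\lambda\,w^q(\{M_\alpha f>\lambda\})^{1/q}\le C\|f\|_{L^{p,w^p}}$. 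For $p=1$ one argues identically, using the $A_{1,q}$ condition $w^q(B_i)^{1/q}\le C|B_i|^{1-\alpha/n}\essinf_{B_i}w$ in place of the Hölder estimate. Together with the weak-type good-$\lambda$ comparison this already proves the $p=1$ part of the theorem.

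Next, for $1<p<n/\alpha$ I would pass from the weak bound to the strong bound by interpolation. Here $w\in A_{p,q}$ gives $w^q\in A_\infty$ and also $w^{-p'}\in A_\infty$ (because $w^p\in A_p$ by Lemma~\ref{thm2.5}, hence $w^{-p'}=(w^p)^{1-p'}\in A_{p'}$); both weights therefore satisfy a reverse Hölder inequality, and this lets the exponents in the $A_{p,q}$ condition be perturbed slightly, so that $w\in A_{p_i,q_i}$ for some $1<p_0<p<p_1<n/\alpha$ with $1/q_i=1/p_i-\alpha/n$ (openness of the $A_{p,q}$ scale). By the previous step $M_\alpha\colon L^{p_i,w^{p_i}}\to WL^{q_i,w^{q_i}}$, and since $(w^{p_0})^{(1-\theta)/p_0}(w^{p_1})^{\theta/p_1}=w$ and likewise with the $q_i$'s, the Stein--Weiss interpolation theorem with change of measures --- taken with the $\theta\in(0,1)$ for which $1/p=(1-\theta)/p_0+\theta/p_1$, a choice automatically compatible with $1/q=1/p-\alpha/n$ --- produces $M_\alpha\colon L^{p,w^p}\to L^{q,w^q}$. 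Feeding this back through the good-$\lambda$ comparison gives $I_\alpha\colon L^{p,w^p}\to L^{q,w^q}$, completing the proof.

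The hardest step will be the good-$\lambda$ comparison of $I_\alpha$ with $M_\alpha$: it is the only genuinely non-elementary ingredient, it is exactly where the $A_\infty$ hypothesis on $w^q$ is used, and it must be arranged with care --- truncating $I_\alpha$ to keep the relevant norm finite, choosing the parameters in the good-$\lambda$ inequality so that the exceptional set has small $v$-measure, and invoking the reverse Hölder property of $v\in A_\infty$ for the absorption step. By contrast the covering estimate for $M_\alpha$, the openness of $A_{p,q}$, and the change-of-measure interpolation are all routine.
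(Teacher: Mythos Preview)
The paper does not supply a proof of this statement; it is quoted verbatim as a classical result from Muckenhoupt and Wheeden \cite{Wheeden} and used only as a black box in the preliminaries. So there is no ``paper's own proof'' to compare against.

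Your outline is correct and is essentially the original Muckenhoupt--Wheeden argument: reduce $I_\alpha$ to $M_\alpha$ via the good-$\lambda$ inequality (valid for $A_\infty$ weights, and $w^q\in A_\infty$ in both cases), prove the weak-type bound for $M_\alpha$ by a Vitali-type covering combined with the $A_{p,q}$ (or $A_{1,q}$) condition, and upgrade to the strong bound for $1<p<n/\alpha$ by exploiting the openness of the $A_{p,q}$ scale and Stein--Weiss interpolation with change of measure. Your identification of the good-$\lambda$ step as the delicate point, and of the need to truncate $I_\alpha f$ so that one can absorb, is accurate. One small comment: in the covering step you should make explicit that the level set is taken inside a compact set (or work with compactly supported $f$) so that the Vitali selection yields a countable family; otherwise the argument is complete.
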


\begin{theorem}\label{thm5.1} \cite{Coifman-Fefferman, Garcia, Sawyer}
For $1< p< \infty,$ the Caldero\`on-Zygmund operator $S$ is bounded on $L^{p,w}$ whenever 
$w \in A_p$. Moreover, $S$ is bounded from $L^{1,w}$ to $WL^{1,w}$ whenever $w\in A_1$.
\end{theorem}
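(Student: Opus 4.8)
The plan is to reduce both assertions to the mapping properties of the Hardy--Littlewood maximal operator $M$ recorded in Theorem \ref{thm3.1}, combined with the standard size and smoothness estimates of the Calder\'on--Zygmund kernel $K$ associated with $S$ and the unweighted $L^2$-boundedness of $S$. For the strong-type estimate with $w\in A_p$, I would argue through the Fefferman--Stein sharp maximal function $M^\#$. First I would establish the pointwise domination
$$
M^\#(Sf)(x) \le C\,M_s f(x), \qquad x\in\mathbb{R}^n,
$$
valid for every $s>1$, where $M_s f = \big(M(|f|^s)\big)^{1/s}$; this is the classical local-oscillation estimate obtained by splitting $f$ into its part supported near a fixed ball and the remainder, using the unweighted boundedness of $S$ on the local part and the smoothness of $K$ on the far part. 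Since $w\in A_p\subset A_\infty$, the Fefferman--Stein inequality then gives
$$
\|Sf\|_{L^{p,w}} \le C\,\|M^\#(Sf)\|_{L^{p,w}} \le C\,\|M_s f\|_{L^{p,w}},
$$
after the usual truncation/density reduction ensuring the left-hand side is a priori finite. Finally, by the self-improving (openness) property of $A_p$ there is $s>1$ with $w\in A_{p/s}$ and $p/s>1$; then $\|M_s f\|_{L^{p,w}}^p = \|M(|f|^s)\|_{L^{p/s,w}}^{p/s}\le C\,\||f|^s\|_{L^{p/s,w}}^{p/s}=C\,\|f\|_{L^{p,w}}^p$ by Theorem \ref{thm3.1} applied with exponent $p/s$, which yields the claim. (Alternatively one may prove the Coifman--Fefferman good-$\lambda$ inequality $w(\{Sf>2\lambda,\,Mf\le\gamma\lambda\})\le C\gamma^{\varepsilon}w(\{Sf>\lambda\})$ for $w\in A_\infty$ and integrate it against $\lambda^{p-1}\,d\lambda$, again invoking Theorem \ref{thm3.1}.)

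For the weak-type $(1,1)$ estimate with $w\in A_1$, I would perform a Calder\'on--Zygmund decomposition of $f$ at height $\lambda$, producing pairwise disjoint cubes $\{Q_j\}$ with $\lambda<\tfrac{1}{|Q_j|}\int_{Q_j}|f|\le 2^n\lambda$ and $|f|\le\lambda$ a.e.\ off $\Omega=\bigcup_j Q_j$, and then split $f=g+b$ with $g$ equal to $f$ off $\Omega$ and to the average of $f$ on each $Q_j$, and $b=\sum_j b_j$, $b_j=\big(f-\tfrac{1}{|Q_j|}\int_{Q_j}f\big)\mathcal{X}_{Q_j}$. For $g$ I would use Chebyshev together with the $L^{2,w}$-boundedness just proved (valid since $A_1\subset A_2$) and the pointwise bound $|g|\lesssim\lambda$, controlling $\int|g|\,w$ by $\int|f|\,w$ via the $A_1$ condition $\tfrac{w(Q_j)}{|Q_j|}\le C\,\essinf_{Q_j}w$, so that $w(\{|Sg|>\lambda/2\})\le C\lambda^{-2}\|g\|_{L^{2,w}}^2\lesssim\lambda^{-1}\int|f|\,w$. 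For $b$ I would discard the dilated cubes $Q_j^*=2\sqrt n\,Q_j$ and $\Omega^*=\bigcup_j Q_j^*$, whose weighted measure is $\lesssim\lambda^{-1}\int|f|\,w$ by the doubling of $w$ (Theorem \ref{garcia}) and the $A_1$ condition, and on $(\Omega^*)^c$ estimate $\int_{(\Omega^*)^c}|Sb_j|\,w$ using the cancellation $\int b_j=0$ and the smoothness of $K$, the resulting weighted tail being controlled by $\essinf_{Q_j}w$ times $\int_{Q_j}|f|$; summation over $j$ and Chebyshev then give $w(\{x\in(\Omega^*)^c:|Sb(x)|>\lambda/2\})\lesssim\lambda^{-1}\int|f|\,w$.

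The main obstacle is the weighted tail estimate for the bad part: one must show, uniformly in $y\in Q_j$ with center $y_j$,
$$
\int_{(Q_j^*)^c}\big|K(x,y)-K(x,y_j)\big|\,w(x)\,dx \le C\,\essinf_{Q_j}w,
$$
which is what couples the Calder\'on--Zygmund smoothness of $K$ to the $A_1$ weight. I would carry this out by a dyadic-annulus decomposition $\{2^k\ell<|x-y_j|\le 2^{k+1}\ell\}$ (with $\ell$ the side length of $Q_j$), bounding $|K(x,y)-K(x,y_j)|\lesssim \ell^{\delta}|x-y_j|^{-n-\delta}$ on each annulus and summing the resulting geometric series $\sum_k 2^{-k\delta}$, using the $A_1$ bound $w(B)/|B|\le C\,\essinf_B w$ on each annulus-ball $B$ together with the monotonicity $\essinf_B w\le\essinf_{Q_j}w$ for $Q_j\subseteq B$. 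This is the technical heart of the argument, whereas the good-part estimate and the strong-type reduction are comparatively routine once Theorem \ref{thm3.1} is available.
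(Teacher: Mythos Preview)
Your proposal is a correct and standard sketch of the classical weighted theory for Calder\'on--Zygmund operators: the Fefferman--Stein sharp-function/good-$\lambda$ route for the strong type, and the Calder\'on--Zygmund decomposition with the $A_1$ tail estimate for the weak type. This is precisely the content of the references \cite{Coifman-Fefferman, Garcia, Sawyer}.

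However, note that the paper does not supply its own proof of this statement at all: Theorem~\ref{thm5.1} is stated with citations only and is invoked as a known classical result, in the same spirit as Theorems~\ref{thm3.1} and~\ref{thm4.1}. So there is no ``paper's proof'' to compare against---your argument simply reproduces, in outline, the proofs given in the cited sources. If anything, your write-up is more detailed than what the paper requires, since the paper treats this theorem as background input rather than as something to be established.
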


The three theorems will imply the boundedness of the three classical operators on 
generalized weighted Morrey spaces. The boundedness properties of these operators shall 
be one of the implications of our results.

Let us now recall some basic definitions and facts for Orlicz functions needed for our 
results, particularly the boundedness for sublinear operators generated by 
Calder\`on-Zygmund operator when $p=1.$ Related to Orlicz functions, we recall the 
definition of Young functions.

\begin{defn}
A function $\Phi:[0,\infty] \to [0,\infty]$ is called a Young function if it is continuous, 
convex, and strictly increasing with $\Phi(0)=0$ and $\lim_{t\to\infty}\Phi(t) = \infty.$
\end{defn}

For a Young function $\Phi$ and $w\in A_\infty$, we define the Orlicz space $L^{\Phi,w}$ to 
be the set of all measurable functions $f$ such that
$$
\int_{\mathbb{R}^n} \Phi(k|f(y)|)w(y) dy < \infty
$$
for some $k>0.$ The space $L^{\Phi,w}$ is a Banach space with respect to the norm
$$
\|f\|_{L^{\Phi,w}} := \inf \left\{ \lambda>0 : \int_{\mathbb{R}^n} \Phi\left(\frac{|f(y)|}
{\lambda}\right) w(y) dy \leq 1 \right\}.
$$
For a measurable subset $\Omega$ in $\mathbb{R}^n$, the space $L^{\Phi,w}(\Omega)$ is 
defined to be the set of all functions $f$ such that
$$
\|f\|_{L^{\Phi,w}(\Omega)} = \inf \left\{ \lambda>0 : \frac{1}{w(\Omega)}\int_{\Omega} 
\Phi\left(\frac{|f(y)|}{\lambda}\right) w(y) dy \leq 1 \right\} <\infty.
$$
Moreover, we denote by $L_{\rm{loc}}^{\Phi,w}$ the set of all measurable functions $f$ such 
that 
$\|f\cdot\mathcal{X}_{B(a,r)}\|_{L^{\Phi,w}}<\infty$ for any ball $B.$

Given a Young function $\Phi$, we denote by $\tilde{\Phi}$ the complementary Young function 
for $\Phi.$ The following inequality holds for any ball $B$ in $\mathbb{R}^n$ (see 
\cite{Rao}).

\begin{equation}\label{GenHolder}
\frac{1}{w(B)} \int_B |f(x)g(x)| w(x) dx \leq 2 \|f\|_{L^{\Phi,w}(B)} 
\|f\|_{L^{\tilde{\Phi},w}(B)}.
\end{equation}

An important example of Young function is $\Phi(t)=t(1+\log ^+ t)$ where $\log^+ t = 
\max (0, \log t).$ Its complementary Young function is $\tilde{\Phi} = e^t -1.$ This 
function will be used in this paper. If $\Phi(t)=t(1+\log ^+ t)$, we write
$$
\|f\|_{L^{\Phi,w}(B)} = \|f\|_{L \log L^w(B)}, \quad \|f\|_{L^{\tilde{\Phi},w}(B)} = 
\|f\|_{\exp L^w(B)}.
$$
Note that by the fact that $t\leq \Phi(t)$ for $t>0$ and the above definition, we have
\begin{equation}\label{RelL-Log}
\|f\|_{L^{1,w}(B)} = \int_B |f(x)|w(x)dx = w(B)\cdot \frac1{w(B)} \int_B |f(x)|w(x)dx \leq \
\|f\|_{L\log L^w(B)}
\end{equation}
holds for any ball $B.$

We now present the definition of the generalized weighted Morrey spaces and the generalized 
weighted weak Morrey spaces which will become the spaces of our interest in this article. 
We also define the generalized weighted morrey spaces Log-type. 
The spaces will be used for boundedness of $S_\alpha$ and $S=S_0$ for the case $p=1.$ 
Moreover, we also introduce the generalized weighted mixed-Morrey spaces.

\begin{defn}\label{wgms}
Let $1 \leq p <\infty, w \in A_p,$ and $\psi$ be a positive function on $\mathbb{R}^n 
\times \mathbb{R}^+$.
The {\it generalized weighted Morrey space} $\mathcal{M}^{p,w}_\psi$ is the set of
all functions $f \in L_{\rm loc}^{p,w}$ such that
$$
\|f\|_{\mathcal{M}^{p,w}_\psi}
= \sup_{a \in \mathbb{R}^n, r>0} \frac{1}{\psi (a,r)} \frac{1}{w(B(a,r))^{\frac{1}{p}}} 
\|f\|_{L^{p,w}(B(a,r))}<\infty
$$
\end{defn}

\begin{defn}\label{wgwms}
Let $1 \leq p < \infty, w \in A_p,$ and $\psi$ be a positive function defined on 
$\mathbb{R}^n \times  \mathbb{R}^+.$
The {\it generalized weighted weak Morrey space} $W \mathcal{M}^{p, w}_\psi$
is the set of all functions $f\in L_{\rm loc}^{p,w}$ such that
$$
\|f\|_{W \mathcal{M}^{p, w}_\psi}
= \sup_{a \in \mathbb{R}^n, r>0} \frac{1}{\psi (a,r)} \frac{1}{w(B(a,r))^{\frac{1}{p}}} 
\|f\|_{WL^{p,w}(B(a,r))}<\infty.
$$
\end{defn}

\begin{remark}
There are some other variations of generalized weighted Morrey spaces and generalized 
weighted weak Morrey spaces and their specific conditions, as well as their relations with 
some classical operators such as in 
\cite{Rosenthal, Rosenthal2, Duoandikoetxea, Nakamura, NakamuraSawanoTanaka2, NakamuraSawanoTanaka}.
\end{remark}

\begin{defn}\label{wgwms-log}
Let $w\in A_1$ and $\psi$ be a positive function defined on $\mathbb{R}^n \times  
\mathbb{R}^+.$
The {\it generalized weighted weak Morrey space Log-type} $\mathcal{M}^{L \log L, w}_\psi$
is the set of all functions $f\in L_{\rm loc}^{\Phi,w}$ such that
$$
\|f\|_{\mathcal{M}^{\Phi, w}_\psi} = \sup_{a \in \mathbb{R}^n, r>0} \frac{1}{\psi (a,r)} 
\|f\|_{L^{\Phi,w}(B(a,r))} = 
\sup_{a \in \mathbb{R}^n, r>0} \frac{1}{\psi (a,r)} \|f\|_{L \log L^w(B(a,r))}<\infty.
$$
\end{defn}

\begin{defn}\label{Mixed-gwms}
Let $1\leq p, q <\infty,$ $w\in A_p,$ and $\psi$ be a positive function defined on 
$\mathbb{R}^n \times \mathbb{R}^+.$ Suppose that $T>0$. The {\it generalized weighted 
mixed-Morrey space} $\mathcal{M}^{q, w}_\psi(0,T,\mathcal{M}_\varphi^{p,w})$
is the set of all functions $f$ on $\mathbb{R}^n \times (0,T)$ such that
$$
\|f\|_{\mathcal{M}^{q, w}_\psi (0,T,\mathcal{M}_\varphi^{p,w})} = \sup_{a \in \mathbb{R}^n, 
r>0, t\in (0,T)} \frac{1}{\psi(a,r)} 
\frac{1}{w(B(a,r))^\frac1q} \left(\int_{(0,T)\cap (t_0 - r, t_0 + r)} \|f(\cdot, t)
\|^q_{\mathcal{M}^{p,w}_{\varphi}} dt\right)^\frac1q < \infty.
$$
\end{defn}

\begin{defn}\label{Mixed-wgwms}
Let $1\leq p, q <\infty,$ $w\in A_p,$ and $\psi$ be a positive function defined on 
$\mathbb{R}^n \times \mathbb{R}^+.$ Suppose that $T>0$. The {\it generalized weighted 
mixed-weak Morrey space} $W\mathcal{M}^{q, w}_\psi(0,T,\mathcal{M}_\varphi^{p,w})$
is the set of all functions $f$ on $\mathbb{R}^n \times (0,T)$ such that
$$
\|f\|_{W\mathcal{M}^{q, w}_\psi (0,T,\mathcal{M}_\varphi^{p,w})} = \sup_{a \in 
\mathbb{R}^n, r>0, t\in (0,T)} \frac{1}{\psi(a,r)} 
\frac{1}{w(B(a,r))^\frac1q} \left(\int_{(0,T)\cap (t_0 - r, t_0 + r)} \|f(\cdot, t)
\|^q_{W\mathcal{M}^{p,w}_{\varphi}} dt\right)^\frac1q<\infty.
$$
\end{defn}

\begin{defn}\label{Mixed-wgwms-Log}
Let $1\leq p, q <\infty,$ $w\in A_p,$ and the positive functions $\psi, \varphi$ which are 
defined on $\mathbb{R}^n \times \mathbb{R}^+.$ Suppose that $T>0$.
The {\it generalized weighted mixed-weak Morrey space Log type} $\mathcal{M}^{q, 
w}_\varphi (0,T,\mathcal{M}^{L \log L, w}_\psi)$
is the set of all functions $f$ on $\mathbb{R}^n \times (0,T)$ such that
$$
\|f\|_{\mathcal{M}^{q, w}_\varphi (0,T,\mathcal{M}_\psi^{L \log L,w})} = \sup_{a \in 
\mathbb{R}^n, r>0, t\in (0,T)} \frac{1}{\varphi(a,r)} \frac{1}{w(B(a,r))^\frac1q} 
\left(\int_{(0,T)\cap (t_0 - r, t_0 + r)} \|f(\cdot, t)\|^q_{\mathcal{M}^{L \log L, w}_\psi} dt\right)^\frac1q
$$
is finite.
\end{defn}

\begin{remark}\label{remark-mixed-morrey}
The definition \ref{Mixed-gwms} is generalization of Mixed-Morrey spaces as in 
\cite{Ragusa2017}. It is easy to see that if $N$ is any operator such that
$$
\|Nf(\cdot,t)\|_{\mathcal{M}_{\psi_2}^{p,w_2}} \leq C 
\|f(\cdot,t)\|_{\mathcal{M}_{\psi_1}^{q,w_1}}; \quad f \in \mathcal{M}_{\psi_1}^{q,w_1}, 
t\in (0,T),
$$
then $N$ is also bounded from $ \mathcal{M}^{q, w}_\psi(0,T,\mathcal{M}_{\psi_1}^{q,w_1}
(\Omega))$ to $\mathcal{M}^{p, w_2}_\psi(0,T,\mathcal{M}_{\psi_2}^{p,w_2}(\Omega))$.
\end{remark}

For a measurable subset $\Omega$ of $\mathbb{R}^n$, write $\|f\|_{\mathcal{M}_\psi^{p,w}
(\Omega)}=\|f\cdot\mathcal{X}_\Omega\|_{\mathcal{M}_\psi^{p,w}}$ and we write similar for 
the other type Morrey spaces. 
With the definitions, we shall investigate the boundedness of sublinear operators generated 
by Ries potentials as well as those 
generated by Calder\`on-Zygmund operators on these spaces in the next section.

Next, we turn to the BMO functions. Recall that space $BMO = BMO(\mathbb{R}^n)$ consists of 
all functions $b$ on $\mathbb{R}^n$ such that
$$
\|b\|_* := \sup_{(a,r)\in \mathbb{R}^n \times \mathbb{R}^+} \frac{1}{|B(a,r)|} \int_{B(a,r)} 
|b(y) - b_{B(a,r)}| dy <\infty
$$
where
$
b_{B(a,r)} = \frac{1}{|B(a,r)|} \int_{B(a,r)} b(y) dy.
$

\begin{lemma}\label{BMO1} \cite{Janson}
Let $b\in BMO$. Then, there exists a positive constant $C$ such that
$$
|b_{B(a,r)} - b_{B(a,t)}| \leq C \|b\|_* \ln \frac{t}{r}, \quad 0<2r<t, a \in \mathbb{R}^n.
$$
\end{lemma}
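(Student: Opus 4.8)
The plan is to reduce the estimate to the elementary fact that the averages of a $BMO$ function over two comparable nested balls differ by at most a fixed multiple of $\|b\|_*$, and then to chain this estimate along a dyadic sequence of concentric balls interpolating between $B(a,r)$ and $B(a,t)$. This is the classical John--Nirenberg style telescoping argument; nothing delicate is involved.

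First I would establish the building block: whenever $B \subseteq B'$ are balls with $|B'| \le 2^n |B|$, one has
$$|b_B - b_{B'}| = \left|\frac{1}{|B|}\int_B \bigl(b(y) - b_{B'}\bigr)\,dy\right| \le \frac{1}{|B|}\int_{B'} |b(y) - b_{B'}|\,dy \le \frac{|B'|}{|B|}\,\|b\|_* \le 2^n \|b\|_*.$$
Next, given $0 < 2r < t$, choose the integer $N \ge 1$ with $2^N r \le t < 2^{N+1} r$ and set $B_j := B(a,2^j r)$ for $j = 0,1,\dots,N$. Consecutive balls satisfy $B_j \subseteq B_{j+1}$ with $|B_{j+1}| = 2^n |B_j|$, while $B_N \subseteq B(a,t) \subseteq B_{N+1}$ with $|B(a,t)| \le 2^n |B_N|$. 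Applying the building block to each link and using the triangle inequality,
$$|b_{B(a,r)} - b_{B(a,t)}| \le \sum_{j=0}^{N-1} |b_{B_j} - b_{B_{j+1}}| + |b_{B_N} - b_{B(a,t)}| \le (N+1)\,2^n \|b\|_*.$$

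Finally, since $t/r \ge 2$ we have $N \le \log_2(t/r)$, hence $N+1 \le 2\log_2(t/r) = \frac{2}{\ln 2}\ln\frac{t}{r}$, which yields the claim with $C = \frac{2^{n+1}}{\ln 2}$. The only point needing a moment's attention is the last link of the chain, where $B(a,t)$ is not itself one of the dyadic balls but is sandwiched between $B_N$ and $B_{N+1}$; the volume comparison $|B(a,t)| \le 2^n|B_N|$ handles it, so there is no genuine obstacle.
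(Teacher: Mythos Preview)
Your argument is correct and is the standard telescoping proof of this inequality. Note, however, that the paper does not supply a proof of this lemma at all: it is stated with a citation to \cite{Janson} and used as a black box. There is therefore nothing to compare your approach against; you have simply filled in the omitted classical argument, and done so without error.
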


\begin{lemma}\label{BMO2} \cite{Hu}
Let $0<p<\infty$, $w\in A_\infty$, and $b\in BMO$. Then, there exists a positive constant $C>0$ such that
$$
\left(\int_{B(a,t)} |b(y)-b_{B(a,r)}|^p w(y) dy\right)^\frac1p \leq C \|b\|_* 
w(B(a,t))^\frac1p \left(1+\left|\ln 
\frac{t}{r}\right|\right), \quad a \in\mathbb{R}^n, t, r >0
$$
\end{lemma}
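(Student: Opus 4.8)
The plan is to split by the triangle inequality, $|b(y)-b_{B(a,r)}| \le |b(y)-b_{B(a,t)}| + |b_{B(a,t)}-b_{B(a,r)}|$, and then apply the quasi-triangle inequality for the functional $g \mapsto \big(\int_{B(a,t)} |g|^p w\big)^{1/p}$ (Minkowski's inequality when $p\ge 1$, and $(A+B)^{1/p}\le 2^{1/p-1}(A^{1/p}+B^{1/p})$ when $0<p<1$). After this reduction it suffices to prove, with $C$ depending only on $n$ and $p$,
$$
\mathrm{I} := \Big(\int_{B(a,t)} |b(y)-b_{B(a,t)}|^p\, w(y)\, dy\Big)^{1/p} \le C\|b\|_*\, w(B(a,t))^{1/p},
$$
$$
\mathrm{II} := |b_{B(a,t)} - b_{B(a,r)}|\, w(B(a,t))^{1/p} \le C\|b\|_*\big(1+|\ln(t/r)|\big)\, w(B(a,t))^{1/p}.
$$
For $\mathrm{II}$ I would run a short case analysis on $t/r$. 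If $2r<t$, Lemma~\ref{BMO1} gives $|b_{B(a,t)}-b_{B(a,r)}|\le C\|b\|_*\ln(t/r)$; if $2t<r$, the same lemma with the roles of $r$ and $t$ interchanged gives $|b_{B(a,t)}-b_{B(a,r)}|\le C\|b\|_*\ln(r/t)$; and if $t/2\le r\le 2t$, writing $\rho=\min\{r,t\}$, $R=\max\{r,t\}$, one has $B(a,\rho)\subset B(a,R)$ with $|B(a,R)|/|B(a,\rho)|\le 2^n$, so $|b_{B(a,t)}-b_{B(a,r)}|\le (|B(a,R)|/|B(a,\rho)|)\cdot|B(a,R)|^{-1}\int_{B(a,R)}|b-b_{B(a,R)}|\le 2^n\|b\|_*$. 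In every case the right-hand side is dominated by $C\|b\|_*(1+|\ln(t/r)|)$, which gives $\mathrm{II}$.

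For $\mathrm{I}$ the key input is the weighted John--Nirenberg inequality, which I would derive from the unweighted one plus the self-improving property of $A_\infty$ weights. The classical John--Nirenberg inequality yields $|\{y\in B:|b(y)-b_B|>\lambda\}|\le c_1 e^{-c_2\lambda/\|b\|_*}|B|$ for every ball $B$. Since $w\in A_\infty$, $w$ satisfies a reverse H\"older inequality and hence the power gain $w(E)/w(B)\le c_3(|E|/|B|)^{\delta}$ for some $\delta>0$ and all measurable $E\subset B$ (see \cite{Garcia}); applying this to $E=\{y\in B:|b(y)-b_B|>\lambda\}$ gives $w(\{y\in B:|b(y)-b_B|>\lambda\})\le c_4 e^{-\delta c_2\lambda/\|b\|_*}\, w(B)$. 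Integrating by the layer-cake formula, $\int_B|b-b_B|^p w = p\int_0^\infty \lambda^{p-1} w(\{|b-b_B|>\lambda\})\,d\lambda \le c_4\, w(B)\, p\int_0^\infty \lambda^{p-1}e^{-\delta c_2\lambda/\|b\|_*}\,d\lambda = C_p\|b\|_*^p\, w(B)$. Taking $B=B(a,t)$ and the $p$-th root gives $\mathrm{I}$.

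Combining $\mathrm{I}$ and $\mathrm{II}$ and absorbing the additive constant $1$ into $1+|\ln(t/r)|$ completes the proof. The step I expect to be the crux is the passage from the unweighted John--Nirenberg inequality to its $w$-weighted form used for $\mathrm{I}$; the remainder is routine ball-inclusion bookkeeping in $\mathrm{II}$ and the elementary case split on $t$ versus $r$. Note that of the two directions of comparison between $w$ and Lebesgue measure, Theorem~\ref{garcia} records the one not needed here ($w(E)\gtrsim(|E|/|B|)^p w(B)$), so the argument for $\mathrm{I}$ relies on the reverse-doubling ($A_\infty$) direction, which may simply be cited from \cite{Garcia}.
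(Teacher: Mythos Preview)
Your argument is correct. The decomposition into $\mathrm{I}$ and $\mathrm{II}$, the weighted John--Nirenberg bound for $\mathrm{I}$ obtained via the unweighted John--Nirenberg inequality together with the $A_\infty$ estimate $w(E)/w(B)\le c\,(|E|/|B|)^\delta$, and the three-case analysis for $\mathrm{II}$ all go through as written; the quasi-triangle constant for $0<p<1$ is handled properly.

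There is nothing to compare against in the paper itself: Lemma~\ref{BMO2} is stated without proof and attributed to \cite{Hu}. Your proof is the standard one and is essentially what one finds in that reference (or in the literature it draws on), so in effect you have reproduced the cited argument. Your closing observation is also accurate: Theorem~\ref{garcia} records only the lower bound $w(E)\gtrsim (|E|/|B|)^p\,w(B)$, so the upper bound $w(E)\lesssim (|E|/|B|)^\delta\,w(B)$ that you actually use for $\mathrm{I}$ must be cited separately from the $A_\infty$/reverse H\"older theory in \cite{Garcia}.
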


\begin{cor}\label{BMO3} \cite{Wang2013}
Let $0<p<\infty$, $w\in A_\infty$, and $b\in BMO$. Then, there exists a positive constant 
$C>0$ such that for any ball $B$ on $\mathbb{R}^n$,
$$
\left(\int_B |b(y)-b_B|^p w(y) dy\right)^\frac1p \leq C \|b\|_* w(B)^\frac1p
$$
\end{cor}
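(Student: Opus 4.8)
The plan is to obtain Corollary \ref{BMO3} directly from Lemma \ref{BMO2} by collapsing its two radii onto the diagonal. Fix an arbitrary ball $B = B(a,r)$ and apply Lemma \ref{BMO2} with $t = r$. Then $B(a,t) = B(a,r) = B$ and $b_{B(a,r)} = b_B$, so the conclusion of that lemma reads
$$
\left(\int_B |b(y) - b_B|^p w(y)\,dy\right)^{1/p} \le C\|b\|_*\, w(B)^{1/p}\left(1 + \left|\ln\tfrac{t}{r}\right|\right).
$$
Because $t = r$ forces $\ln(t/r) = \ln 1 = 0$, the correction factor collapses to $1 + |\ln 1| = 1$, and the stated inequality follows with the same constant $C$. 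In this form there is essentially no obstacle: the one point to verify is that $1 + |\ln(t/r)|$ equals $1$ on the diagonal, which is immediate.

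If a self-contained argument is wanted (this is, in effect, the content of \cite{Wang2013} and what underlies Lemma \ref{BMO2}), I would argue through the distribution function. Writing
$$
\int_B |b(y) - b_B|^p w(y)\,dy = p\int_0^\infty \lambda^{p-1}\, w\bigl(\{y \in B : |b(y) - b_B| > \lambda\}\bigr)\,d\lambda,
$$
I would first bound the Lebesgue measure of the level sets by the John--Nirenberg inequality, namely $|\{y \in B : |b(y) - b_B| > \lambda\}| \le C_1 |B|\, e^{-c_2 \lambda/\|b\|_*}$, and then convert this into a bound on $w$-measure using the quantitative $A_\infty$ property: there exist $C, \delta > 0$ with $w(E)/w(B) \le C (|E|/|B|)^\delta$ for all measurable $E \subseteq B$. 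Combining the two gives $w(\{y \in B : |b - b_B| > \lambda\}) \le C\, w(B)\, e^{-c_2\delta \lambda/\|b\|_*}$.

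Substituting this exponential decay into the distributional integral and evaluating $\int_0^\infty \lambda^{p-1} e^{-c_2\delta\lambda/\|b\|_*}\,d\lambda = \Gamma(p)\,(\|b\|_*/(c_2\delta))^p$ yields
$$
\int_B |b(y) - b_B|^p w(y)\,dy \le C\, \|b\|_*^p\, w(B),
$$
and taking $p$-th roots produces the claim. The only genuinely nontrivial input in this route is the $A_\infty$ measure-comparison estimate $w(E)/w(B) \le C(|E|/|B|)^\delta$; although it is not displayed in the excerpt, it is the classical reverse-Hölder characterization of $A_\infty = \bigcup_{p\ge 1} A_p$ and is exactly what powers Lemma \ref{BMO2}. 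I would therefore present the first, one-line specialization as the proof and regard the distributional argument only as the mechanism behind the cited lemma.
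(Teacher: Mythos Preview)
Your proposal is correct. The paper does not actually supply a proof of this corollary---it is simply quoted from \cite{Wang2013}---but your one-line specialization of Lemma~\ref{BMO2} with $t=r$ is precisely why the statement is labeled a corollary of that lemma, and your optional John--Nirenberg/$A_\infty$ argument is the standard mechanism behind both results.
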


\begin{lemma}\label{Zhang}\cite{Zhang}
Let $w\in A_1$ and $b\in BMO$. Then, for any ball $B$ on $\mathbb{R}^n.$
$$
\|b-b_{B}\|_{\exp L^w(B)} \leq C \|b\|_*.
$$
\end{lemma}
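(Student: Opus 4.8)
The plan is to unwind the Orlicz norm directly from its definition and reduce the claim to a weighted exponential-integrability estimate for $b-b_B$. Since $\exp L^w(B)$ is the Orlicz space attached to the complementary Young function $\tilde\Phi(t)=e^t-1$, its norm reads
$$
\|b-b_B\|_{\exp L^w(B)}=\inf\left\{\lambda>0:\ \frac{1}{w(B)}\int_B\left(e^{|b(y)-b_B|/\lambda}-1\right)w(y)\,dy\le 1\right\}.
$$
Thus it suffices to exhibit a constant $C>0$, depending only on $n$ and $w$, for which the choice $\lambda=C\|b\|_*$ makes the weighted average at most $1$; the infimum is then bounded by $\lambda=C\|b\|_*$, which is exactly the assertion. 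The whole argument rests on a weighted form of the John--Nirenberg inequality together with a layer-cake expansion of the exponential.

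First I would establish the weighted John--Nirenberg estimate
$$
w\left(\{y\in B:\ |b(y)-b_B|>s\}\right)\le C_1\,e^{-C_2 s/\|b\|_*}\,w(B),\qquad s>0,
$$
with $C_1,C_2>0$ independent of $B$ and $s$. For this I start from the classical unweighted John--Nirenberg inequality $|\{y\in B:|b(y)-b_B|>s\}|\le c_1|B|e^{-c_2 s/\|b\|_*}$, and then upgrade it to the weight $w$ using the self-improving property of $A_\infty$ weights: since $A_1\subset A_\infty$, the reverse Hölder inequality yields constants $C,\delta>0$ with $w(E)/w(B)\le C(|E|/|B|)^{\delta}$ for every measurable $E\subseteq B$. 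Applying this with $E=\{y\in B:|b(y)-b_B|>s\}$ and inserting the unweighted bound gives the displayed estimate with $C_1=Cc_1^{\delta}$ and $C_2=\delta c_2$.

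Next I would carry out the layer-cake computation. Writing $E_s=\{y\in B:|b(y)-b_B|>s\}$ and using $\int_B(e^{g/\lambda}-1)\,dw=\lambda^{-1}\int_0^\infty e^{s/\lambda}w(E_s)\,ds$ for $g=|b-b_B|$, the weighted John--Nirenberg estimate gives
$$
\frac{1}{w(B)}\int_B\left(e^{|b-b_B|/\lambda}-1\right)w\,dy\le\frac{C_1}{\lambda}\int_0^\infty e^{s(1/\lambda-C_2/\|b\|_*)}\,ds=\frac{C_1}{C_2\lambda/\|b\|_*-1},
$$
valid once $\lambda>\|b\|_*/C_2$ so that the integral converges. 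Choosing $\lambda=\frac{C_1+1}{C_2}\|b\|_*$ makes the right-hand side equal to $1$, and hence $\|b-b_B\|_{\exp L^w(B)}\le\lambda=\frac{C_1+1}{C_2}\|b\|_*$, completing the proof with $C=(C_1+1)/C_2$.

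The main obstacle is the weighted John--Nirenberg estimate, and more precisely the self-improving $A_\infty$ bound $w(E)/w(B)\le C(|E|/|B|)^\delta$. Note that Theorem \ref{garcia} as stated in the excerpt provides only the opposite comparison $w(B)/w(E)\le C(|B|/|E|)^p$, which controls $w(E)$ from below rather than from above; the estimate I need is the genuine $A_\infty$ (reverse Hölder) property and must be invoked separately. Once that property is in hand, the exponential integrability and the optimization over $\lambda$ are routine, and the $\|b\|_*$-homogeneity of the final bound is automatic from the scaling $b\mapsto b/\|b\|_*$.
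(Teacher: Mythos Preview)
The paper does not supply its own proof of this lemma; it is quoted from \cite{Zhang} without argument. Your proposal is a correct and standard derivation: reduce the Orlicz-norm bound to a weighted exponential-integrability estimate, obtain the latter from the unweighted John--Nirenberg inequality combined with the $A_\infty$ comparison $w(E)/w(B)\le C(|E|/|B|)^\delta$ (available since $w\in A_1\subset A_\infty$ via reverse H\"older), and finish by a layer-cake computation and an explicit choice of $\lambda$. Your caveat that the needed $A_\infty$ inequality is the reverse of the doubling estimate in Theorem~\ref{garcia} is well taken; once that reverse H\"older property is granted, the remaining steps are routine and your constants are computed correctly.
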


We say that the commutator $[b,B]$, generated by an operator $B$ and a BMO 
function $b$, is bounded from $L^{p,w_1}$ to $L^{q,w_2}$ whenever there exists a constant 
$C>0$ such that
$$
\|[b,B]f\|_{L^{q,w_2}} \leq C \|b\|_* \|f\|_{L^{p,w_1}},\quad f \in L^{p,w_1}.
$$
We end this section by providing some lemmas which will be used later.

\begin{lemma} \label{lemma-boundedness}
Suppose that $u_1$, $u_2$, $v_1$, and $v_2$  are positive functions on $\mathbb{R}^n \times 
\mathbb{R}^+$ and $h:\mathbb{R}^+\times \mathbb{R}^+ \to \mathbb{R}^+$. Suppose that 
$g:\mathbb{R}^n \times \mathbb{R}^+ \to [0,\infty)$ is increasing. If 
$$
\sup_{(a,r) \in \mathbb{R}^n \times \mathbb{R}^+} \frac{1}{\psi_2(a,r)}\int_r^{\infty} 
h(t,r) \frac{\inf_{t\leq s<\infty} u_1(a,l) v_1(a,l)}{u_2(a,t)} \frac{dt}{t}\le C<\infty, 
$$
then
$$
\sup_{(a,r) \in \mathbb{R}^n \times \mathbb{R}^+} \int_r^{\infty} h(t,r) u_2(a,t)^{-1} 
g(a,t) \frac{dt}{t} \leq C v_2 (a,r) \sup_{(a,t)\in \mathbb{R}^n \times \mathbb{R}^+} 
\frac{g(a,t)}{u_1(a,t)v_1(a,t)}.
$$
\end{lemma}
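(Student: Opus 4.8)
The plan is to reduce the claimed bound to the hypothesised integral estimate by majorising $g(a,t)$ pointwise by a quantity that already occurs there. Put
$$
A := \sup_{(a,t)\in\mathbb{R}^n\times\mathbb{R}^+}\frac{g(a,t)}{u_1(a,t)\,v_1(a,t)}.
$$
We may assume $A<\infty$, since otherwise the asserted inequality is vacuous. By the very definition of $A$ we have $g(a,l)\le A\,u_1(a,l)\,v_1(a,l)$ for every $a$ and every $l>0$.

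The key (and essentially only) step is to use the monotonicity of $g$ in its second, radial variable: for fixed $a$ and any pair $t\le l$ we get $g(a,t)\le g(a,l)\le A\,u_1(a,l)\,v_1(a,l)$. Taking the infimum over $l\in[t,\infty)$ on the right-hand side yields the pointwise bound
$$
g(a,t)\ \le\ A\,\inf_{t\le l<\infty} u_1(a,l)\,v_1(a,l),\qquad (a,t)\in\mathbb{R}^n\times\mathbb{R}^+.
$$
Inserting this into the integral on the left of the conclusion, and using $h\ge 0$, $u_2>0$, we obtain for every $(a,r)$
$$
\int_r^\infty h(t,r)\,u_2(a,t)^{-1}g(a,t)\,\frac{dt}{t}
\ \le\ A\int_r^\infty h(t,r)\,\frac{\inf_{t\le l<\infty}u_1(a,l)\,v_1(a,l)}{u_2(a,t)}\,\frac{dt}{t}
\ \le\ A\,C\,v_2(a,r),
$$
where the last inequality is precisely the hypothesis. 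Reading this for each fixed $(a,r)$ (which is what the statement intends) completes the argument.

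I expect no real obstacle beyond recognising that monotonicity of $g$ is exactly what converts $g(a,t)$ into the infimum-majorant appearing in the hypothesis; the remainder is a single substitution. In writing up I would only (i) dispose of the trivial case $A=\infty$ at the outset, and (ii) note the two evident typographical slips in the statement, namely that the outer weight in the hypothesis should read $v_2(a,r)$ rather than $\psi_2(a,r)$, and that the infimum there should be taken over $l$, not over $s$.
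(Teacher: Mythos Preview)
Your proof is correct and follows essentially the same approach as the paper: both arguments use the monotonicity of $g$ to obtain $g(a,t)\le A\,\inf_{t\le l<\infty}u_1(a,l)v_1(a,l)$ and then apply the hypothesised integral bound. Your write-up is in fact a bit more transparent, since the paper presents the key monotonicity step inside a chain of equalities (some of which should really be inequalities), whereas you isolate it explicitly.
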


\begin{proof}
By the assumption, we have that
$$
\begin{aligned}
	\int_r^{\infty} h(t,r) u_2(a,t)^{-1} g(a,t) \frac{dt}{t}
		& = \int_r^{\infty} h(t,r) \frac{\inf_{t \leq l<\infty} u_1(a,l) v_1(a,l)}{\inf_{t \leq l<\infty} u_1(a,l) v_1(a,l)} u_2(a,t)^{-1} g(a,t) \frac{dt}{t} \\
		& = \int_r^\infty h(t,r) \frac{\inf_{t \leq l<\infty} u_1(a,l) v_1(a,l)}{u_2(a,t)} \sup_{t\leq l<\infty} \frac{1}{u_1(a,l)} \frac{1}{v_1(a,l)} g(a,l) \frac{dt}{t} \\
		& = \int_r^\infty h(t,r) \frac{\inf_{t\leq l<\infty} u_1(a,l) v_1(a,l)}{u_2(a,t)} \frac{dt}{t} \cdot \sup_{(a,t)\in \mathbb{R}^n \times \mathbb{R}^+} \frac{g(a,t)}{u_1(a,t) v_1(a,t)} \\
		& \leq C v_2 (a,r) \sup_{(a,t)\in \mathbb{R}^n \times \mathbb{R}^+} \frac{g(a,t)}{u_1(a,t)v_1(a,t)}
\end{aligned}
$$
for $(a,r)\in \mathbb{R}^n \times \mathbb{R}^+$. This proves the Lemma.
\end{proof}

\begin{lemma}\label{f-1}
Let $w$ be a weight on $\mathbb{R}^n$, $0 < \alpha < n$, $1\leq p < n/\alpha$, and 
$1/q=1/p-\alpha/n$. Let $1\le s<p$ for $1<p<n/\alpha$ and $s=1$ for $p=1$. Then,
$$
r^{-\frac{\alpha}{n}+\frac1s} \leq C w_2(B(a,r))^\frac1q \|w_2^{-1}\|_{L^{ps/(p-s)}(B(a,r))}, \quad (a,r) \in \mathbb{R}^n \times \mathbb{R}^+,
$$
for $1<p<n/\alpha$, and
$$
r^{-\frac{\alpha}{n}+1} \leq C w_2(B(a,r))^\frac1q \|w_2^{-1}\|_{L^\infty(B(a,r))}, \quad (a,r) \in \mathbb{R}^n \times \mathbb{R}^+.
$$
\end{lemma}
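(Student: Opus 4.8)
The plan is to obtain both inequalities directly from Hölder's inequality applied to the constant function $1$ on the ball $B(a,r)$, via the splitting $1=w_2^{\theta}\cdot w_2^{-\theta}$ for a suitable exponent $\theta>0$; no $A_{p,q}$-type hypothesis is needed, only that $w_2$ is a weight. For $1<p<n/\alpha$ the two Hölder exponents that arise are finite and conjugate, while for $p=1$ (so $s=1$) the estimate degenerates into the elementary a.e.\ bound $1=w_2^{q}\,w_2^{-q}\le w_2^{q}\,\|w_2^{-1}\|_{L^{\infty}(B(a,r))}^{q}$.

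Consider first $1<p<n/\alpha$. Put $m:=\tfrac{ps}{p-s}$ and $\theta:=\bigl(\tfrac1s-\tfrac{\alpha}{n}\bigr)^{-1}$; since $s<p<n/\alpha$ gives $\tfrac1s>\tfrac1p>\tfrac{\alpha}{n}$, we have $0<\theta<\infty$. I would apply Hölder's inequality on $B(a,r)$ with the exponents $\tfrac{q}{\theta}$ and $\tfrac{m}{\theta}$, which are conjugate because
$$
\frac{\theta}{q}+\frac{\theta}{m}=\theta\Bigl(\frac1q+\frac1s-\frac1p\Bigr)=\theta\Bigl(\frac1s-\frac{\alpha}{n}\Bigr)=1,
$$
using $\tfrac1q=\tfrac1p-\tfrac{\alpha}{n}$; one checks moreover $\tfrac{q}{\theta}=q\bigl(\tfrac1s-\tfrac1p\bigr)+1>1$ and $\tfrac{m}{\theta}=\tfrac{p(1-s\alpha/n)}{p-s}>1$, again from $s<p<n/\alpha$. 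This gives
$$
|B(a,r)|=\int_{B(a,r)}w_2(x)^{\theta}\,w_2(x)^{-\theta}\,dx\le\Bigl(\int_{B(a,r)}w_2(x)^{q}\,dx\Bigr)^{\theta/q}\Bigl(\int_{B(a,r)}w_2(x)^{-m}\,dx\Bigr)^{\theta/m},
$$
and the last factor is exactly $\|w_2^{-1}\|_{L^{ps/(p-s)}(B(a,r))}^{\theta}$ (if this is infinite the asserted inequality is trivial, so we may assume it finite). Taking $\theta$-th roots,
$$
|B(a,r)|^{1/\theta}\le w_2^{q}(B(a,r))^{1/q}\,\|w_2^{-1}\|_{L^{ps/(p-s)}(B(a,r))},
$$
and since $|B(a,r)|=c_n r^{n}$ with $\tfrac1\theta=\tfrac1s-\tfrac{\alpha}{n}$, the left-hand side is a dimensional constant times $r^{\,n/s-\alpha}$; absorbing this constant into $C$ proves the first inequality.

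For $p=1$ (so $s=1$ and $\tfrac1q=1-\tfrac{\alpha}{n}$) the argument is the limiting case: $w_2^{-q}\le\|w_2^{-1}\|_{L^{\infty}(B(a,r))}^{q}$ a.e.\ on $B(a,r)$, hence
$$
|B(a,r)|=\int_{B(a,r)}w_2(x)^{q}\,w_2(x)^{-q}\,dx\le\|w_2^{-1}\|_{L^{\infty}(B(a,r))}^{q}\,w_2^{q}(B(a,r)),
$$
and taking $q$-th roots and using $|B(a,r)|=c_n r^{n}$ yields the second inequality. I do not expect a genuine obstacle: the statement is just the "reverse Hölder"/Jensen lower bound for the product of a weighted $L^{q}$ mass of $w_2$ and a negative-power norm of $w_2$, and the only point needing care is the bookkeeping of the Hölder exponents — verifying conjugacy and that both exceed $1$ — together with correctly tracking the power of the radius that emerges from $|B(a,r)|^{1/\theta}$.
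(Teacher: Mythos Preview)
Your argument is correct and follows essentially the same idea as the paper's proof: write $1 = w_2^{a}\cdot w_2^{-a}$ on the ball and apply H\"older. The paper first uses H\"older with exponents $(p,p')$ and then upgrades each average via Jensen to exponents $q$ and $ps/(p-s)$, whereas you apply H\"older once with the correctly chosen conjugate pair $(q/\theta,\, m/\theta)$; your single-step version is a bit cleaner and in particular avoids a slip in the paper's $p=1$ case (the paper writes $\int_B w_2 \le (\int_B w_2^q)^{1/q}$, which is not true in general, while your bound $w_2^{-q}\le \|w_2^{-1}\|_{L^\infty(B)}^{q}$ is fine). Note also that what you (and the paper's own computation) actually obtain is $|B(a,r)|^{1/s-\alpha/n}\sim r^{\,n/s-\alpha}$ on the left, which is the quantity used in the subsequent propositions; the exponent $-\alpha/n+1/s$ in the displayed statement appears to be a typo for $n(1/s-\alpha/n)=n/s-\alpha$.
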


\begin{proof}
First suppose that $1<p<\infty$. Since $p<q$ and $p'<s/(p-s)$, by H\"older 
inequality we obtain that
$$
\begin{aligned}
      1 & = \frac{1}{|B(a,r)|} \int_{B(a,r)} dx= \frac{1}{|B(a,r)|} \int_{B(a,r)} w_2(y) 
 \cdot w_2(y)^{-1} dy \\
	& \leq \frac{1}{B|(a,r)|} \left(\int_{B(a,r)} w_2^p(y) dy\right)^\frac1p 
 \left(\int_{B(a,r)} w_2^{-p'}(y) dy\right)^{\frac{1}{p'}}\\
	& \leq \left(\frac{1}{B|(a,r)|} \int_{B(a,r)} w_2^p(y) dy\right)^\frac1p 
 \left(\frac{1}{B|(a,r)|} \int_{B(a,r)} w_2^{-p'}(y) dy\right)^{\frac{1}{p'}}\\
	& \leq \left(\frac{1}{B|(a,r)|} \int_{B(a,r)} w_2^q(y) dy\right)^\frac1q 
 \left(\frac{1}{B|(a,r)|} \int_{B(a,r)} w_2^{-\frac{ps}{p-s}}(y) dy\right)^{\frac{p-s}{ps}},
\end{aligned}
$$
which implies that
$$
|B(a,r)|^{\frac1q+\frac1s-\frac1p} \leq w_2(B(a,r))^\frac1q 
\|w_2^{-1}\|_{L^{ps/(p-s)}(B(a,r))}
$$
and
$$
r^{-\frac{\alpha}{n}+\frac1s} \leq C w_2(B(a,r))^\frac1q 
\|w_2^{-1}\|_{L^{ps/(p-s)}(B(a,r))}, \quad (a,r)\in \mathbb{R}^n \times \mathbb{R}^+.
$$
	
For $p=1$, we have
$$
\begin{aligned}
 1 & = \frac{1}{|B(a,r)|} \int_{B(a,r)} \leq \frac{1}{B|(a,r)|} \left(\int_{B(a,r)} w_2(y) 
dy\right) \|w_2^{-1}\|_{L^\infty(B(a,r))}\\ 
   & \leq \frac{1}{B|(a,r)|} \left(\int_{B(a,r)} w_2(y)^q dy\right)^\frac1q
  \|w_2^{-1}\|_{L^\infty(B(a,r))},
\end{aligned}
$$
and so
$$
r^{-\frac\alpha{n} + 1} \leq C w_2^q(B(a,r))^\frac1q \|w_2^{-1}\|_{L^\infty (B(a,r))}, 
\quad (a,r) \in \mathbb{R}^n \times \mathbb{R}^+,
$$
which completes the proof of the lemma.
\end{proof}

\section{Norm Estimates for Sublinear Operators Generated by Riesz Potentials and Their 
Commutators on Generalized Weighted Morrey Spaces}

In this section, we prove the boundedness of the sublinear operators generated by Riesz 
potentials on generalized weighted Morrey spaces with different weights, namely Theorems 
\ref{Main-Riesz} and \ref{Main-Riesz-Mixed}. Moreover, we also prove the boundedness of 
their commutators of the sublinear operators with a BMO function on generalized weighted 
Morrey spaces with different weights, namely Theorems \ref{Main-Comm-Riesz} and 
\ref{Main-Comm-Riesz-Mixed}. Before we do so, we prove the following estimates.

\begin{prop}\label{est-sub-Riesz}
Let $0 < \alpha < n, 1 \le p < n/\alpha, 1/q = 1/p - \alpha/n$, $(w_1^s, w_2^s)\in
A_{p/s,q/s}$, and $w_2^s \in A_{p/s,q/s}$, where $1\le s<p$ for $1<p<n/\alpha$ and $s=1$ 
for $p=1$. 
If $S_\alpha$ is bounded from $L^{p,w_1^p}$ to $L^{q,w_2^q}$ for $1<p<n/\alpha$ and from 
$L^{1,w_1}$ to $WL^{q,w_2^q}$, then there exists a constant $C>0$ such that for every 
$(a,r) \in \mathbb{R}^n \times \mathbb{R}^+$ and $f \in L_{\rm loc}^{p,w_1^p}$,
$$
\|S_\alpha f\|_{L^{q,w_2^q}(B(a,r))} \leq C_1 w_2^q(B(a,r))^{\frac{1}{q}} \int_r^\infty 
w_2^q(B(a,t))^{-\frac{1}{q}} \|f\|_{L^{p,w_1^p}B(a,t)} \frac{dt}{t},
$$
for $1<p<n/\alpha,$ and
$$
\|S_\alpha f\|_{WL^{q,w_2^q}(B(a,r))} \leq C_2 w_2^q(B(a,r))^{\frac{1}{q}} \int_r^\infty 
w_2^q(B(a,t))^{-\frac{1}{q}} \|f\|_{L^{1,w_1}B(a,t)} \frac{dt}{t}.
$$
\end{prop}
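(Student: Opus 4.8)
The plan is to fix $(a,r)\in\mathbb{R}^n\times\mathbb{R}^+$, abbreviate $B=B(a,r)$ and $B_k=B(a,2^{k+1}r)$, and split $f=f_1+f_2$ with $f_1=f\cdot\mathcal{X}_{B(a,2r)}$ and $f_2=f\cdot\mathcal{X}_{B(a,2r)^{\rm c}}$. By sublinearity, $|S_\alpha f|\le|S_\alpha f_1|+|S_\alpha f_2|$ pointwise, so it suffices to bound each piece --- in the strong norm when $1<p<n/\alpha$, and, when $p=1$, in the weak norm after using the inclusion $\{|S_\alpha f|>\gamma\}\subseteq\{|S_\alpha f_1|>\gamma/2\}\cup\{|S_\alpha f_2|>\gamma/2\}$ together with the sub-additivity of $t\mapsto t^{1/q}$ (note $q>1$).

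For the local piece, $\mathrm{supp}\,f_1\subseteq B(a,2r)$, so the assumed boundedness of $S_\alpha$ gives immediately $\|S_\alpha f_1\|_{L^{q,w_2^q}(B)}\le\|S_\alpha f_1\|_{L^{q,w_2^q}}\le C\|f\|_{L^{p,w_1^p}(B(a,2r))}$ (and $\|S_\alpha f_1\|_{WL^{q,w_2^q}(B)}\le C\|f\|_{L^{1,w_1}(B(a,2r))}$ when $p=1$). For the far piece, since $\|g\|_{L^{q,w_2^q}(B)}$ and $\|g\|_{WL^{q,w_2^q}(B)}$ are both at most $w_2^q(B)^{1/q}\sup_B|g|$, the defining inequality (\ref{sublinear}) gives
$$
\|S_\alpha f_2\|_{L^{q,w_2^q}(B)}\le C\,w_2^q(B)^{1/q}\sum_{k=1}^\infty(2^{k+1}r)^{-\frac ns+\alpha}\Big(\int_{B_k}|f(y)|^s\,dy\Big)^{1/s}
$$
(and likewise for the weak norm), and by H\"older's inequality with exponents $p/s$ and $p/(p-s)$ --- or the trivial $L^\infty$ bound when $p=1$ --- one has $\big(\int_{B_k}|f|^s\big)^{1/s}\le\|f\|_{L^{p,w_1^p}(B_k)}\,\|w_1^{-1}\|_{L^{ps/(p-s)}(B_k)}$, with $\|w_1^{-1}\|_{L^\infty(B_k)}$ in place of the last factor when $p=1$.

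The crux is that the hypothesis $(w_1^s,w_2^s)\in A_{p/s,q/s}$, when written out for the pair and raised to the power $1/s$, is precisely the statement
$$
w_2^q(B')^{\frac1q}\,\|w_1^{-1}\|_{L^{ps/(p-s)}(B')}\le C\,|B'|^{\frac1s-\frac\alpha n}\qquad\text{for every ball }B'
$$
(a pair version of Lemma \ref{f-1}; here one uses $\frac1q+\frac1s-\frac1p=\frac1s-\frac\alpha n$), and for $p=1$ it reads $w_2^q(B')^{1/q}\|w_1^{-1}\|_{L^\infty(B')}\le C|B'|^{1-\frac\alpha n}$. Taking $B'=B_k$ and using $|B_k|^{\frac1s-\frac\alpha n}\approx(2^{k+1}r)^{\frac ns-\alpha}$ cancels the power of $2^{k+1}r$, leaving
$$
\|S_\alpha f_2\|_{L^{q,w_2^q}(B)}\le C\,w_2^q(B)^{\frac1q}\sum_{k=1}^\infty w_2^q(B_k)^{-\frac1q}\,\|f\|_{L^{p,w_1^p}(B_k)},
$$
with $\|f\|_{L^{1,w_1}(B_k)}$ in the weak case.

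Finally I would pass from the sum to the integral using that $w_2^q$ is doubling; this is where the extra hypothesis $w_2^s\in A_{p/s,q/s}$ enters, since by Lemma \ref{thm2.5} it forces $w_2^q\in A_{q/s}\subseteq A_\infty$. For $t\in(2^{k+1}r,2^{k+2}r)$ one has $w_2^q(B(a,t))^{-1/q}\ge c\,w_2^q(B_k)^{-1/q}$ (doubling) and $\|f\|_{L^{p,w_1^p}(B(a,t))}\ge\|f\|_{L^{p,w_1^p}(B_k)}$, so integrating over the dyadic shells and summing converts $\sum_{k\ge1}w_2^q(B_k)^{-1/q}\|f\|_{L^{p,w_1^p}(B_k)}$ into $C\int_r^\infty w_2^q(B(a,t))^{-1/q}\|f\|_{L^{p,w_1^p}(B(a,t))}\,\frac{dt}{t}$; the same argument applied on the single shell $(2r,4r)$ turns the local term $\|f\|_{L^{p,w_1^p}(B(a,2r))}$ into that same integral times $w_2^q(B)^{1/q}$. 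Adding the local and far estimates yields the strong inequality, and repeating verbatim with the weak norm and $L^{1,w_1}$ yields the $p=1$ inequality. I expect the only genuinely delicate points to be the exponent bookkeeping that turns the pair $A_{p/s,q/s}$ condition into the displayed coefficient bound, and keeping track of which weight is doubling --- the sum-to-integral passage must use $w_2^q$ (legitimate thanks to $w_2^s\in A_{p/s,q/s}$), not the possibly non-doubling weight $w_1$; the rest is routine.
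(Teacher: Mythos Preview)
Your proposal is correct and follows essentially the same approach as the paper: the same near/far decomposition $f=f_1+f_2$, the same use of H\"older with exponents $p/s,\,p/(p-s)$ to extract $\|w_1^{-1}\|_{L^{ps/(p-s)}}$, and the same reading of the pair condition $(w_1^s,w_2^s)\in A_{p/s,q/s}$ as the coefficient bound $w_2^q(B')^{1/q}\|w_1^{-1}\|_{L^{ps/(p-s)}(B')}\le C|B'|^{1/s-\alpha/n}$. The only cosmetic difference is in the sum-to-integral passage: the paper carries the explicit power $t^{-n/s+\alpha}$ through the identity $(2^{k+1}r)^{\alpha-n/s}\approx\int_{2^{k+1}r}^{2^{k+2}r}t^{\alpha-n/s-1}\,dt$ and applies the weight condition pointwise in $t$ afterward (so doubling of $w_2^q$ is never invoked for $f_2$, only for $f_1$ via Lemma~\ref{f-1}), whereas you apply the weight condition at the discrete level first and then use doubling of $w_2^q$; both routes are valid and use the hypothesis $w_2^s\in A_{p/s,q/s}$ in equivalent ways.
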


\begin{proof}
Given $a\in \mathbb{R}^n$ and $r>0$, we decompose $f$ as $f := f_1 + f_2$ where 
$f_1 :=f\cdot \mathcal{X}_{B(a,2r)}$. Note that $f_1 \in L^{p,w_1^p}.$ Now, since 
$S_\alpha$ is bounded from $L^{p,w_1^p}$ to $L^{q,w_2^q}$ and from $L^{1,w}$ to $WL^{q,w^q}$,
$$
\|S_\alpha f_1\|_{L^{q,w_2^q}(B(a,r))} \leq \|S_\alpha f_1\|_{L^{q,w_2^q}} \leq C 
\|f_1\|_{L^{p,w_1^p}} = C \|f\|_{L^{p,w_1^p}(B(a,2r))}
$$
for $1<p<n/\alpha$, and
$$
\|S_\alpha f_1\|_{WL^{q,w_2^q}(B(a,r))} \leq \|S_\alpha f_1\|_{L^{q,w_2^q}} \leq C 
\|f_1\|_{L^{1,w_1}} = C \|f\|_{L^{1,w_1}(B(a,2r))}.
$$
Thus, by Lemma \ref{f-1} and the assumption that $w_2^s \in A_{p/s,q/s}$ for 
$1< p< n/\alpha$, we have
$$
\begin{aligned}
\|f\|_{L^{p,w^p}(B(a,2r))} 
& = C r^{\frac{n}s - \alpha} \|f\|_{L^{p,w^p}(B(a,2r))} \int_{2r}^\infty \frac{t}{t^{n/s-
\alpha+1}}\\
& \leq Cr^{\frac{n}{s}-\alpha} \int_{2r}^\infty \|f\|_{L^{p,w^p}(B(a,t))} \frac{dt}{t^{n/s-
\alpha+1}}\\
& \leq w_2(B(a,r))^\frac1q \|w_2^{-1}\|_{L^{ps/(p-s)}(B(a,r))} \int_{2r}^\infty 
\|f\|_{L^{p,w^p}(B(a,t))} \frac{dt}{t^{n/s-\alpha+1}}\\
& \leq w_2(B(a,r))^\frac1q \int_{2r}^\infty \|f\|_{L^{p,w^p}(B(a,t))} 
\|w_2^{-1}\|_{L^{ps/(p-s)}(B(a,t))} \frac{dt}{t^{n/s-\alpha+1}} \\
& \leq w_2(B(a,r))^\frac1q \int_{2r}^\infty \|f\|_{L^{p,w^p}(B(a,t))} w_2^q(B(a,t))^{
-\frac1q} \frac{dt}{t}.\\
\end{aligned}
$$
Hence,
$$
\|S_\alpha f_1\|_{L^{q,w_2^q}(B(a,r))} \leq C w_2^q(B(a,r))^\frac1q \int_{2r}^\infty 
\|f\|_{L^{p,w_1^p}(B(a,t))} w_2^q(B(a,t))^{-\frac1q} \frac{dt}{t}
$$
for $1<p<n/\alpha$, and
$$
\|S_\alpha f_1\|_{L^{q,w_2^q}(B(a,r))} \leq w_2^q(B(a,r))^\frac1q \int_{2r}^\infty 
\|f\|_{L^{1,w_1}(B(a,t))} w_2^q(B(a,t))^{-\frac1q} \frac{dt}{t}.
$$
	
For $f_2$, it follows from the assumption that
$$
|S_\alpha f_2 (x)| \leq C \sum_{k=1}^\infty \left(\frac{1}{|B(a,2^{k+1}r)|^{1-\frac{\alpha 
s}{n}}} \int_{B(a,2^{k+1}r)}|f(y)|^s dy\right)^\frac1s.
$$
Now suppose that $1<p<n/\alpha$. Then, H\"older inequality implies that
$$
\begin{aligned}
		& \left(\int_{B(a,2^{k+1}r)} |f(y)|^s dy\right)^\frac1s = 
\left(\int_{B(a,2^{k+1}r)} |f(y)|^s w_1 (y)^s w_1(y)^{-s} dy\right)^\frac1s\\
		& \leq \left(\int_{B(a,2^{k+1}r)} \left(|f(y)|^s\right)^{\frac{p}{s}} 
\left(w_1(y)^s\right)^\frac{p}{s}\right)^{\left(\frac{s}{p}\right)
\left(\frac1s\right)} \left(\int_{B(a,2^{k+1}r)}  \left(w_1(y)^{-s}\right)^\frac{p}{p-s}\right)^{\left(\frac{p-s}{p}\right)\left(\frac1s\right)}\\
		& = \left(\int_{B(a,2^{k+1}r)} |f(y)|^p w_1(y)^p\right)^\frac1p 
\left(\int_{B(a,2^{k+1}r)}  w_1(y)^{-\frac{ps}{p-s}}\right)^{\left(
\frac{p-s}{ps}\right)}\\
		& = \|f\|_{L^{p,w_1^p}(B(a,2^{k+1}r))} 
\|w_1^{-1}\|_{L^{ps/(p-s)}(B(a,2^{k+1}r))}.
\end{aligned}
$$
Hence, for $x \in B(a,r)$ by the assumption $(w_1^s, w_2^s) \in A_{p/s,q/s}$ we obtain that
$$
\begin{aligned}
|S_\alpha f_2 (x)| & \leq C \sum_{k=1}^\infty \frac{1}{|B(a,2^{k+1}r)|^{\frac1s-
\frac{\alpha }{n}}} \|f\|_{L^{p,w_1^p}(B(a,2^{k+1}r))} 
\|w_1^{-1}\|_{L^{ps/(p-s)}(B(a,2^{k+1}r))} \\
		& \leq C \sum_{k=1}^\infty \left(2^{k+1}r\right)^{\alpha-\frac{n}{s}} 
\|f\|_{L^{p,w_1^p}(B(a,2^{k+1}r))} \|w_1^{-1}\|_{L^{ps/(p-s)}(B(a,2^{k+1}r))} \\
		& = C \sum_{k=1}^\infty \int_{2^{k+1}r}^{2^{k+2}r} \frac{dt}{t^{-\alpha+
\frac{n}{s}+1}} \cdot \|f\|_{L^{p,w_1^p}(B(a,2^{k+1}r))} 
\|w_1^{-1}\|_{L^{ps/(p-s)}(B(a,2^{k+1}r))} \\
		& = C \sum_{k=1}^\infty \int_{2^{k+1}r}^{2^{k+2}r} \|f\|_{L^{p,w_1^p}(B(a,t))} 
\|w_1^{-1}\|_{L^{ps/(p-s)}(B(a,t))} \frac{dt}{t^{-\alpha+\frac{n}{s}+1}}\\
		& \leq C \int_{2r}^\infty \|f\|_{L^{p,w_1^p}(B(a,t))} w_2^q(B(a,t))^{-\frac1q} 
\frac{dt}{t}.
\end{aligned}
$$
Thus,
$$
\|S_\alpha f_2\|_{L^{q,w_2^q}(B(a,r))} \leq C w_2^q (B(a,r))^\frac1q \int_{2r}^\infty 
\|f\|_{L^{p,w_1^p}(B(a,t))} w_2^q(B(a,t))^{-\frac1q} \frac{dt}{t}.
$$
for $1<p<n/\alpha.$ 

Next, for $p=1,$ we have
$$
\begin{aligned}
\left(\int_{B(a,2^{k+1}r)} |f(y)|^s dy\right)^\frac1s & = \int_{B(a,2^{k+1}r)} |f(y)| 
w_1(y) w_1(y)^{-1} dy\\
	& \leq \|f\|_{L^{1,w_1}(B(a,2^{k+1}r))} \|w_1^{-1}\|_{L^\infty(B(a,2^{k+1}r))}.
\end{aligned}
$$
For $x \in B(a,r)$ we obtain that
$$
\begin{aligned}
|S_\alpha f_2 (x)| & \leq C \sum_{k=1}^\infty \frac{1}{|B(a,2^{k+1}r)|^{1-
\frac{\alpha}{n}}} \|f\|_{L^{1,w_1}(B(a,2^{k+1}r))} 
\|w_1^{-1}\|_{L^\infty(B(a,2^{k+1}r))} \\
		& \leq C \sum_{k=1}^\infty \left(2^{k+1}r\right)^{\alpha-n} 
\|f\|_{L^{1,w_1}(B(a,2^{k+1}r))} \|w_1^{-1}\|_{L^\infty (B(a,2^{k+1}r))} \\
		& = C \sum_{k=1}^\infty \int_{2^{k+1}r}^{2^{k+2}r} \frac{dt}{t^{-\alpha + n + 
1}} \cdot \|f\|_{L^{1,w_1}(B(a,2^{k+1}r))} \|w_1^{-1}\|_{L^\infty(B(a,2^{k+1}r))} \\
		& = C \sum_{k=1}^\infty \int_{2^{k+1}r}^{2^{k+2}r} \|f\|_{L^{1,w_1}(B(a,t))} 
\|w_1^{-1}\|_{L^\infty(B(a,t))} \frac{dt}{t^{-\alpha+ n + 1}}\\
		& \leq C \int_{2r}^\infty \|f\|_{L^{1,w_1}(B(a,t))} w_2^q(B(a,t))^{-\frac1q} 
\frac{dt}{t},
\end{aligned}
$$
and
$$
\|S_\alpha f_2\|_{WL^{q,w_2^q}(B(a,r))} \leq \|S_\alpha f_2\|_{L^{q,w_2^q}(B(a,r))}\leq C 
w_2^q (B(a,r))^\frac1q \int_{2r}^\infty \|f\|_{L^{1,w_1}(B(a,t))} w_2^q(B(a,t))^{-\frac1q} 
\frac{dt}{t}.
$$
Therefore,
$$
\|S_\alpha f\|_{L^{q,w_2^q}(B(a,r))} \leq C_1 w_2^q(B(a,r))^{\frac{1}{q}} \int_r^\infty 
w_2^q(B(a,t))^{-\frac{1}{q}} 
\|f\|_{L^{p,w_1^p}B(a,t)} \frac{dt}{t},
$$
for $1<p<n/\alpha,$ and
$$
\|S_\alpha f\|_{WL^{q,w_2^q}(B(a,r))} \leq C_2 w_2^q(B(a,r))^{\frac{1}{q}} \int_r^\infty 
w_2^q(B(a,t))^{-\frac{1}{q}} \|f\|_{L^{1,w_1}B(a,t)} \frac{dt}{t},
$$
as desired.
\end{proof}

\begin{prop}\label{est-comm-sub-riesz}
Let $0 < \alpha < n, 1 < p < n/\alpha, 1/q = 1/p - \alpha/n$, $(w_1^s, w_2^s) \in 
A_{p/s,q/s}$ and $w_2^s \in A_{p/s, q/s}$ where $1\le s<p$, and $w_1 \in A_\infty$. 
If $[b, S_\alpha]$ is bounded from $L^{p,w_1^p}$ to $L^{q,w_2^q}$, then there exists a 
constant $C>0$ such that for every $(a,r) \in \mathbb{R}^n \times \mathbb{R}^+$ and 
$f \in L_{\rm loc}^{p,w_1^p}$,
$$
\|[b, S_\alpha ]f\|_{L^{q,w_2^q}(B(a,r))} \leq C \|b\|_* w_2^q(B(a,r))^{\frac{1}{q}} 
\int_r^\infty w_2^q(B(a,t))^{-\frac{1}{q}} \|f\|_{L^{p,w_1^p}B(a,t)} \frac{dt}{t}.
$$
\end{prop}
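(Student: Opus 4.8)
\emph{Proof proposal.} The plan is to reuse the two-scale decomposition from the proof of Proposition~\ref{est-sub-Riesz}. Fix $(a,r)\in\mathbb{R}^n\times\mathbb{R}^+$ and split $f=f_1+f_2$ with $f_1=f\cdot\mathcal{X}_{B(a,2r)}$ and $f_2=f\cdot\mathcal{X}_{B(a,2r)^{\rm c}}$. For the local piece I would apply the standing hypothesis that $[b,S_\alpha]$ is bounded from $L^{p,w_1^p}$ to $L^{q,w_2^q}$, giving $\|[b,S_\alpha]f_1\|_{L^{q,w_2^q}(B(a,r))}\le C\|b\|_*\|f\|_{L^{p,w_1^p}(B(a,2r))}$, and then turn the single norm on $B(a,2r)$ into the integral tail exactly as in Proposition~\ref{est-sub-Riesz}: write $1= Cr^{\,n/s-\alpha}\int_{2r}^\infty \frac{dt}{t^{\,n/s-\alpha+1}}$, enlarge the ball under the integral, and use Lemma~\ref{f-1} together with the pair condition $(w_1^s,w_2^s)\in A_{p/s,q/s}$ to manufacture the factor $w_2^q(B(a,r))^{1/q}\,w_2^q(B(a,t))^{-1/q}$. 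This already delivers the stated logarithm-free tail for the $f_1$ part.

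For the far piece I would fix the single constant $c=b_{B(a,r)}$ and use the pointwise identity
$$
[b,S_\alpha]f_2(x)=(b(x)-b_{B(a,r)})\,S_\alpha f_2(x)-S_\alpha\big((b-b_{B(a,r)})f_2\big)(x),\qquad x\in B(a,r).
$$
The first summand is handled cleanly: the proof of Proposition~\ref{est-sub-Riesz} already bounds $|S_\alpha f_2(x)|$, for $x\in B(a,r)$, by the $x$-independent quantity $C\int_{2r}^\infty w_2^q(B(a,t))^{-1/q}\|f\|_{L^{p,w_1^p}(B(a,t))}\frac{dt}{t}$. I would pull this constant out and estimate the surviving $\|b-b_{B(a,r)}\|_{L^{q,w_2^q}(B(a,r))}$ by Corollary~\ref{BMO3}. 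This is legitimate because $w_2^s\in A_{p/s,q/s}$ forces $w_2^q=(w_2^s)^{q/s}\in A_{q/s}\subseteq A_\infty$ by Lemma~\ref{thm2.5}; the outcome is precisely $C\|b\|_*\,w_2^q(B(a,r))^{1/q}$ times the desired log-free integral.

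The substance of the argument is the second summand $S_\alpha((b-b_{B(a,r)})f_2)$. Using \eqref{sublinear} I would dominate it on $B(a,r)$ by the $x$-independent sum $C\sum_{k\ge1}(2^{k+1}r)^{\alpha-n/s}\big(\int_{B(a,2^{k+1}r)}|b-b_{B(a,r)}|^{s}|f|^{s}\big)^{1/s}$. On each annular ball I would apply the generalized H\"older inequality with exponents $p/s$ and $(p/s)'=p/(p-s)$ to peel off $\|f\|_{L^{p,w_1^p}(B(a,2^{k+1}r))}$, leaving an oscillation factor $\big(\int|b-b_{B(a,r)}|^{sp/(p-s)}w_1^{-sp/(p-s)}\big)^{(p-s)/(sp)}$. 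I would control this by $C\|b\|_*\,\|w_1^{-1}\|_{L^{ps/(p-s)}(B(a,2^{k+1}r))}$ through a weighted BMO estimate of Corollary~\ref{BMO3} type (this is where $w_1\in A_\infty$ enters), and then convert $\|w_1^{-1}\|_{L^{ps/(p-s)}(B(a,2^{k+1}r))}$ into $w_2^q(B(a,2^{k+1}r))^{-1/q}$ by the pair condition $(w_1^s,w_2^s)\in A_{p/s,q/s}$, just as in Proposition~\ref{est-sub-Riesz}. Summing the resulting geometric dyadic series, passing to the integral, and integrating the $x$-independent constant against $w_2^q$ over $B(a,r)$ reproduces $C\|b\|_*\,w_2^q(B(a,r))^{1/q}\int_{2r}^\infty w_2^q(B(a,t))^{-1/q}\|f\|_{L^{p,w_1^p}(B(a,t))}\frac{dt}{t}$, and combining with the $f_1$ estimate closes the proof.

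The delicate point, and the main obstacle, is exactly this second summand: one must measure the oscillation of $b$ over the far annuli against the weight while keeping the dependence on $b$ both linear \emph{and} free of the logarithmic factor $\ln(t/r)$. A naive three-ball decomposition $b-b_{B(a,r)}=(b-b_{B(a,2^{k+1}r)})+(b_{B(a,2^{k+1}r)}-b_{B(a,r)})$ would, through Lemma~\ref{BMO1}, inject a factor $k\sim\ln(2^{k+1}r/r)$ at scale $k$, i.e. a $\ln(t/r)$ in the tail. The whole issue is therefore to organize the oscillation estimate — applying Corollary~\ref{BMO3} (rather than Lemma~\ref{BMO2}) on each annular ball and absorbing the surviving weight through the $A_{p/s,q/s}$ conversion — so that each dyadic scale contributes a single clean $\|b\|_*$, after which the decay $(2^{k+1}r)^{\alpha-n/s}$ leaves a convergent geometric series with no logarithmic growth, matching the stated log-free bound.
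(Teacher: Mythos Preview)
Your overall architecture --- split $f=f_1+f_2$, use the Lebesgue boundedness hypothesis on $f_1$, freeze $b_{B(a,r)}$ and split the far piece into $J_1=|b-b_{B(a,r)}|\,|S_\alpha f_2|$ and $J_2=|S_\alpha((b-b_{B(a,r)})f_2)|$ --- matches the paper exactly, and your treatment of $f_1$ and of $J_1$ is correct.

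The gap is in $J_2$, and it is precisely the point you flag as ``the main obstacle.'' You propose to bound
\[
\Bigl(\int_{B(a,2^{k+1}r)}|b(y)-b_{B(a,r)}|^{\frac{ps}{p-s}}\,w_1(y)^{-\frac{ps}{p-s}}\,dy\Bigr)^{\frac{p-s}{ps}}
\]
by $C\|b\|_*\,\|w_1^{-1}\|_{L^{ps/(p-s)}(B(a,2^{k+1}r))}$ via ``a weighted BMO estimate of Corollary~\ref{BMO3} type.'' That corollary, however, controls $\int_B|b-b_B|^p\,w$ only when the oscillation is anchored at the \emph{same} ball $B$ over which you integrate. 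Here the anchor is $b_{B(a,r)}$ while the integration is over the much larger $B(a,2^{k+1}r)$; the mismatch is exactly $|b_{B(a,2^{k+1}r)}-b_{B(a,r)}|$, which for a generic $b\in BMO$ (e.g.\ $b=\log|\cdot|$) is genuinely of size $\sim k\,\|b\|_*$. There is no reorganization that makes this term disappear: the ``naive three-ball decomposition'' you discard is not naive, it is forced.

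What actually happens in the paper is that they apply Lemma~\ref{BMO2} (not Corollary~\ref{BMO3}) at this step, pick up the factor $(1+\ln\tfrac{t}{r})$, and arrive at
\[
\|[b,S_\alpha]f\|_{L^{q,w_2^q}(B(a,r))}\le C\|b\|_*\,w_2^q(B(a,r))^{1/q}\int_{2r}^\infty\Bigl(1+\ln\tfrac{t}{r}\Bigr)\,w_2^q(B(a,t))^{-1/q}\|f\|_{L^{p,w_1^p}(B(a,t))}\,\frac{dt}{t}.
\]
The log-free inequality displayed in the statement is a misprint; the logarithmic version is what is proved and what is subsequently used (note that Theorem~\ref{Main-Comm-Riesz} imposes the condition \eqref{inequal-comm-riesz} with the factor $(1+\ln\tfrac{t}{r})$ precisely to absorb it). So: keep your decomposition, but in $J_2$ use Lemma~\ref{BMO2} and accept the logarithm.
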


\begin{proof}
Let $a\in \mathbb{R}^n$ and $r>0$, and we represent the function $f \in L_{\rm 
loc}^{p,w_1^p}$ as $f := f_1 + f_2$ where $f_1 :=
f\cdot \mathcal{X}_{B(a,2r)}$. Since  $f_1 \in L^{p,w_1^p}$ and $[b, S_\alpha]$ is bounded 
from $L^{p,w_1^p}$ to $L^{q,w_2^q}$, we find that
$$
\|[b, S_\alpha] f_1\|_{L^{q,w_2^q}(B(a,r))} \leq \|[b, S_\alpha ]f f_1\|_{L^{q,w_2^q}} \leq 
C \|b\|_*\|f_1\|_{L^{p,w_1^p}} = C \|b\|_*\|f\|_{L^{p,w_1^p}(B(a,2r))},
$$
and by Lemma \ref{f-1},
\begin{equation}\label{Comf1}
\|[b, S_\alpha] f_1\|_{L^{q,w_2^q}(B(a,r))} \leq C \|b\|_* w_2^q(B(a,r))^\frac1q 
\int_{2r}^\infty \|f\|_{L^{p,w_1^p}(B(a,t))} w_2^q(B(a,t))^{-\frac1q} \frac{dt}{t}.
\end{equation}
	
For $f_2$, by (\ref{commutator})
$$
|[b, S_\alpha] f_2 (x)| \leq C |b(x) - b_{B(a,r)}| |S_\alpha (f_2) (x)| + |S_\alpha 
((b_{B(a,r)}-b)(f))(x)|, \quad x \in B(a,r).
$$
	
We write
$$
J_1 (x) := |b(x) - b_{B(a,r)}| |S_\alpha (f_2) (x)|, \quad J_2 (x) := |S_\alpha 
((b_{B(a,r)}-b)(f))(x)|.
$$
We shall find the estimates for $J_1$ and $J_2$. As in Theorem \ref{est-sub-Riesz},
$$
J_1 (x) \leq C |b(x) - b_{B(a,r)}| \int_{2r}^{\infty} \|f\|_{L^{p,w_1^p}(B(a,t))} 
w_2^q(B(a,t))^{-\frac1q} \frac{dt}{t}.
$$
Since $w_2^s \in A_{p/s,q/s}$, we have that $w_2^q \in A_q$ and by Lemma \ref{BMO3}, it 
follows that
\begin{equation}\label{J1}
\|J_1\|_{L^{q,w^q}(B(a,r))} \leq C \|b\|_* w_2^q(B(a,r))^\frac1q \int_{2r}^\infty 
\|f\|_{L^{p,w_1^q}(B(a,t))} w_2^q(B(a,t))^{-\frac1q} \frac{dt}{t}
\end{equation}
	
By (\ref{sublinear}), we obtain that
$$
J_2 (x) \leq C \sum_{k=1}^\infty (2^{k+1}r)^{-\frac{n}s + \alpha} 
\left(\int_{B(a,2^{k+1}r)} |(b(y) - b_{B(a,r)})f(y)|^s\right)^\frac1s, \quad x \in B(a,r).
$$
H\"older inequality then implies that
$$
\left(\int_{B(a,2^{k+1}r)} |(b(y) - b_{B(a,r)})f(y)|^s\right)^\frac1s \leq 
\|f\|_{L^{p,w_1^p}(B(a,2^{k+1}r))} \left\|(b-b_{B(a,r)})w^{-1} \right\|_{L^{ps/(p-s)}
(B(a,2^{k+1}r))}.
$$
	
Thus, by the assumption that $w_1 \in A_\infty$ and Lemma \ref{f-1},
$$
\begin{aligned}
J_2 (x) & \leq C \sum_{k=1}^\infty (2^{k+1}r)^{-\frac{n}s + \alpha} 
\|f\|_{L^{p,w_1^p}(B(a,2^{k+1}r))} \left\| (b-b_{B(a,r)})w^{-1} 
\right\|_{L^{ps/(p-s)}(B(a,2^{k+1}r))} \\
	& = C \int_{2^{k+1}r}^{2^{k+2}r} \frac{dt}{t^{-\alpha+n+1}} 
 \|f\|_{L^{p,w_1^p}(B(a,2^{k+1}r))} \|(b-b_{B(a,r)}) 
 w_1^{-1}\|_{L^{\frac{ps}{p-s}}(B(a,2^{k+1}r))} \\
	& \leq C \int_{2r}^\infty \|f\|_{L^{p,w_1^p}(B(a,t))} \|
 (b-b_{B(a,r)})w_1^{-1}\|_{L^{\frac{ps}{p-s}}(B(a,t))} \frac{dt}{t^{-\alpha+n+1}}\\
	& \leq C \int_{2r}^\infty \|f\|_{L^{p,w_1^p} (B(a,t))} \left(\int_{B(a,t)} 
 |b(y) -b_{B(a,r)}|^{\frac{ps}{p-s}} w_1(y)^{-\frac{ps}{p-s}} dy\right)^{\frac{p-s}{ps}} 
 \frac{dt}{t^{-\alpha+n+1}}\\
	& = C \int_{2r}^\infty \|f\|_{L^{p,w_1^p}(B(a,t))} \|b\|_* \left(1 + \ln 
 \frac{t}r \right) \left(w_1^{-\frac{ps}{p-s}}(B(a,t))\right)^{\frac{p-s}{ps}} 
 \frac{dt}{t^{-\alpha+n+1}} \\
	& = C \|b\|_* \int_{2r}^{\infty} \left(1+ \ln \frac{t}r \right) 
 \|f\|_{L^{p,w_1^p}(B(a,t))} \|w_1^{-1}\|_{L^{\frac{ps}{p-s}}(B(a,t))} 
 \frac{dt}{t^{-\alpha+n+1}} \\
	& \leq C \|b\|_* \int_{2r}^\infty \left(1+\ln \frac{t}r\right) 
 \|f\|_{L^{p,w_1^p}(B(a,t))} w_2^q(B(a,t))^{-\frac1q} \frac{dt}{t}.
\end{aligned}
$$
It follows that
\begin{equation} \label{J-2}
\|J_2\|_{L^{q,w_2^q}(B(a,r))}\leq C w_2^q(B(a,r)) \int_{2r}^\infty \left(1+\ln 
\frac{t}r\right) \|f\|_{L^{p,w_1^p}(B(a,t))} w_2^q(B(a,t))^{-\frac1q} \frac{dt}{t}.
\end{equation}
By combining (\ref{J1}) and (\ref{J-2}), we then obtain
\begin{equation} \label{Comf2}
\|[b,S_\alpha]f_2\|_{L^{q,w_2^q}(B(a,r))} \leq C \|b\|_* \int_{2r}^\infty \left(1+\ln 
\frac{t}r\right) \|f\|_{L^{p,w_1^p}(B(a,t))} w_2^q(B(a,t))^{-\frac1q} \frac{dt}{t}.
\end{equation}
Now	(\ref{Comf1}) and (\ref{Comf2}) imply that
$$
\|[b,S_\alpha]f\|_{L^{q,w_2^q}(B(a,r))} \leq C \|b\|_* \int_{2r}^\infty \left(1+\ln 
\frac{t}{r}\right) \|f\|_{L^{p,w_1^p}(B(a,t))} w_2^q(B(a,t))^{-\frac1q} \frac{dt}{t},
$$
which proves the proposition.
\end{proof}

Now, we are ready to prove Theorem \ref{Main-Riesz}, Theorem \ref{Main-Comm-Riesz}, Theorem \ref{Main-Riesz-Mixed}, and 
Theorem \ref{Main-Comm-Riesz-Mixed}.

\begin{proof}[\textbf{Proof of Theorem \ref{Main-Riesz}}]
	By Lemma \ref{lemma-boundedness}, we have that for $(a,r) \in \mathbb{R}^n \times \mathbb{R}^n$ the following inequality holds.
	$$
	\begin{aligned}
		\int_r^{\infty} w_2^q(B(a,t))^{-\frac{1}{q}} \|f\|_{L^{p,w_1^p}(B(a,t))} \frac{dt}{t}
		\leq \|f\|_{\mathcal{M}_{\psi_1}^{p,w_1^p}} \int_r^{\infty} \frac{\inf_{t\leq s<\infty} \psi_1(a,s)w_1^p(B(a,s))^\frac1p}{w_2^q(B(a,s))^\frac1q} \frac{dt}{t}
	\end{aligned}
	$$
	By Proposition \ref{est-sub-Riesz}, assumptions, and the last inequality, we obtain that
	$$
	\begin{aligned}
		\|S_\alpha f\|_{\mathcal{M}_{\psi_2}^{q,w^q}} & = \sup_{(a,r) \in \mathbb{R}^n \times \mathbb{R}^+} \frac{1}{\psi_2 (a,r)} \frac{1}{w_2^q(B(a,r))^\frac1q} \|S_\alpha f\|_{L^{q,w_2^q}(B(a,r))} \\
		& \leq C \sup_{(a,r) \in \mathbb{R}^n \times \mathbb{R}^+} \frac{1}{\psi_2(a,r)} \int_{r}^\infty w_2^q(B(a,t))^{-\frac1q} \|f\|_{L^{p,w_1^p}(B(a,r))} \frac{dt}{t}\\
		& \leq C \|f\|_{\mathcal{M}_{\psi_1}^{p,w_1^p}} \sup_{(a,r)\in \mathbb{R}^n \times \mathbb{R}^+} \frac{1}{\psi_2(a,r)} \int_r^\infty \frac{\inf_{t \leq l < \infty} \psi_1 (a,l) w_1^p(B(a,l))^\frac1p}{w_2^q (B(a,t))^\frac1q} \frac{dt}{t}  \leq C \|f\|_{\mathcal{M}_{\psi_1}^{p,w_1^p}}
	\end{aligned}
	$$
	for $1<p<n/\alpha$. Meanwhile,
	$$
	\begin{aligned}
		\|S_\alpha f \|_{W\mathcal{M}_{\psi_2}^{q,w_2^q}} & = \sup_{(a,r) \in \mathbb{R}^n \times \mathbb{R}^+} \frac{1}{\psi_2(a,r)} \frac{1}{w_2^q(B(a,r))^\frac1q} \|S_\alpha f\|_{WL^{q,w_2^q} (B(a,r))} \\
		& \leq C \sup_{(a,r) \in \mathbb{R}^n \times \mathbb{R}^+} \frac{1}{\psi_2(a,r)} \int_r^\infty w_2^q(B(a,t))^{-\frac1q} \|f\|_{L^{1,w}(B(a,t))} \frac{dt}{t}\\
		& \leq C \|f\|_{\mathcal{M}_{\psi_1}^{1,w}} \sup_{(a,r)\in \mathbb{R}^n \times \mathbb{R}^+} \frac{1}{\psi_2(a,r)} \int_r^\infty \frac{\inf_{t \leq l < \infty} \psi_1 (a,l) w_1(B(a,l))}{w_2^q (B(a,r))^\frac1q} \frac{dt}{t} \leq C \|f\|_{\mathcal{M}_{\psi_1}^{1,w}}.
	\end{aligned}
	$$
	This proves the theorem.
\end{proof}

\begin{proof} [\textbf{Proof of Theorem \ref{Main-Comm-Riesz}}]
	By Lemma \ref{lemma-boundedness}, we have that for $(a,r) \in \mathbb{R}^n \times \mathbb{R}^n$ the following inequality holds.
	$$
	\begin{aligned}
		\int_r^{\infty} \left(1+\ln \frac{t}r \right) w_2^q(B(a,t))^{-\frac{1}{q}} & \|f\|_{L^{p,w_1^p}(B(a,t))} \frac{dt}{t} \\
		& \leq \|f\|_{\mathcal{M}_{\psi_1}^{p,w_1^p}} \int_r^{\infty} \left(1+\ln \frac{t}r \right) \frac{\inf_{t\leq s<\infty} \psi_1(a,s)w_1^p(B(a,s))^\frac1p}{w_2^q(B(a,s))^\frac1q} \frac{dt}{t}.
	\end{aligned}
	$$
	By Proposition \ref{est-comm-sub-riesz}, assumptions, and the last inequality, we obtain that
	$$
	\begin{aligned}
		& \|[b,S_\alpha] f\|_{\mathcal{M}_{\psi_2}^{q,w_2^q}}
		\\ & = \sup_{(a,r) \in \mathbb{R}^n \times \mathbb{R}^+} \frac{1}{\psi_2 (a,r)} \frac{1}{w_2^q(B(a,r))^\frac1q} \|[b,S_\alpha] f\|_{L^{q,w_2^q}(B(a,r))} \\
		& \leq C \sup_{(a,r) \in \mathbb{R}^n \times \mathbb{R}^+} \frac{1}{\psi_2(a,r)} \int_{r}^\infty \left(1+\ln \frac{t}r \right) w_2^q(B(a,t))^{-\frac1q} \|f\|_{L^{p,w_1^p}(B(a,r))} \frac{dt}{t}\\
		& \leq C \|f\|_{\mathcal{M}_{\psi_1}^{p,w_1^p}} \sup_{(a,r)\in \mathbb{R}^n \times \mathbb{R}^+} \frac{1}{\psi_2(a,r)} \int_r^\infty \left(1+\ln \frac{t}r \right) \frac{\inf_{t \leq l < \infty} \psi_1 (a,l) w_1^p(B(a,l))^\frac1p}{w_2^q (B(a,t))^\frac1q} \frac{dt}{t}  \\
		& \leq C \|f\|_{\mathcal{M}_{\psi_1}^{p,w_1^p}}.
	\end{aligned}
	$$
	This proves the theorem.
\end{proof}

\begin{proof}[\textbf{Proof of Theorem \ref{Main-Riesz-Mixed}}]
	One may replace $f(\cdot)$ by $f(\cdot, t)$ in Proposition \ref{est-sub-Riesz} to obtain
	$$
	\|S_\alpha f(\cdot, t)\|_{\mathcal{M}^{q,w_2^q}_{\psi_1}} \leq C \|f(\cdot, t)\|_{\mathcal{M}^{p,w_1^p}_{\psi_1}}, \quad 
f\in \mathcal{M}^{p,w_1^p}_{\psi_1}
	$$
	where $C>0$ is independent of $t$. Therefore, by Definition \ref{Mixed-wgwms} and Remark \ref{remark-mixed-morrey} we have the desired results of $S_\alpha$ on generalized weighted mixed-morrey spaces.
\end{proof}

\begin{proof}[\textbf{Proof of Theorem \ref{Main-Comm-Riesz-Mixed}}]
	One may replace $f(\cdot)$ by $f(\cdot, t)$ in Proposition \ref{est-comm-sub-riesz}, and obtain
	$$
	|[b, S_\alpha]f(\cdot, t)\|_{\mathcal{M}^{q,w_2^q}_{\psi_1}} \leq C \|b\|_* \|f(\cdot, t)\|_{\mathcal{M}^{p,w_1^p}_{\psi_1}}, 
\quad f\in \mathcal{M}^{p,w_1^p}_{\psi_1}
	$$
	where $C>0$ is independent of $t$. The conclusion then follows from Definition \ref{Mixed-wgwms} and Remark \ref{remark-mixed-morrey}.
\end{proof}

\section{Norm Estimates for Sublinear Operators Generated by Calder\`on-Zygmund Operators on 
Generalized Weighted Morrey Spaces}

We prove the boundedness for sublinear operators $S$ generated by Calder\`on-Zymgund 
operators on generalized weighted Morrey spaces and generalized weighted weak Morrey spaces. 
In addition, we prove the boundedness for commutator of the operator with BMO functions 
on generalized weighted Morrey spaces.

\begin{prop}\label{est-sub-calder}
Let $1 \leq p < \infty$ and $(w_1, w_2), w_2 \in A_{p/s}$, where $1\le s<p$ for $1<p<\infty$ 
and $s=1$ for $p=1$. If $S$ is bounded from $L^{p,w_1}$ to $L^{p,w_2}$ for $1<p<\infty$ and 
from $L^{1,w_1}$ to $WL^{1,w_2}$, then there exists a constant $C>0$ such that for every 
$(a,r) \in \mathbb{R}^n \times \mathbb{R}^+$ and $f \in L_{\rm loc}^{p,w_1}$,
$$
	\|S f\|_{L^{p,w_2}(B(a,r))} \leq C_1 w_2(B(a,r))^{\frac{1}{p}} \int_r^\infty w_2(B(a,t))^{-\frac{1}{p}} \|f\|_{L^{p,w_1}B(a,t)} \frac{dt}{t},
$$
for $1<p<\infty,$ and
	$$
	\|S f\|_{WL^{1,w_2}(B(a,r))} \leq C_2 w_2(B(a,r)) \int_r^\infty w_2(B(a,t))^{-1} \|f\|_{L^{1,w_1}B(a,t)} \frac{dt}{t}.
	$$
\end{prop}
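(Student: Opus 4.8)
The plan is to transcribe the proof of Proposition \ref{est-sub-Riesz} with $\alpha=0$ and $q=p$; the only real differences are bookkeeping, since the weight attached to the Morrey spaces is now $w_i$ itself rather than $w_i^p$, and the governing Muckenhoupt class is $A_{p/s}$ rather than $A_{p/s,q/s}$. Fix $(a,r)\in\mathbb{R}^n\times\mathbb{R}^+$ and split $f=f_1+f_2$ with $f_1:=f\cdot\mathcal{X}_{B(a,2r)}$.

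For the local part I would invoke the assumed boundedness of $S$ to get $\|Sf_1\|_{L^{p,w_2}(B(a,r))}\le\|Sf_1\|_{L^{p,w_2}}\le C\|f\|_{L^{p,w_1}(B(a,2r))}$ for $1<p<\infty$ (and the analogous weak-type bound $\|Sf_1\|_{WL^{1,w_2}(B(a,r))}\le C\|f\|_{L^{1,w_1}(B(a,2r))}$ for $p=1$), and then absorb $\|f\|_{L^{p,w_1}(B(a,2r))}$ into a tail integral exactly as in Proposition \ref{est-sub-Riesz}: insert the factor $1=c\,r^{n/s}\int_{2r}^\infty t^{-n/s}\,\frac{dt}{t}$, use monotonicity of $t\mapsto\|f\|_{L^{p,w_1}(B(a,t))}$, convert $r^{n/s}$ by the $\alpha=0$ instance of Lemma \ref{f-1} (which reads $r^{n/s}\le C\,w_2(B(a,r))^{1/p}\big(\int_{B(a,r)}w_2^{-s/(p-s)}\big)^{(p-s)/(sp)}$ for $1<p<\infty$, and $r^n\le C\,w_2(B(a,r))\,\|w_2^{-1}\|_{L^\infty(B(a,r))}$ for $p=1$, both being pure H\"older/power-mean facts), and finally apply $w_2\in A_{p/s}$ in its ``reverse'' form $w_2(B(a,t))^{1/p}\big(\int_{B(a,t)}w_2^{-s/(p-s)}\big)^{(p-s)/(sp)}\le C\,t^{n/s}$ to arrive at the integrand $\|f\|_{L^{p,w_1}(B(a,t))}\,w_2(B(a,t))^{-1/p}\,\frac{dt}{t}$.

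For the global part, since $S$ satisfies (\ref{sublinear-calder}) (the $\alpha=0$ case of (\ref{sublinear})), for $x\in B(a,r)$ one has $|Sf_2(x)|\le C\sum_{k\ge1}(2^{k+1}r)^{-n/s}\big(\int_{B(a,2^{k+1}r)}|f|^s\big)^{1/s}$; H\"older bounds the inner term by $\|f\|_{L^{p,w_1}(B(a,2^{k+1}r))}\big(\int_{B(a,2^{k+1}r)}w_1^{-s/(p-s)}\big)^{(p-s)/(sp)}$ (by $\|f\|_{L^{1,w_1}(B(a,2^{k+1}r))}\|w_1^{-1}\|_{L^\infty(B(a,2^{k+1}r))}$ when $p=1$). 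Replacing $(2^{k+1}r)^{-n/s}$ by $\int_{2^{k+1}r}^{2^{k+2}r}t^{-n/s}\,\frac{dt}{t}$, summing in $k$, and using $(w_1,w_2)\in A_{p/s}$ in the form $w_2(B(a,t))^{1/p}\big(\int_{B(a,t)}w_1^{-s/(p-s)}\big)^{(p-s)/(sp)}\le C\,t^{n/s}$ gives $|Sf_2(x)|\le C\int_{2r}^\infty\|f\|_{L^{p,w_1}(B(a,t))}\,w_2(B(a,t))^{-1/p}\,\frac{dt}{t}$ for every $x\in B(a,r)$; since the right side is $x$-independent, this yields the claimed bound for $\|Sf_2\|_{L^{p,w_2}(B(a,r))}$, and for $p=1$ the same for $\|Sf_2\|_{WL^{1,w_2}(B(a,r))}$ via $\|\cdot\|_{WL^{1,w_2}(B(a,r))}\le\|\cdot\|_{L^{1,w_2}(B(a,r))}\le w_2(B(a,r))\sup_{B(a,r)}|\cdot|$. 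Adding the two contributions and enlarging $\int_{2r}^\infty$ to $\int_r^\infty$ finishes the proof.

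The only place where care is actually required — hence what I expect to be the main obstacle — is the weight bookkeeping: one must check that the two ``reverse'' inequalities above are exactly what $w_2\in A_{p/s}$ and $(w_1,w_2)\in A_{p/s}$ become after raising their defining inequalities to the power $1/p$ and recalling $(p/s)'=p/(p-s)$ and $|B(a,t)|\approx t^n$ (the $p=1$ endpoint being the $L^\infty$ version of $A_1$), and one must keep track that the exponent in the $w_i^{-1}$ factor is $s/(p-s)$ here, not $ps/(p-s)$ as in Proposition \ref{est-sub-Riesz}, because the Morrey weight is $w_i$ and not $w_i^p$. Everything else is a faithful copy of the $S_\alpha$ argument.
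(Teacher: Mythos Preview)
Your proposal is correct and follows essentially the same approach as the paper's own proof: the same $f_1+f_2$ decomposition, the same use of Lemma \ref{f-1} (in its $\alpha=0$ form) together with $w_2\in A_{p/s}$ for the local piece, and the same H\"older-plus-$(w_1,w_2)\in A_{p/s}$ argument for the tail. Your observation that the dual-weight integrand is $w_i^{-s/(p-s)}$ here rather than $w_i^{-ps/(p-s)}$ is precisely the bookkeeping adjustment the paper makes (it records this quantity as $\|w_1^{-1/p}\|_{L^{ps/(p-s)}}$).
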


\begin{proof}
Let $a\in \mathbb{R}^n$ and $r>0$, and we write $f := f_1 + f_2$ where $f_1 := f\cdot 
\mathcal{X}_{B(a,2r)}$. Since $T$ is bounded from $L^{p,w_1}$ to $L^{p,w_2}$ and from 
$L^{1,w_1}$ to $WL^{1,w_2}$,
$$
\|S f_1\|_{L^{p,w_2}(B(a,r))} \leq \|S f_1\|_{L^{p,w_2}} \leq C \|f_1\|_{L^{p,w_1}} = C 
\|f\|_{L^{p,w_1}(B(a,2r))}
$$
for $1<p<\infty$ and
$$
\|S f_1\|_{WL^{1,w_2}(B(a,r))} \leq \|S f_1\|_{L^{1,w_2}} \leq C \|f_1\|_{L^{1,w_1}} = C 
\|f\|_{L^{1,w_1}(B(a,2r))}.
$$
Thus, by Lemma \ref{f-1} and the assumption that $w_2 \in A_{p/s}$, we have
$$
\begin{aligned}
	\|f\|_{L^{p,w_1}(B(a,2r))} & = C r^{\frac{n}s} \|f\|_{L^{p,w_1}(B(a,2r))} 
 \int_{2r}^\infty \frac{t}{t^{n/s+1}} \leq Cr^{\frac{n}{s}} \int_{2r}^\infty 
 \|f\|_{L^{p,w_1}(B(a,t))} \frac{dt}{t^{n/s+1}}\\
	& \leq w_2(B(a,r))^\frac1p \|w_2^{-1}\|_{L^{ps/(p-s)}(B(a,r))} \int_{2r}^\infty 
 \|f\|_{L^{p,w_1}(B(a,t))} \frac{dt}{t^{n/s+1}}\\
	& \leq w_2(B(a,r))^\frac1p \int_{2r}^\infty \|f\|_{L^{p,w_1}(B(a,t))} 
 \|w_2^{-1}\|_{L^{ps/(p-s)}(B(a,t))} \frac{dt}{t^{n/s+1}} \\
	& \leq w_2(B(a,r))^\frac1p \int_{2r}^\infty \|f\|_{L^{p,w_1}(B(a,t))} 
 w_2(B(a,t))^{-\frac1p} \frac{dt}{t}. \\
\end{aligned}
$$
Hence,
$$
\|S f_1\|_{L^{p,w_2}(B(a,r))} \leq C w_2(B(a,r))^\frac1p \int_{2r}^\infty 
\|f\|_{L^{p,w_1}(B(a,t))} w_2(B(a,t))^{-\frac1p} \frac{dt}{t}
$$
for $1<p<\infty$, and
$$
\|S f_1\|_{WL^{1,w_2}(B(a,r))} \leq w_2(B(a,r)) \int_{2r}^\infty \|f\|_{L^{1,w_1}(B(a,t))} 
w_2(B(a,t))^{-1} \frac{dt}{t}.
$$
	
For $f_2$, by the assumption,
$$
|S f_2 (x)| \leq C \sum_{k=1}^\infty \left(\frac{1}{|B(a,2^{k+1}r)|} 
\int_{B(a,2^{k+1}r)}|f(y)|^s dy\right)^\frac1s.
$$
First consider the case where $1<p<\infty$ and $1\le s<p$. Then, H\"older inequality implies 
that
$$
\begin{aligned}
	& \left(\int_{B(a,2^{k+1}r)} |f(y)|^s dy\right)^\frac1s = \left(\int_{B(a,2^{k+1}r)} 
 |f(y)|^s w_1 (y)^\frac{s}{p} w_1(y)^{-\frac{s}{p}} dy\right)^\frac1s\\
	& \leq \left(\int_{B(a,2^{k+1}r)} \left(|f(y)|^s\right)^{\frac{p}{s}} 
 \left(w_1(y)^\frac{s}{p}\right)^\frac{p}{s}\right)^{\left(
 \frac{s}{p}\right)\left(\frac1s\right)} \left(\int_{B(a,2^{k+1}r)}  
 \left(w_1(y)^{-\frac{s}{p}}\right)^\frac{p}{p-s}\right)^{\left(
 \frac{p-s}{p}\right)\left(\frac1s\right)}\\
	& = \left(\int_{B(a,2^{k+1}r)} |f(y)|^p w_1(y)\right)^\frac1p 
 \left(\int_{B(a,2^{k+1}r)}  w_1(y)^\frac{-s}{p-s}\right)^{\left(\frac{p-s}{ps}\right)}\\
	& = \|f\|_{L^{p,w_1}(B(a,2^{k+1}r))} \|w_1^{-{\frac1p}}
 \|_{L^{ps/(p-s)}(B(a,2^{k+1}r))}.
\end{aligned}
$$
Hence, for $x \in B(a,r)$ by the assumption $(w_1, w_2) \in A_{p/s}$ we obtain
$$
\begin{aligned}
	|S f_2 (x)| & \leq C \sum_{k=1}^\infty \frac{1}{|B(a,2^{k+1}r)|^{\frac1s}} 
 \|f\|_{L^{p,w_1}(B(a,2^{k+1}r))} \|w_1^{-{\frac1p}}\|_{L^{ps/(p-s)}(B(a,2^{k+1}r))} \\
	& \leq C \sum_{k=1}^\infty \left(2^{k+1}r\right)^{-\frac{n}{s}} 
 \|f\|_{L^{p,w_1}(B(a,2^{k+1}r))} \|w_1^{-{\frac1p}}\|_{L^{ps/(p-s)}(B(a,2^{k+1}r))} \\
	& = C \sum_{k=1}^\infty \int_{2^{k+1}r}^{2^{k+2}r} \frac{dt}{t^{+\frac{n}{s}+1}} 
 \cdot \|f\|_{L^{p,w_1}(B(a,2^{k+1}r))} \|w_1^{-{\frac1p}}\|_{L^{ps/(p-s)}(B(a,2^{k+1}r))} \\
	& = C \sum_{k=1}^\infty \int_{2^{k+1}r}^{2^{k+2}r} \|f\|_{L^{p,w_1}(B(a,t))} 
 \|w_1^{-{\frac1p}}\|_{L^{ps/(p-s)}(B(a,t))}  \frac{dt}{t^{\frac{n}{s}+1}}\\
	& \leq C \int_{2r}^\infty \|f\|_{L^{p,w_1}(B(a,t))} w_2(B(a,t))^{-\frac1p} 
 \frac{dt}{t}.
\end{aligned}
$$
Thus,
$$
	\|S f_2\|_{L^{p,w_2}(B(a,r))} \leq C w_2 (B(a,r))^\frac1p \int_{2r}^\infty 
 \|f\|_{L^{p,w_1}(B(a,t))} w_2(B(a,t))^{-\frac1p} \frac{dt}{t}.
$$
	
Next, for the case where $p=1$ and $s=1$, we have
$$
\begin{aligned}
	\left(\int_{B(a,2^{k+1}r)} |f(y)|^s dy\right)^\frac1s & = \int_{B(a,2^{k+1}r)} 
 |f(y)| w_1 (y) w_1(y)^{-1} dy\\
	& \leq \|f\|_{L^{1,w_1}(B(a,2^{k+1}r))} \|w_1^{-1}\|_{L^\infty(B(a,2^{k+1}r))}.
\end{aligned}
$$
For $x \in B(a,r)$ we obtain
$$
\begin{aligned}
	|S f_2 (x)| & \leq C \sum_{k=1}^\infty \frac{1}{|B(a,2^{k+1}r)|} 
 \|f\|_{L^{1,w_1}(B(a,2^{k+1}r))} \|w_1^{-1}\|_{L^\infty(B(a,2^{k+1}r))} \\
	& = C \sum_{k=1}^\infty \left(2^{k+1}r\right)^{-n} \|f\|_{L^{1,w_1}(B(a,2^{k+1}r))} 
 \|w_1^{-1}\|_{L^\infty (B(a,2^{k+1}r))} \\
	& = C \sum_{k=1}^\infty \int_{2^{k+1}r}^{2^{k+2}r} \frac{dt}{t^{n + 1}} \cdot 
 \|f\|_{L^{1,w_1}(B(a,2^{k+1}r))} \|w_1^{-1}\|_{L^\infty(B(a,2^{k+1}r))} \\
	& = C \sum_{k=1}^\infty \int_{2^{k+1}r}^{2^{k+2}r} \|f\|_{L^{1,w_1}(B(a,t))} 
 \|w_1^{-1}\|_{L^\infty(B(a,t))}  \frac{dt}{t^{n + 1}}\\
	& \leq C \int_{2r}^\infty \|f\|_{L^{1,w_1}(B(a,t))} w_2(B(a,t))^{-1} \frac{dt}{t},
\end{aligned}
$$
and so
$$
	\|S f_2\|_{WL^{1,w_2}(B(a,r))} \leq \|S_\alpha f_2\|_{L^{1,w_2}(B(a,r))}\leq 
 C w_2(B(a,r)) \int_{2r}^\infty \|f\|_{L^{1,w_1}(B(a,t))} w_2(B(a,t))^{-1} \frac{dt}{t}.
$$
Therefore,
$$
	\|S f\|_{L^{p,w_2}(B(a,r))} \leq C_1 w_2(B(a,r))^\frac1p \int_r^\infty 
 w_2(B(a,t))^{-\frac1p} \|f\|_{L^{p,w_1}B(a,t)} \frac{dt}{t},
$$
for $1<p<\infty,$ and
$$
	\|S f\|_{WL^{1,w_2}(B(a,r))} \leq C_2 w_2(B(a,r)) \int_r^\infty w_2(B(a,t))^{-1} 
 \|f\|_{L^{1,w_1}B(a,t)} \frac{dt}{t},
$$
as desired.
\end{proof}

\begin{prop}\label{est-comm-sub-calder}
Let $1 < p<\infty$, $(w_1, w_2) \in A_{p/s}$ and $w_2 \in A_{p/s}$ where $1\le s<p$, 
and $w_1 \in A_\infty$. If $S$ is bounded from $L^{p,w_1}$ to $L^{p,w_2}$, then there 
exists a constant $C>0$ such that
$$
\|[b, S]f\|_{L^{p,w_2}(B(a,r))} \leq C \|b\|_* w_2(B(a,r))^{\frac{1}{p}} \int_r^\infty w_2(B(a,t))^{-\frac{1}{p}} \|f\|_{L^{p,w_1}B(a,t)} \frac{dt}{t},
$$
for any $(a,r) \in \mathbb{R}^n \times \mathbb{R}^+$ and $f \in L_{\rm loc}^{p,w_1}$.
\end{prop}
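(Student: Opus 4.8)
The plan is to transcribe the proof of Proposition~\ref{est-comm-sub-riesz} to the case $\alpha=0$, where $q=p$ and the weights on $\mathbb{R}^n$ are $w_1,w_2$ rather than $w_1^p,w_2^p$, replacing the sublinear bound (\ref{sublinear}) by its $\alpha=0$ version (\ref{sublinear-calder}) and Proposition~\ref{est-sub-Riesz} by Proposition~\ref{est-sub-calder}. Fixing $(a,r)\in\mathbb{R}^n\times\mathbb{R}^+$, I would first split $f=f_1+f_2$ with $f_1:=f\cdot\mathcal{X}_{B(a,2r)}$. For the local piece I would use the assumed $L^{p,w_1}\to L^{p,w_2}$ boundedness of $[b,S]$ to get $\|[b,S]f_1\|_{L^{p,w_2}(B(a,r))}\le C\|b\|_*\|f\|_{L^{p,w_1}(B(a,2r))}$, and then Lemma~\ref{f-1} (with $q=p$) to absorb $r^{n/s}$ into $w_2(B(a,r))^{1/p}$ and trade $\|f\|_{L^{p,w_1}(B(a,2r))}$ for the tail integral $w_2(B(a,r))^{1/p}\int_{2r}^\infty \|f\|_{L^{p,w_1}(B(a,t))}\,w_2(B(a,t))^{-1/p}\,\frac{dt}{t}$, exactly as in the treatment of $f_1$ in Proposition~\ref{est-sub-calder}.

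For the global piece $f_2$ I would invoke (\ref{commutator}) to bound, for $x\in B(a,r)$,
$$
|[b,S]f_2(x)|\le J_1(x)+J_2(x),\qquad J_1(x):=|b(x)-b_{B(a,r)}|\,|S(f_2)(x)|,\quad J_2(x):=|S((b_{B(a,r)}-b)f)(x)|,
$$
and estimate the two terms separately. For $J_1$, the pointwise bound on $|S(f_2)(x)|$ for $x\in B(a,r)$ established in the proof of Proposition~\ref{est-sub-calder} is independent of $x$, so taking $L^{p,w_2}(B(a,r))$ norms reduces matters to estimating $\big(\int_{B(a,r)}|b-b_{B(a,r)}|^p w_2\big)^{1/p}$, which is $\le C\|b\|_* w_2(B(a,r))^{1/p}$ by Corollary~\ref{BMO3} (applicable since $w_2\in A_{p/s}\subset A_\infty$). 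For $J_2$, I would apply (\ref{sublinear-calder}) with $f$ replaced by $(b_{B(a,r)}-b)f$ and then H\"older's inequality with exponents $p/s$ and $p/(p-s)$, keeping the weight $w_1^{1/p}$ with $f$ and $w_1^{-1/p}$ with $b-b_{B(a,r)}$, to obtain for each $k$
$$
\Big(\int_{B(a,2^{k+1}r)}|(b-b_{B(a,r)})f|^s\Big)^{1/s}\le \|f\|_{L^{p,w_1}(B(a,2^{k+1}r))}\,\big\|(b-b_{B(a,r)})w_1^{-1/p}\big\|_{L^{ps/(p-s)}(B(a,2^{k+1}r))}.
$$
Lemma~\ref{BMO2} (applicable because $w_1\in A_\infty$) then bounds the last factor by $C\|b\|_*\big(1+\ln\frac{2^{k+1}r}{r}\big)\,\|w_1^{-1/p}\|_{L^{ps/(p-s)}(B(a,2^{k+1}r))}$, after which the pair condition $(w_1,w_2)\in A_{p/s}$ converts $(2^{k+1}r)^{-n/s}\|w_1^{-1/p}\|_{L^{ps/(p-s)}(B(a,2^{k+1}r))}$ into $w_2(B(a,2^{k+1}r))^{-1/p}$, as in Proposition~\ref{est-sub-calder}. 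Passing from the dyadic sum to an integral over $(2r,\infty)$ gives $\|J_2\|_{L^{p,w_2}(B(a,r))}\le C\|b\|_*w_2(B(a,r))^{1/p}\int_{2r}^\infty\big(1+\ln\frac{t}{r}\big)\|f\|_{L^{p,w_1}(B(a,t))}w_2(B(a,t))^{-1/p}\frac{dt}{t}$, and adding the $f_1$, $J_1$, $J_2$ contributions and enlarging $\int_{2r}^\infty$ to $\int_r^\infty$ completes the proof. As in Proposition~\ref{est-comm-sub-riesz}, the estimate naturally carries the factor $1+\ln\frac{t}{r}$ inside the integral; this is precisely the weight that condition (\ref{inequal-comm-calder}) in Theorem~\ref{Main-Comm-Calder} is designed to absorb.

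I expect the only delicate step to be the estimate for $J_2$: the BMO function must be controlled on the dilated balls $B(a,2^{k+1}r)$, which are far larger than $B(a,r)$, and it is exactly this that produces the logarithmic growth $1+\ln(2^{k+1}r/r)\sim k$ and forces the hypothesis $w_1\in A_\infty$, since Lemma~\ref{BMO2} is applied against a power of $w_1$. The remaining steps — the decomposition, the local estimate, the $J_1$ estimate, and the bookkeeping turning dyadic sums into $dt/t$ integrals — are a routine specialization of the corresponding steps in the proof of Proposition~\ref{est-comm-sub-riesz} for $\alpha>0$.
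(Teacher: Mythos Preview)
Your proposal is correct and follows essentially the same approach as the paper's own proof: the same $f=f_1+f_2$ decomposition, the same splitting of $[b,S]f_2$ via (\ref{commutator}) into the two pieces you call $J_1$ and $J_2$ (the paper's $I_1,I_2$), and the same key tools --- Proposition~\ref{est-sub-calder} for the pointwise bound on $|Sf_2|$, Corollary~\ref{BMO3} for $J_1$, Lemma~\ref{BMO2} for the logarithmic growth in $J_2$, and the $A_{p/s}$ pair condition to convert the $w_1^{-1/p}$ norm into $w_2(B)^{-1/p}$. Your observation that the argument naturally produces the extra factor $1+\ln(t/r)$ inside the integral --- missing from the displayed inequality in the statement but present in the paper's final line of proof and required for Theorem~\ref{Main-Comm-Calder} --- is exactly right.
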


\begin{proof}
Let $a\in \mathbb{R}^n$ and $r>0$, and we represent the function $f \in L_{\rm loc}^{p,w_1}$ 
as $f := f_1 + f_2$ where $f_1 :=f\cdot \mathcal{X}_{B(a,2r)}$. Since $[b, S]$ is bounded 
from $L^{p,w_1}$ to $L^{p,w_2}$, then
$$
\|[b, S] f_1\|_{L^{p,w_2}(B(a,r))} \leq \|[b, S] f_1\|_{L^{p,w_2}} \leq C \|b\|_*\|f_1\|_{L^{p,w_1}} = C \|b\|_*\|f\|_{L^{p,w_1}(B(a,2r))}
$$
and Lemma \ref{f-1} implies that
\begin{equation}\label{Comf-Calder-1}
\|[b, S] f_1\|_{L^{p,w_2}(B(a,r))} \leq C \|b\|_* w_2(B(a,r))^\frac1p \int_{2r}^\infty 
\|f\|_{L^{p,w_1}(B(a,t))} w_2(B(a,t))^{-\frac1p} \frac{dt}{t}.
\end{equation}
	
On the other hand, by (\ref{commutator}),
$$
|[b, S] f_2 (x)| \leq C |b(x) - b_{B(a,r)}| |S (f_2) (x)| + |S ((b_{B(a,r)}-b)(f_2)(x)|, 
\quad x \in B(a,r).
$$
	
As for the sublinear operator generated by Riesz potential, we write
$$
I_1 (x) := |b(x) - b_{B(a,r)}| |S (f_2) (x)|, \quad I_2 (x) := |S ((b_{B(a,r)}-b)(f_2))(x)|.
$$
As in Theorem \ref{est-sub-calder}, we obtain that
$$
I_1 (x) \leq C |b(x) - b_{B(a,r)}| \int_{2r}^{\infty} \|f\|_{L^{p,w_1}(B(a,t))} 
w_2(B(a,t))^{-\frac1p} \frac{dt}{t}.
$$
The assumption $w_2^s \in A_{p/s}$ implies $w_2 \in A_p$ and by Lemma \ref{BMO3}, it follows 
that
\begin{equation}\label{I1}
\|I_1\|_{L^{p,w_2}(B(a,r))} \leq C \|b\|_* w_2(B(a,r))^\frac1p \int_{2r}^\infty 
\|f\|_{L^{p,w_1}(B(a,t))} w_2(B(a,t))^{-\frac1p} \frac{dt}{t}
\end{equation}
	
By (\ref{sublinear}), we have the following inequality.
$$
I_2 (x) \leq C \sum_{k=1}^\infty (2^{k+1}r)^{-\frac{n}s + \alpha} \left(\int_{B(a,2^{k+1}r)} 
|(b(y) - b_{B(a,r)})f(y)|^s dy\right)^\frac1s, \quad x \in B(a,r).
$$
H\"older inequality then implies that
$$
\left(\int_{B(a,2^{k+1}r)} |(b(y) - b_{B(a,r)})f(y)|^sdy\right)^\frac1s \leq C 
\|f\|_{L^{p,w_1}(B(a,2^{k+1}r))} \left\| (b-b_{B(a,r)})w^{-\frac1p} 
\right\|_{L^{ps/(p-s)}(B(a,2^{k+1}r))}.
$$
	
Thus, by the assumption that $w_1 \in A_\infty$, Lemma \ref{BMO3}, and Lemma \ref{f-1},
$$
\begin{aligned}
I_2 (x) & \leq C \sum_{k=1}^\infty (2^{k+1}r)^{-\frac{n}s} \|f\|_{L^{p,w_1}(B(a,2^{k+1}r))} 
\left\| (b-b_{B(a,r)})w^{-\frac1p} \right\|_{L^{ps/(p-s)}(B(a,2^{k+1}r))} \\
	& = C \sum_{k=1}^\infty \int_{2^{k+1}r}^{2^{k+2}r} \frac{dt}{t^{n+1}} \|f\|_{L^{p,w_1}(B(a,2^{k+1}r))}
\|(b-b_{B(a,r)}) w_1^{-\frac1p}\|_{L^{\frac{ps}{p-s}}(B(a,2^{k+1}r))} \\
	& \leq C \int_{2r}^\infty \|f\|_{L^{p,w_1}(B(a,t))} \|(b-b_{B(a,r)})
w_1^{\frac1p}\|_{L^{\frac{ps}{p-s}}(B(a,t))} \frac{dt}{t^{n+1}}\\
	& \leq C \int_{2r}^\infty \|f\|_{L^{p,w_1} (B(a,t))} \left(\int_{B(a,t)} 
|b(y)-b_{B(a,r)}|^{\frac{ps}{p-s}} w_1(y)^{-\frac{s}{p-s}} dy\right)^{\frac{p-s}{ps}} 
\frac{dt}{t^{n+1}}\\
	& = C \int_{2r}^\infty \|f\|_{L^{p,w_1}(B(a,t))} \|b\|_* \left(1 + \ln \frac{t}r \right) \left(w_1^{-\frac{s}{p-s}}(B(a,t))\right)^{\frac{p-s}{ps}} \frac{dt}{t^{n+1}} \\
	& = C \|b\|_* \int_{2r}^{\infty} \left(1+ \ln \frac{t}r \right) \|f\|_{L^{p,w_1}(B(a,t))} \|w_1^{-1}\|_{L^{\frac{s}{p-s}}(B(a,t))} \frac{dt}{t^{-\alpha+n+1}} \\
	& \leq C \|b\|_* \int_{2r}^\infty \left(1+\ln \frac{t}r\right) \|f\|_{L^{p,w_1}(B(a,t))} w_2(B(a,t))^{-\frac1p} \frac{dt}{t}
\end{aligned}
$$
which follows that
\begin{equation} \label{I-2}
	\|I_2\|_{L^{p,w_2}(B(a,r))}\leq C w_2(B(a,r)) \int_{2r}^\infty \left(1+\ln \frac{t}r\right) \|f\|_{L^{p,w_1}(B(a,t))} w_2(B(a,t))^{-\frac1p} \frac{dt}{t}.
\end{equation}
By combining (\ref{I1}) and (\ref{I-2}), we then obtain
\begin{equation} \label{Comf-Calder-2}
\|[b,S]f_2\|_{L^{p,w_2}(B(a,r))} \leq C \|b\|_* \int_{2r}^\infty \left(1+\ln 
\frac{t}r\right) \|f\|_{L^{p,w_1} (B(a,t))} w_2(B(a,t))^{-\frac1p} \frac{dt}{t}.
\end{equation}
	
Finally, (\ref{Comf-Calder-1}) and (\ref{Comf-Calder-2}) imply that
$$
\|[b,S]f\|_{L^{p,w_2}(B(a,r))} \leq C \|b\|_* \int_{2r}^\infty \left(1+\ln \frac{t}
{r}\right) \|f\|_{L^{p,w_1}(B(a,t))} w_2(B(a,t))^{-\frac1p} \frac{dt}{t}
$$
which proves the proposition.
\end{proof}

\begin{prop}\label{weak-est-sub-calder}
Let $(w_1, w_2) \in A_1$, $w_2 \in A_1$, and $w_1 \in A_\infty$. If $S$ satisfies 
(\ref{weak-commutator-calderon}), then there exists a constant $C>0$ such that for any 
$\sigma>0,$ $(a,r) \in \mathbb{R}^n \times \mathbb{R}^+$, and $f$,
$$
\|[b,S](f)\|_{WL^{1,w_2}} \leq C \sigma\|b\|_* w_2(B(a,r)) \int_{2r}^\infty \left(1 + 
\ln \frac{t}{r}\right) \left\| \Phi\left(\frac{|f|}{\sigma}\right) \right\|_{L \log 
L^{w_1}(B(a,t))} \frac{w_1B(a,t)}{w_2(B(a,t))}\frac{dt}{t}.
$$
\end{prop}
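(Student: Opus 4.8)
The plan is to run the dyadic decomposition from the proofs of Propositions~\ref{est-sub-calder} and~\ref{est-comm-sub-calder}, now at a fixed level $\sigma$ and with the local $L^{1,w}$-quantities replaced by their $L\log L^{w}$-analogues. Fix $(a,r)\in\mathbb{R}^{n}\times\mathbb{R}^{+}$ and $\sigma>0$, split $f=f_{1}+f_{2}$ with $f_{1}:=f\cdot\mathcal{X}_{B(a,2r)}$, and use $\{|[b,S]f|>\sigma\}\subseteq\{|[b,S]f_{1}|>\sigma/2\}\cup\{|[b,S]f_{2}|>\sigma/2\}$ to reduce to bounding $\sigma\,w_{2}(\{x\in B(a,r):|[b,S]f_{i}(x)|>\sigma/2\})$ for $i=1,2$ by the right-hand side of the claim (this being the meaning of the displayed inequality, with the weak norm localized to $B(a,r)$ and read at height $\sigma$). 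For $i=1$ I would apply the hypothesis~(\ref{weak-commutator-calderon}) at level $\sigma/2$, together with the doubling property $\Phi(2t)\le C\Phi(t)$, to get $w_{2}(\{|[b,S]f_{1}|>\sigma/2\})\le C\int_{B(a,2r)}\Phi(|f|/\sigma)\,w_{1}\,dx$; by~(\ref{RelL-Log}) this is $\le C\,w_{1}(B(a,2r))\,\|\Phi(|f|/\sigma)\|_{L\log L^{w_{1}}(B(a,2r))}$, and using the doubling of $w_{1},w_{2}$ (both in $A_{1}\subseteq A_{\infty}$), the quasi-monotonicity of $t\mapsto\|\Phi(|f|/\sigma)\|_{L\log L^{w_{1}}(B(a,t))}$ (from the convexity of $\Phi$ and $\Phi(0)=0$), and $1+\ln(t/r)\ge1$, one absorbs this into $w_{2}(B(a,r))\int_{2r}^{\infty}(1+\ln\tfrac{t}{r})\tfrac{w_{1}(B(a,t))}{w_{2}(B(a,t))}\|\Phi(|f|/\sigma)\|_{L\log L^{w_{1}}(B(a,t))}\tfrac{dt}{t}$ by comparing with $t\in(2r,4r)$.

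For $i=2$ the point is that $x\in B(a,r)$ lies outside the support of $f_{2}$, so~(\ref{commutator}) applied to $f_{2}$ gives $|[b,S]f_{2}(x)|\le I_{1}(x)+I_{2}(x)$ with $I_{1}=|b-b_{B(a,r)}|\,|Sf_{2}|$ and $I_{2}=|S((b_{B(a,r)}-b)f_{2})|$. For $I_{1}$: by~(\ref{sublinear-calder}) with $s=1$, $|Sf_{2}(x)|\le C\sum_{k\ge1}(2^{k+1}r)^{-n}\int_{B(a,2^{k+1}r)}|f|\,dy$; I would bound $|f|\le\sigma\Phi(|f|/\sigma)$, then $\int_{B}|f|\le\|f\|_{L^{1,w_{1}}(B)}\|w_{1}^{-1}\|_{L^{\infty}(B)}$, then invoke~(\ref{RelL-Log}) and the two-weight condition $(w_{1},w_{2})\in A_{1}$ (which gives $\|w_{1}^{-1}\|_{L^{\infty}(B)}\le C|B|/w_{2}(B)$), reaching $(2^{k+1}r)^{-n}\int_{B(a,2^{k+1}r)}|f|\,dy\le C\sigma\tfrac{w_{1}(B(a,2^{k+1}r))}{w_{2}(B(a,2^{k+1}r))}\|\Phi(|f|/\sigma)\|_{L\log L^{w_{1}}(B(a,2^{k+1}r))}$; summing in $k$ and converting the dyadic sum into $\int_{2r}^{\infty}\cdots\tfrac{dt}{t}$ (exactly as in the earlier propositions) shows $|Sf_{2}(x)|\le C\sigma A$ with $A:=\int_{2r}^{\infty}\tfrac{w_{1}(B(a,t))}{w_{2}(B(a,t))}\|\Phi(|f|/\sigma)\|_{L\log L^{w_{1}}(B(a,t))}\tfrac{dt}{t}$ independent of $x$, so $\|I_{1}\|_{WL^{1,w_{2}}(B(a,r))}\le\|I_{1}\|_{L^{1,w_{2}}(B(a,r))}=C\sigma A\int_{B(a,r)}|b-b_{B(a,r)}|\,w_{2}\,dx\le C\sigma\|b\|_{*}w_{2}(B(a,r))A$ by Corollary~\ref{BMO3}, which is dominated by the asserted bound since $1+\ln(t/r)\ge1$.

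The term $I_{2}$ is the crux. I would apply~(\ref{sublinear-calder}) with $s=1$ to $(b_{B(a,r)}-b)f_{2}$, obtaining $I_{2}(x)\le C\sum_{k\ge1}(2^{k+1}r)^{-n}\int_{B(a,2^{k+1}r)}|b(y)-b_{B(a,r)}|\,|f(y)|\,dy$, split $|b-b_{B(a,r)}|\le|b-b_{B(a,2^{k+1}r)}|+|b_{B(a,2^{k+1}r)}-b_{B(a,r)}|$ and bound the second difference by $C(k+1)\|b\|_{*}$ via Lemma~\ref{BMO1} (for $t\in(2^{k+1}r,2^{k+2}r)$ this is $\le C(1+\ln(t/r))\|b\|_{*}$, which is how the logarithmic factor enters). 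For the part carrying $|b-b_{B(a,2^{k+1}r)}|$ I would use $|f|\le\sigma\Phi(|f|/\sigma)$, then $\int_{B}|b-b_{B}|\Phi(|f|/\sigma)\,dy\le\|w_{1}^{-1}\|_{L^{\infty}(B)}\int_{B}|b-b_{B}|\Phi(|f|/\sigma)\,w_{1}\,dy$, and then the weighted generalized H\"older inequality~(\ref{GenHolder}) with the complementary pair $(L\log L^{w_{1}},\exp L^{w_{1}})$ together with $\|b-b_{B}\|_{\exp L^{w_{1}}(B)}\le C\|b\|_{*}$ (Lemma~\ref{Zhang}) to get $\le C\|b\|_{*}\|w_{1}^{-1}\|_{L^{\infty}(B)}w_{1}(B)\|\Phi(|f|/\sigma)\|_{L\log L^{w_{1}}(B)}$; the part with the constant $b_{B(a,2^{k+1}r)}-b_{B(a,r)}$ is handled by~(\ref{RelL-Log}) directly. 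After using $(w_{1},w_{2})\in A_{1}$ to replace $\|w_{1}^{-1}\|_{L^{\infty}(B)}$ by $C|B|/w_{2}(B)$ and cancelling $(2^{k+1}r)^{-n}|B(a,2^{k+1}r)|\le C$, the $k$-th term is $\le C\sigma\|b\|_{*}(1+\ln 2^{k+1})\tfrac{w_{1}(B(a,2^{k+1}r))}{w_{2}(B(a,2^{k+1}r))}\|\Phi(|f|/\sigma)\|_{L\log L^{w_{1}}(B(a,2^{k+1}r))}$; summing in $k$ and passing to the integral over $(2r,\infty)$ yields $I_{2}(x)\le C\sigma\|b\|_{*}\int_{2r}^{\infty}(1+\ln\tfrac{t}{r})\tfrac{w_{1}(B(a,t))}{w_{2}(B(a,t))}\|\Phi(|f|/\sigma)\|_{L\log L^{w_{1}}(B(a,t))}\tfrac{dt}{t}$, again independent of $x$, whence $\|I_{2}\|_{WL^{1,w_{2}}(B(a,r))}\le\|I_{2}\|_{L^{1,w_{2}}(B(a,r))}\le w_{2}(B(a,r))\,I_{2}$ has the required form. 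Adding the contributions of $f_{1}$, $I_{1}$ and $I_{2}$ then gives the proposition.

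I expect the estimate for $I_{2}$ to be the main obstacle: one has to linearise the weight through $|f|\le\sigma\Phi(|f|/\sigma)$ so that the datum $\Phi(|f|/\sigma)$ can be produced; extract $b-b_{B(a,r)}$ from the local $L^{1}$-integral via the weighted generalized H\"older inequality into an $\exp L^{w_{1}}$-norm controlled by $\|b\|_{*}$, first recentring $b$ at $b_{B(a,2^{k+1}r)}$ so that Lemma~\ref{Zhang} applies on the correct ball (this recentring being the source of the factor $1+\ln(t/r)$); and turn the dyadic sum into the integral $\int_{2r}^{\infty}(\cdots)\tfrac{dt}{t}$ by using the doubling of $w_{1},w_{2}$ and the two-weight bound $\|w_{1}^{-1}\|_{L^{\infty}(B)}\lesssim|B|/w_{2}(B)$. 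The contributions of $f_{1}$ and $I_{1}$ are routine adaptations of Propositions~\ref{est-sub-calder} and~\ref{est-comm-sub-calder}.
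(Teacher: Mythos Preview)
Your proposal is correct and follows essentially the same strategy as the paper: split $f=f_{1}+f_{2}$, handle $f_{1}$ via the hypothesis~(\ref{weak-commutator-calderon}), and for $f_{2}$ use~(\ref{commutator}) to split into the $I_{1}$ and $I_{2}$ pieces, controlling $I_{1}$ through Corollary~\ref{BMO3} and $I_{2}$ through the generalized H\"older inequality~(\ref{GenHolder}), Lemma~\ref{Zhang}, Lemma~\ref{BMO1}, and the two-weight $A_{1}$ condition. The only organizational difference is that the paper first converts the dyadic sums into integrals while keeping the (genuinely monotone) quantities $\|\Phi(|f|/\sigma)\|_{L^{1,w_{1}}(B(a,t))}$, and only afterwards applies~(\ref{RelL-Log}) under the integral to produce the $L\log L^{w_{1}}$-norms and the factor $w_{1}(B(a,t))$; you instead pass to the $L\log L^{w_{1}}$-norm first and then rely on its quasi-monotonicity under doubling of $w_{1}$ (which is indeed valid, via convexity of $\Phi$ and $\Phi(0)=0$) to carry out the sum-to-integral conversion. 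Both routes are correct; the paper's ordering avoids having to justify that quasi-monotonicity step explicitly.
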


\begin{proof}
Let $a\in \mathbb{R}^n$ and $r>0$, and we represent the function $f$ as $f := f_1 + f_2$ 
where $f_1 := f\cdot \mathcal{X}_{B(a,2r)}$. We then have that
$$
\begin{aligned}
	& w\left(\left\{ x\in B(a,r) : |[b,S](f)(x)|>\sigma \right\}\right) \leq \\
	& w\left(\left\{ x\in B(a,r) : |[b,S](f_1)(x)|>\frac{\sigma}{2} \right\}\right) + 
 w\left(\left\{ x\in B(a,r) : |[b,S](f_2)(x)|>\frac{\sigma}{2} \right\}\right) := M_1 + M_2
\end{aligned}
$$
Since $S$ satisfies (\ref*{weak-commutator-calderon}), by (\ref{GenHolder}) and Lemma 
\ref{f-1} we have that
$$
\begin{aligned}
	& M_1 \leq C \int_{\mathbb{R}^n} \Phi \left(\frac{2|f_1(y)|}{\sigma}\right) w_1 (x) 
 dx \leq C \int_{\mathbb{R}^n} \Phi \left(\frac{2|f_1(y)|}{\sigma}\right) w_1 (x) dx \\
	& \leq C\int_{B(a,2r)} \Phi \left(\frac{2|f_1(y)|}{\sigma}\right) w_1 (x) dx
	\leq  C w_2(B(a,2r)) \int_{2r}^\infty \left\| \Phi\left(\frac{|f|}{\sigma}\right) 
 \right\|_{L^{1,w}(B(a,t))} w_2(B(a,t))^{-1} \frac{dt}{t} \\
	& \leq C w_2(B(a,r)) \int_{2r}^\infty \left\| \Phi\left(\frac{|f|}{\sigma}\right)
 \right\|_{L \log L^{w_1}(B(a,t))} \frac{w_1B(a,t)}{w_2(B(a,t))} \frac{dt}{t}.
\end{aligned}
$$
Hence,
\begin{equation}\label{weak-Comf-Calder-1}
	\|[b, S] f_1\|_{WL^{1,w_2}(B(a,r))} \leq C \sigma \|b\|_* w_2(B(a,r)) 
 \int_{2r}^\infty \left\| \Phi\left(\frac{|f|}{\sigma}\right) \right\|_{L \log 
 L^{w_1}(B(a,t))} \frac{w_1B(a,t)}{w_2(B(a,t))} \frac{dt}{t}.
\end{equation}
	
By (\ref{commutator}),
$$
\begin{aligned}
	& w_2\left(\left\{x\in B(a,r): |[b,S](f_2)(x)|> \frac{\sigma}{2}\right\}\right) \leq 
 w_2\left(\left\{x\in B(a,r): |b(x) - b_{B(a,r)}|\cdot|S(f_2)(x)|> 
 \frac{\sigma}{4}\right\}\right) \\
	& + w_2\left(\left\{x\in B(a,r): |S((b_{B(a,r)} - b)(f_2))(x)|> 
 \frac{\sigma}{4}\right\}\right) := M_1' + M_2'.
\end{aligned}
$$
Note that as in Theorem \ref{est-sub-calder}, by the fact that $t \leq \Phi(t)$ and 
(\ref{RelL-Log}), we have
$$
\begin{aligned}
	& \frac1\sigma |S(f_2)(x)| \leq C \sum_{k=1}^\infty \frac{1}{|2^{k+1}B(a,r)|} 
 \int_{2^{k+1}B(a,r)} \left|\frac{f(y)}{\sigma}\right| dy \leq C \int_{2r}^\infty 
 \left\|\frac{f}{\sigma}\right\|_{L^{1,w}(B(a,t))} w_2(B(a,t))^{-1} \frac{dt}{t} \\
	& \leq C \int_{2r}^\infty \left\|\Phi\left(\frac{f}{\sigma}\right)\right
 \|_{L^{1,w}(B(a,t))} w_2(B(a,t))^{-1} \frac{dt}{t} \leq C \int_{2r}^\infty \left\| 
 \Phi\left(\frac{|f|}{\sigma}\right) \right\|_{L \log L^{w_1}(B(a,t))} \frac{w_1B(a,t)}
 {w_2(B(a,t))} \frac{dt}{t}.
\end{aligned}
$$
For $M_1'$, the last inequality implies that
$$
\begin{aligned}
   M_1' & \leq \frac1\sigma \int_{B(a,r)} |b(x)-b_{B(a,r)}| \cdot |S(f_2)(x)|w_2(x) dx \\
	& \leq C \left(\int_{B(a,r)} |b(x)-b_{B(a,r)}| w_2(x) dx\right)  \int_{2r}^\infty 
 \left\| \Phi\left(\frac{|f|}{\sigma}\right) \right\|_{L \log L^{w_1}(B(a,t))} 
 \frac{w_1B(a,t)}{w_2(B(a,t))} \frac{dt}{t} \\
	& \leq C \|b\|_* w_2(B(a,r))\int_{2r}^\infty \left\| \Phi\left(\frac{|f|}
 {\sigma}\right) \right\|_{L \log L^{w_1}(B(a,t))} \frac{w_1B(a,t)}{w_2(B(a,t))} \frac{dt}{t}
\end{aligned}
$$
For $M_2'$, by the assumption that $(w_1, w_2) \in A_1$, Lemma \ref{Zhang}, and the fact 
$t \leq \Phi(t)$, we have by (\ref{GenHolder}) that
$$
\begin{aligned}
	& M_2' \leq C \frac1\sigma \int_{B(a,r)} |S((b_{B(a,r)}-b)f_2)(x)| w_2(x) dx \\
	&\leq w_2(B(a,r))
	\frac1\sigma  \sum_{k=1}^\infty \frac{1}{|2^{k+1}B(a,r)|} \int_{2^{k+1}B(a,r)} 
 |(b_{B(a,r)}-b(x))f(x)| dy \\
	& \leq C w_2(B(a,r)) \frac1\sigma \int_{2r}^\infty \|(b_{B(a,r)}-b)f
 \|_{L^{1,w_1}(B(a,t))} \|w_1^{-1}\|_{L^\infty (B(a,t))} \frac{dt}{t^{n+1}} \\
	& \leq C w_2(B(a,r)) \int_{2r}^\infty \left\|(b_{B(a,r)}-b)\frac{f}
 {\sigma}\right\|_{L^{1,w_1}(B(a,t))}  w_2(B(a,t))^{-1} \frac{dt}{t} \\
	& \leq C w_2(B(a,r)) \int_{2r}^\infty \left\|(b_{B(a,t)}-b)\frac{f}
 {\sigma}\right\|_{L^{1,w_1}(B(a,t))}  w_2(B(a,t))^{-1} \frac{dt}{t} \\
	& + C w_2(B(a,r)) \int_{2r}^\infty \left\|(b_{B(a,r)}-b_{B(a,t)})\frac{f}
 {\sigma}\right\|_{L^{1,w_1}(B(a,t))}  w_2(B(a,t))^{-1} \frac{dt}{t} \\
	& \leq C w_2(B(a,r))\int_{2r}^\infty \left\| \Phi\left(\frac{|f|}{\sigma}\right) 
 \right\|_{L \log L^{w_1}(B(a,t))} \left\|b-b_{B(a,t)}\right\|_{\exp L^{w_1}(B(a,t))} \frac{w_1B(a,t)}{w_2(B(a,t))} \frac{dt}{t}\\
	& + C w_2(B(a,r))\int_{2r}^\infty (b_{B(a,r)}-b_{B(a,t)}) \left\|\Phi\left(\frac{|f|}
 {\sigma}\right)\right\|_{L^{1,w_1}(B(a,t))} w_2(B(a,t))^{-1}\frac{dt}{t}\\
	& \leq C \|b\|_* w_2(B(a,r))\int_{2r}^\infty \left\| \Phi\left(\frac{|f|}
 {\sigma}\right) \right\|_{L \log L^{w_1}(B(a,t))} \frac{w_1B(a,t)}{w_2(B(a,t))} \frac{dt}{t} \\
	& + C \|b\|_* w_2(B(a,r))\int_{2r}^\infty \left(\ln \frac{t}{r}\right) \left\| 
 \Phi\left(\frac{|f|}{\sigma}\right) \right\|_{L \log L^{w_1}(B(a,t))} \frac{w_1B(a,t)}{w_2(B(a,t))}\frac{dt}{t} \\
	& \leq C \|b\|_* w_2(B(a,r)) \int_{2r}^\infty \left(1 + \ln \frac{t}{r}\right) 
 \left\| \Phi\left(\frac{|f|}{\sigma}\right) \right\|_{L \log L^{w_1}(B(a,t))} \frac{w_1B(a,t)}{w_2(B(a,t))}\frac{dt}{t}.
\end{aligned}
$$
Therefore,
$$
\begin{aligned}
	&w_2\left(\left\{x\in B(a,r): |[b,S](f_2)(x)|> \frac{\sigma}{2}\right\}\right) \\
	&\leq C \|b\|_* w_2(B(a,r)) \int_{2r}^\infty \left(1 + \ln \frac{t}{r}\right) 
 \left\| \Phi\left(\frac{|f|}{\sigma}\right) \right\|_{L \log L^{w_1}(B(a,t))} 
 \frac{w_1B(a,t)}{w_2(B(a,t))}\frac{dt}{t}
\end{aligned}
$$
and
\begin{equation}\label{weak-Comf-Calder-2}
\|[b,S](f_2)\|_{WL^{1,w_2}} \leq C \sigma\|b\|_* w_2(B(a,r)) \int_{2r}^\infty 
 \left(1 + \ln \frac{t}{r}\right) \left\| \Phi\left(\frac{|f|}{\sigma}\right) \right\|_{L 
 \log L^{w_1}(B(a,t))} \frac{w_1B(a,t)}{w_2(B(a,t))}\frac{dt}{t}
\end{equation}
By combining (\ref{weak-Comf-Calder-1}) and (\ref{weak-Comf-Calder-2}), we obtain that
$$
\|[b,S](f)\|_{WL^{1,w_2}} \leq C \sigma\|b\|_* w_2(B(a,r)) \int_{2r}^\infty \left(1 + \ln 
\frac{t}{r}\right) \left\| \Phi\left(\frac{|f|}{\sigma}\right) \right\|_{L \log 
L^{w_1}(B(a,t))} \frac{w_1B(a,t)}{w_2(B(a,t))}\frac{dt}{t}
$$
and this proves the proposition.
\end{proof}

Now, we shall prove Theorem \ref{Main-Calder}, Theorem \ref{Main-Comm-Calder}, Theorem 
\ref{Main-Calder-Weak}, Theorem \ref{Main-Calder-Mixed}, Theorem 
\ref{Main-Comm-Calder-Mixed}, and Theorem \ref{Main-Comm-Calder-Mixed-Weak}.

\begin{proof}[\textbf{Proof of Theorem \ref{Main-Calder}}]
By Lemma \ref{lemma-boundedness} the following inequality holds.
$$
\begin{aligned}
	\int_r^{\infty} w_2(B(a,t))^{-\frac{1}{p}} \|f\|_{L^{p,w_1}(B(a,t))} \frac{dt}{t}
	\leq \|f\|_{\mathcal{M}_{\psi_1}^{p,w_1}} \int_r^{\infty} \frac{\inf_{t\leq 
 l<\infty} \psi_1(a,l)w_1(B(a,l))^\frac1p}{w_2(B(a,t))^\frac1p} \frac{dt}{t}
\end{aligned}
$$
The last theorem, assumptions, and the last inequality imply that
$$
\begin{aligned}
\|S f\|_{\mathcal{M}_{\psi_2}^{p,w_2}} & = \sup_{(a,r) \in \mathbb{R}^n \times \mathbb{R}^+} 
\frac{1}{\psi_2 (a,r)} \frac{1}{w_2(B(a,r))^\frac1p} \|S f\|_{L^{p,w_2}(B(a,r))} \\
	& \leq C \sup_{(a,r) \in \mathbb{R}^n \times \mathbb{R}^+} \frac{1}{\psi_2(a,r)} 
 \int_{r}^\infty w_2(B(a,t))^{-\frac1p} \|f\|_{L^{p,w_1}(B(a,r))} \frac{dt}{t}\\
	& \leq C \|f\|_{\mathcal{M}_{\psi_1}^{p,w_1}} \sup_{(a,r)\in \mathbb{R}^n \times 
 \mathbb{R}^+} \frac{1}{\psi_2(a,r)} \int_r^\infty \frac{\inf_{t \leq l < \infty} 
 \psi_1 (a,l) w_1(B(a,l))^\frac1p}{w_2 (B(a,t))^\frac1p} \frac{dt}{t}  \leq C \|f\|_{\mathcal{M}_{\psi_1}^{p,w_1}}
\end{aligned}
$$
for $1<p<\infty$. On the other hand,
$$
\begin{aligned}
	\|S f \|_{W\mathcal{M}_{\psi_2}^{1,w_2}} & = \sup_{(a,r) \in \mathbb{R}^n \times 
 \mathbb{R}^+} \frac{1}{\psi_2(a,r)} \frac{1}{w_2(B(a,r))} \|T f\|_{WL^{1,w_2} (B(a,r))} \\
	& \leq C \sup_{(a,r) \in \mathbb{R}^n \times \mathbb{R}^+} \frac{1}{\psi_2(a,r)} \int_r^\infty w_2(B(a,t))^{-1} \|f\|_{L^{1,w}(B(a,t))} \frac{dt}{t}\\
	& \leq C \|f\|_{\mathcal{M}_{\psi_1}^{1,w}} \sup_{(a,r)\in \mathbb{R}^n \times \mathbb{R}^+} \frac{1}{\psi_2(a,r)} \int_r^\infty \frac{\inf_{t \leq l < \infty} \psi_1 (a,l) w_1(B(a,l))}{w_2 (B(a,r))} \frac{dt}{t} \leq C \|f\|_{\mathcal{M}_{\psi_1}^{1,w}}.
\end{aligned}
$$
These complete the proof of the theorem.
\end{proof}

\begin{proof}[\textbf{Proof of Theorem \ref{Main-Comm-Calder}}]
For $(a,r) \in \mathbb{R}^n \times \mathbb{R}^n$ the following inequality holds by Lemma \ref{lemma-boundedness}.
$$
\begin{aligned}
	\int_r^{\infty} \left(1+\ln \frac{t}r \right) w_2(B(a,t))^{-\frac{1}{p}} & 
 \|f\|_{L^{p,w_1}(B(a,t))} \frac{dt}{t} \\
	& \leq \|f\|_{\mathcal{M}_{\psi_1}^{p,w_1}} \int_r^{\infty} \left(1+\ln \frac{t}r 
 \right) \frac{\inf_{t\leq s<\infty} \psi_1(a,s)w_1(B(a,s))^\frac1p}{w_2(B(a,s))^\frac1p} \frac{dt}{t}.
\end{aligned}
$$
Theorem \ref{est-comm-sub-calder}, assumptions, and the last inequality then imply that
$$
\begin{aligned}
		& \|[b, S] f\|_{\mathcal{M}_{\psi_2}^{p,w_2}}
		\\ & = \sup_{(a,r) \in \mathbb{R}^n \times \mathbb{R}^+} \frac{1}{\psi_2 (a,r)} 
  \frac{1}{w_2(B(a,r))^\frac1p} \|[b, S] f\|_{L^{p,w_2}(B(a,r))} \\
		& \leq C \sup_{(a,r) \in \mathbb{R}^n \times \mathbb{R}^+} \frac{1}{\psi_2(a,r)} 
  \int_{r}^\infty \left(1+\ln \frac{t}r \right) w_2(B(a,t))^{-\frac1p} 
  \|f\|_{L^{p,w_1}(B(a,r))} \frac{dt}{t}\\
		& \leq C \|f\|_{\mathcal{M}_{\psi_1}^{p,w_1}} \sup_{(a,r)\in \mathbb{R}^n \times 
  \mathbb{R}^+} \frac{1}{\psi_2(a,r)} \int_r^\infty \left(1+\ln \frac{t}r \right) 
  \frac{\inf_{t \leq l < \infty} \psi_1 (a,l) w_1(B(a,l))^\frac1p}{w_2 (B(a,t))^\frac1p} 
  \frac{dt}{t} \leq C \|f\|_{\mathcal{M}_{\psi_1}^{p,w_1}}
\end{aligned}
	$$
	and this proves the theorem.
\end{proof}

\begin{proof}[\textbf{Proof of Theorem \ref{Main-Calder-Weak}}]
For $(a,r) \in \mathbb{R}^n \times \mathbb{R}^+$ the following inequality as in Lemma 
\ref{lemma-boundedness}.
	
$$
\begin{aligned}
	&\int_{2r}^\infty \left(1 + \ln \frac{t}{r}\right) \left\| \Phi\left(\frac{|f|}
 {\sigma}\right) \right\|_{L \log L^{w_1}(B(a,t))} \frac{w_1B(a,t)}{w_2(B(a,t))}\frac{dt}{t} \\
	& \leq \left\|\Phi \left(\frac{|f|}{\sigma}\right)\right\|_{\mathcal{M}_{\psi_1}^{L 
 \log L, w_1}} \int_r^{\infty} \left(1+\ln \frac{t}r \right)\inf_{t\leq s<\infty} 
 \psi_1(a,s) \frac{w_1(B(a,t))}{w_2(B(a,t))} \frac{dt}{t}.
\end{aligned}
$$
Theorem \ref{weak-est-sub-calder}, assumptions, and the last inequality then imply that
$$
\begin{aligned}
		& \|[b, S] f\|_{W\mathcal{M}_{\psi_2}^{1,w_2}}
		\\ & = \sup_{(a,r) \in \mathbb{R}^n \times \mathbb{R}^+} \frac{1}{\psi_2 (a,r)} 
  \frac{1}{w_2(B(a,r))^\frac1p} \|[b, T] f\|_{WL^{1,w_2}(B(a,r))} \\
		& \leq C \sigma \|b\|_* \sup_{(a,r) \in \mathbb{R}^n \times \mathbb{R}^+} 
  \frac{1}{\psi_2(a,r)} \int_{2r}^\infty \left(1 + \ln \frac{t}{r}\right) \left\| 
  \Phi\left(\frac{|f|}{\sigma}\right) \right\|_{L \log L^{w_1}(B(a,t))} 
  \frac{w_1B(a,t)}{w_2(B(a,t))}\frac{dt}{t}\\
		& \leq C \sigma \|b\|_* \left\|\Phi \left(\frac{|f|}
  {\sigma}\right)\right\|_{\mathcal{M}_{\psi_1}^{L \log L, w_1}} \sup_{(a,r) \in 
  \mathbb{R}^n \times \mathbb{R}^+} \frac1{\psi_2(a,r)} \int_r^{\infty} \left(1+\ln 
  \frac{t}r \right)\inf_{t\leq s<\infty} \psi_1(a,s) \frac{w_1(B(a,t))}{w_2(B(a,t))} 
  \frac{dt}{t} \\
		& \leq C \sigma \|b\|_* \left\|\Phi \left(\frac{|f|}{\sigma}\right)\right\|_{\mathcal{M}_{\psi_1}^{L \log L, w_1}}
\end{aligned}
$$
and this proves the theorem.
\end{proof}

\begin{proof}[\textbf{Proof of Theorem \ref{Main-Calder-Mixed}}]
$f(\cdot)$ may be replaced by $f(\cdot, t)$ in Theorem \ref{est-sub-calder} to obtain
$$
\|S f(\cdot, t)\|_{\mathcal{M}^{q,w_2}_{\psi_1}} \leq C \|b\|_* \|f(\cdot, t)
\|_{\mathcal{M}^{p,w_1}_{\psi_1}}, \quad f\in \mathcal{M}^{p,w_1}_{\psi_1}
$$
where $C>0$ is independent of $t$ and $1<p<\infty$. The theorem about the boundedness of $S$ 
on generalized weighted mixed-morrey spaces follows from Definition \ref{Mixed-wgwms} and 
Remark \ref{remark-mixed-morrey}.
\end{proof}
\begin{proof}[\textbf{Proof of Theorem \ref{Main-Comm-Calder-Mixed}}]
$f(\cdot)$ may be replaced by $f(\cdot, t)$ in Theorem \ref{est-comm-sub-calder} to obtain 
that
$$
\|[b, S]f(\cdot, t)\|_{\mathcal{M}^{q,w_2}_{\psi_1}} \leq C \|b\|_* \|f(\cdot, t)
\|_{\mathcal{M}^{p,w_1}_{\psi_1}},\quad f\in \mathcal{M}^{p,w_1}_{\psi_1}
$$
where $C>0$ is independent of $t$ and $1<p<\infty$. Then, Definition \ref{Mixed-wgwms} and 
Remark \ref{remark-mixed-morrey} imply the result.
\end{proof}

\begin{proof}[\textbf{Proof of Theorem \ref{Main-Comm-Calder-Mixed-Weak}}]
$f(\cdot)$ may be replaced by $f(\cdot, t)$ in Theorem \ref{weak-est-sub-calder} to obtain
$$
w_2(\{x \in \mathbb{R}^n: |[b,S](f)(x,t)|>\sigma\}) \leq C \int_{\mathbb{R}^n} \Phi 
\left(\frac{|f(x,t)|}{\sigma}\right) w_1 (x) dx
$$
where $C>0$ is independent of $t$. By Definition \ref{Mixed-wgwms} and Remark \ref{remark-mixed-morrey} we obtain the desired results about the boundedness of commutator generated by 
$S$ and $b$ on generalized weighted mixed-morrey spaces in the case $p=1$.
\end{proof}

\section{Applications to other Operators}

\subsection{Fractional integrals related to operators with Gaussian kernel bounds, 
fractional operators with rough kernels, and fractional maximal operators with rough kernels}

We provide the applications of our results in the section 3 for to some other operators particularly fractional integrals and fractional maximal operators, i.e. $L^{-\alpha/2}, S_{\Omega, \alpha}$, and $M_{\Omega, \alpha}$. First, we recall the definition and basic results of the operators. Let $L$ be a linear operator on $L^2$ which generates an analytic semigroup $\exp(-tL)$ with a kernel $p_t (x,y)$ satisfying
$$
|p_t(x,y)| \leq C \frac1{t^{n/2}} \exp \left(-c \frac{|x-y|^2}{t}\right); \quad x, y \in \mathbb{R}^n, t>0.
$$
For $0<\alpha<n,$ the fractional power $L^{-\alpha/n}$ of the operator $L$ is defined by
$$
L^{-\alpha/2}(f)(x) = \frac{1}{\Gamma(\alpha/2)} \int_0^\infty \exp (-tL) (f) (x) \frac{dt}{t^{-\alpha/2+1}}.
$$

\begin{theorem} \cite{Auscher} \label{fractional-integral}
$L^{-\alpha/2}$ and its commutator $[b,L^{-\alpha/2}]$ where $b \in BMO$ is bounded from $L^{p,w^p}$ to $L^{q,w^q}$ for $1<p< n /\alpha$ and $1/q=1/p-\alpha/n$.
\end{theorem}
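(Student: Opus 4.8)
The plan is to reduce both assertions to the corresponding statements for the Riesz potential $I_\alpha$, which are already available to us through Theorem \ref{thm4.1} and its known commutator analogue. Throughout I would work under the (implicit) hypothesis $w\in A_{p,q}$, which by Lemma \ref{thm2.5} also gives $w^p\in A_p$ and $w^q\in A_q$.

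First I would establish the pointwise domination $|L^{-\alpha/2}f(x)|\le C\,I_\alpha(|f|)(x)$. Inserting the Gaussian bound on the heat kernel $p_t(x,y)$ into the subordination formula defining $L^{-\alpha/2}$ gives
$$
|L^{-\alpha/2}f(x)|\le \frac{C}{\Gamma(\alpha/2)}\int_0^\infty\int_{\mathbb{R}^n} t^{-n/2}\exp\!\Big(-c\frac{|x-y|^2}{t}\Big)|f(y)|\,dy\;\frac{dt}{t^{1-\alpha/2}}.
$$
By Tonelli I would exchange the order of integration and evaluate the inner $t$-integral via the substitution $u=c|x-y|^2/t$; since $\alpha<n$ the resulting integral is a convergent Gamma integral and produces exactly the constant multiple of the kernel $|x-y|^{\alpha-n}$, hence
$$
|L^{-\alpha/2}f(x)|\le C\int_{\mathbb{R}^n}\frac{|f(y)|}{|x-y|^{n-\alpha}}\,dy = C\,I_\alpha(|f|)(x)\quad\text{a.e.}
$$
Combining this with the Muckenhoupt--Wheeden estimate (Theorem \ref{thm4.1}), namely $\|I_\alpha g\|_{L^{q,w^q}}\le C\|g\|_{L^{p,w^p}}$ for $1<p<n/\alpha$ and $1/q=1/p-\alpha/n$, immediately yields the boundedness of $L^{-\alpha/2}$ from $L^{p,w^p}$ to $L^{q,w^q}$.

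For the commutator I would write $[b,L^{-\alpha/2}]f(x)=L^{-\alpha/2}\big((b(x)-b(\cdot))f\big)(x)$ and repeat the computation of the previous paragraph with the extra factor $|b(x)-b(y)|$ carried inside the $y$-integral. This gives the pointwise bound
$$
\big|[b,L^{-\alpha/2}]f(x)\big|\le C\int_{\mathbb{R}^n}\frac{|b(x)-b(y)|}{|x-y|^{n-\alpha}}\,|f(y)|\,dy =: C\,\mathcal{C}_{b,\alpha}(|f|)(x),
$$
the positive maximal commutator associated with $I_\alpha$. It then remains to invoke the weighted two-index estimate $\|\mathcal{C}_{b,\alpha}g\|_{L^{q,w^q}}\le C\|b\|_*\|g\|_{L^{p,w^p}}$ valid for $b\in BMO$ and $w\in A_{p,q}$ (the Segovia--Torrea / Cruz-Uribe--Fiorenza type result for commutators of fractional integrals), which is proved by a sharp maximal function / good-$\lambda$ argument, or by sparse domination, using $w^p\in A_p$ and $w^q\in A_q$.

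The genuine content is concentrated in the last step: the passage from $[b,L^{-\alpha/2}]$ to $\mathcal{C}_{b,\alpha}$ is elementary, and the operator bound for $L^{-\alpha/2}$ itself is purely formal once the change of variables is set up, but the weighted $L^{p,w^p}\to L^{q,w^q}$ inequality for the BMO-twisted fractional kernel $\mathcal{C}_{b,\alpha}$ is not a formal consequence of the unweighted estimate and genuinely requires the Muckenhoupt weight machinery. For this reason the statement is cited from \cite{Auscher} and used as a black box, after which Theorems \ref{Main-Riesz} and \ref{Main-Comm-Riesz} upgrade it to the corresponding boundedness of $L^{-\alpha/2}$ and $[b,L^{-\alpha/2}]$ on generalized weighted Morrey spaces.
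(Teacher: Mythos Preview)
The paper does not prove this theorem: it is stated with a citation to \cite{Auscher} and used as a black box, exactly as you recognize in your final paragraph. There is therefore no ``paper's own proof'' to compare against.

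Your sketch is a correct and standard route to the result. The kernel domination $|L^{-\alpha/2}f|\le C\,I_\alpha(|f|)$ via the Gaussian bound and the change of variables $u=c|x-y|^2/t$ is the usual argument, and once that pointwise inequality is in hand the operator bound follows from Theorem~\ref{thm4.1}. For the commutator, the pointwise domination by the positive fractional commutator $\mathcal{C}_{b,\alpha}$ is likewise correct, and you are right that the remaining weighted $L^{p,w^p}\to L^{q,w^q}$ estimate for $\mathcal{C}_{b,\alpha}$ is the substantive step, available in the literature under the $A_{p,q}$ hypothesis. One small remark: the theorem as stated in the paper omits the hypothesis $w\in A_{p,q}$, which you correctly supply; without it the statement is false.
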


Next, suppose that $\mathbb{S}^{n-1}$ is the unit sphere in $\mathbb{R}^n$ where $n\geq 2$ 
equipped with the normalized Lebesgue measure $d\sigma.$ Let $\Omega \in 
L^{s'}(\mathbb{S}^{n-1})$ be homogeneous of degree zero where $s'>1.$ For 
$0<\alpha<n$, the fractional integrals with rough kernel $\Omega$ is defined by
$$
S_{\Omega,\alpha} (f)(x) = \int_{\mathbb{R}^n} \frac{\Omega (y)}{|y|^{n-\alpha}} f(x-y)dy
$$
For $M_{\Omega, \alpha}$, suppose that $0<\alpha<n$ and $\Omega\in L^{s'} (\mathbb{S}^{n-1})$ 
is homogeneous of degree zero where $s'>1.$ The fractional maximal operator $M_{\Omega, 
\alpha}$ is defined by
$$
M_{\Omega,\alpha}(f)(x) = \sup_{r>0} \frac{1}{|B(x,r)|^{1-\alpha/n}} \int_{B(x,r)<r} 
|\Omega(x-y) f(y)| dy.
$$

\begin{theorem} \cite{Ding1998, Ding1999} \label{rough-kernel}
Let $0<\alpha<n, 1 < p< n/\alpha, 1/q=1/p-\alpha/n$, and $b\in BMO.$ Suppose that $\Omega 
\in L^{s'} (\mathbb{S})^{n-1}$ and $w^s\in A_{p/s,q/s}$ where $1\le s<p$. Then, 
$S_{\Omega,\alpha}$, $M_{\Omega,\alpha}$, $[b,S_{\Omega,\alpha}]$, and $[M_{\Omega, 
\alpha}]$ is bounded from $L^{p,w^p}$ to $L^{q,w^q}$.
\end{theorem}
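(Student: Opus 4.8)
The plan is to reduce all four assertions to the classical $A_{p,q}$ theory for the \emph{smooth} fractional operators. Put $P:=p/s$, $Q:=q/s$ and $v:=w^s$; since $1\le s<p<n/\alpha$ one has $1<P<Q<\infty$, $0<s\alpha<n$, and $1/Q=1/P-(s\alpha)/n$, while the hypothesis reads $v\in A_{P,Q}$. By Lemma \ref{thm2.5}, $v^P=w^p\in A_P\subseteq A_p$ and $v^Q=w^q\in A_Q\subseteq A_q\subseteq A_\infty$. Moreover, Theorem \ref{thm4.1} (applied with fractional order $s\alpha$ and weight $v$) shows that $I_{s\alpha}$ is bounded from $L^{P,v^P}=L^{p/s,w^p}$ to $L^{Q,v^Q}=L^{q/s,w^q}$, hence so is the fractional maximal operator $M_{s\alpha}$ (because $M_{s\alpha}g\le C\,I_{s\alpha}(|g|)$).

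First I would dispatch $M_{\Omega,\alpha}$. Hölder's inequality on each ball $B(x,r)$ with exponents $s'$ and $s$, together with $\int_{B(x,r)}|\Omega(x-y)|^{s'}\,dy\approx\|\Omega\|_{L^{s'}(\mathbb{S}^{n-1})}^{s'}\,r^{n}$ (degree-zero homogeneity plus polar coordinates), gives the pointwise bound $M_{\Omega,\alpha}f(x)\le C\|\Omega\|_{L^{s'}(\mathbb{S}^{n-1})}\bigl(M_{s\alpha}(|f|^{s})(x)\bigr)^{1/s}$. Substituting $g=|f|^{s}$ and using the weighted bound for $M_{s\alpha}$ then yields $\|M_{\Omega,\alpha}f\|_{L^{q,w^q}}\le C\|\Omega\|_{L^{s'}}\|M_{s\alpha}(|f|^{s})\|_{L^{q/s,w^q}}^{1/s}\le C\|\Omega\|_{L^{s'}}\||f|^{s}\|_{L^{p/s,w^p}}^{1/s}=C\|\Omega\|_{L^{s'}}\|f\|_{L^{p,w^p}}$, as desired.

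For the integral operator $S_{\Omega,\alpha}$ the analogous annular/Hölder estimate only produces $S_{\Omega,\alpha}f(x)\le C\|\Omega\|_{L^{s'}}\sum_{j\in\mathbb{Z}}2^{j(\alpha-n/s)}\bigl(\int_{B(x,2^{j+1})}|f|^{s}\bigr)^{1/s}$, a series that cannot be collapsed pointwise to $\bigl(I_{s\alpha}(|f|^{s})(x)\bigr)^{1/s}$ when $s>1$; so instead I would prove a sharp maximal function estimate, $M^{\#}_{\delta}(S_{\Omega,\alpha}f)(x)\le C\|\Omega\|_{L^{s'}}\bigl(M_{s\alpha}(|f|^{s})(x)\bigr)^{1/s}$ for a fixed $0<\delta<1$, and then invoke the weighted Fefferman--Stein inequality, which is admissible because $w^q\in A_\infty$, after the customary check that $\|S_{\Omega,\alpha}f\|_{L^{q,w^q}}$ is finite on a dense subclass. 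This reduces $\|S_{\Omega,\alpha}f\|_{L^{q,w^q}}$ to $\|M^{\#}_{\delta}(S_{\Omega,\alpha}f)\|_{L^{q,w^q}}$, which is controlled exactly as in the maximal case. A Coifman--Fefferman-type good-$\lambda$ inequality comparing $S_{\Omega,\alpha}f$ with $M_{\Omega,\alpha}f$ (valid for $A_\infty$ weights) would serve the same purpose.

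Finally, for the commutators $[b,S_{\Omega,\alpha}]$ and $[b,M_{\Omega,\alpha}]$ with $b\in BMO$ I would run the Coifman--Rochberg--Weiss scheme: prove, for $0<\delta<\varepsilon<1$, a sharp maximal estimate $M^{\#}_{\delta}([b,S_{\Omega,\alpha}]f)(x)\le C\|b\|_*\bigl(M_{\varepsilon}(S_{\Omega,\alpha}f)(x)+\mathcal{N}f(x)\bigr)$, where $\mathcal{N}$ is a rough fractional maximal operator of $L(\log L)$ type which, via Hölder together with Corollary \ref{BMO3} and Lemma \ref{BMO2} for the $BMO$ oscillation, is still dominated up to the factor $\|b\|_*$ by $\bigl(M_{s\alpha}(|f|^{s})\bigr)^{1/s}$; iterating the weighted Fefferman--Stein inequality and feeding in the bound for $S_{\Omega,\alpha}$ already obtained closes the argument, and $[b,M_{\Omega,\alpha}]$ is handled the same way (or directly from a pointwise bound $|[b,M_{\Omega,\alpha}]f|\le C\|b\|_*\,\mathcal{N}f$). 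The main obstacle throughout is exactly the roughness of $\Omega$: the $L^{s'}$-averaging forces the nonlinear substitution $g=|f|^{s}$, so one cannot merely quote the smooth-kernel (commutator) theorems, and the sharp maximal function estimates --- with their annular bookkeeping and $BMO$-oscillation control via Lemmas \ref{BMO1}, \ref{BMO2} and Corollary \ref{BMO3} --- must be carried out directly for the rough operators.
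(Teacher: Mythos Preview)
The paper does not give its own proof of this theorem: it is stated with the citation \cite{Ding1998, Ding1999} and used as a known result, with no argument supplied. So there is nothing in the paper to compare your attempt against.

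That said, your sketch is essentially the strategy of the cited Ding--Lu papers. The pointwise domination $M_{\Omega,\alpha}f(x)\le C\|\Omega\|_{L^{s'}(\mathbb{S}^{n-1})}\bigl(M_{s\alpha}(|f|^{s})(x)\bigr)^{1/s}$ via H\"older on balls, followed by the substitution $g=|f|^{s}$ and the $A_{P,Q}$ bound for $M_{s\alpha}$ with $P=p/s$, $Q=q/s$, is exactly how \cite{Ding1998} handles $M_{\Omega,\alpha}$. For $S_{\Omega,\alpha}$, \cite{Ding1998} proceeds through a Coifman--Fefferman good-$\lambda$ inequality relating $S_{\Omega,\alpha}f$ to $M_{\Omega,\alpha}f$ for $A_\infty$ weights (the alternative you mention), rather than a direct sharp-maximal estimate; either route works. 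For the commutators, \cite{Ding1999} runs precisely the sharp-maximal/Coifman--Rochberg--Weiss scheme you outline, with the $BMO$ oscillation absorbed by passing to a slightly larger maximal average. Your identification of the key obstacle --- that the roughness forces the nonlinear substitution $g=|f|^{s}$ and hence the rescaled $A_{p/s,q/s}$ hypothesis --- is exactly the point of those references.
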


Related to inequality (\ref{sublinear}), we have the following theorem.
\begin{lemma}\cite{Wang2017}\label{cor-riesz}
Suppose that $0<\alpha<n.$
\begin{enumerate}
	\item $L^{-\alpha/2}$ satisfies \ref{sublinear}.
	\item If $\Omega \in L^{s'} (\mathbb{S}^{n-1})$ is homogeneous of degree zero 
 where $1 < s' \leq \infty,$ then $S_{\Omega, \alpha}$ and $M_{\Omega, \alpha}$ satisfy 
 (\ref{sublinear}).
\end{enumerate}
\end{lemma}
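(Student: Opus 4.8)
The plan is to verify the structural inequality (\ref{sublinear}) for each of the three operators by one and the same device. For a fixed ball $B(a,r)$, decompose the complement $B(a,2r)^{\rm c}$ into the dyadic annuli $A_k:=B(a,2^{k+1}r)\setminus B(a,2^{k}r)$, $k\ge 1$, observe that for $x\in B(a,r)$ and $y\in A_k$ one has $|x-y|\sim 2^{k}r$, and then apply H\"older's inequality on each $A_k$ (or, more conveniently, on the containing ball $B(a,2^{k+1}r)$). The powers of $2^{k+1}r$ will always recombine through the relation $1/s+1/s'=1$ to the exponent $-n/s+\alpha$ on the right-hand side of (\ref{sublinear}).

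For part (1), I would first reduce $L^{-\alpha/2}$ to the Riesz potential. The Gaussian bound on $p_t$ gives $|\exp(-tL)f(x)|\le C\,t^{-n/2}\int_{\mathbb{R}^n}e^{-c|x-y|^2/t}|f(y)|\,dy$, whence
$$
|L^{-\alpha/2}f(x)|\le C\int_{\mathbb{R}^n}|f(y)|\left(\int_0^\infty t^{-n/2}e^{-c|x-y|^2/t}\,\frac{dt}{t^{1-\alpha/2}}\right)dy = C\int_{\mathbb{R}^n}\frac{|f(y)|}{|x-y|^{n-\alpha}}\,dy,
$$
where the inner integral is computed by the substitution $u=|x-y|^2/t$ and is finite precisely because $0<\alpha<n$. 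Thus it suffices to check (\ref{sublinear}) for $I_\alpha$ itself. For $x\in B(a,r)$ the annular decomposition yields
$$
I_\alpha\!\left(f\cdot\mathcal{X}_{B(a,2r)^{\rm c}}\right)(x)\le C\sum_{k=1}^\infty(2^{k}r)^{\alpha-n}\int_{B(a,2^{k+1}r)}|f(y)|\,dy,
$$
and H\"older's inequality with exponent $s$ bounds $\int_{B(a,2^{k+1}r)}|f|$ by $C(2^{k+1}r)^{n(1-1/s)}\big(\int_{B(a,2^{k+1}r)}|f|^{s}\big)^{1/s}$; multiplying the two factors produces exactly $(2^{k+1}r)^{-n/s+\alpha}\big(\int_{B(a,2^{k+1}r)}|f|^{s}\big)^{1/s}$.

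For part (2), write $S_{\Omega,\alpha}f(x)=\int\Omega(x-y)|x-y|^{\alpha-n}f(y)\,dy$. On the $k$-th annulus estimate $|x-y|^{\alpha-n}\le C(2^{k}r)^{\alpha-n}$ and separate $\Omega$ from $f$ by H\"older with the conjugate pair $(s',s)$. For the kernel factor use the homogeneity of degree zero of $\Omega$, the inclusion $B(a,2^{k+1}r)\subseteq B(x,2^{k+2}r)$ (valid since $|x-a|<r$), and polar coordinates centered at $x$ to obtain
$$
\left(\int_{B(a,2^{k+1}r)}|\Omega(x-y)|^{s'}\,dy\right)^{1/s'}\le C\,\|\Omega\|_{L^{s'}(\mathbb{S}^{n-1})}\,(2^{k+1}r)^{n/s'}
$$
(when $s'=\infty$ one takes $s=1$ and uses $|\Omega(x-y)|\le\|\Omega\|_{L^\infty}$ directly). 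Since $n/s'=n-n/s$, the three factors again recombine to $(2^{k+1}r)^{-n/s+\alpha}\big(\int_{B(a,2^{k+1}r)}|f|^{s}\big)^{1/s}$, which is (\ref{sublinear}). For $M_{\Omega,\alpha}$ the only additional observation is that for $x\in B(a,r)$ a ball $B(x,\rho)$ with $\rho<r$ does not meet $B(a,2r)^{\rm c}$, so only radii $\rho\ge r$ contribute; for such a $\rho$ choose $k\ge 0$ with $2^{k}r\le\rho<2^{k+1}r$, note $B(x,\rho)\cap B(a,2r)^{\rm c}\subseteq B(a,2^{k+2}r)$ and $|B(x,\rho)|^{1-\alpha/n}\ge C(2^{k}r)^{n-\alpha}$, apply the same H\"older estimate, and then take the supremum over $\rho$ (dominated by the corresponding dyadic sum) and over $x$.

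The computations that require a little care are the two ``kernel'' estimates: the integration in $t$ that turns $L^{-\alpha/2}$ into a Riesz-type potential, where $\alpha<n$ is exactly the condition guaranteeing convergence of the resulting integral; and the polar-coordinate estimate of the $L^{s'}$ average of $y\mapsto\Omega(x-y)$ over an off-center ball, which uses the degree-zero homogeneity together with the containment of $B(a,2^{k+1}r)$ in a ball concentric with $x$. Everything else is bookkeeping with the dyadic annuli and repeated use of H\"older's inequality, and in each case the exponents reassemble to the exact power demanded by (\ref{sublinear}).
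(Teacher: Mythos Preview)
The paper does not supply its own proof of this lemma; it is quoted from \cite{Wang2017}. Your argument is correct and is the standard one: reduce $L^{-\alpha/2}$ to $I_\alpha$ via the Gaussian bound and a change of variables in $t$, then for all three operators run the dyadic-annulus decomposition of $B(a,2r)^{\rm c}$ together with H\"older's inequality with exponents $(s,s')$, using homogeneity of $\Omega$ and polar coordinates to control the kernel factor. The paper itself employs exactly this scheme in its proof of the closely related Lemma~\ref{Lemma-Cald} (the $\alpha=0$ case for $S_\Omega$), so your approach is fully in line with the method the authors use elsewhere.
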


The following theorems and corollaries generalize the results in \cite{Ramadana, Wang2017} 
and others by using Theorem \ref{fractional-integral} and Theorem \ref{rough-kernel}.

\begin{theorem}\label{Theo-Riesz-gwms-1}
Let $0 < \alpha < n, 1 \leq p < n/\alpha, 1/q = 1/p - \alpha/n$, $(w_1^s, w_2^s) \in 
A_{p/s,q/s}$, and $w_2^s \in A_{p/s, q/s}$, where $1\le s<p$ for $1<p<n/\alpha$ and $s=1$ 
for $p=1$.
\begin{enumerate}
	\item Suppose that the positive functions $\psi_1$ and $\psi_2$ satisfy 
(\ref{inequal-Riesz}). If $L^{-\alpha/2}$ is bounded from $L^{p,w_1^p}$ to $L^{q,w_2^q}$ 
for $1<p<n/\alpha$ and from $L^{1,w_1}$ to $WL^{q,w_2^q}$, then 
$L^{-\alpha/2}$ is bounded from $\mathcal{M}^{p,w_1^p}_{\psi_1}$ to $\mathcal{M}^{q,w_2^q}_{\psi_2}$ for $1<p<n/\alpha$ and from $\mathcal{M}^{1,w}_{\psi_1}$ to $W\mathcal{M}^{q,w_2^q}_{\psi_2}$.
	\item Suppose that the positive functions $\psi_1$ and $\psi_2$ satisfy 
(\ref{inequal-comm-riesz}) and $w_1\in A_\infty$. If $b\in BMO$ and $[b, L^{-\alpha/2}]$ is 
bounded from $L^{p,w_1^p}$ to $L^{q,w_2^q}$ for $1<p<n/\alpha$, then $[b,L^{-\alpha/2}]$ is 
bounded from $\mathcal{M}^{p,w_1^p}_{\psi_1}$ to $\mathcal{M}^{q,w_2^q}_{\psi_2}$ for $1<p<n/\alpha$.
\end{enumerate}
\end{theorem}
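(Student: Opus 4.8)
The plan is to recognize Theorem~\ref{Theo-Riesz-gwms-1} as a direct specialization of Theorems~\ref{Main-Riesz} and~\ref{Main-Comm-Riesz}, so the only real work is to check that $L^{-\alpha/2}$ and its commutator fit the abstract framework of sublinear operators satisfying the structural inequality~(\ref{sublinear}).

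First I would invoke Lemma~\ref{cor-riesz}(1), which asserts precisely that $L^{-\alpha/2}$ satisfies~(\ref{sublinear}) for every $(a,r)\in\mathbb{R}^n\times\mathbb{R}^+$, with $1<s<p$ when $1<p<n/\alpha$ and $s=1$ when $p=1$. Thus $L^{-\alpha/2}$ is an admissible choice of $S_\alpha$. Since $[b,L^{-\alpha/2}]$ is by definition the commutator of such an operator with a BMO function $b$, it automatically satisfies the pointwise bound~(\ref{commutator}) on any ball $B$. With these two facts in hand, the hypotheses of the main theorems are met verbatim.

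For part (1), I would observe that the weight conditions $(w_1^s,w_2^s)\in A_{p/s,q/s}$ and $w_2^s\in A_{p/s,q/s}$ and the condition~(\ref{inequal-Riesz}) on $(\psi_1,\psi_2)$ are exactly those demanded by Theorem~\ref{Main-Riesz}; combined with the assumed strong-type bound $L^{p,w_1^p}\to L^{q,w_2^q}$ for $1<p<n/\alpha$ and the weak-type bound $L^{1,w_1}\to WL^{q,w_2^q}$, applying Theorem~\ref{Main-Riesz} with $S_\alpha=L^{-\alpha/2}$ immediately gives boundedness from $\mathcal{M}^{p,w_1^p}_{\psi_1}$ to $\mathcal{M}^{q,w_2^q}_{\psi_2}$ and from $\mathcal{M}^{1,w_1}_{\psi_1}$ to $W\mathcal{M}^{q,w_2^q}_{\psi_2}$. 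For part (2), the extra hypotheses $w_1\in A_\infty$ and the logarithmically weighted condition~(\ref{inequal-comm-riesz}), together with the assumed boundedness of $[b,L^{-\alpha/2}]$ from $L^{p,w_1^p}$ to $L^{q,w_2^q}$, are precisely the hypotheses of Theorem~\ref{Main-Comm-Riesz}; applying that theorem yields the desired boundedness of $[b,L^{-\alpha/2}]$ on the generalized weighted Morrey spaces, with the accompanying $\|b\|_*$-estimate.

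The only point requiring genuine attention is the verification that $L^{-\alpha/2}$ and its commutator obey~(\ref{sublinear}) and~(\ref{commutator}), and this is delivered by Lemma~\ref{cor-riesz}; after that the proof is essentially a quotation of the abstract results, so I do not anticipate any serious obstacle. If desired, I would add a sentence noting that the hypothesized $L^p$-boundedness is not vacuous: when $w_1=w_2$ it is supplied by Theorem~\ref{fractional-integral}.
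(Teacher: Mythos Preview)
Your proposal is correct and matches the paper's own proof essentially verbatim: the paper simply notes that $L^{-\alpha/2}$ satisfies~(\ref{sublinear}) by Lemma~\ref{cor-riesz} and then plugs $L^{-\alpha/2}$ into $S_\alpha$ in Theorems~\ref{Main-Riesz} and~\ref{Main-Comm-Riesz}. Your additional remarks about~(\ref{commutator}) and Theorem~\ref{fractional-integral} are fine but not needed for the argument.
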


\begin{proof}
$L^{-\alpha/2}$ satisfies (\ref{sublinear}) by Lemma \ref{cor-riesz}. Plug $L^{-\alpha/2}$ 
onto $S_\alpha$ in Theorem \ref{Main-Riesz} and Theorem \ref{Main-Comm-Riesz} to obtain the 
results.
\end{proof}

\begin{theorem}\label{Theo-Riesz-Mixed-1}
Let $0 < \alpha < n, 1 \leq p < n/\alpha, 1/q = 1/p - \alpha/n$, $(w_1^s, w_2^s) \in 
A_{p/s,q/s}$, and $w_2^s \in A_{p/s, q/s}$, where $1\le s<p$ for $1<p<n/\alpha$ and $s=1$ 
for $p=1$. Suppose that $1<p_0<\infty$ and $\psi:\mathbb{R}^n\times \mathbb{R}^+ \to 
\mathbb{R}^+$.
\begin{enumerate}
	\item Suppose that the positive functions $\psi_1$ and $\psi_2$ satisfy 
(\ref{inequal-Riesz}). If $L^{-\alpha/2}$ is bounded from $L^{p,w_1^p}$ to $L^{q,w_2^q}$ for 
$1<p<n/\alpha$ and from $L^{1,w_1}$ to $WL^{q,w_2^q}$, then 
$L^{-\alpha/2}$ is bounded from $\mathcal{M}_\psi^{p_0,w}(0, T, 
\mathcal{M}^{p,w_1^p}_{\psi_1})$ to $\mathcal{M}_\psi^{p_0,w}(0, T, 
\mathcal{M}^{q,w_2^q}_{\psi_2})$ for $1<p<n/\alpha$ and 
from $\mathcal{M}_\psi^{p_0,w}(0, T, \mathcal{M}^{1,w}_{\psi_1})$ to 
$\mathcal{M}_\psi^{p_0,w} (0, T, W\mathcal{M}^{q,w^q}_{\psi_2})$.
	\item Suppose that the positive functions $\psi_1$ and $\psi_2$ satisfy 
(\ref{inequal-comm-riesz}) and $w_1\in A_\infty$. If $b\in BMO$ and $[b,L^{-\alpha/2}]$ is 
bounded from $L^{p,w_1^p}$ to $L^{q,w_2^q}$ for $1<p<n/\alpha$, 
then $[b,L^{-\alpha/2}]$ is bounded from $\mathcal{M}_\psi^{p_0,w} (0, T, 
\mathcal{M}^{p,w_1^p}_{\psi_1})$ to $\mathcal{M}_\psi^{p_0,w} (0, T, 
\mathcal{M}^{q,w_2^q}_{\psi_2})$ for $1<p<n/\alpha$.
\end{enumerate}
\end{theorem}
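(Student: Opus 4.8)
The plan is to derive this result directly from the general mixed-Morrey theorems of Section 3, namely Theorem \ref{Main-Riesz-Mixed} and Theorem \ref{Main-Comm-Riesz-Mixed}, by checking that $S_\alpha:=L^{-\alpha/2}$ satisfies the structural hypotheses needed there. The first step is to invoke Lemma \ref{cor-riesz}(1), which states precisely that $L^{-\alpha/2}$ obeys the pointwise inequality (\ref{sublinear}) for every $(a,r)\in\mathbb{R}^n\times\mathbb{R}^+$, with the exponent $s$ taken as in the statement ($1\le s<p$ for $1<p<n/\alpha$, and $s=1$ for $p=1$). This is the only place where the analytic-semigroup structure of $L$ is used; once it is available, $L^{-\alpha/2}$ is an admissible operator $S_\alpha$ in the sense of the Introduction.

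For part (1), I would then observe that the weighted Lebesgue-space bounds serving as input to Theorem \ref{Main-Riesz-Mixed} — boundedness of $L^{-\alpha/2}$ from $L^{p,w_1^p}$ to $L^{q,w_2^q}$ for $1<p<n/\alpha$ and from $L^{1,w_1}$ to $WL^{q,w_2^q}$ — are exactly the hypotheses assumed here (and in the one-weight case $w_1=w_2=w$ they follow from Theorem \ref{fractional-integral}). Together with the assumption (\ref{inequal-Riesz}) on the pair $(\psi_1,\psi_2)$, all hypotheses of Theorem \ref{Main-Riesz-Mixed} are met, and applying it with $S_\alpha=L^{-\alpha/2}$ yields the claimed boundedness from $\mathcal{M}_\psi^{p_0,w}(0,T,\mathcal{M}^{p,w_1^p}_{\psi_1})$ to $\mathcal{M}_\psi^{p_0,w}(0,T,\mathcal{M}^{q,w_2^q}_{\psi_2})$ for $1<p<n/\alpha$, and from $\mathcal{M}_\psi^{p_0,w}(0,T,\mathcal{M}^{1,w}_{\psi_1})$ to $\mathcal{M}_\psi^{p_0,w}(0,T,W\mathcal{M}^{q,w^q}_{\psi_2})$.

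For part (2) I would argue analogously with the commutator: under the additional hypothesis $w_1\in A_\infty$, the assumed boundedness of $[b,L^{-\alpha/2}]$ from $L^{p,w_1^p}$ to $L^{q,w_2^q}$ (again available from Theorem \ref{fractional-integral} in the one-weight case) together with condition (\ref{inequal-comm-riesz}) places us in the setting of Theorem \ref{Main-Comm-Riesz-Mixed}. Applying that theorem with $S_\alpha=L^{-\alpha/2}$ gives the asserted mapping property from $\mathcal{M}_\psi^{p_0,w}(0,T,\mathcal{M}^{p,w_1^p}_{\psi_1})$ to $\mathcal{M}_\psi^{p_0,w}(0,T,\mathcal{M}^{q,w_2^q}_{\psi_2})$, together with the quantitative estimate $\|[b,L^{-\alpha/2}]f\|\le C\|b\|_*\|f\|$ on the relevant mixed spaces.

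The only genuine obstacle is the verification underlying Lemma \ref{cor-riesz}, that $L^{-\alpha/2}$ satisfies (\ref{sublinear}); this relies on the Gaussian bound $|p_t(x,y)|\le C t^{-n/2}\exp(-c|x-y|^2/t)$ and a standard splitting of the integral defining $L^{-\alpha/2}$, carried out in \cite{Wang2017}. Everything downstream is a routine transfer of hypotheses, since the mixed-Morrey boundedness in Theorems \ref{Main-Riesz-Mixed} and \ref{Main-Comm-Riesz-Mixed} was itself obtained by applying the slicewise pointwise estimates of Propositions \ref{est-sub-Riesz} and \ref{est-comm-sub-riesz} together with Remark \ref{remark-mixed-morrey}.
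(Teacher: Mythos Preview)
Your proposal is correct and follows exactly the same route as the paper: the paper's proof is simply ``plug $L^{-\alpha/2}$ onto $S_\alpha$ in Theorem \ref{Main-Riesz-Mixed} and Theorem \ref{Main-Comm-Riesz-Mixed},'' relying implicitly on Lemma \ref{cor-riesz}(1) for the structural inequality (\ref{sublinear}). Your write-up is in fact more explicit than the paper's about why the hypotheses match, but the argument is identical.
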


\begin{proof}
Just plug $L^{-\alpha/2}$ onto $S_\alpha$ in Theorem \ref{Main-Riesz-Mixed} and Theorem 
\ref{Main-Comm-Riesz-Mixed}.
\end{proof}

\begin{cor}\label{Cor-Riesz-gwms-1}
Let $0 < \alpha < n,\,1 <p<n/\alpha,\,1/q=1/p-\alpha/n$, and $w^s \in A_{p/s,q/s}$, where $1\le s<p$. Suppose that $\Omega\in L^{s'}(\mathbb{S}^{n-1})$, $1<p_0<\infty$ and $\psi:\mathbb{R}^n\times \mathbb{R}^+ \to \mathbb{R}^+$.
\begin{enumerate}
\item Suppose that the positive functions $\psi_1$ and $\psi_2$ satisfy 
(\ref{inequal-Riesz}). Then, $L^{-\alpha/2}$ is bounded from $\mathcal{M}^{p,w^p}_{\psi_1}$ 
to $\mathcal{M}^{q,w^q}_{\psi_2}$.
\item Suppose that the positive functions $\psi_1$ and $\psi_2$ satisfy 
(\ref{inequal-comm-riesz}). If $b\in BMO$, then $[b,L^{-\alpha/2}]$ is bounded from 
$\mathcal{M}^{p,w^p}_{\psi_1}$ to $\mathcal{M}^{q,w^q}_{\psi_2}$.
\end{enumerate}
\end{cor}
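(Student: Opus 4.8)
The plan is to obtain Corollary \ref{Cor-Riesz-gwms-1} as a direct specialization of Theorem \ref{Theo-Riesz-gwms-1}, feeding in the weighted Lebesgue-space estimates already available for $L^{-\alpha/2}$. Throughout I would take $w_1=w_2=w$, so that the pair condition $(w^s,w^s)\in A_{p/s,q/s}$ and the single condition $w^s\in A_{p/s,q/s}$ coincide and are precisely the standing hypothesis; likewise $w_2^s=w^s\in A_{p/s,q/s}$.

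The first step is to assemble the Lebesgue-space input. By Lemma \ref{cor-riesz}, $L^{-\alpha/2}$ satisfies the pointwise bound (\ref{sublinear}), hence it is an admissible operator $S_\alpha$ in the sense of Section 3. Next, $w^s\in A_{p/s,q/s}$ with $1\le s<p<q$ forces $w\in A_{p,q}$: by the equivalence $w\in A_{p,q}\iff w^q\in A_{1+q/p'}$ recorded above, this reduces to the inclusion $A_{1+(q/s)/(p/s)'}\subseteq A_{1+q/p'}$, which holds because $s\ge 1$ makes $1+(q/s)/(p/s)'\le 1+q/p'$. Therefore Theorem \ref{fractional-integral} applies and yields that $L^{-\alpha/2}$ and, for $b\in BMO$, its commutator $[b,L^{-\alpha/2}]$ are bounded from $L^{p,w^p}$ to $L^{q,w^q}$ for $1<p<n/\alpha$; only this strong estimate is relevant, since the conclusions of Corollary \ref{Cor-Riesz-gwms-1} concern the range $1<p<n/\alpha$. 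Finally, for part (2) I would check $w\in A_\infty$: Lemma \ref{thm2.5} gives $w^p\in A_p\subseteq A_\infty$ from $w^s\in A_{p/s,q/s}$, and since $A_\infty$ is stable under raising to a power in $(0,1]$ (for instance by Jones factorization), $w=(w^p)^{1/p}\in A_\infty$.

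With these points in hand, part (1) is exactly Theorem \ref{Theo-Riesz-gwms-1}(1) applied to $S_\alpha=L^{-\alpha/2}$ under hypothesis (\ref{inequal-Riesz}), and part (2) is Theorem \ref{Theo-Riesz-gwms-1}(2) applied to $[b,L^{-\alpha/2}]$ under (\ref{inequal-comm-riesz}) together with $w\in A_\infty$. I do not expect any genuine obstacle: the corollary is a transference statement, and the only delicate point is reconciling the weight classes of the Lebesgue-space theorems with the standing hypothesis $w^s\in A_{p/s,q/s}$, handled above; the parameters $p_0$ and $\psi$ appearing in the statement play no role here and are relevant only to the mixed-Morrey analogue (Theorem \ref{Theo-Riesz-Mixed-1}). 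Running the same argument with Theorem \ref{rough-kernel} and Lemma \ref{cor-riesz} in place of Theorem \ref{fractional-integral} gives the corresponding boundedness of $S_{\Omega,\alpha}$, $M_{\Omega,\alpha}$ and their commutators from $\mathcal{M}^{p,w^p}_{\psi_1}$ to $\mathcal{M}^{q,w^q}_{\psi_2}$ when $\Omega\in L^{s'}(\mathbb{S}^{n-1})$.
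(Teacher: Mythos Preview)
Your approach is correct and matches the paper's (unwritten) argument: Corollary \ref{Cor-Riesz-gwms-1} is simply Theorem \ref{Theo-Riesz-gwms-1} specialized to $w_1=w_2=w$, with the Lebesgue-space hypotheses supplied by Theorem \ref{fractional-integral}. Your verification that $w^s\in A_{p/s,q/s}$ forces $w\in A_{p,q}$ and $w\in A_\infty$ is more detail than the paper gives; for the $A_\infty$ step a slightly cleaner route than Jones factorization is to observe that $w^p\in A_p$ (via Lemma \ref{thm2.5}) and then Jensen's inequality gives $w\in A_p\subseteq A_\infty$ directly.
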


\begin{cor}\label{Cor-Riesz-Mixed-1}
Let $0 < \alpha < n,\ 1 <p<n/\alpha,\ 1/q=1/p-\alpha/n$, and $w^s \in A_{p/s,q/s}$,
where $1\le s<p$. Suppose that $1<p_0<\infty$ and $\psi:\mathbb{R}^n\times \mathbb{R}^+ \to \mathbb{R}^+$.
\begin{enumerate}
\item Suppose that the positive functions $\psi_1$ and $\psi_2$ satisfy 
(\ref{inequal-Riesz}). Then, $L^{-\alpha/2}$ is bounded from $\mathcal{M}_\psi^{p_0,w}(0, T,
\mathcal{M}^{p,w^p}_{\psi_1})$ to $\mathcal{M}_\psi^{p_0,w}(0, T, 
\mathcal{M}^{q,w^q}_{\psi_2})$.
\item Suppose that the positive functions $\psi_1$ and $\psi_2$ satisfy 
(\ref{inequal-comm-riesz}). If $b\in BMO$, then $[b,L^{-\alpha/2}]$ is bounded from 
$\mathcal{M}_\psi^{p_0,w}(0, T, \mathcal{M}^{p,w^p}_{\psi_1})$ to 
$\mathcal{M}_\psi^{p_0,w}(0, T, \mathcal{M}^{q,w^q}_{\psi_2})$.
\end{enumerate}
\end{cor}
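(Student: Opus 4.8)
The plan is to obtain Corollary \ref{Cor-Riesz-Mixed-1} by specializing Theorem \ref{Theo-Riesz-Mixed-1} to the case $w_1=w_2=w$ and checking that the remaining hypotheses of that theorem are satisfied automatically. First I would recall, from part (1) of Lemma \ref{cor-riesz}, that $L^{-\alpha/2}$ satisfies the pointwise estimate (\ref{sublinear}) and is therefore an admissible operator $S_\alpha$; since the corollary only treats $1<p<n/\alpha$, the weak-type endpoint at $p=1$ never enters, so we only need the strong-type bound from $L^{p,w^p}$ to $L^{q,w^q}$.

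I would then verify the weight hypotheses. Taking $w_1=w_2=w$, the pair condition $(w^s,w^s)\in A_{p/s,q/s}$ is precisely the assumed scalar condition $w^s\in A_{p/s,q/s}$, and likewise $w_2^s=w^s\in A_{p/s,q/s}$. For part (2) of the corollary one additionally needs $w_1=w\in A_\infty$; by Lemma \ref{thm2.5}, $w^s\in A_{p/s,q/s}$ gives $w^p\in A_{p/s}\subseteq A_\infty$, and since $A_\infty$ is stable under taking powers in $(0,1]$ it follows that $w=(w^p)^{1/p}\in A_\infty$. Finally, the strong-type boundedness of $L^{-\alpha/2}$ and of $[b,L^{-\alpha/2}]$ from $L^{p,w^p}$ to $L^{q,w^q}$ for $1<p<n/\alpha$ is exactly the content of Theorem \ref{fractional-integral} (its weight hypothesis being implied by $w^s\in A_{p/s,q/s}$).

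With these facts in hand, I would conclude by applying Theorem \ref{Theo-Riesz-Mixed-1} with $w_1=w_2=w$ --- equivalently, by plugging $L^{-\alpha/2}$ into Theorems \ref{Main-Riesz-Mixed} and \ref{Main-Comm-Riesz-Mixed} --- which yields the boundedness of $L^{-\alpha/2}$ from $\mathcal{M}_\psi^{p_0,w}(0,T,\mathcal{M}^{p,w^p}_{\psi_1})$ to $\mathcal{M}_\psi^{p_0,w}(0,T,\mathcal{M}^{q,w^q}_{\psi_2})$ under (\ref{inequal-Riesz}) and of $[b,L^{-\alpha/2}]$ between the same spaces under (\ref{inequal-comm-riesz}). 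No genuine obstacle arises, since this is essentially a repackaging of the earlier theorems; the only step deserving a word of care is the deduction $w^s\in A_{p/s,q/s}\Rightarrow w\in A_\infty$ used for the commutator statement, which rests on the power-stability of the $A_\infty$ class (and may, if preferred, simply be cited from the theory of Muckenhoupt weights).
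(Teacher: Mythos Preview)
Your proposal is correct and follows the same route the paper implicitly takes: the corollary is stated without proof because it is the single-weight specialization $w_1=w_2=w$ of Theorem \ref{Theo-Riesz-Mixed-1}, with the required $L^{p,w^p}\to L^{q,w^q}$ boundedness of $L^{-\alpha/2}$ and $[b,L^{-\alpha/2}]$ supplied by Theorem \ref{fractional-integral}. Your additional verifications (that $w^s\in A_{p/s,q/s}$ yields both the pair condition and $w\in A_\infty$, and that it implies the $A_{p,q}$ hypothesis needed in Theorem \ref{fractional-integral}) are the natural details behind the paper's one-line attribution.
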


\begin{theorem}\label{Theo-Riesz-gwms-2}
Let $0 < \alpha < n, 1 \le p< n/\alpha, 1/q = 1/p - \alpha/n$, $(w_1^s, w_2^s) \in 
A_{p/s,q/s}$, and $w_2^s \in A_{p/s, q/s}$, where $1\le s<p$ for $1<p<n/\alpha$ and $s=1$
for $p=1$. Suppose that $\Omega\in L^{s'}(\mathbb{S}^{n-1})$, $1<p_0<\infty$ and 
$\psi:\mathbb{R}^n\times \mathbb{R}^+ \to \mathbb{R}^+$.
\begin{enumerate}
	\item Suppose that the positive functions $psi_1$ and $\psi_2$ satisfy 
 (\ref{inequal-Riesz}). If $S_{\Omega,\alpha}$ is bounded from $L^{p,w_1^p}$ to 
 $L^{q,w_2^q}$ for $1<p<n/\alpha$ and from $L^{1,w_1}$ to $WL^{q,w^q}$, then 
 $S_{\Omega,\alpha}$ is bounded from $\mathcal{M}^{p,w_1^p}_{\psi_1}$ to 
 $\mathcal{M}^{q,w_2^q}_{\psi_2}$ for $1<p<n/\alpha$ and from $\mathcal{M}^{1,w}_{\psi_1}$ 
 to $W\mathcal{M}^{q,w^q}_{\psi_2}$.
	\item Suppose that the positive functions $\psi_1$ and $\psi_2$ satisfy 
 (\ref{inequal-comm-riesz}) and $w_1\in A_\infty$.  If $b\in BMO$ and 
 $[b,S_{\Omega,\alpha}]$ is bounded from $L^{p,w_1^p}$ to $L^{q,w_2^q}$ for $1<p<n/\alpha$, 
 then $[b,S_{\Omega,\alpha}]$ is bounded from $\mathcal{M}^{p,w_1^p}_{\psi_1}$ to 
 $\mathcal{M}^{q,w_2^q}_{\psi_2}$ for $1<p<n/\alpha$.
\end{enumerate}
\end{theorem}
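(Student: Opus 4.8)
The plan is to derive this theorem as a direct specialization of the abstract results proved in Section~3, namely Theorem~\ref{Main-Riesz} (for part (1)) and Theorem~\ref{Main-Comm-Riesz} (for part (2)), once we certify that $S_{\Omega,\alpha}$ belongs to the class of operators to which those theorems apply. The decisive input is Lemma~\ref{cor-riesz}(2): since $\Omega\in L^{s'}(\mathbb{S}^{n-1})$ is homogeneous of degree zero with $1<s'\le\infty$, the operator $S_{\Omega,\alpha}$ satisfies the pointwise size estimate (\ref{sublinear}) with the exponent $s$, and hence $S_{\Omega,\alpha}$ is an instance of the sublinear operator $S_\alpha$ appearing in Theorem~\ref{Main-Riesz}. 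The commutator $[b,S_{\Omega,\alpha}]$ then automatically satisfies the pointwise inequality (\ref{commutator}), which is purely formal and holds for the commutator of any operator with a locally integrable $b$.

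For part (1), I would apply Theorem~\ref{Main-Riesz} with $S_\alpha:=S_{\Omega,\alpha}$. All hypotheses of that theorem are present here: $0<\alpha<n$, $1\le p<n/\alpha$, $1/q=1/p-\alpha/n$, the weight conditions $(w_1^s,w_2^s)\in A_{p/s,q/s}$ and $w_2^s\in A_{p/s,q/s}$ (with $1\le s<p$ for $1<p<n/\alpha$ and $s=1$ for $p=1$), the condition (\ref{inequal-Riesz}) on $(\psi_1,\psi_2)$, and the assumed $L^{p,w_1^p}\to L^{q,w_2^q}$ (resp. $L^{1,w_1}\to WL^{q,w_2^q}$) boundedness of $S_{\Omega,\alpha}$. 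Theorem~\ref{Main-Riesz} then gives boundedness of $S_{\Omega,\alpha}$ from $\mathcal{M}^{p,w_1^p}_{\psi_1}$ to $\mathcal{M}^{q,w_2^q}_{\psi_2}$ for $1<p<n/\alpha$ and from $\mathcal{M}^{1,w_1}_{\psi_1}$ to $W\mathcal{M}^{q,w_2^q}_{\psi_2}$, which is exactly the assertion. For part (2), I would instead invoke Theorem~\ref{Main-Comm-Riesz} with $S_\alpha:=S_{\Omega,\alpha}$: now one additionally has $1<p<n/\alpha$, $w_1\in A_\infty$, $b\in BMO$, the condition (\ref{inequal-comm-riesz}) on $(\psi_1,\psi_2)$, and the hypothesis that $[b,S_{\Omega,\alpha}]$ is bounded from $L^{p,w_1^p}$ to $L^{q,w_2^q}$; since $S_{\Omega,\alpha}$ satisfies (\ref{sublinear}) and $[b,S_{\Omega,\alpha}]$ satisfies (\ref{commutator}), Theorem~\ref{Main-Comm-Riesz} applies verbatim and yields boundedness of $[b,S_{\Omega,\alpha}]$ from $\mathcal{M}^{p,w_1^p}_{\psi_1}$ to $\mathcal{M}^{q,w_2^q}_{\psi_2}$.

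The one point requiring attention---and thus the only (mild) obstacle---is the bookkeeping of the exponent $s$: the same $s$ must simultaneously govern the Muckenhoupt condition $A_{p/s,q/s}$ on the weight pair, the assumed strong/weak boundedness on weighted Lebesgue spaces, and the size estimate (\ref{sublinear}) supplied by Lemma~\ref{cor-riesz}. Because Lemma~\ref{cor-riesz}(2) furnishes (\ref{sublinear}) for every admissible choice of $s$ compatible with $\Omega\in L^{s'}(\mathbb{S}^{n-1})$, and the statement of the present theorem already restricts to $1\le s<p$, this matching is automatic and no additional argument is needed; the proof is complete once the reductions above are recorded.
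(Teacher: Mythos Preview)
Your proposal is correct and follows essentially the same approach as the paper: invoke Lemma~\ref{cor-riesz}(2) to verify that $S_{\Omega,\alpha}$ satisfies (\ref{sublinear}), and then apply Theorem~\ref{Main-Riesz} and Theorem~\ref{Main-Comm-Riesz} with $S_\alpha:=S_{\Omega,\alpha}$. Your additional remarks on the bookkeeping of the exponent $s$ and the commutator inequality (\ref{commutator}) are accurate elaborations but go slightly beyond what the paper records.
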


\begin{proof}
$S_{\Omega,\alpha}$ satisfies (\ref{sublinear}) by Lemma \ref{cor-riesz} point (2). The 
results immediately are obtained by plugging $S_{\Omega,\alpha}$ onto $S_\alpha$ in Theorem 
\ref{Main-Riesz} and Theorem \ref{Main-Comm-Riesz}.
\end{proof}

\begin{theorem}\label{Theo-Riesz-Mixed-2}
Let $0 < \alpha < n, 1 \leq p < n/\alpha, 1/q = 1/p - \alpha/n$, $(w_1^s, w_2^s) \in 
A_{p/s,q/s}$, and $w_2^s \in A_{p/s, q/s}$, where $1\le s<p$ for $1<p<n/\alpha$ and $s=1$ 
for $p=1$. Suppose that $\Omega\in L^{s'}(\mathbb{S}^{n-1})$, $1<p_0<\infty$ and 
$\psi:\mathbb{R}^n\times \mathbb{R}^+ \to \mathbb{R}^+$. \begin{enumerate}
	\item Suppose that the positive functions $psi_1$ and $\psi_2$ satisfy 
 (\ref{inequal-Riesz}).  If $S_{\Omega,\alpha}$ is bounded from $L^{p,w_1^p}$ to 
 $L^{q,w_2^q}$ for $1<p<\infty$ and from $L^{1,w_1}$ to $WL^{q,w^q}$, then 
 $S_{\Omega,\alpha}$ is bounded from $\mathcal{M}_\psi^{p_0,w}(0, T, 
 \mathcal{M}^{p,w_1^p}_{\psi_1})$ to $\mathcal{M}_\psi^{p_0,w}(0, T, 
 \mathcal{M}^{q,w_2^q}_{\psi_2})$ for $1<p<n/\alpha$ and from $\mathcal{M}_\psi^{p_0,w}(0, 
 T, \mathcal{M}^{1,w}_{\psi_1})$ to $\mathcal{M}_\psi^{p_0,w} (0, T, 
 W\mathcal{M}^{q,w^q}_{\psi_2})$.
	\item Suppose that the positive functions $\psi_1$ and $\psi_2$ satisfy 
 (\ref{inequal-comm-riesz}) and $w_1\in A_\infty$.  If $b\in BMO$ and 
 $[b,S_{\Omega,\alpha}]$ is bounded from $L^{p,w_1^p}$ to $L^{q,w_2^q}$ for $1<p<n/\alpha$, 
 then $[b,S_{\Omega,\alpha}]$ is bounded from $\mathcal{M}_\psi^{p_0,w} (0, T, 
 \mathcal{M}^{p,w_1^p}_{\psi_1})$ to $\mathcal{M}_\psi^{p_0,w} (0, T, 
 \mathcal{M}^{q,w_2^q}_{\psi_2})$ for $1<p<n/\alpha$.
\end{enumerate}
\end{theorem}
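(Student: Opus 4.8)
The plan is to reduce this to the general mixed-Morrey results of Section 3. First I would invoke Lemma \ref{cor-riesz}(2): since $\Omega \in L^{s'}(\mathbb{S}^{n-1})$ is homogeneous of degree zero with $1 < s' \le \infty$, the fractional integral with rough kernel $S_{\Omega,\alpha}$ satisfies the sublinearity estimate (\ref{sublinear}) with the given exponent $s$. Thus $S_{\Omega,\alpha}$ is an admissible choice for the operator $S_\alpha$ in the hypotheses of Theorems \ref{Main-Riesz-Mixed} and \ref{Main-Comm-Riesz-Mixed}.

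For part (1), under the assumption that $\psi_1,\psi_2$ satisfy (\ref{inequal-Riesz}) and the weight conditions $(w_1^s,w_2^s)\in A_{p/s,q/s}$ and $w_2^s\in A_{p/s,q/s}$, I would simply substitute $S_\alpha = S_{\Omega,\alpha}$ into Theorem \ref{Main-Riesz-Mixed}. Since by hypothesis $S_{\Omega,\alpha}$ is bounded from $L^{p,w_1^p}$ to $L^{q,w_2^q}$ for $1<p<n/\alpha$ and from $L^{1,w_1}$ to $WL^{q,w_2^q}$, the conclusion of Theorem \ref{Main-Riesz-Mixed} yields the boundedness of $S_{\Omega,\alpha}$ from $\mathcal{M}_\psi^{p_0,w}(0,T,\mathcal{M}^{p,w_1^p}_{\psi_1})$ to $\mathcal{M}_\psi^{p_0,w}(0,T,\mathcal{M}^{q,w_2^q}_{\psi_2})$, and from the $L^{1,w_1}$ endpoint to the corresponding weak mixed-Morrey space.

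For part (2), I would add the hypothesis $w_1 \in A_\infty$ and assume $\psi_1,\psi_2$ satisfy (\ref{inequal-comm-riesz}). Then, since $b \in BMO$ and $[b, S_{\Omega,\alpha}]$ is bounded from $L^{p,w_1^p}$ to $L^{q,w_2^q}$, plugging $S_\alpha = S_{\Omega,\alpha}$ into Theorem \ref{Main-Comm-Riesz-Mixed} gives the boundedness of $[b, S_{\Omega,\alpha}]$ from $\mathcal{M}_\psi^{p_0,w}(0,T,\mathcal{M}^{p,w_1^p}_{\psi_1})$ to $\mathcal{M}_\psi^{p_0,w}(0,T,\mathcal{M}^{q,w_2^q}_{\psi_2})$. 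There is no serious obstacle here: the entire argument has been carried out abstractly in Section 3, so the only point requiring care is verifying that $S_{\Omega,\alpha}$ genuinely satisfies (\ref{sublinear}), which is exactly the content of Lemma \ref{cor-riesz}. (We remark that in the single-weight case $w_1 = w_2 = w$, Theorem \ref{rough-kernel} supplies the required weighted Lebesgue-space boundedness of $S_{\Omega,\alpha}$ and $[b, S_{\Omega,\alpha}]$ as a genuine theorem rather than a hypothesis.)
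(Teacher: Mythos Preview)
Your proposal is correct and follows essentially the same approach as the paper: the paper's proof is simply ``Just plug $S_{\Omega,\alpha}$ onto $S_\alpha$ in Theorem \ref{Main-Riesz-Mixed} and Theorem \ref{Main-Comm-Riesz-Mixed},'' and your version spells this out more carefully, including the explicit invocation of Lemma \ref{cor-riesz}(2) to verify that $S_{\Omega,\alpha}$ satisfies (\ref{sublinear}).
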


\begin{proof}
Just plug $S_{\Omega,\alpha}$ onto $S_\alpha$ in Theorem \ref{Main-Riesz-Mixed} and Theorem 
\ref{Main-Comm-Riesz-Mixed}.
\end{proof}

\begin{cor}\label{Cor-Riesz-gwms-2}
Let $0 < \alpha < n, 1 <p<n/\alpha, 1/q = 1/p - \alpha/n$, and $w^s \in A_{p/s,q/s}$, where 
$1\le s<p$. Suppose that $\Omega\in L^{s'}(\mathbb{S}^{n-1})$, $1<p_0<\infty$ and 
$\psi:\mathbb{R}^n\times \mathbb{R}^+ \to \mathbb{R}^+$.
\begin{enumerate}
	\item Suppose that the positive functions $\psi_1$ and $\psi_2$ satisfy 
(\ref{inequal-Riesz}). Then, $S_{\Omega,\alpha}$ is bounded from 
$\mathcal{M}^{p,w_1^p}_{\psi_1}$ to $\mathcal{M}^{q,w_2^q}_{\psi_2}$.
	\item Suppose that the positive functions $\psi_1$ and $\psi_2$ satisfy (\ref{inequal-comm-riesz}). If $b\in BMO$, then $[b,S_{\Omega,\alpha}]$ is bounded from $\mathcal{M}^{p,w_1^p}_{\psi_1}$ to $\mathcal{M}^{q,w_2^q}_{\psi_2}$.
	\end{enumerate}
\end{cor}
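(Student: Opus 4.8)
The plan is to obtain Corollary \ref{Cor-Riesz-gwms-2} as a direct specialization of the Section~3 machinery: take the pair of weights to be $w_1=w_2=w$ in Theorem \ref{Theo-Riesz-gwms-2} (equivalently, in Theorems \ref{Main-Riesz} and \ref{Main-Comm-Riesz}), and use the known weighted $L^p$--$L^q$ theory for rough fractional integrals to discharge the analytic hypotheses that appear as assumptions there. Since the corollary only concerns the range $1<p<n/\alpha$, the weak-type endpoint hypothesis present in Theorem \ref{Main-Riesz} is vacuous and need not be checked.

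Concretely, I would proceed in three steps. First, by part~(2) of Lemma \ref{cor-riesz}, the hypothesis $\Omega\in L^{s'}(\mathbb{S}^{n-1})$ (homogeneous of degree zero, $s'>1$) forces $S_{\Omega,\alpha}$ to satisfy the sublinear inequality \eqref{sublinear}; this is the only structural input the abstract theorems require of the operator. Second, since $\Omega\in L^{s'}(\mathbb{S}^{n-1})$ and $w^s\in A_{p/s,q/s}$ with $1\le s<p$, Theorem \ref{rough-kernel} yields that both $S_{\Omega,\alpha}$ and, for $b\in BMO$, the commutator $[b,S_{\Omega,\alpha}]$ are bounded from $L^{p,w^p}$ to $L^{q,w^q}$. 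Third, I would match the weight hypotheses: with $w_1=w_2=w$ the pair condition $(w^s,w^s)\in A_{p/s,q/s}$ is, by the definition of the pair class, exactly the single-weight condition $w^s\in A_{p/s,q/s}$, and likewise $w_2^s=w^s\in A_{p/s,q/s}$ holds by assumption. For part~(2) one also needs $w_1=w\in A_\infty$; this is automatic, since $w^s\in A_{p/s,q/s}$ gives $w^p\in A_{p/s}$ by Lemma \ref{thm2.5}, and the self-improving property of Muckenhoupt classes ($v\in A_r\Rightarrow v^{t}\in A_{1+t(r-1)}$ for $0<t\le1$, by Jensen's inequality) applied with $v=w^p$, $t=1/p$, $r=p/s$ gives $w\in A_{1+1/s-1/p}\subseteq A_\infty$. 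With these ingredients in hand, substituting $S_{\Omega,\alpha}$ for $S_\alpha$ in Theorem \ref{Main-Riesz} under \eqref{inequal-Riesz} proves part~(1), and in Theorem \ref{Main-Comm-Riesz} under \eqref{inequal-comm-riesz} proves part~(2); equivalently, one may simply quote Theorem \ref{Theo-Riesz-gwms-2} with $w_1=w_2=w$.

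I do not expect a genuine obstacle here: the corollary is a plug-in statement, feeding the classical Ding-type weighted norm inequalities into the general boundedness transference of Section~3. The only point that needs a line of reasoning rather than a citation is the implication $w^s\in A_{p/s,q/s}\Rightarrow w\in A_\infty$ used for the commutator; the parameters $p_0$ and $\psi$ listed in the hypotheses do not enter the statement and can be ignored (they are leftovers relevant only to the mixed-Morrey variant, Corollary \ref{Cor-Riesz-Mixed-1}).
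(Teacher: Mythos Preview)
Your proposal is correct and matches the paper's intended approach: the paper states Corollary~\ref{Cor-Riesz-gwms-2} without proof, placing it immediately after Theorem~\ref{Theo-Riesz-gwms-2} and Theorem~\ref{rough-kernel}, with the evident intention that one specializes $w_1=w_2=w$ and invokes the Ding--Lu weighted $L^{p,w^p}\to L^{q,w^q}$ bounds to discharge the hypotheses. Your added verification that $w^s\in A_{p/s,q/s}\Rightarrow w\in A_\infty$ (via Lemma~\ref{thm2.5} and Jensen) fills a detail the paper leaves implicit, and your remark that $p_0,\psi$ are extraneous leftovers from the mixed-Morrey analogue is also on point.
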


\begin{cor}\label{Cor-Riesz-Mixed-2}
Let $0 < \alpha < n, 1 < p<n/\alpha, 1/q = 1/p - \alpha/n$, and $w^s \in A_{p/s,q/s}$, where 
$1\le s<p$. Suppose that $\Omega\in L^{s'}(\mathbb{S}^{n-1})$, $1<p_0<\infty$ and 
$\psi:\mathbb{R}^n\times \mathbb{R}^+ \to \mathbb{R}^+$. \begin{enumerate}
	\item Suppose that the positive functions $\psi_1$ and $\psi_2$ satisfy 
(\ref{inequal-Riesz}). Then, $S_{\Omega,\alpha}$ is bounded from $\mathcal{M}_\psi^{p_0,w}
(0, T, \mathcal{M}^{p,w_1^p}_{\psi_1})$ to $\mathcal{M}_\psi^{p_0,w}(0, T, 
\mathcal{M}^{q,w_2^q}_{\psi_2})$.
	\item Suppose that the positive functions $\psi_1$ and $\psi_2$ satisfy 
(\ref{inequal-comm-riesz}). Then, $[b,S_{\Omega,\alpha}]$ is bounded from 
$\mathcal{M}_\psi^{p_0,w} (0, T, \mathcal{M}^{p,w_1^p}_{\psi_1})$ to 
$\mathcal{M}_\psi^{p_0,w} (0, T, \mathcal{M}^{q,w_2^q}_{\psi_2})$.
\end{enumerate}
\end{cor}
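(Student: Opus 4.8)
The plan is to derive both parts of the corollary directly from Theorem~\ref{Theo-Riesz-Mixed-2} by taking $w_1=w_2=w$, once I check that the hypotheses there collapse to the ones assumed here. First I would note that, by the definitions in Section~2, the pair condition $(w^s,w^s)\in A_{p/s,q/s}$ is literally the same as the single-weight condition $w^s\in A_{p/s,q/s}$, and similarly the extra requirement $w_2^s=w^s\in A_{p/s,q/s}$ is exactly the standing hypothesis. So with $w_1=w_2=w$ all the weight conditions of Theorem~\ref{Theo-Riesz-Mixed-2} are in force.

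For part~(1), I would first use Lemma~\ref{cor-riesz}(2): since $\Omega\in L^{s'}(\mathbb{S}^{n-1})$ is homogeneous of degree zero, the operator $S_{\Omega,\alpha}$ satisfies (\ref{sublinear}) with the given exponent $s$. By Theorem~\ref{rough-kernel}, and because $w^s\in A_{p/s,q/s}$ with $1<p<n/\alpha$, $S_{\Omega,\alpha}$ is bounded from $L^{p,w^p}$ to $L^{q,w^q}$. Hence the hypotheses of Theorem~\ref{Theo-Riesz-Mixed-2}(1) are met (only the branch $1<p<n/\alpha$ is needed here, so the weak $L^1$ hypothesis is vacuous), and applying it gives that $S_{\Omega,\alpha}$ is bounded from $\mathcal{M}_\psi^{p_0,w}(0,T,\mathcal{M}^{p,w^p}_{\psi_1})$ to $\mathcal{M}_\psi^{p_0,w}(0,T,\mathcal{M}^{q,w^q}_{\psi_2})$, as claimed.

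For part~(2), the only additional ingredient is $w\in A_\infty$. From $w^s\in A_{p/s,q/s}$ and Lemma~\ref{thm2.5} (applied with the pair $(p/s,q/s)$ in place of $(p,q)$) one gets $w^p\in A_{p/s}\subset A_\infty$; since $A_\infty$ classes are stable under taking powers in $(0,1]$, it follows that $w=(w^p)^{1/p}\in A_\infty$. Next, Theorem~\ref{rough-kernel} gives that $[b,S_{\Omega,\alpha}]$ is bounded from $L^{p,w^p}$ to $L^{q,w^q}$ for $b\in BMO$. Thus the hypotheses of Theorem~\ref{Theo-Riesz-Mixed-2}(2) hold with $w_1=w_2=w$, and the conclusion is the asserted boundedness of $[b,S_{\Omega,\alpha}]$ from $\mathcal{M}_\psi^{p_0,w}(0,T,\mathcal{M}^{p,w^p}_{\psi_1})$ to $\mathcal{M}_\psi^{p_0,w}(0,T,\mathcal{M}^{q,w^q}_{\psi_2})$.

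This corollary is a pure verification of hypotheses, so I do not expect any real obstacle: the only mildly non-routine step is confirming $w\in A_\infty$ in part~(2), which is immediate from the stability properties of Muckenhoupt weights recorded above; all the analytic content has already been absorbed into Theorems~\ref{Main-Riesz-Mixed}, \ref{Main-Comm-Riesz-Mixed}, and~\ref{Theo-Riesz-Mixed-2}.
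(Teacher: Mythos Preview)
Your proposal is correct and matches the paper's intended (unwritten) argument: the corollary is obtained from Theorem~\ref{Theo-Riesz-Mixed-2} by specializing $w_1=w_2=w$ and invoking Theorem~\ref{rough-kernel} for the weighted Lebesgue boundedness. Your explicit verification that $w\in A_\infty$ (via Lemma~\ref{thm2.5} and the stability of $A_\infty$ under powers in $(0,1]$) is a detail the paper leaves implicit, but it is indeed the point that needs checking for part~(2).
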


\begin{theorem}\label{Theo-Riesz-gwms-3}
Let $0 < \alpha < n, 1 \leq p < n/\alpha, 1/q = 1/p - \alpha/n$, $(w_1^s, w_2^s) 
\in A_{p/s,q/s}$, and $w_2^s \in A_{p/s, q/s}$, where $1\le s<p$ for $1<p<n/\alpha$ and
$s=1$ for $p=1$. Suppose that $\Omega\in L^{s'}(\mathbb{S}^{n-1})$, $1<p_0<\infty$ and $\psi:\mathbb{R}^n\times \mathbb{R}^+ \to \mathbb{R}^+$.
\begin{enumerate}
	\item Suppose that the positive functions $\psi_1$ and $\psi_2$ satisfy 
(\ref{inequal-Riesz}). If $M_{\Omega,\alpha}$ is bounded from $L^{p,w_1^p}$ to 
$L^{q,w_2^q}$ for $1<p<n/\alpha$ and from $L^{1,w_1}$ to $WL^{q,w^q}$, then 
$M_{\Omega,\alpha}$ is bounded from $\mathcal{M}^{p,w_1^p}_{\psi_1}$ to 
$\mathcal{M}^{q,w_2^q}_{\psi_2}$ for $1<p<n/\alpha$ and from $\mathcal{M}^{1,w}_{\psi_1}$ to 
$W\mathcal{M}^{q,w^q}_{\psi_2}$.
	\item Suppose that the positive functions $\psi_1$ and $\psi_2$ satisfy 
(\ref{inequal-comm-riesz}) and $w_1\in A_\infty$. If $b\in BMO$ and $[b,M_{\Omega,\alpha}]$ 
is bounded from $L^{p,w_1^p}$ to $L^{q,w_2^q}$ for $1<p<n/\alpha$, then 
$[b,M_{\Omega,\alpha}]$ is bounded from $\mathcal{M}^{p,w_1^p}_{\psi_1}$ to 
$\mathcal{M}^{q,w_2^q}_{\psi_2}$ for $1<p<n/\alpha$.
\end{enumerate}
\end{theorem}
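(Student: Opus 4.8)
The plan is to reduce the statement to the abstract boundedness results of Section~3, namely Theorem~\ref{Main-Riesz} and Theorem~\ref{Main-Comm-Riesz}, by checking that $M_{\Omega,\alpha}$ satisfies the structural hypotheses imposed there on a generic sublinear operator $S_\alpha$.

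First I would invoke Lemma~\ref{cor-riesz}(2): since $\Omega\in L^{s'}(\mathbb{S}^{n-1})$ is homogeneous of degree zero, the fractional maximal operator $M_{\Omega,\alpha}$ satisfies the pointwise inequality (\ref{sublinear}) with the prescribed exponent $s$, which is exactly the condition required of $S_\alpha$ in both theorems. The weight conditions $(w_1^s,w_2^s)\in A_{p/s,q/s}$ and $w_2^s\in A_{p/s,q/s}$ are already part of the present hypotheses, and for part (2) we additionally assume $w_1\in A_\infty$, so the full hypothesis set of Theorem~\ref{Main-Comm-Riesz} is in place. I would also note that the commutator inequality (\ref{commutator}) holds automatically for any sublinear operator, hence in particular for $M_{\Omega,\alpha}$, so nothing extra is needed there.

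For part (1), applying Theorem~\ref{Main-Riesz} with $S_\alpha:=M_{\Omega,\alpha}$ --- using the assumed boundedness of $M_{\Omega,\alpha}$ from $L^{p,w_1^p}$ to $L^{q,w_2^q}$ for $1<p<n/\alpha$ and from $L^{1,w_1}$ to $WL^{q,w_2^q}$, together with (\ref{inequal-Riesz}) --- yields the boundedness of $M_{\Omega,\alpha}$ from $\mathcal{M}^{p,w_1^p}_{\psi_1}$ to $\mathcal{M}^{q,w_2^q}_{\psi_2}$ and from $\mathcal{M}^{1,w}_{\psi_1}$ to $W\mathcal{M}^{q,w^q}_{\psi_2}$. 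For part (2), applying Theorem~\ref{Main-Comm-Riesz} with $S_\alpha:=M_{\Omega,\alpha}$ --- using (\ref{inequal-comm-riesz}) and the assumed $L^{p,w_1^p}\to L^{q,w_2^q}$ boundedness of $[b,M_{\Omega,\alpha}]$ --- gives the boundedness of $[b,M_{\Omega,\alpha}]$ from $\mathcal{M}^{p,w_1^p}_{\psi_1}$ to $\mathcal{M}^{q,w_2^q}_{\psi_2}$. Since the whole argument is a direct substitution into the Section~3 theorems once (\ref{sublinear}) is verified, there is no real obstacle; the only point to keep straight is that the $s$ appearing in Lemma~\ref{cor-riesz}(2) is the same $s$ used in the weight and integrability conditions, which it is.
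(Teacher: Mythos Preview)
Your proposal is correct and follows essentially the same approach as the paper: the paper's proof simply observes that $M_{\Omega,\alpha}$ satisfies (\ref{sublinear}) by Lemma~\ref{cor-riesz}, and then plugs $M_{\Omega,\alpha}$ into $S_\alpha$ in Theorems~\ref{Main-Riesz} and~\ref{Main-Comm-Riesz}. Your write-up is slightly more explicit about matching the hypotheses, but the argument is identical.
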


\begin{proof}
By Lemma \ref{cor-riesz}, $M_{\Omega,\alpha}$ satisfies (\ref{sublinear}). We plug 
$M_{\Omega,\alpha}$ onto $S_\alpha$ in Theorem \ref{Main-Riesz} and Theorem 
\ref{Main-Comm-Riesz} to obtain the the boundedness properties.
\end{proof}

\begin{theorem}\label{Theo-Riesz-Mixed-3}
Let $0 < \alpha < n, 1 \leq p < n/\alpha, 1/q = 1/p - \alpha/n$, $(w_1^s, w_2^s) \in 
A_{p/s,q/s}$, and $w_2^s \in A_{p/s, q/s}$, where $1\le s<p$ for $1<p<n/\alpha$ and $s=1$ 
for $p=1$. Suppose that $\Omega\in L^{s'}(\mathbb{S}^{n-1})$, $1<p_0<\infty$ and $\psi:\mathbb{R}^n\times \mathbb{R}^+ \to \mathbb{R}^+$.
\begin{enumerate}
	\item Suppose that the positive functions $\psi_1$ and $\psi_2$ satisfy 
(\ref{inequal-Riesz}). If $M_{\Omega,\alpha}$ is bounded from $L^{p,w_1^p}$ to 
$L^{q,w_2^q}$ for $1<p<n/\alpha$ and from $L^{1,w_1}$ to $WL^{q,w^q}$, then 
$M_{\Omega,\alpha}$ is bounded from $\mathcal{M}_\psi^{p_0,w}(0, T, 
\mathcal{M}^{p,w_1^p}_{\psi_1})$ to $\mathcal{M}_\psi^{p_0,w}(0, T, 
\mathcal{M}^{q,w_2^q}_{\psi_2})$ for $1<p<n/\alpha$ and from $\mathcal{M}_\psi^{p_0,w}(0, T, 
\mathcal{M}^{1,w}_{\psi_1})$ to $\mathcal{M}_\psi^{p_0,w} (0, T, 
W\mathcal{M}^{q,w^q}_{\psi_2})$.
	\item Suppose that the positive functions $\psi_1$ and $\psi_2$ satisfy 
(\ref{inequal-comm-riesz}) and $w_1\in A_\infty$. If $b\in BMO$ and $[b,M_{\Omega,\alpha}]$
is bounded from $L^{p,w_1^p}$ to $L^{q,w_2^q}$ for $1<p<n/\alpha$, then 
$[b,M_{\Omega,\alpha}]$ is bounded from $\mathcal{M}_\psi^{p_0,w} (0, T, 
\mathcal{M}^{p,w_1^p}_{\psi_1})$ to $\mathcal{M}_\psi^{p_0,w} (0, T, \mathcal{M}^{q,w_2^q}_{\psi_2})$ for $1<p<n/\alpha$.
\end{enumerate}
\end{theorem}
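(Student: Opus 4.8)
The plan is to deduce both statements from the abstract mixed-Morrey results already proved, namely Theorem \ref{Main-Riesz-Mixed} for part (1) and Theorem \ref{Main-Comm-Riesz-Mixed} for part (2), by checking that $M_{\Omega,\alpha}$ is a legitimate instance of the sublinear operator $S_\alpha$ to which those theorems apply. The key structural fact is supplied by Lemma \ref{cor-riesz}(2): since $\Omega\in L^{s'}(\mathbb{S}^{n-1})$ is homogeneous of degree zero with $1<s'\le\infty$, the fractional maximal operator $M_{\Omega,\alpha}$ satisfies the off-diagonal pointwise bound (\ref{sublinear}) for the given $\alpha$ and the given exponent $s$ (with $1\le s<p$ when $1<p<n/\alpha$ and $s=1$ when $p=1$). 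Hence one may legitimately take $S_\alpha=M_{\Omega,\alpha}$ throughout.

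First I would treat part (1). Under the weight hypotheses $(w_1^s,w_2^s)\in A_{p/s,q/s}$ and $w_2^s\in A_{p/s,q/s}$, the pair $(\psi_1,\psi_2)$ satisfying (\ref{inequal-Riesz}), and the assumed boundedness of $M_{\Omega,\alpha}$ from $L^{p,w_1^p}$ to $L^{q,w_2^q}$ (for $1<p<n/\alpha$) and from $L^{1,w_1}$ to $WL^{q,w_2^q}$, all the hypotheses of Theorem \ref{Main-Riesz-Mixed} are met with $S_\alpha=M_{\Omega,\alpha}$. Applying that theorem directly gives the boundedness of $M_{\Omega,\alpha}$ from $\mathcal{M}_\psi^{p_0,w}(0,T,\mathcal{M}^{p,w_1^p}_{\psi_1})$ to $\mathcal{M}_\psi^{p_0,w}(0,T,\mathcal{M}^{q,w_2^q}_{\psi_2})$ for $1<p<n/\alpha$, and from $\mathcal{M}_\psi^{p_0,w}(0,T,\mathcal{M}^{1,w}_{\psi_1})$ to $\mathcal{M}_\psi^{p_0,w}(0,T,W\mathcal{M}^{q,w^q}_{\psi_2})$. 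For part (2), I would additionally use the hypothesis $w_1\in A_\infty$, the logarithmically weighted condition (\ref{inequal-comm-riesz}) on $(\psi_1,\psi_2)$, and the assumed boundedness of $[b,M_{\Omega,\alpha}]$ from $L^{p,w_1^p}$ to $L^{q,w_2^q}$; these are precisely the hypotheses of Theorem \ref{Main-Comm-Riesz-Mixed} with $S_\alpha=M_{\Omega,\alpha}$, which yields the boundedness of $[b,M_{\Omega,\alpha}]$ from $\mathcal{M}_\psi^{p_0,w}(0,T,\mathcal{M}^{p,w_1^p}_{\psi_1})$ to $\mathcal{M}_\psi^{p_0,w}(0,T,\mathcal{M}^{q,w_2^q}_{\psi_2})$.

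There is no serious obstacle here: the argument is a direct substitution, entirely parallel to the proof of Theorem \ref{Theo-Riesz-gwms-3} (and to the mixed-space proof of Theorem \ref{Theo-Riesz-Mixed-2}). The only point that is not purely formal is the verification that the supremum structure of the maximal operator $M_{\Omega,\alpha}$ still obeys the off-diagonal estimate (\ref{sublinear}) rather than a linear estimate of the form (\ref{sublinear-calder-1}); this is exactly the content of Lemma \ref{cor-riesz}(2) and may be cited. Once that is in place, the abstract machinery of Section 3 together with Definition \ref{Mixed-wgwms} and Remark \ref{remark-mixed-morrey} does the rest, and I expect the write-up to be a couple of lines.
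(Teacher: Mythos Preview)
Your proposal is correct and matches the paper's approach exactly: the paper's proof is the single line ``Put $M_{\Omega,\alpha} = S_\alpha$ in Theorem \ref{Main-Riesz-Mixed} and Theorem \ref{Main-Comm-Riesz-Mixed},'' and you have simply spelled out why this substitution is licit (via Lemma \ref{cor-riesz}(2)) and which hypotheses line up. There is nothing to add or correct.
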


\begin{proof}
Put $M_{\Omega,\alpha} = S_\alpha$ in Theorem \ref{Main-Riesz-Mixed} and Theorem 
\ref{Main-Comm-Riesz-Mixed}.
\end{proof}

\begin{cor}\label{Cor-Riesz-gwms-3}
Let $0 < \alpha < n, 1 < p<n/\alpha, 1/q = 1/p - \alpha/n$, and $w^s \in A_{p/s,q/s}$, where 
$1\le s<p$. Suppose that $\Omega\in L^{s'}(\mathbb{S}^{n-1})$, $1<p_0<\infty$ and 
$\psi:\mathbb{R}^n\times \mathbb{R}^+ \to \mathbb{R}^+$.
\begin{enumerate}
	\item Suppose that the positive functions $\psi_1$ and $\psi_2$ satisfy 
(\ref{inequal-Riesz}). Then, $M_{\Omega,\alpha}$ is bounded from 
$\mathcal{M}^{p,w_1^p}_{\psi_1}$ to $\mathcal{M}^{q,w_2^q}_{\psi_2}$.
	\item Suppose that the positive functions $\psi_1$ and $\psi_2$ satisfy 
(\ref{inequal-comm-riesz}). If $b\in BMO$, then $[b,M_{\Omega,\alpha}]$ is bounded from $\mathcal{M}^{p,w_1^p}_{\psi_1}$ to $\mathcal{M}^{q,w_2^q}_{\psi_2}$.
\end{enumerate}
\end{cor}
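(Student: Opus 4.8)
The plan is to derive the corollary directly from Theorem~\ref{Theo-Riesz-gwms-3} by specializing to the single-weight situation $w_1=w_2=w$, supplying the required weighted Lebesgue-space boundedness of $M_{\Omega,\alpha}$ and its commutator from the classical result of Ding recorded in Theorem~\ref{rough-kernel}.

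First I would verify that the hypotheses of Theorem~\ref{Theo-Riesz-gwms-3} are met with $w_1=w_2=w$. The two weight conditions ``$(w_1^s,w_2^s)\in A_{p/s,q/s}$'' and ``$w_2^s\in A_{p/s,q/s}$'' both collapse to the single assumption $w^s\in A_{p/s,q/s}$ of the corollary, so they hold automatically. For part (2) one also needs $w_1=w\in A_\infty$: by Lemma~\ref{thm2.5}, $w^s\in A_{p/s,q/s}$ gives $w^p\in A_{p/s}\subset A_\infty$, and then $w=(w^p)^{1/p}\in A_\infty$ since a weight lies in $A_\infty$ whenever a positive power of it (here with exponent $1/p\le 1$) does.

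Next I would record the Lebesgue-space boundedness. Since $1<p<n/\alpha$, $1/q=1/p-\alpha/n$, $\Omega\in L^{s'}(\mathbb{S}^{n-1})$, $1\le s<p$, and $w^s\in A_{p/s,q/s}$, Theorem~\ref{rough-kernel} gives that $M_{\Omega,\alpha}$ is bounded from $L^{p,w^p}$ to $L^{q,w^q}$, and, when $b\in BMO$, that $[b,M_{\Omega,\alpha}]$ is bounded from $L^{p,w^p}$ to $L^{q,w^q}$ as well. (Only the range $1<p<n/\alpha$ is claimed in the corollary, so no weak $(1,q)$ estimate is needed.) Moreover, by Lemma~\ref{cor-riesz}(2) the operator $M_{\Omega,\alpha}$ satisfies the sublinear inequality (\ref{sublinear}), which is already incorporated in Theorem~\ref{Theo-Riesz-gwms-3}.

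With these ingredients in place, part (1) of the corollary is exactly part (1) of Theorem~\ref{Theo-Riesz-gwms-3} applied to $S_\alpha=M_{\Omega,\alpha}$ with $w_1=w_2=w$ and $(\psi_1,\psi_2)$ satisfying (\ref{inequal-Riesz}); likewise part (2) is part (2) of that theorem, using (\ref{inequal-comm-riesz}) and $w\in A_\infty$. Everything reduces to substitution, so there is no substantial obstacle; the only steps needing a word of justification are the implication $w^s\in A_{p/s,q/s}\Rightarrow w\in A_\infty$ for the commutator statement, and the exponent bookkeeping ($1\le s<p$, $\Omega\in L^{s'}$, $1/q=1/p-\alpha/n$) when invoking Theorem~\ref{rough-kernel}.
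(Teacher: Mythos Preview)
Your proposal is correct and matches the paper's intended argument: the corollary has no separate proof in the paper, but is stated immediately after Theorem~\ref{Theo-Riesz-gwms-3} with the blanket remark that the ``theorems and corollaries \ldots\ [follow] by using Theorem~\ref{fractional-integral} and Theorem~\ref{rough-kernel}''; in other words, one specializes $w_1=w_2=w$ in Theorem~\ref{Theo-Riesz-gwms-3} and supplies the $L^{p,w^p}\to L^{q,w^q}$ boundedness of $M_{\Omega,\alpha}$ and $[b,M_{\Omega,\alpha}]$ from Theorem~\ref{rough-kernel}. Your extra step checking $w\in A_\infty$ via Lemma~\ref{thm2.5} and the fact that $v\in A_r\Rightarrow v^\theta\in A_r$ for $0<\theta\le 1$ is a detail the paper leaves implicit, but it is correct and needed for part~(2).
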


\begin{cor}\label{Cor-Riesz-Mixed-3}
Let $0 < \alpha < n, 1 < p < n/\alpha, 1/q = 1/p - \alpha/n$, and $w^s \in A_{p/s,q/s}$, 
where $1\le s<p$. Suppose that $\Omega\in L^{s'}(\mathbb{S}^{n-1})$, $1<p_0<\infty$ and 
$\psi:\mathbb{R}^n\times \mathbb{R}^+ \to \mathbb{R}^+$.
\begin{enumerate}
	\item Suppose that the positive functions $\psi_1$ and $\psi_2$ satisfy 
(\ref{inequal-Riesz}). Then, $M_{\Omega,\alpha}$ is bounded from 
$\mathcal{M}_\psi^{p_0,w}(0, T, \mathcal{M}^{p,w_1^p}_{\psi_1})$ to 
$\mathcal{M}_\psi^{p_0,w}(0, T, \mathcal{M}^{q,w_2^q}_{\psi_2})$.
	\item Suppose that the positive functions $\psi_1$ and $\psi_2$ satisfy 
(\ref{inequal-comm-riesz}). If $b\in BMO$, then $[b,M_{\Omega,\alpha}]$ is bounded from 
$\mathcal{M}_\psi^{p_0,w}(0, T, \mathcal{M}^{p,w_1^p}_{\psi_1})$ to 
$\mathcal{M}_\psi^{p_0,w}(0, T, \mathcal{M}^{q,w_2^q}_{\psi_2})$.
\end{enumerate}
\end{cor}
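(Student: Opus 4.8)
The plan is to obtain Corollary~\ref{Cor-Riesz-Mixed-3} as an immediate consequence of Theorem~\ref{Main-Riesz-Mixed} and Theorem~\ref{Main-Comm-Riesz-Mixed} (equivalently, of Theorem~\ref{Theo-Riesz-Mixed-3}), by checking that the fractional maximal operator with rough kernel, together with the single weight $w$, meets all of their hypotheses.

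First I would note that $M_{\Omega,\alpha}$ belongs to the class of sublinear operators under study: by Lemma~\ref{cor-riesz}(2), since $\Omega \in L^{s'}(\mathbb{S}^{n-1})$ is homogeneous of degree zero with $s'>1$, the operator $M_{\Omega,\alpha}$ satisfies the pointwise inequality (\ref{sublinear}) for the exponent $s$ with $1<s<p$; consequently (\ref{commutator}) applies to $[b,M_{\Omega,\alpha}]$ as well.

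Next I would supply the weighted Lebesgue-space estimates that serve as the standing hypotheses of the mixed-Morrey theorems. Take $w_1=w_2=w$, so that the pair condition $(w_1^s,w_2^s)=(w^s,w^s)\in A_{p/s,q/s}$ and the single condition $w_2^s=w^s\in A_{p/s,q/s}$ both reduce to the assumption $w^s\in A_{p/s,q/s}$. Since $1<p<n/\alpha$, only the strong-type estimate is required, and Theorem~\ref{rough-kernel} gives precisely that $M_{\Omega,\alpha}$ and, for $b\in BMO$, $[b,M_{\Omega,\alpha}]$ are bounded from $L^{p,w^p}$ to $L^{q,w^q}$. For part~(2) the commutator theorem additionally requires $w_1=w\in A_\infty$; this follows because, by Lemma~\ref{thm2.5}, $w^s\in A_{p/s,q/s}$ forces $w^p\in A_{p/s}\subseteq A_p$, and a routine H\"older argument (comparing the $A_p$ characteristic of $w$ with the $p$-th root of the $A_p$ characteristic of $w^p$) then yields $w\in A_p\subseteq A_\infty$.

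Finally, part~(1) follows by applying Theorem~\ref{Main-Riesz-Mixed} with $S_\alpha=M_{\Omega,\alpha}$, $w_1=w_2=w$, and the pair $(\psi_1,\psi_2)$ satisfying (\ref{inequal-Riesz}); part~(2) follows by applying Theorem~\ref{Main-Comm-Riesz-Mixed} to $[b,M_{\Omega,\alpha}]$ with $(\psi_1,\psi_2)$ satisfying (\ref{inequal-comm-riesz}). No genuine obstacle arises, since every step merely invokes a previously established result; the only points needing a little care are the verification that $w\in A_\infty$ in the commutator case and the consistent bookkeeping of the auxiliary exponent $s$ across Lemma~\ref{cor-riesz}, Theorem~\ref{rough-kernel}, and the main theorems. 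Alternatively one may quote Theorem~\ref{Theo-Riesz-Mixed-3} directly, whose stated hypotheses already include $w_1\in A_\infty$, and simply observe that the needed bounds from $L^{p,w^p}$ to $L^{q,w^q}$ for $M_{\Omega,\alpha}$ and $[b,M_{\Omega,\alpha}]$ are furnished by Theorem~\ref{rough-kernel}.
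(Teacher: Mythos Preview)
Your proposal is correct and follows exactly the approach implicit in the paper: the paper states Corollary~\ref{Cor-Riesz-Mixed-3} without proof, as an immediate specialization of Theorem~\ref{Theo-Riesz-Mixed-3} to the single-weight case $w_1=w_2=w$, with the required $L^{p,w^p}\to L^{q,w^q}$ bounds for $M_{\Omega,\alpha}$ and $[b,M_{\Omega,\alpha}]$ supplied by Theorem~\ref{rough-kernel}. Your additional verification that $w\in A_\infty$ via Lemma~\ref{thm2.5} is a detail the paper leaves tacit but is appropriate to include.
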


\medskip

\subsection{Singular integral operators}

Let $\Omega$ be as in Subsection 5.2. Suppose that $S_\Omega$ is a sublinear operator such 
that for any $f\in L^1$ with compact support and $x \notin {\rm supp}\,(f)$
\begin{equation}\label{sublinear-1}
	|S_\Omega(f)(x)| \leq C \int_{\mathbb{R}^n} \frac{|\Omega(x-y)|}{|x-y|^n}|f(y)| dy.
\end{equation}

In fact, $S_\Omega$ satisfies (\ref{sublinear}) as in the following lemma.

\begin{lemma}\label{Lemma-Cald}
$S_\Omega$ satisfies (\ref{sublinear}) for $\Omega \in L^{s'} (\mathbb{S}^{n-1})$ where $1<s'\leq \infty.$
\end{lemma}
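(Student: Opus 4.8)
The plan is as follows. Fix $(a,r)\in\mathbb{R}^n\times\mathbb{R}^+$ and put $g:=f\cdot\mathcal{X}_{B(a,2r)^{\rm c}}$, and let $s$ be the exponent conjugate to $s'$, so $1\le s<\infty$ (with $s=1$ when $s'=\infty$). For $x\in B(a,r)$ we have $x\in B(a,2r)\subseteq(\mathrm{supp}\,g)^{\rm c}$, so the pointwise bound (\ref{sublinear-1}) applies and gives
$$|S_\Omega(g)(x)|\le C\int_{B(a,2r)^{\rm c}}\frac{|\Omega(x-y)|}{|x-y|^n}\,|f(y)|\,dy=C\sum_{k=1}^{\infty}\int_{A_k}\frac{|\Omega(x-y)|}{|x-y|^n}\,|f(y)|\,dy,$$
where $A_k:=B(a,2^{k+1}r)\setminus B(a,2^kr)$. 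For $x\in B(a,r)$ and $y\in A_k$ the triangle inequality yields $2^{k-1}r\le|x-y|\le 2^{k+2}r$; in particular $|x-y|^{-n}\le C\,(2^{k+1}r)^{-n}$ on $A_k$, uniformly in $x$.

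Next I would estimate each annular integral. When $s'<\infty$, H\"older's inequality with exponents $s'$ and $s$ gives
$$\int_{A_k}\frac{|\Omega(x-y)|}{|x-y|^n}\,|f(y)|\,dy\le C(2^{k+1}r)^{-n}\Big(\int_{A_k}|\Omega(x-y)|^{s'}dy\Big)^{1/s'}\Big(\int_{A_k}|f(y)|^{s}dy\Big)^{1/s}.$$
For the $\Omega$-factor I would substitute $z=x-y$; since $|x-y|\le 2^{k+2}r$ on $A_k$, the new variable ranges over a subset of $B(0,2^{k+2}r)$, and passing to polar coordinates together with the homogeneity of degree zero of $\Omega$ gives, up to dimensional constants,
$$\int_{A_k}|\Omega(x-y)|^{s'}dy\le\int_{B(0,2^{k+2}r)}|\Omega(z)|^{s'}dz\le C\,(2^{k+1}r)^{n}\,\|\Omega\|_{L^{s'}(\mathbb{S}^{n-1})}^{s'},$$
a bound independent of the center $x\in B(a,r)$. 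Combining the last two displays and using $-n+n/s'=-n/s$ gives
$$\int_{A_k}\frac{|\Omega(x-y)|}{|x-y|^n}\,|f(y)|\,dy\le C(2^{k+1}r)^{-n/s}\Big(\int_{B(a,2^{k+1}r)}|f(y)|^{s}dy\Big)^{1/s}.$$

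Finally, summing over $k\ge1$ and taking the supremum over $x\in B(a,r)$ yields exactly (\ref{sublinear}) with $\alpha=0$, which is the assertion. The endpoint case $s'=\infty$ (so $s=1$) is identical but shorter: one replaces the H\"older step by the trivial bound $|\Omega(x-y)|\le\|\Omega\|_{L^\infty(\mathbb{S}^{n-1})}$, after which $\int_{A_k}|x-y|^{-n}|\Omega(x-y)||f(y)|\,dy\le C(2^{k+1}r)^{-n}\int_{B(a,2^{k+1}r)}|f(y)|\,dy$. The only point requiring a little care is that the estimate of $\int_{A_k}|\Omega(x-y)|^{s'}dy$ must be uniform in the center $x$; this is why I enlarge the domain of the substituted integral to the fixed ball $B(0,2^{k+2}r)$ before invoking the homogeneity of $\Omega$. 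No serious obstacle is expected.
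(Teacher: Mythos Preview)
Your proof is correct and follows essentially the same approach as the paper's: annular decomposition, H\"older with exponents $s,s'$, the distance estimate $|x-y|\sim 2^kr$ on each annulus, and the bound $\int_{A_k}|\Omega(x-y)|^{s'}dy\le C(2^{k+1}r)^n\|\Omega\|_{L^{s'}(\mathbb{S}^{n-1})}^{s'}$ via homogeneity. The only cosmetic difference is that you pull the factor $|x-y|^{-n}$ outside before applying H\"older, whereas the paper keeps it with the $f$-integral; you also treat the endpoint $s'=\infty$ explicitly, which the paper omits.
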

\begin{proof}
By H\"older inequality we deduce that
$$
\begin{aligned}
		&|S_\Omega (f\cdot\mathcal{X}_{B(a,2r)^c}) (x)| \leq C \int_{B(a,2r)^c} \frac{|\Omega(x-y)|}{|x-y|^n} |f(y)| dy \\
		& = C \sum_{k=1}^\infty \int_{B(a,2^{k+1}r)\setminus B(a,2^kr)} \frac{|\Omega(x-y)|}{|x-y|^n} |f(y)| dy \\
		& \leq \sum_{k=1}^\infty \int_{B(a,2^{k+1}r)\setminus B(a,2^kr)} \frac{|\Omega(x-y)|}{|x-y|^n} |f(y)| dy \\
		& \leq C \sum_{k=1}^\infty \left(\int_{B(a,2^{k+1}r)\setminus B(a,2^kr)} |\Omega(x-y)|^{s'} dy \right)^{\frac1{s'}} \left(\int_{B(a,2^{k+1}r)\setminus B(a,2^kr)} \frac{|f(y)|^s}{|x-y|^{ns}} dy \right)^{\frac1s}.
\end{aligned}
$$
We note from that
$$
\begin{aligned}
	\left(\int_{B(a,2^{k+1}r)\setminus B(a,2^kr)} |\Omega(x-y)|^{s'} dy 
 \right)^{\frac1{s'}} & \leq C \|\Omega\|_{L^{s'}(\mathbb{S}^{n-1})}|B(0,2^{k+1}r+
 |x-a|)|^{\frac{1}{s'}} \\
	& \leq C \|\Omega\|_{L^{s'}(\mathbb{S}^{n-1})}|B(a,2^{k+1}r)|^{\frac{1}{s'}}
\end{aligned}
$$
where $C>0$ is dependent of $x,k,$ and $a$ (see \cite{Guliyev2016} for details). We also 
note that $|x-y| \sim |y-a|$ for $x\in B(a,r)$ and $y\in B(a,2r)^c.$ Hence,
$$
	\left(\int_{B(a,2^{k+1}r)\setminus B(a,2^kr)} \frac{|f(y)|^s}{|x-y|^{ns}} dy 
 \right)^{\frac1s} \leq C \frac{1}{|B(a,2^{k+1}r)|} \left(\int_{B(a,2^{k+1}r)\setminus 
 B(a,2^kr)} |f(y)|^{s} dy \right)^{\frac1s}.
$$
Consequently,
$$
	|S_\Omega (f\cdot\mathcal{X}_{B(a,2r)^c}) (x)| \leq C \frac{1}{|B(a,2^{k+1}r)|^{1-
 \frac{1}{s'}}} \left(\int_{B(a,2^{k+1}r)\setminus B(a,2^kr)} |f(y)|^{s} dy \right)^{\frac1s}
$$
which implies that $S_\Omega$ satisfies (\ref{sublinear}), and the theorem is proved.
\end{proof}

By the lemma, we have the following theorem concerning the boundedness of sublinear operator with rough kernel and its commutator on generalized weighted Morrey spaces and generalized weighted mixed-Morrey spaces with different weights. The following theorem generalizes many other knwon results, for example \cite{Guliyev2016, Hamzayev, Karaman, Ragusa2017, Ramadana}.

\begin{theorem}
Let $1 \leq p < \infty, \Omega \in L^{s'}(\mathbb{S}^{n-1})$, $(w_1^s, w_2^s), w_2^s \in A_{p/s}$ 
where $1\le s<p$ for $p>1$ and $s=1$ for $p=1$, and $S_\Omega$ satisfies (\ref{sublinear-1}).
	\begin{enumerate}
		\item Suppose that the two positive functions $\psi_1$ and $\psi_2$ on $\mathbb{R}^n \times \mathbb{R}^+$ satisfy (\ref{inequal-calder}). If $S_\Omega$ is bounded from $L^{p,w_1}$ to $L^{p,w_2}$ for $1<p<\infty$ and bounded from $L^{1,w_1}$ to $WL^{1,w_2}$, then $S_\Omega$ is bounded from $\mathcal{M}^{p,w_1}_{\psi_1}$ to $\mathcal{M}^{p,w_2}_{\psi_2}$ for $1<p<\infty$ and bounded from $\mathcal{M}^{1,w_1}_{\psi_1}$ to $W\mathcal{M}^{1,w_2}_{\psi_2}$.
		\item Suppose that the two positive functions $\psi_1$ and $\psi_2$ on $\mathbb{R}^n \times \mathbb{R}^+$ satisfy (\ref{inequal-comm-calder}) and $w_1\in A_\infty$. If $[b, S_\Omega]$ is bounded from $L^{p,w_1}$ to $L^{p,w_2}$ for $1<p<\infty$,  then $[b, S_\Omega]$ is bounded from $\mathcal{M}^{p,w_1}_{\psi_1}$ to $\mathcal{M}^{p,w_2}_{\psi_2}$ for $1<p<\infty$.
		\item Suppose that the two positive functions $\psi_1$ and $\psi_2$ on $\mathbb{R}^n \times \mathbb{R}^+$ satisfy (\ref{weak-inequal-comm-calder}) and $w_1\in A_\infty$. If $S_\Omega$ satisfies (\ref{weak-commutator-calderon}),  then there exists a constant $C>0$ such that
		$$
		\|[b,S_\Omega](f)\|_{W\mathcal{M}_{\psi_2}^{1,w_2}} \leq C \|b\|_* \sup_{\sigma>0} \sigma \left\|\Phi \left(\frac{|f|}{\sigma}\right)\right\|_{\mathcal{M}_{\psi_1}^{L \log L, w_1}}
		$$
	\end{enumerate}
\end{theorem}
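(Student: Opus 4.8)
The strategy is to reduce all three parts to the corresponding abstract theorems already proved in Sections 3 and 4 by verifying that $S_\Omega$ satisfies the structural inequality \eqref{sublinear} (in the form \eqref{sublinear-calder} for $S_0$), which is precisely the content of Lemma \ref{Lemma-Cald}. Once this reduction is in place, each part is a direct invocation of a theorem about the abstract sublinear operator $S$.

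\textbf{Part (1).} First I would observe that $S_\Omega$ satisfies \eqref{sublinear} with $\alpha=0$ by Lemma \ref{Lemma-Cald}, so $S_\Omega$ is an instance of the operator $S=S_0$ appearing in Proposition \ref{est-sub-calder} and Theorem \ref{Main-Calder}. Under the hypotheses $(w_1^s,w_2^s)\in A_{p/s}$, $w_2^s\in A_{p/s}$, and the boundedness of $S_\Omega$ from $L^{p,w_1}$ to $L^{p,w_2}$ (for $1<p<\infty$) and from $L^{1,w_1}$ to $WL^{1,w_2}$ (for $p=1$), together with the assumption that $(\psi_1,\psi_2)$ satisfies \eqref{inequal-calder}, Theorem \ref{Main-Calder} applies verbatim and yields that $S_\Omega$ is bounded from $\mathcal{M}^{p,w_1}_{\psi_1}$ to $\mathcal{M}^{p,w_2}_{\psi_2}$ for $1<p<\infty$ and from $\mathcal{M}^{1,w_1}_{\psi_1}$ to $W\mathcal{M}^{1,w_2}_{\psi_2}$.

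\textbf{Part (2).} Again by Lemma \ref{Lemma-Cald}, $S_\Omega$ is an admissible $S=S_0$, so its commutator $[b,S_\Omega]$ satisfies \eqref{commutator}. Adding the extra hypothesis $w_1\in A_\infty$ and the boundedness of $[b,S_\Omega]$ from $L^{p,w_1}$ to $L^{p,w_2}$ for $1<p<\infty$, together with \eqref{inequal-comm-calder} for $(\psi_1,\psi_2)$, the hypotheses of Theorem \ref{Main-Comm-Calder} are all met; that theorem then gives the boundedness of $[b,S_\Omega]$ from $\mathcal{M}^{p,w_1}_{\psi_1}$ to $\mathcal{M}^{p,w_2}_{\psi_2}$ with the quantitative bound $\|[b,S_\Omega]f\|_{\mathcal{M}^{p,w_2}_{\psi_2}}\le C\|b\|_*\|f\|_{\mathcal{M}^{p,w_1}_{\psi_1}}$.

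\textbf{Part (3).} For the weak endpoint estimate, I would note that the hypothesis is stated directly in terms of the distributional inequality \eqref{weak-commutator-calderon} for $S_\Omega$ in place of $S$; no use of \eqref{sublinear} is needed here beyond what Theorem \ref{Main-Calder-Weak} already assumes. With $(w_1,w_2)\in A_1$, $w_2\in A_1$, $w_1\in A_\infty$, and $(\psi_1,\psi_2)$ satisfying \eqref{weak-inequal-comm-calder}, Theorem \ref{Main-Calder-Weak} applies with $S$ replaced by $S_\Omega$ and produces exactly the claimed inequality
$$
\|[b,S_\Omega](f)\|_{W\mathcal{M}_{\psi_2}^{1,w_2}} \leq C \|b\|_* \sup_{\sigma>0} \sigma \left\|\Phi \left(\frac{|f|}{\sigma}\right)\right\|_{\mathcal{M}_{\psi_1}^{L \log L, w_1}}.
$$
The only genuinely substantive point in the whole argument is the verification, carried out in Lemma \ref{Lemma-Cald}, that the pointwise size condition \eqref{sublinear-1} implies \eqref{sublinear}; this uses Hölder's inequality in the radial variable, the homogeneity of $\Omega$ to control $\|\Omega\|_{L^{s'}}$ on annuli by $|B(a,2^{k+1}r)|^{1/s'}$, and the equivalence $|x-y|\sim|y-a|$ for $x\in B(a,r)$, $y\in B(a,2r)^c$. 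Given that lemma, parts (1)--(3) are immediate specializations of Theorems \ref{Main-Calder}, \ref{Main-Comm-Calder}, and \ref{Main-Calder-Weak} respectively, so there is no real obstacle remaining.
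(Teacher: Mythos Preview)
Your proposal is correct and matches the paper's proof essentially verbatim: the paper simply notes that $S_\Omega$ satisfies \eqref{sublinear} by Lemma \ref{Lemma-Cald} and then plugs $S_\Omega$ in for $S$ in Theorems \ref{Main-Calder}, \ref{Main-Comm-Calder}, and \ref{Main-Calder-Weak}. One small clarification on your Part (3): Theorem \ref{Main-Calder-Weak} does still implicitly rely on the sublinear condition \eqref{sublinear-calder} through Proposition \ref{weak-est-sub-calder} (the estimates for $M_1'$ and $M_2'$ there use the pointwise bound on $|S(f_2)(x)|$), so Lemma \ref{Lemma-Cald} is genuinely needed for all three parts, not just the first two.
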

\begin{proof}
	$S_\Omega$ satisfies (\ref{sublinear}) by Lemma \ref{Lemma-Cald}. Plug $S_\Omega$ onto $S$ in Theorem \ref{Main-Calder}, Theorem \ref{Main-Comm-Calder}, and Theorem \ref{Main-Calder-Weak} to obtain the desired results.
\end{proof}

\begin{theorem}
Let $\psi$ be a positive function on $\mathbb{R}^n \times \mathbb{R}^+$, $0<p_0<\infty$, 
$1 \leq p < \infty$, $\Omega\in L^{s'}(\mathbb{S}^{n-1}),$ and $(w_1^s, w_2^s), w_2^s \in A_{p/s}$, where $1\le s<p$ for $p>1$ and $s=1$ for $p=1$. Suppose that $1<p_0<\infty$, 
$\psi:\mathbb{R}^n\times\mathbb{R}$, and $S_\Omega$ satisfies (\ref{sublinear-1}).
\begin{enumerate}
	\item Suppose that the two functions $\psi_1$ and $\psi_2$ satisfy 
(\ref{inequal-calder}). If $S_\Omega$ is bounded from $L^{p,w_1}$ to $L^{p,w_2}$ for 
$1<p<\infty$ and bounded from $L^{1,w_1}$ to $WL^{1,w_2}$, then $S_\Omega$ is bounded from 
the space $\mathcal{M}_\psi^{p_0,w}(0, T, \mathcal{M}^{p,w_1}_{\psi_1})$ to the space 
$\mathcal{M}_\psi^{p_0,w}(0, T, \mathcal{M}^{p,w_2}_{\psi_2})$ for $1<p<\infty$ and bounded 
from $\mathcal{M}_\psi^{p_0,w}(0, T, \mathcal{M}^{1,w_1}_{\psi_1})$ to the space 
$\mathcal{M}_\psi^{p_0,w}(0, T, W\mathcal{M}^{1,w_2}_{\psi_2})$.
	\item Suppose that the two functions $\psi_1$ and $\psi_2$ satisfy 
(\ref{inequal-comm-calder}) and $w_1\in A_\infty$. If $S_\Omega$ is bounded from $L^{p,w_1}$ 
to $L^{p,w_2}$ for $1<p<\infty$,  then $[b, S_\Omega]$ is bounded from 
$\mathcal{M}_\psi^{p_0,w}(0, T, \mathcal{M}^{p,w_1}_{\psi_1})$ to 
$\mathcal{M}_\psi^{p_0,w}(0, T, \mathcal{M}^{p,w_2}_{\psi_2})$ for $1<p<\infty$.
	\item Suppose that the two functions $\psi_1$ and $\psi_2$ satisfy 
 (\ref{weak-inequal-comm-calder}) and $w_1\in A_\infty$. If $S_\Omega$ satisfies 
 (\ref{weak-commutator-calderon}),  then there exists a constant $C>0$ such that for any 
 suitable function $f,$
$$
	\|[b,S_\Omega](f)\|_{\mathcal{M}_\psi^{p_0,w}(0, T, W\mathcal{M}_{\psi_2}^{1,w_2})} 
 \leq C \|b\|_* \sup_{\sigma>0} \sigma \left\|\Phi \left(\frac{|f|}
 {\sigma}\right)\right\|_{\mathcal{M}_\psi^{p_0,w}(0, T, \mathcal{M}_{\psi_1}^{L \log L, 
 w_1})}.
$$
\end{enumerate}
\end{theorem}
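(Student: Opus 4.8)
The plan is to reduce all three assertions to the already-established mixed-Morrey results of Section~4 for the abstract sublinear operator $S=S_0$ generated by a Calder\`on--Zygmund operator. The first step is to record that any $S_\Omega$ satisfying the pointwise domination (\ref{sublinear-1}) automatically satisfies the structural inequality (\ref{sublinear}) with $\alpha=0$; this is precisely Lemma~\ref{Lemma-Cald}. In that lemma one decomposes $B(a,2r)^{\mathrm c}$ into dyadic annuli $B(a,2^{k+1}r)\setminus B(a,2^k r)$, applies H\"older's inequality in the $y$-variable with exponents $s'$ and $s$, uses the homogeneity-of-degree-zero bound $\left(\int_{B(a,2^{k+1}r)\setminus B(a,2^k r)}|\Omega(x-y)|^{s'}dy\right)^{1/s'}\le C\|\Omega\|_{L^{s'}(\mathbb{S}^{n-1})}|B(a,2^{k+1}r)|^{1/s'}$ together with the comparability $|x-y|\sim|y-a|$ valid for $x\in B(a,r)$ and $y\notin B(a,2r)$. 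Hence $S_\Omega$ is an admissible operator $S$ in the sense of Section~4, and $[b,S_\Omega]$ inherits the decomposition (\ref{commutator}).

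With this observation the three parts follow by direct substitution. For part~(1), the weight hypotheses $(w_1^s,w_2^s)\in A_{p/s}$ and $w_2^s\in A_{p/s}$ (with $s=1$ when $p=1$) and the condition (\ref{inequal-calder}) on $(\psi_1,\psi_2)$ are exactly what Theorem~\ref{Main-Calder-Mixed} requires; applying it with $S=S_\Omega$ yields the boundedness of $S_\Omega$ from $\mathcal{M}_\psi^{p_0,w}(0,T,\mathcal{M}^{p,w_1}_{\psi_1})$ to $\mathcal{M}_\psi^{p_0,w}(0,T,\mathcal{M}^{p,w_2}_{\psi_2})$ for $1<p<\infty$ and from $\mathcal{M}_\psi^{p_0,w}(0,T,\mathcal{M}^{1,w_1}_{\psi_1})$ to $\mathcal{M}_\psi^{p_0,w}(0,T,W\mathcal{M}^{1,w_2}_{\psi_2})$ when $p=1$. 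For part~(2), under (\ref{inequal-comm-calder}) and $w_1\in A_\infty$, Theorem~\ref{Main-Comm-Calder-Mixed} applied to $S=S_\Omega$ gives the commutator bound for $[b,S_\Omega]$ on the corresponding mixed-Morrey spaces. For part~(3), under (\ref{weak-inequal-comm-calder}) with $w_1\in A_\infty$ and the $A_1$-pair normalization of $(w_1,w_2)$, Theorem~\ref{Main-Comm-Calder-Mixed-Weak} applied to $S=S_\Omega$ (which satisfies (\ref{weak-commutator-calderon}) by hypothesis) delivers the $L\log L$-to-weak estimate exactly as stated.

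The only point deserving a sentence of care is matching hypotheses across the parts: since the weight conditions are stated once for the whole theorem, in part~(3) one reads them in the regime $p=1$, i.e. $(w_1,w_2)\in A_1$ and $w_2\in A_1$ with $s=1$, which is precisely the setting of Theorem~\ref{Main-Comm-Calder-Mixed-Weak}. No genuine difficulty arises: the transference from the spatial generalized weighted Morrey bound to the mixed norm in the time variable over $(0,T)$ is already packaged inside the three master theorems via Remark~\ref{remark-mixed-morrey}, so beyond verifying that $S_\Omega$ (and hence $[b,S_\Omega]$) fits the template, nothing further is required. If any step is an "obstacle" it is merely bookkeeping of the weight and exponent ranges; there is no new analytic content.
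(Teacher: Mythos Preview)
Your proposal is correct and follows essentially the same route as the paper: invoke Lemma~\ref{Lemma-Cald} to verify that $S_\Omega$ satisfies (\ref{sublinear}) with $\alpha=0$, and then apply Theorems~\ref{Main-Calder-Mixed}, \ref{Main-Comm-Calder-Mixed}, and \ref{Main-Comm-Calder-Mixed-Weak} for parts (1), (2), and (3), respectively. Your write-up is more detailed in matching the weight and exponent hypotheses, but the logical content is identical to the paper's one-line proof.
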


\begin{proof}
Just plug $S_\Omega$ onto $S$ in Theorem \ref{Main-Calder-Mixed}, Theorem 
\ref{Main-Comm-Calder-Mixed}, and Theorem \ref{Main-Comm-Calder-Mixed-Weak}.
\end{proof}

\section{Applications to Partial Differential Equations}

In this section, we investigate the regularity properties of the solution of partial differential equations, i.e. elliptic partial differential equation and parabolic partial differential equation. We use the boundedness properties obtained in the previous sections particularly fractional integral operator $I_\alpha$ and singular integral operators $K$. Here we take $w_1 = w_2 = w$ the weight on $\mathbb{R}^n.$

\medskip

\subsection{Elliptic partial differential equations}

Throughout this subsection, let $\Omega$ an open, bounded, and connected subset of $\mathbb{R}^n$ where $n\geq2$. We assume that $w\in A_{p,q}$ and the pairing functions $(\psi_1,\psi_2)$ satisfy (\ref{inequal-Riesz}). We also assume that $1 \leq p<\infty$ and write $1/q = 1/p - 2/n$ and $1/q_0 = 1/p-1/n.$

For $k=1, 2,$ we denote $W^{1,k}(\Omega)$ by the Sobolev spaces and under the Sobolev norm, 
the closure of $C_0^\infty(\Omega)$ in $W^{1,k}(\Omega)$ is denoted by $W_0^{1,k}(\Omega).$ 
Moreover, we denote $H^{-1}(\Omega)$ by the dual space of $W_0^{1,2}(\Omega).$ We shall 
investigate the following Dirichlet problem
\begin{equation}\label{dirichlet}
\begin{aligned}
	& Lu = f \text{\textit{ in }} \Omega, \quad u = 0 \text{\textit{ in }} 
\partial\Omega, \quad u \in W_o^{1,2} (\Omega)
\end{aligned}
\end{equation}
where $L$ is the divergent elliptic operator. The function $f$ is belonging to generalized 
weighted Morrey spaces $\mathcal{M}^{p,w^p}_{\psi_1}$.
The operator $L$ is defined by
$$
Lu = - \sum_{i, j=1}^\infty \frac{\partial}{\partial x_j} \left(a_{ij} \frac{\partial u}{\partial x_i}\right)
$$
where $u\in W_0^{1,2} (\Omega)$, $a_{ij} = a_{ji} \in L^\infty(\Omega)$ for $i, j, \in \{1, 
\dots, n\}$, and there exists $\Lambda>0$ for which
$$
\Lambda^{-1} |\xi|^2 \leq \sum_{i, j=1}^n a_{ij}(x) \xi_i\xi_j \leq \Lambda |\xi|^2, \quad \xi = (\xi_1, \dots, \xi_n) \in \mathbb{R}^n,
$$
for $x \in \Omega$ a.e. Moreover, we assume that the coefficients of $L$ satisfy the Dini-
continuous condition.

\begin{theorem}\cite{Gruter} \label{Gruter}
There exists a unique function $K:\Omega \times \Omega \to [0,\infty]$ such that
$$
G(\cdot, y) \in W^{1,2}(\Omega \setminus B(y,r)) \cap W_0^{1,1}(\Omega), \quad (y,r) \in 
\Omega \times \mathbb{R}^+,
$$
and for $\phi \in C_0^\infty(\Omega)$,
$$
\int_\Omega \sum_{i,j=1}^n a_{ij}(x) \frac{\partial G(x,y)}{\partial x_i} \frac{\partial (x)}
{\partial x_j} dx = \phi(y).
$$
Furthermore,
$$
G(x,y) \leq C \frac{1}{|x-y|^{n-2}}, \quad |\nabla_x G(x,y)| \leq \frac{1}{|x-y|^{n-1}}, 
\quad x, y \in \Omega, x \neq y.
$$
\end{theorem}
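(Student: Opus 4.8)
The plan is to reconstruct the Green function by the classical Grüter–Widman method; since the statement is quoted from \cite{Gruter} I only sketch the strategy. First I would regularize the pole: for each $y\in\Omega$ and each small $\varepsilon>0$, let $G_\varepsilon(\cdot,y)\in W_0^{1,2}(\Omega)$ be the unique weak solution of $LG_\varepsilon(\cdot,y)=|B(y,\varepsilon)|^{-1}\mathcal{X}_{B(y,\varepsilon)}$. This exists by the Lax–Milgram theorem, because the ellipticity bound $\Lambda^{-1}|\xi|^2\le\sum a_{ij}\xi_i\xi_j\le\Lambda|\xi|^2$ makes the bilinear form $a(u,v)=\int_\Omega\sum a_{ij}\,\partial_i u\,\partial_j v$ bounded and coercive on $W_0^{1,2}(\Omega)$, while the right-hand side lies in $L^2(\Omega)\subset H^{-1}(\Omega)$. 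The defining identity $a(G_\varepsilon(\cdot,y),\phi)=|B(y,\varepsilon)|^{-1}\int_{B(y,\varepsilon)}\phi$ for $\phi\in C_0^\infty(\Omega)$ then tends to $\phi(y)$ as $\varepsilon\to0$.

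The bulk of the work is to bound $G_\varepsilon(\cdot,y)$ uniformly in $\varepsilon$. The key ingredients are: (a) a Caccioppoli / energy estimate on dyadic annuli $B(y,2\rho)\setminus B(y,\rho)$, obtained by testing the equation against $G_\varepsilon(\cdot,y)$ truncated away from $y$; (b) the De Giorgi–Nash–Moser theory, giving local boundedness and local Hölder continuity of $G_\varepsilon(\cdot,y)$ on $\Omega\setminus B(y,\rho)$ with constants depending only on $n$, $\Lambda$, and the geometry; (c) a dyadic iteration of the energy estimate combined with the Sobolev inequality, yielding the pointwise bound $G_\varepsilon(x,y)\le C|x-y|^{2-n}$ for $n\ge3$, with the logarithmic substitute in the borderline case $n=2$ handled separately; and (d) the Dini-continuity hypothesis on $(a_{ij})$ together with interior Schauder-type gradient estimates applied on the annulus $B(x,|x-y|/2)$, upgrading this to $|\nabla_x G_\varepsilon(x,y)|\le C|x-y|^{1-n}$. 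These same estimates also control $\|G_\varepsilon(\cdot,y)\|_{W^{1,s}(\Omega)}$ uniformly for $s<n/(n-1)$.

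Next I would pass to the limit. By the uniform bounds, a subsequence $G_{\varepsilon_k}(\cdot,y)$ converges weakly in $W_0^{1,s}(\Omega)$ for $s<n/(n-1)$, strongly in $W^{1,2}(\Omega\setminus B(y,\rho))$ for every fixed $\rho>0$, and pointwise a.e., to a function $G(\cdot,y)$ which inherits the pointwise bounds, lies in $W^{1,2}(\Omega\setminus B(y,r))\cap W_0^{1,1}(\Omega)$, and satisfies $a(G(\cdot,y),\phi)=\phi(y)$ for $\phi\in C_0^\infty(\Omega)$. For uniqueness, if $G_1(\cdot,y)$ and $G_2(\cdot,y)$ both satisfy the identity and the stated integrability, then $v=G_1(\cdot,y)-G_2(\cdot,y)$ is $L$-harmonic against all $C_0^\infty(\Omega)$ test functions and lies in $W_0^{1,1}(\Omega)$ with $W^{1,2}$ regularity away from $y$; a removable-singularity / capacity argument (the pole has codimension $\ge2$ and $v$ has the correct integrability) shows $v\in W_0^{1,2}(\Omega)$, whence testing against $v$ and using coercivity forces $v\equiv0$.

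The step I expect to be the main obstacle is (b)–(c): extracting the sharp, $\varepsilon$-uniform bound $G_\varepsilon(x,y)\le C|x-y|^{2-n}$, which is exactly where the full strength of De Giorgi–Nash–Moser is needed, together with the separate and somewhat delicate treatment of the critical dimension $n=2$.
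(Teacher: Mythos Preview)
The paper does not prove this theorem at all; it is simply quoted from \cite{Gruter} with no accompanying argument. Your sketch is a faithful outline of the original Gr\"uter--Widman construction (regularized Green functions $G_\varepsilon$, uniform energy and De Giorgi--Nash--Moser bounds, compactness, and a removable-singularity uniqueness argument), so in that sense it is correct and in fact supplies far more than the paper itself does. There is nothing in the paper to compare against beyond the bare citation.
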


The function $G$ in the theorem is then called the Green function for the operator $L$ and 
the domain $\Omega$. For $f \in \mathcal{M}_\psi^{p,w}(\Omega) \cap H^{-1} (\Omega)$, we 
define
\begin{equation}\label{weaksol}
	u(x) = \int_\Omega G(x,y) f(y) dy, \quad x \in \Omega.
\end{equation}

\begin{lemma} \cite{Tumalun} \label{lemma}
The weak derivative of $u$ is given by
$$
\frac{\partial u(x)}{\partial x_i} = \frac{\partial}{\partial x_i} \left(\int_\Omega G(x,y) 
f(y) dy\right) = \int_\Omega \frac{\partial G(x,y)}{\partial x_i} f(y) dy.
$$
\end{lemma}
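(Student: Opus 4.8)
The claim is a differentiation-under-the-integral-sign statement for the Green potential $u$, so the plan is to verify it in the weak sense, against test functions. Write $v_i(x):=\int_\Omega \partial_{x_i}G(x,y)f(y)\,dy$ for the candidate derivative. Since $f$ lies in the Morrey space $\mathcal{M}^{p,w^p}_{\psi_1}$, $\Omega$ is bounded, and $w\in A_{p,q}$ forces $w^{-p'}$ (resp.\ $w^{-1}$) to be locally integrable (resp.\ locally bounded), one checks by H\"older's inequality that $f\in L^1(\Omega)$; extending $f$ by zero, the bounds $G(x,y)\le C|x-y|^{2-n}$ and $|\nabla_x G(x,y)|\le C|x-y|^{1-n}$ of Theorem~\ref{Gruter} give
$$
|u(x)|\le C\,I_2(|f|\mathcal{X}_\Omega)(x),\qquad |v_i(x)|\le C\,I_1(|f|\mathcal{X}_\Omega)(x),\qquad x\in\Omega,
$$
so that $u,v_i\in L^1_{\mathrm{loc}}(\Omega)$ (and, by Theorem~\ref{thm4.1} and the transfer result Theorem~\ref{Main-Riesz}, even $u\in\mathcal{M}^{q,w^q}_{\psi_2}$ and $v_i\in\mathcal{M}^{q_0,w^{q_0}}_{\psi_2}$). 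In particular $\int_K I_2(|f|\mathcal{X}_\Omega)\,dx<\infty$ and $\int_K I_1(|f|\mathcal{X}_\Omega)\,dx<\infty$ for every compact $K\subset\Omega$, and the statement ``$v_i$ is the $i$-th weak derivative of $u$'' is meaningful.

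First I would fix $\phi\in C_0^\infty(\Omega)$, set $K:=\mathrm{supp}\,\phi$, and compute
$$
\int_\Omega u(x)\,\partial_{x_i}\phi(x)\,dx=\int_K\Big(\int_\Omega G(x,y)f(y)\,dy\Big)\partial_{x_i}\phi(x)\,dx=\int_\Omega f(y)\Big(\int_\Omega G(x,y)\,\partial_{x_i}\phi(x)\,dx\Big)dy,
$$
where the interchange of integrals is legitimate because $(x,y)\mapsto G(x,y)|f(y)|\,|\partial_{x_i}\phi(x)|$ is integrable on $K\times\Omega$, being dominated by $\|\partial_{x_i}\phi\|_\infty\,|x-y|^{2-n}|f(y)|$ with $\int_K I_2(|f|\mathcal{X}_\Omega)\,dx<\infty$. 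For fixed $y\in\Omega$, Theorem~\ref{Gruter} gives $G(\cdot,y)\in W_0^{1,1}(\Omega)$ with weak gradient $\nabla_x G(\cdot,y)$ (note $|x-y|^{1-n}\in L^1(\Omega)$ since $\Omega$ is bounded), so by the definition of the weak derivative
$$
\int_\Omega G(x,y)\,\partial_{x_i}\phi(x)\,dx=-\int_\Omega \partial_{x_i}G(x,y)\,\phi(x)\,dx.
$$
Substituting this back and interchanging the order of integration once more---now justified by $|\partial_{x_i}G(x,y)|\,|f(y)|\,\|\phi\|_\infty\le C|x-y|^{1-n}|f(y)|\,\|\phi\|_\infty$ and $\int_K I_1(|f|\mathcal{X}_\Omega)\,dx<\infty$---gives
$$
\int_\Omega u(x)\,\partial_{x_i}\phi(x)\,dx=-\int_\Omega\phi(x)\Big(\int_\Omega \partial_{x_i}G(x,y)f(y)\,dy\Big)dx=-\int_\Omega v_i(x)\,\phi(x)\,dx.
$$
As $\phi\in C_0^\infty(\Omega)$ was arbitrary and $u,v_i\in L^1_{\mathrm{loc}}(\Omega)$, this is exactly the assertion $\partial_{x_i}u=v_i$ (and since $f\in H^{-1}(\Omega)$ makes $u$ the unique $W_0^{1,2}(\Omega)$-solution of \eqref{dirichlet}, the weak gradient of $u$ is genuinely represented by this integral).

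I expect the only real obstacle to be technical: justifying the two applications of Fubini's theorem, which is precisely what the fractional-integral estimates of the earlier sections provide, and---if one prefers not to quote $G(\cdot,y)\in W_0^{1,1}(\Omega)$ as a black box---carrying out the integration by parts across the singularity of $G(\cdot,y)$ by hand. The latter is done by inserting a smooth cutoff $\zeta_\delta$ that vanishes on $B(y,\delta)$, equals $1$ outside $B(y,2\delta)$, and satisfies $|\nabla\zeta_\delta|\le C/\delta$, integrating by parts the (now unproblematic) quantity $\int_\Omega G(x,y)\,\partial_{x_i}(\zeta_\delta\phi)(x)\,dx$, and letting $\delta\to0$: the spurious product term $\int_\Omega G(x,y)\phi(x)\,\partial_{x_i}\zeta_\delta(x)\,dx$ is bounded by $C\delta^{-1}\int_{B(y,2\delta)}|G(x,y)|\,dx\le C'\delta$, while the truncation errors coming from $\zeta_\delta\ne1$ near $y$ are bounded by $\int_{B(y,2\delta)}\big(|\nabla_x G(x,y)|+|x-y|^{2-n}\big)\,dx\to0$. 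No genuinely new idea beyond the estimates already assembled in the paper is required.
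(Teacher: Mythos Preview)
The paper does not prove this lemma; it is quoted from \cite{Tumalun} without argument, so there is no in-paper proof to compare against. Your proposal is a correct and standard way to establish the differentiation-under-the-integral formula in the weak sense: test against $\phi\in C_0^\infty(\Omega)$, apply Fubini (justified via the Green-function bounds $G(x,y)\le C|x-y|^{2-n}$, $|\nabla_x G(x,y)|\le C|x-y|^{1-n}$ and $f\in L^1(\Omega)$), invoke $G(\cdot,y)\in W_0^{1,1}(\Omega)$ from Theorem~\ref{Gruter} to integrate by parts for each fixed $y$, and Fubini back. The alternative you sketch with the cutoff $\zeta_\delta$ is the usual direct argument when one does not want to cite the $W_0^{1,1}$ membership as a black box, and your error bounds for the $\delta\to0$ limit are correct. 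The only point worth tightening is the implicit identification of the weak gradient of $G(\cdot,y)$ in $W_0^{1,1}(\Omega)$ with the pointwise $\nabla_x G(\cdot,y)$; this is part of the Gr\"uter--Widman construction (smoothness of $G$ off the diagonal under the Dini assumption on the coefficients), and you are right to treat it as given by Theorem~\ref{Gruter}.
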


\begin{theorem}
For $1<p<\infty,$ there exists a positive constant $C$ such that
$$
\begin{aligned}
	& \|u\|_{\mathcal{M}_{\psi_2}^{q,w^{q}}(\Omega)} \leq C 
 \|f\|_{\mathcal{M}_{\psi_1}^{p,w^p}(\Omega)}, \quad \|u\|_{W\mathcal{M}_{\psi_2}^{q,w^{q}}
 (\Omega)} \leq C \|f\|_{\mathcal{M}_{\psi_1}^{1,w}(\Omega)}.
\end{aligned}
$$
\end{theorem}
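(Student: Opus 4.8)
The plan is to dominate $u$ pointwise by the Riesz potential of order $2$ and then apply Theorem \ref{Main-Riesz}. Extend $f$ by zero outside $\Omega$. From the representation \eqref{weaksol} and the Green function bound $0 \le G(x,y) \le C|x-y|^{2-n}$ of Theorem \ref{Gruter}, we obtain
$$
|u(x)| \le C \int_\Omega \frac{|f(y)|}{|x-y|^{n-2}}\, dy = C\, I_2\!\left(|f|\mathcal{X}_\Omega\right)(x), \qquad x \in \Omega,
$$
where $I_2$ is the fractional integral operator with $\alpha = 2$; note that $1/q = 1/p - 2/n$ is exactly the relation required in the theorems of Section 3 (so that $0<\alpha=2<n$, and $1<p<n/2$ so that $q$ is finite and positive).

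Next I would note that $I_2$ is a sublinear operator satisfying \eqref{sublinear} with $\alpha = 2$: for $x \in B(a,r)$ and $y \in B(a,2r)^{\rm c}$ one has $|x-y| \sim |y-a|$, hence, splitting $B(a,2r)^{\rm c}$ into the dyadic annuli $B(a,2^{k+1}r)\setminus B(a,2^k r)$, bounding $|x-y|^{2-n}$ by $C(2^{k+1}r)^{2-n}$ on each, and using H\"older's inequality, one recovers the right-hand side of \eqref{sublinear} for any admissible exponent $s$; this is the computation already carried out for $S_\Omega$ in Lemma \ref{Lemma-Cald} with the kernel $\Omega \equiv 1$. Moreover, since $w \in A_{p,q}$ (respectively $w \in A_{1,q}$ with $1/q = 1 - 2/n$ when $p=1$), Theorem \ref{thm4.1} gives that $I_2$ is bounded from $L^{p,w^p}$ to $L^{q,w^q}$ for $1 < p < n/2$ and from $L^{1,w}$ to $WL^{q,w^q}$; and the same hypothesis, together with the self-improvement (reverse H\"older) property of $A_{p,q}$ weights, supplies $w^s \in A_{p/s,q/s}$ for $s>1$ chosen close to $1$, so that all hypotheses of Theorem \ref{Main-Riesz} are met with $w_1 = w_2 = w$.

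Applying Theorem \ref{Main-Riesz} to $S_\alpha = I_2$ with the pair $(\psi_1,\psi_2)$ satisfying \eqref{inequal-Riesz} then yields that $I_2$ is bounded from $\mathcal{M}^{p,w^p}_{\psi_1}$ to $\mathcal{M}^{q,w^q}_{\psi_2}$ for $1 < p < n/2$ and from $\mathcal{M}^{1,w}_{\psi_1}$ to $W\mathcal{M}^{q,w^q}_{\psi_2}$. Finally, I would combine this with the pointwise estimate: since $\|g\|_{\mathcal{M}^{p,w}_\psi(\Omega)} = \|g\mathcal{X}_\Omega\|_{\mathcal{M}^{p,w}_\psi}$ by definition and both the strong and weak generalized weighted Morrey norms are monotone in $|g|$, the bound $|u| \le C\, I_2(|f|\mathcal{X}_\Omega)$ gives
$$
\|u\|_{\mathcal{M}^{q,w^q}_{\psi_2}(\Omega)} \le C\,\big\|I_2(|f|\mathcal{X}_\Omega)\big\|_{\mathcal{M}^{q,w^q}_{\psi_2}} \le C\,\|f\mathcal{X}_\Omega\|_{\mathcal{M}^{p,w^p}_{\psi_1}} = C\,\|f\|_{\mathcal{M}^{p,w^p}_{\psi_1}(\Omega)},
$$
and, for $p=1$, the analogous chain with $W\mathcal{M}^{q,w^q}_{\psi_2}$ on the left. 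The only genuinely delicate point in this argument is the exponent/weight bookkeeping — matching $\alpha = 2$ against $1/q = 1/p - \alpha/n$, keeping $p < n/2$, and verifying that membership in $A_{p,q}$ propagates to the $A_{p/s,q/s}$ condition needed by Theorem \ref{Main-Riesz}; everything else is a direct substitution into the results of Section 3.
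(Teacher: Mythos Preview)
Your proposal is correct and follows essentially the same approach as the paper. The paper's own proof is a one-line reference --- ``apply Theorem~\ref{Main-Riesz} and then use the definition of $u$ as in \eqref{weaksol}'' --- and you have simply unpacked the implied steps: the pointwise domination $|u|\le C\,I_2(|f|\mathcal{X}_\Omega)$ coming from the Green function bound in Theorem~\ref{Gruter}, the verification that $I_2$ satisfies \eqref{sublinear}, and the weighted Lebesgue boundedness of $I_2$ from Theorem~\ref{thm4.1}. Your remark about using the reverse H\"older property to pass from $w\in A_{p,q}$ to $w^s\in A_{p/s,q/s}$ for some $s>1$ is a detail the paper leaves implicit, but it is the natural way to reconcile the standing hypothesis $w\in A_{p,q}$ of the subsection with the $s>1$ required in \eqref{sublinear} and Theorem~\ref{Main-Riesz}.
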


\begin{proof}
One may apply Theorem \ref{Main-Riesz} and then use the definition of $u$ as in \ref{weaksol} to get the desired inequality.
\end{proof}

\begin{theorem}
For $1<p<\infty,$ there exists a positive constant $C$ such that
$$
\begin{aligned}
	& \|u\|_{\mathcal{M}_\psi^{p_0,w_0}(0, T\mathcal{M}_{\psi_2}^{q,w^{q}}(\Omega))} 
\leq C \|f\|_{\mathcal{M}_\psi^{p_0,w_0}(0,T,\mathcal{M}_{\psi_1}^{p,w^p}(\Omega))}, \\ 
        & \|u\|_{\mathcal{M}_\psi^{p_0,w_0}(0,T,W\mathcal{M}_{\psi_2}^{q,w^{q}}(\Omega))}   
\leq C \|f\|_{\mathcal{M}_\psi^{p_0,w_0}(0,T,\mathcal{M}_{\psi_1}^{1,w}(\Omega))}.
\end{aligned}
	$$
\end{theorem}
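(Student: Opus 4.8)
The plan is to deduce the mixed-Morrey estimates from the purely spatial estimate of the preceding theorem by freezing the time variable and then applying the lifting principle of Remark~\ref{remark-mixed-morrey} (equivalently, by repeating the argument that produced Theorem~\ref{Main-Riesz-Mixed} from Theorem~\ref{Main-Riesz}). Thus I would treat the solution operator $N\colon f\mapsto u$, where $u(x,t)=\int_\Omega G(x,y)f(y,t)\,dy$, as acting slicewise in $t$, establish a bound uniform in $t$, and then pass to the supremum defining the mixed-Morrey norm.

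First I would fix $t\in(0,T)$. Whenever $f$ lies in the relevant mixed space, the slice $f(\cdot,t)$ belongs to $\mathcal{M}_{\psi_1}^{p,w^p}(\Omega)$ (respectively $\mathcal{M}_{\psi_1}^{1,w}(\Omega)$) for a.e.\ $t$, and by Theorem~\ref{Gruter} and Lemma~\ref{lemma} the function $u(\cdot,t)$ is the weak solution of (\ref{dirichlet}) with datum $f(\cdot,t)$. Since $0\le G(x,y)\le C|x-y|^{2-n}$, extending $f(\cdot,t)$ by zero outside $\Omega$ gives the pointwise domination $|u(x,t)|\le C\,I_2(|f(\cdot,t)|)(x)$, so $f(\cdot,t)\mapsto u(\cdot,t)$ is dominated by a sublinear operator obeying (\ref{sublinear}) with $\alpha=2$; this is exactly the situation handled by the previous theorem, which therefore yields, for some $C$ not depending on the datum and hence not on $t$,
$$
\|u(\cdot,t)\|_{\mathcal{M}_{\psi_2}^{q,w^q}(\Omega)}\le C\,\|f(\cdot,t)\|_{\mathcal{M}_{\psi_1}^{p,w^p}(\Omega)},\qquad
\|u(\cdot,t)\|_{W\mathcal{M}_{\psi_2}^{q,w^q}(\Omega)}\le C\,\|f(\cdot,t)\|_{\mathcal{M}_{\psi_1}^{1,w}(\Omega)}.
$$
The point that makes this work is that the constant is built only from $n,p,q$, the weight $w\in A_{p,q}$, the pair $(\psi_1,\psi_2)$ satisfying (\ref{inequal-Riesz}), and the ellipticity and Dini data of $L$, so the uniformity in $t$ is automatic.

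With the uniform slice bounds in hand, I would invoke Remark~\ref{remark-mixed-morrey} with $N$ the operator $f\mapsto u$: the $\mathcal{M}_\psi^{p_0,w_0}(0,T,\cdot)$-norm of $Nf$ is a supremum over $(a,r,t_0)$ of a weighted $L^{p_0}_t$-average of $t\mapsto\|u(\cdot,t)\|_{\mathcal{M}_{\psi_2}^{q,w^q}(\Omega)}$, which by the slice estimate is majorized pointwise in $t$ by $C\,\|f(\cdot,t)\|_{\mathcal{M}_{\psi_1}^{p,w^p}(\Omega)}$, so the same supremum is $\le C\,\|f\|_{\mathcal{M}_\psi^{p_0,w_0}(0,T,\mathcal{M}_{\psi_1}^{p,w^p}(\Omega))}$; the weak version is identical with $W\mathcal{M}_{\psi_2}^{q,w^q}(\Omega)$ replacing $\mathcal{M}_{\psi_2}^{q,w^q}(\Omega)$. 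This gives both displayed inequalities. I do not foresee any genuine obstacle, since all the analytic content — the Green-function kernel bound, the identification of $f\mapsto u$ with a sublinear operator satisfying (\ref{sublinear}), and the Morrey boundedness itself — is already supplied by Theorem~\ref{Gruter}, Lemma~\ref{lemma}, and the preceding theorem; the only routine checks are the joint measurability of $u$ and the finiteness of $u(\cdot,t)$ for a.e.\ $t$, both immediate from $|u(\cdot,t)|\le C\,I_2(|f(\cdot,t)|)$ and the a.e.\ finiteness of the slice norms.
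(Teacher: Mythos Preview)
Your proposal is correct and mirrors exactly the approach the paper takes (implicitly, since the paper states this theorem without proof): one freezes $t$, invokes the preceding spatial theorem via the Green-function domination $|u(\cdot,t)|\le C\,I_2(|f(\cdot,t)|)$, and then lifts to the mixed-Morrey norm by Remark~\ref{remark-mixed-morrey}, precisely as the proofs of Theorems~\ref{Main-Riesz-Mixed} and~\ref{Main-Comm-Riesz-Mixed} do from their spatial counterparts.
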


By two previous theorems, Theorem 4 in \cite{Tumalun}, and the results in \cite{DiFazio1993, 
DiFazio2020}, we have the following theorem.

\begin{theorem}
$u$ is the unique weak solution of Dirichlet problem \ref{dirichlet} and 
$$
u \in 
\mathcal{M}_{\psi_2}^{q,w^q}(\Omega) \cap \mathcal{M}_\psi^{p_0,w_0}(0, T, 
\mathcal{M}_{\psi_2}^{q,w^q}(\Omega)).
$$
\end{theorem}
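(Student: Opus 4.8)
The plan is to assemble the statement from three ingredients: the classical solvability and uniqueness theory for the Dirichlet problem \eqref{dirichlet}, the identification of the Green-function integral \eqref{weaksol} with the variational solution, and the two norm estimates established immediately above (which themselves rest on Theorem~\ref{Main-Riesz} and Theorem~\ref{Main-Riesz-Mixed}).

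First I would show that the function $u$ defined in \eqref{weaksol} is \emph{the} weak solution of \eqref{dirichlet}. Since $L$ is uniformly elliptic with bounded (indeed Dini-continuous) coefficients and $f\in\mathcal{M}_{\psi_1}^{p,w^p}(\Omega)\cap H^{-1}(\Omega)$, the Lax--Milgram theorem produces a unique $v\in W_0^{1,2}(\Omega)$ with $\int_\Omega\sum_{i,j=1}^n a_{ij}\,\partial_{x_i}v\,\partial_{x_j}\phi\,dx=\langle f,\phi\rangle$ for all $\phi\in C_0^\infty(\Omega)$, uniqueness being obtained by testing the equation for the difference of two solutions against itself and invoking the lower ellipticity bound. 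To see $v=u$, I would insert $\phi$ into the defining identity of Theorem~\ref{Gruter}, multiply by $f(y)$, integrate in $y$ over $\Omega$, and then interchange the $x$-derivative with the $y$-integral using Lemma~\ref{lemma} and Fubini's theorem; the left side becomes $\int_\Omega\sum_{i,j=1}^n a_{ij}\,\partial_{x_i}u\,\partial_{x_j}\phi\,dx$ and the right side becomes $\int_\Omega f\phi$, so $u$ solves the same variational problem as $v$. This identification, together with the Dini-continuity hypothesis, is exactly Theorem~4 of \cite{Tumalun} together with the results of \cite{DiFazio1993,DiFazio2020}, which I would simply quote.

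Next I would read off the two membership assertions from the estimates proved just above. The bound $0\le G(x,y)\le C|x-y|^{2-n}$ from Theorem~\ref{Gruter} yields the pointwise domination $|u(x)|\le C\,I_2\big(|f|\,\mathcal{X}_\Omega\big)(x)$ for a.e.\ $x\in\Omega$, where $I_2$ denotes the Riesz potential of order $2$. Since $I_2$ obeys \eqref{sublinear} with $s=1$ (elementary; cf.\ Lemma~\ref{cor-riesz} with $L=-\Delta$, $\alpha=2$) and, by Theorem~\ref{thm4.1}, maps $L^{p,w^p}$ to $L^{q,w^q}$ for $1<p<n/2$ and $L^{1,w}$ to $WL^{q,w^q}$, Theorem~\ref{Main-Riesz} applies and reproduces the bound $\|u\|_{\mathcal{M}_{\psi_2}^{q,w^q}(\Omega)}\le C\|f\|_{\mathcal{M}_{\psi_1}^{p,w^p}(\Omega)}$ for $1<p<n/2$ together with its weak counterpart for $p=1$; in particular $u\in\mathcal{M}_{\psi_2}^{q,w^q}(\Omega)$. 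Applying the same domination in each time-slice and invoking Theorem~\ref{Main-Riesz-Mixed} (equivalently, the mixed-Morrey estimate stated above) gives $u\in\mathcal{M}_\psi^{p_0,w_0}(0,T,\mathcal{M}_{\psi_2}^{q,w^q}(\Omega))$. Intersecting the two memberships and combining with the first paragraph finishes the proof.

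I expect the main obstacle to be the identification in the second paragraph: justifying the interchange of differentiation and integration (this is the role of Lemma~\ref{lemma}) and checking that the Green-function integral \eqref{weaksol} indeed belongs to $W_0^{1,2}(\Omega)$ so that it can be compared with the Lax--Milgram solution. Everything downstream of that identification is a routine pointwise comparison with $I_2$ followed by a transcription of the two preceding theorems.
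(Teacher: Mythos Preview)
Your proposal is correct and matches the paper's approach: the paper's entire ``proof'' is the single sentence ``By two previous theorems, Theorem~4 in \cite{Tumalun}, and the results in \cite{DiFazio1993, DiFazio2020}, we have the following theorem,'' and you have simply unpacked what those citations contain---the Lax--Milgram uniqueness plus Green-function identification for the solvability part, and the two immediately preceding norm estimates (via pointwise domination by $I_2$ and Theorem~\ref{Main-Riesz}/\ref{Main-Riesz-Mixed}) for the regularity memberships. Your write-up is more detailed than the paper's, but the skeleton is identical.
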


For the gradient of $u$, related to the boundedness on generalized weighted Morrey spaces 
and generalized weighted mixed-Morrey spaces, the following theorem holds.

\begin{theorem}
For $1<p<\infty,$ there exists a constant $C$ such that
$$
\|\nabla u \|_{\mathcal{M}_{\psi_2}^{q_0,w^{q_0}}(\Omega)} \leq C 
\|f\|_{\mathcal{M}_{\psi_1}^{p,w^p}(\Omega)}, \quad \|
\nabla u\|_{W\mathcal{M}_{\psi_2}^{q_0,w^{q_0}}(\Omega)} \leq C 
\|f\|_{\mathcal{M}_{\psi_1}^{1,w}(\Omega)}.
$$
\end{theorem}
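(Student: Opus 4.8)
The plan is to reduce the assertion to the boundedness of the Riesz potential $I_1$ (the case $\alpha=1$ of the operators treated in Section 3) on generalized weighted Morrey spaces, by dominating $\nabla u$ pointwise. First I would apply Lemma \ref{lemma} to write, for a.e.\ $x\in\Omega$,
$$
\frac{\partial u(x)}{\partial x_i}=\int_\Omega \frac{\partial G(x,y)}{\partial x_i}\,f(y)\,dy,\qquad i=1,\dots,n,
$$
and then use the gradient estimate $|\nabla_x G(x,y)|\le C|x-y|^{-(n-1)}$ from Theorem \ref{Gruter} to deduce
$$
|\nabla u(x)|\le C\int_\Omega \frac{|f(y)|}{|x-y|^{n-1}}\,dy = C\,I_1\bigl(|f|\,\mathcal{X}_{\Omega}\bigr)(x),\qquad x\in\Omega.
$$
Since $\Omega$ is bounded and $f\in L^{1}_{\rm loc}$, the integral converges, and because the Morrey (quasi-)norm on $\Omega$ is by definition the norm of the zero-extension and is monotone under pointwise domination of the integrand, it suffices to estimate $\|I_1(|f|\,\mathcal{X}_{\Omega})\|_{\mathcal{M}^{q_0,w^{q_0}}_{\psi_2}}$ and $\|I_1(|f|\,\mathcal{X}_{\Omega})\|_{W\mathcal{M}^{q_0,w^{q_0}}_{\psi_2}}$.

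Next I would verify that $I_1$ fits the hypotheses of Theorem \ref{Main-Riesz} with $\alpha=1$ and $s=1$. The operator $I_1$ is sublinear and satisfies (\ref{sublinear}) with $\alpha=1$, $s=1$: for $x\in B(a,r)$ and $y\notin B(a,2r)$ one has $|x-y|\sim|y-a|$, and summing $\int_{B(a,2^{k+1}r)\setminus B(a,2^{k}r)}|f(y)|\,|y-a|^{\alpha-n}\,dy$ over $k$ produces exactly the right-hand side of (\ref{sublinear}). For the weighted Lebesgue bounds, note $1/q_0=1/p-1/n\ge 1/p-2/n=1/q$, so $q_0\le q$, whence $w\in A_{p,q}$ forces $w\in A_{p,q_0}$ (and, at the endpoint, the corresponding $A_{1,q_0}$-condition) by H\"older's inequality; Theorem \ref{thm4.1} then gives that $I_1$ is bounded from $L^{p,w^p}$ to $L^{q_0,w^{q_0}}$ for $1<p<n$ and from $L^{1,w}$ to $WL^{q_0,w^{q_0}}$. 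Finally, the standing hypothesis that $(\psi_1,\psi_2)$ satisfies (\ref{inequal-Riesz}) is to be read with the exponent $q_0$ in the denominator $w^{q_0}(B(a,t))^{1/q_0}$, i.e.\ as (\ref{inequal-Riesz}) for the pair $(p,q_0)$ and the weight $w_1=w_2=w$.

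With these checks in place, Theorem \ref{Main-Riesz} applied to $S_1=I_1$ yields boundedness of $I_1$ from $\mathcal{M}^{p,w^p}_{\psi_1}$ to $\mathcal{M}^{q_0,w^{q_0}}_{\psi_2}$ for $1<p<n$ and from $\mathcal{M}^{1,w}_{\psi_1}$ to $W\mathcal{M}^{q_0,w^{q_0}}_{\psi_2}$. Combining this with the pointwise domination of the first paragraph and the identity $\|f\|_{\mathcal{M}^{p,w^p}_{\psi_1}(\Omega)}=\|f\,\mathcal{X}_{\Omega}\|_{\mathcal{M}^{p,w^p}_{\psi_1}}$ (and its $p=1$ analogue) gives both inequalities in the statement. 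The only genuinely delicate step is the reduction carried out in the first paragraph: one must justify differentiating under the integral sign, which is precisely the content of Lemma \ref{lemma}, and control the local singularity of $\nabla_x G$ tested against $f$, which is harmless since $\Omega$ is bounded; once the pointwise bound by $C\,I_1(|f|\,\mathcal{X}_{\Omega})$ is secured, the remainder is a direct appeal to Theorem \ref{Main-Riesz} and Theorem \ref{thm4.1}.
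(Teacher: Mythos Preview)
Your proposal is correct and follows essentially the same route as the paper: obtain the pointwise domination $|\nabla u(x)|\le C\,I_1(|f|\,\mathcal{X}_\Omega)(x)$ via Lemma \ref{lemma} and the Green-function gradient bound of Theorem \ref{Gruter}, then invoke Theorem \ref{Main-Riesz} with $\alpha=1$. The paper's proof is terser---it simply writes the pointwise chain and cites Theorem \ref{Main-Riesz}---whereas you additionally spell out why $I_1$ satisfies (\ref{sublinear}), why $w\in A_{p,q}$ implies $w\in A_{p,q_0}$, and that condition (\ref{inequal-Riesz}) must be read with $q_0$ in place of $q$; these checks are omitted in the paper but are exactly what is needed.
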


\begin{proof}
Note that by Theorem \ref{Gruter} and Lemma \ref{lemma}, we have
$$
\begin{aligned}
|\nabla u|
	= \left(\sum_{i=1}^n \left| \int_\Omega \frac{\partial G(x,y)}{\partial x_i} f(y) 
 dy\right|^2 \right)^\frac12
	\leq \sqrt{n} \int_\Omega |\nabla_x G (x,y)| |f(y)| dy
	\leq \sqrt{n} \int_\Omega\frac{1}{|x-y|^{n-1}} |f(y)| dy,
	\end{aligned}
$$
for $x \in \Omega.$ Hence, by Theorem \ref{Main-Riesz}, we obtain the desired inequality.
\end{proof}

\begin{theorem}
For $1<p<\infty,$ there exists a constant $C$ such that
$$
\begin{aligned}
		&\|\nabla u \|_{\mathcal{M}_\psi^{p_0,w_0}(0,T, 
\mathcal{M}_{\psi_2}^{q_0,w^{q_0}}(\Omega))} \leq C \|f\|_{\mathcal{M}_\psi^{p_0,w_0}
(0,T,\mathcal{M}_{\psi_1}^{p,w^p}(\Omega))}, \\
		& \|\nabla u\|_{\mathcal{M}_\psi^{p_0,w_0}
(0,T,W\mathcal{M}_{\psi_2}^{q_0,w^{q_0}}(\Omega))} \leq C \|f\|_{\mathcal{M}_\psi^{p_0,w_0}
(0,T,\mathcal{M}_{\psi_1}^{1,w}(\Omega))}.
\end{aligned}
$$
\end{theorem}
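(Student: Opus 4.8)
The plan is to reduce the statement to the boundedness of the Riesz potential $I_1$ of order $\alpha=1$ on the relevant mixed spaces, exactly as the preceding gradient theorem was reduced via Theorem \ref{Main-Riesz}. First I would recall the pointwise bound already obtained in the proof of that theorem: writing $u(x,t)=\int_\Omega G(x,y)f(y,t)\,dy$ and using Theorem \ref{Gruter} together with Lemma \ref{lemma},
\[
|\nabla u(x,t)| \le \sqrt{n}\int_\Omega \frac{|f(y,t)|}{|x-y|^{n-1}}\,dy = C\,I_1\big(|f(\cdot,t)|\,\mathcal{X}_\Omega\big)(x),\qquad x\in\Omega .
\]
Next I would note that $I_1$ is a sublinear operator satisfying inequality (\ref{sublinear}) with $\alpha=1$ for every $1\le s<p$; this is the content of Lemma \ref{cor-riesz} applied to $L^{-1/2}$ with $L=-\Delta$ (alternatively it follows directly from the standard dyadic decomposition of $|x-y|^{-(n-1)}$ over annuli $B(a,2^{k+1}r)\setminus B(a,2^kr)$ together with Hölder's inequality).

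Then I would check the hypotheses of Theorem \ref{Main-Riesz-Mixed} with $S_\alpha=I_1$, $\alpha=1$, $w_1=w_2=w$, and $q=q_0$ where $1/q_0=1/p-1/n$. Since $w\in A_{p,q_0}$, Theorem \ref{thm4.1} gives that $I_1$ is bounded from $L^{p,w^p}$ to $L^{q_0,w^{q_0}}$ for $1<p<n$ and, since $w\in A_{1,q_0}$, from $L^{1,w}$ to $WL^{q_0,w^{q_0}}$; moreover $(\psi_1,\psi_2)$ satisfies (\ref{inequal-Riesz}) by assumption, with $w_1=w_2=w$. Hence Theorem \ref{Main-Riesz-Mixed} yields that $I_1$ is bounded from $\mathcal{M}_\psi^{p_0,w_0}(0,T,\mathcal{M}^{p,w^p}_{\psi_1})$ to $\mathcal{M}_\psi^{p_0,w_0}(0,T,\mathcal{M}^{q_0,w^{q_0}}_{\psi_2})$ and from $\mathcal{M}_\psi^{p_0,w_0}(0,T,\mathcal{M}^{1,w}_{\psi_1})$ to $\mathcal{M}_\psi^{p_0,w_0}(0,T,W\mathcal{M}^{q_0,w^{q_0}}_{\psi_2})$. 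Combining this with the pointwise domination above and the monotonicity of all the (mixed-)Morrey quasinorms under $0\le g_1\le g_2$, and noting that extension by zero identifies $\|\cdot\|_{\cdots(\Omega)}$ with $\|(\cdot)\mathcal{X}_\Omega\|_{\cdots}$, we obtain
\[
\|\nabla u\|_{\mathcal{M}_\psi^{p_0,w_0}(0,T,\mathcal{M}^{q_0,w^{q_0}}_{\psi_2})(\Omega)} \le C\,\big\|I_1(|f|\mathcal{X}_\Omega)\big\|_{\mathcal{M}_\psi^{p_0,w_0}(0,T,\mathcal{M}^{q_0,w^{q_0}}_{\psi_2})} \le C\,\|f\|_{\mathcal{M}_\psi^{p_0,w_0}(0,T,\mathcal{M}^{p,w^p}_{\psi_1})(\Omega)},
\]
and the analogous chain with the $p=1$ right-hand side gives the weak estimate.

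The only points requiring care — the "hard part" — are bookkeeping rather than substantive: (i) making sure the weight hypothesis genuinely in force is $w\in A_{p,q_0}$ (respectively $w\in A_{1,q_0}$) at the exponent $q_0$ with $1/q_0=1/p-1/n$, rather than the $A_{p,q}$ with $1/q=1/p-2/n$ used for $u$ itself in this subsection, so that Theorem \ref{thm4.1} can be cited at the right exponent; and (ii) justifying the transfer from the pointwise bound to the norm inequality on the bounded domain $\Omega$, which is immediate once one records that $f\mapsto f\mathcal{X}_\Omega$ is an isometric embedding of $\mathcal{M}_\psi^{p_0,w_0}(0,T,\mathcal{M}^{p,w^p}_{\psi_1})(\Omega)$ into the corresponding space on $\mathbb{R}^n$. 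Everything else is a direct invocation of Theorem \ref{Main-Riesz-Mixed}, mirroring the proof of the preceding gradient estimate.
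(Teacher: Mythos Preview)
Your proposal is correct and matches the paper's approach. The paper does not write out a separate proof for this mixed-Morrey gradient estimate; it is stated immediately after the ordinary-Morrey gradient theorem and is understood to follow from it by the same lifting mechanism used everywhere in the paper (Remark~\ref{remark-mixed-morrey}): once the slice-wise estimate $\|\nabla u(\cdot,t)\|_{\mathcal{M}_{\psi_2}^{q_0,w^{q_0}}(\Omega)}\le C\|f(\cdot,t)\|_{\mathcal{M}_{\psi_1}^{p,w^p}(\Omega)}$ is known, the mixed norm bound is automatic. Your route --- pointwise domination $|\nabla u|\le C\,I_1(|f|\mathcal{X}_\Omega)$ via Theorem~\ref{Gruter} and Lemma~\ref{lemma}, then Theorem~\ref{Main-Riesz-Mixed} --- is exactly the same argument packaged slightly differently, since Theorem~\ref{Main-Riesz-Mixed} itself is proved by applying Theorem~\ref{Main-Riesz} fiberwise and invoking Remark~\ref{remark-mixed-morrey}. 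Your care points (i) and (ii) are well taken but purely notational, as you note.
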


\medskip

\subsection{Parabolic partial differential equations}

First we recall the Sarason class $VMO = VMO(\mathbb{R}^n).$ The $BMO$ function $f$ is in $VMO$ if
$$
\lim_{r\to 0^+} \sup_{\rho \geq r} \frac1{B_\rho} \int_{B_r} |f(y)-f_{B_\rho}| dy = 0
$$
where $B_\rho$ is a ball with radius $\rho.$ we also recall that the singular integral operator $K$ and its commutator as in Section 5 are bounded on $L^{p,w}$ (see \cite{Guliyev2014, Sarason} for more details).

Through this subsection, we assume that $w=w_1=w_2\in A_p$ where $1 < p<\infty$ and the pairing functions $(\psi_1,\psi_2)$ satisfy (\ref{inequal-comm-calder}). Next, we recall the definition of singular integral operator related to variable Calder\`on-Zymgund kernel.

\begin{defn}\label{Cald-Kern}
Suppose that $K:\mathbb{R}^n\times\mathbb{R}^n\setminus \{0\} \to \mathbb{R}$ is measurable 
function. $K$ is called a variable Calder\`on-Zygmund kernel provided the following 
condition holds.
\begin{enumerate}
	\item $K(x,\cdot)$ is a Calder\`on-Zygmund kernel for almost all $x \in 
 \mathbb{R}^n$, i.e.:
\begin{enumerate}
	\item $K(x,\cdot)\in C^\infty(\mathbb{R}^n\setminus \{0\})$,
	\item $K(x,\mu y) = \mu^{-n} K(x, \mu)$ for $\mu>0$,
	\item $\int_{\mathbb{S}^{n-1}} K(x,y) d\sigma_y = 0$ and $\int_{\mathbb{S}^{n-1}} 
 |K(x,y)| d\sigma_y$.
\end{enumerate}
		\item $\max_{|\beta|\leq 2n} \left\|D^{\beta}_y K\right\|_{L^\infty(\mathbb{R}^n\times \mathbb{S}^{n-1})}$ is finite.
\end{enumerate}
\end{defn}

We define the singular integral operators $K(f)(x)$ and the commutator $[b,K](f)(x)$ by
$$
\begin{aligned}
	K(f)(x) & = P.V. \int_{\mathbb{R}^n} K(x,x-y) f(y) dy\\
	[a,K](f)(x) & = P.V. \int_{\mathbb{R}^n} K(x,x-y) [a(x)-a(y)] f(y) dy.
\end{aligned}
$$
$K$ satisfies \ref{sublinear} for $f \in L_1(\mathbb{R}^n)$ with compact support where 
$\Omega$ constant a.e. as in \cite{Guliyev2019}. Hence, we have the following corollary 
about the boundedness of $K$ and its commutator on generalized weighted Morrey spaces which 
generalize the results in \cite{Guliyev2019, Ragusa2017, Ramadana, Wang} and on on 
generalized weighted mixed-Morrey spaces which generalize the results in \cite{Ragusa2017}.

\begin{cor}\label{Cald-Cor-Kern}
Let $\psi$ be a positive function on $\mathbb{R}^n \times \mathbb{R}^+$ and $1<p_0<\infty$. 
Then, $K$ and $[b, K]$ are bounded from $\mathcal{M}_{\psi_1}^{p,w}$ to 
$\mathcal{M}_{\psi_2}^{p,w}$ and $\mathcal{M}_\psi^{p_0,w_0}(0, T, \mathcal{M}^{p,w}_{\psi_1})$ to $\mathcal{M}_\psi^{p_0,w_0}(0, T, 
\mathcal{M}^{p,w}_{\psi_2})$.
\end{cor}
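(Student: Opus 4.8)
The plan is to obtain the corollary by specializing the Calder\'on-Zygmund-type theorems of Section 3 to $S = K$ with $w_1 = w_2 = w$: Theorems \ref{Main-Calder} and \ref{Main-Comm-Calder} for the generalized weighted Morrey estimates, and Theorems \ref{Main-Calder-Mixed} and \ref{Main-Comm-Calder-Mixed} for the mixed-Morrey estimates. First I would record that $K$ (and its commutator) is admissible for these theorems: as recalled in this subsection following \cite{Guliyev2019}, for $f\in L^1$ with compact support the operator $K$ obeys the pointwise Calder\'on-Zygmund bound (\ref{sublinear-calder-1})---its kernel satisfies $|K(x,z)| \le C|z|^{-n}$ by Definition \ref{Cald-Kern}---and hence, by the remark following (\ref{sublinear-calder}) together with the fact that $|x-y|\sim|y-a|$ for $x\in B(a,r)$, $y\in B(a,2r)^{\mathrm c}$, it satisfies (\ref{sublinear-calder})---which is (\ref{sublinear}) for $S_0 = S$---with the exponent $s=1$; by H\"older's inequality it then satisfies it with every $1\le s<p$ as well. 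The same applies to $[b,K]$.

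Next I would verify that the remaining hypotheses reduce to the standing assumptions of the subsection. Taking $s=1$, the two-weight conditions $(w_1^s,w_2^s)\in A_{p/s}$ and $w_2^s\in A_{p/s}$ become---since $w_1=w_2=w$---simply $w\in A_p$, which is assumed; for the commutator theorems the additional requirement $w_1=w\in A_\infty$ holds because $A_p\subseteq A_\infty$. (Should one insist on the strict constraint $1<s<p$ literally written in Theorem \ref{Main-Comm-Calder-Mixed}---whose proof in fact passes through Proposition \ref{est-comm-sub-calder}, where $s=1$ is already permitted---one may instead fix $s\in(1,p)$ close to $1$ and use the reverse H\"older / self-improvement property of $A_p$ to secure $w^s\in A_{p/s}$.) The conditions on $(\psi_1,\psi_2)$ are likewise covered: the standing hypothesis (\ref{inequal-comm-calder}) implies (\ref{inequal-calder}) since $1+\ln(t/r)\ge 1$ for $t\ge r$. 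Finally, the boundedness of $K$ and of $[b,K]$, $b\in BMO$, from $L^{p,w}$ to $L^{p,w}$ for $w\in A_p$ and $1<p<\infty$ is exactly what was recalled from \cite{Guliyev2014, Sarason}; because the conclusion is asserted only for $1<p<\infty$, the weak-type $(1,1)$ hypotheses appearing in Theorems \ref{Main-Calder} and \ref{Main-Calder-Mixed} are never used.

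With these checks in place the corollary is immediate: Theorem \ref{Main-Calder} with $S=K$ gives $K\colon\mathcal{M}^{p,w}_{\psi_1}\to\mathcal{M}^{p,w}_{\psi_2}$, Theorem \ref{Main-Comm-Calder} the corresponding bound for $[b,K]$, and Theorems \ref{Main-Calder-Mixed} and \ref{Main-Comm-Calder-Mixed}---which, via Remark \ref{remark-mixed-morrey}, upgrade the pointwise-in-time Morrey estimate to the mixed norm and carry the time-direction weight $w_0$ along---give the bounds from $\mathcal{M}_\psi^{p_0,w_0}(0,T,\mathcal{M}^{p,w}_{\psi_1})$ to $\mathcal{M}_\psi^{p_0,w_0}(0,T,\mathcal{M}^{p,w}_{\psi_2})$. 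There is no genuine analytic difficulty here, since the corollary is a direct specialization of the main theorems once $K$ is seen to satisfy (\ref{sublinear}); the steps calling for the most care are the bookkeeping of the admissible ranges of the exponent $s$ across the four theorems and the collapse of the two-weight $A_{p/s}$ conditions to the single standing hypothesis $w\in A_p$ under $w_1=w_2=w$, both settled as above.
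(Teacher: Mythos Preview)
Your proposal is correct and matches the paper's own (implicit) argument: the paper simply observes that $K$ satisfies the sublinear bound \eqref{sublinear-calder} (equivalently \eqref{sublinear-1} with $\Omega$ constant, via Lemma~\ref{Lemma-Cald} or directly from Definition~\ref{Cald-Kern}) and that $K$, $[b,K]$ are bounded on $L^{p,w}$, whence the corollary follows by specializing Theorems~\ref{Main-Calder}, \ref{Main-Comm-Calder}, \ref{Main-Calder-Mixed}, \ref{Main-Comm-Calder-Mixed} to $w_1=w_2=w$. Your additional bookkeeping on the exponent $s$ and the collapse of the two-weight $A_{p/s}$ condition to $w\in A_p$ is more explicit than the paper but entirely in line with its intended reasoning.
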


Let $n \geq 3$ and $Q_T = \Omega\times (0,T)$ be a cylinder of $\mathbb{R}^{n+1}$ of base 
$\Omega \subset \mathbb{R}^n.$ In the sequel we write $x'=(x,t) = (x_1, x_2, \cdots, x_n, 
t)\in \mathbb{R}^{n+1}$ a generic point in $Q_T$, $f\in \mathcal{M}_{\psi_1}^{p,w}(\Omega)$ 
or $f \in \mathcal{M}_{\psi}^{p_0,w_0}(0,T, \mathcal{M}_{\psi_1}^{p,w}(\Omega))$ where 
$1\leq p_0,p<\infty$.

We also write $$Lu = u_t - \sum_{i,j=1}^n a_{ij}(x,t) \frac{\partial^2 u}{\partial x_i \partial x_j} $$
where
$$
\begin{aligned}
	& a_{ij}(x,t) = a_{ji}(x,t); \quad i,j = 1, \cdots, n, \quad \text{\textit{a.e. }}x \in Q_T, \\
	& \Lambda^{-1} |\xi|^2 \leq \sum_{i,j=1}^n a_{ij}(x,t) \xi_i\xi_j \leq \Lambda |\xi|^2; \quad \text{\textit{a.e. in }}Q_T, \xi \in \mathbb{R}^n,\\
	& a_{ij}(x,t) \in VMO(Q_T)\cap L^\infty (Q_T); \quad i,j=1, \cdots, n.
\end{aligned}
$$

Suppose the equation $Lu(x,t)=f(x,t)$. The strong solution of the equation is a function 
$u \in \mathcal{M}_{\psi_1}^{p,w}(\Omega)$ if $f \in u \in \mathcal{M}_{\psi_1}^{p,w}
(\Omega)$ and $u \in \mathcal{M}_\psi^{p_0,w_0}(0,T,\mathcal{M}_{\psi_1}^{p,w}(\Omega))$ if 
$f \in u \in \mathcal{M}_\psi^{p_0,w_0}(0,T,\mathcal{M}_{\psi_1}^{p,w}(\Omega))$ with all 
its weak derivatives $D_{x'_i}u,D_{x'_ix'_j}u$, and $D_t$ satisfying the the equation as in 
\cite{Ragusa2017}.  The local representation formula (see \cite{Bramanti} for more details 
about the formula and notations) for $D_{x'_i,x'_j} u$ where $u\in C_t = \{v\in 
C_0^\infty(\mathbb{R}^{n+1}\cap \{t \geq 0\}) : v(x,0) = 0\}$ is given by

\begin{equation} \label{Local-Repr}
\begin{aligned}
D_{x'_i,x'_j} & = \lim_{\varepsilon \to 0} \int_{\rho(x-y)>\varepsilon} \Gamma_{ij}(x,x-y) 
Lu(y) dy \\
&+ \lim_{\varepsilon \to 0} \int_{\rho(x-y)>\varepsilon} \Gamma_{ij}(x,x-y) \sum_{h,k=1}^n 
 [a_{hk}(y)-a_{hk}(x)] D_{x'_h,x'_k}u(y) dy + Lu(x) \int_\Sigma \Gamma_j(x,y) v_i 
 d\sigma (y) \\
& = \mathcal{R}_{ij}Lu(x) + \sum_{h,k=1}^n [a_{hk}, \mathcal{R}_{ij}] (D_{x'_h,x'_k}u)(x) + 
Lu(x) \int_{\Sigma} \Gamma_j(x,y) v_i d\sigma(y),
\end{aligned}
\end{equation}
for $x \in {\rm supp}\,(f)$ where $v_i$ is the $i-$th component of the outer normal to 
$\Sigma$. Here $\Gamma_{ij} (x,\xi) = D_{\xi_i, \xi_j} \Gamma(x,\xi)$ and $\Gamma_{ij}$ are 
Calder\`on-Zygmund kernel as in Definition \ref{Cald-Kern}. This implies that 
$\mathcal{R}_{ij}$ are singular integrals as in section 5. Moreover, we may observe that
$$
u_t = Lu + \sum_{i,j=1}^n a_{ij}(x,t) \frac{\partial^2 u}{\partial x_i \partial x_j}.
$$

Therefore, by Corollary \ref{Cald-Cor-Kern}, the fact that $K$ and $[b,K]$ is bounded on 
$L^{p,w}$ provided $b \in VMO$, and the local representation \ref{Local-Repr} we have the 
following theorems about the regularity properties of the solution of the parabolic partial 
differential equations.

\begin{theorem}
Let $n\geq 3, a_{ij}\in VMO(Q_T) \cap L^\infty (Q_T)$, $B_r \subseteq\Omega$ a ball in 
$\mathbb{R}^n.$ For every $u$ with compact support in $B \times (0,T)$, the solution of 
$Lu=f$ such that $D_{x'_i,x'_j}u \in \mathcal{M}_{\psi_1}^{p,w}(B_r)$ for $i,j=1,\cdots, n$, 
there exists $r_0$ such that if $r<r_0$, then
$$
\begin{aligned}
	& \|D_{x'_i, x'_j} u\|_{\mathcal{M}_{\psi_2}^{p,w}(B_r)} \leq 
 \|Lu\|_{\mathcal{M}_{\psi_1}^{p,w}(B_r)}, \quad i,j =1, \cdots, n,\\
	& \|u_t\|_{\mathcal{M}_{\psi_2}^{p,w}(B_r)} \leq \|Lu\|_{\mathcal{M}_{\psi_1}^{p,
 w}(B_r)}.
\end{aligned}
$$
\end{theorem}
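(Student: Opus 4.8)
The plan is to treat the local representation formula (\ref{Local-Repr}) as an identity for the second-order derivatives $D_{x'_i,x'_j}u$ and to bound its three terms in the $\mathcal{M}_{\psi_2}^{p,w}(B_r)$-norm, closing the estimate by an absorption argument of Chiarenza--Frasca--Longo type. For the leading term, Corollary~\ref{Cald-Cor-Kern} asserts that each singular integral $\mathcal{R}_{ij}$ maps $\mathcal{M}_{\psi_1}^{p,w}(B_r)$ into $\mathcal{M}_{\psi_2}^{p,w}(B_r)$ (restriction to $B_r$ being understood via the convention $\|g\|_{\mathcal{M}_\psi^{p,w}(B_r)}=\|g\,\mathcal{X}_{B_r}\|_{\mathcal{M}_\psi^{p,w}}$ fixed at the end of Section~2), so that $\|\mathcal{R}_{ij}(Lu)\|_{\mathcal{M}_{\psi_2}^{p,w}(B_r)}\le C\|Lu\|_{\mathcal{M}_{\psi_1}^{p,w}(B_r)}$. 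For the boundary term $Lu(x)\int_{\Sigma}\Gamma_j(x,y)v_i\,d\sigma(y)$, the kernel bounds of Definition~\ref{Cald-Kern} make the surface integral bounded by an absolute constant, so this term is pointwise $\le C|Lu(x)|$ and its $\mathcal{M}_{\psi_2}^{p,w}(B_r)$-norm is $\le C\|Lu\|_{\mathcal{M}_{\psi_1}^{p,w}(B_r)}$ after using the embedding of $\mathcal{M}_{\psi_1}^{p,w}(B_r)$ into $\mathcal{M}_{\psi_2}^{p,w}(B_r)$ provided by the condition (\ref{inequal-comm-calder}) when $w_1=w_2=w$.

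The decisive term is the commutator sum $\sum_{h,k=1}^n[a_{hk},\mathcal{R}_{ij}](D_{x'_h,x'_k}u)$, and handling it is what I expect to be the main obstacle. Corollary~\ref{Cald-Cor-Kern} only supplies boundedness of $[b,K]$ with constant proportional to $\|b\|_*$, which is insufficient here since the $a_{hk}$ are merely $VMO$. What is needed is a localized refinement: re-running the proof of Theorem~\ref{Main-Comm-Calder} --- and hence of Proposition~\ref{est-comm-sub-calder}, where the factor $\|b\|_*$ enters through Lemma~\ref{BMO2} and Lemma~\ref{BMO3} --- while decomposing $a_{hk}=(a_{hk}-(a_{hk})_{B_{2r}})+(a_{hk})_{B_{2r}}$, discarding the constant part, and keeping explicit track of the mean oscillation of $a_{hk}$ over balls of radius $\le$ a fixed multiple of $r$. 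This produces a constant $C_0$ (depending only on $n,p,w,\psi_1,\psi_2,\Lambda$) and a modulus $\varepsilon(r)$ with $\varepsilon(r)\to 0$ as $r\to 0^+$ such that
$$
\Big\|\sum_{h,k=1}^n[a_{hk},\mathcal{R}_{ij}](D_{x'_h,x'_k}u)\Big\|_{\mathcal{M}_{\psi_2}^{p,w}(B_r)}\le C_0\,\varepsilon(r)\sum_{h,k=1}^n\|D_{x'_h,x'_k}u\|_{\mathcal{M}_{\psi_1}^{p,w}(B_r)}.
$$
Here the $VMO$ hypothesis on the $a_{hk}$ is precisely what converts the a priori uncontrolled $\|a_{hk}\|_*$ into the small quantity $\varepsilon(r)$, and the $A_\infty$-hypothesis on $w$ is used through Lemma~\ref{BMO2} and Lemma~\ref{BMO3}, exactly as in the proof of Proposition~\ref{est-comm-sub-calder}.

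Finally I would sum over $i,j=1,\dots,n$ and pick $r_0>0$ so small that $n^2C_0\,\varepsilon(r)\le\tfrac12$ for all $r<r_0$; since $u$ has compact support in $B\times(0,T)$ and $D_{x'_h,x'_k}u\in\mathcal{M}_{\psi_1}^{p,w}(B_r)\hookrightarrow\mathcal{M}_{\psi_2}^{p,w}(B_r)$ by hypothesis, the sum $\sum_{h,k}\|D_{x'_h,x'_k}u\|_{\mathcal{M}_{\psi_2}^{p,w}(B_r)}$ is finite and the commutator contribution can be absorbed into the left-hand side, leaving
$$
\sum_{i,j=1}^n\|D_{x'_i,x'_j}u\|_{\mathcal{M}_{\psi_2}^{p,w}(B_r)}\le C\,\|Lu\|_{\mathcal{M}_{\psi_1}^{p,w}(B_r)},
$$
which yields the first asserted bound. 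The estimate for $u_t$ then follows from the identity $u_t=Lu+\sum_{i,j=1}^n a_{ij}D_{x'_i,x'_j}u$: using $a_{ij}\in L^\infty(Q_T)$ together with the ellipticity bound and the inequality just proved,
$$
\|u_t\|_{\mathcal{M}_{\psi_2}^{p,w}(B_r)}\le\|Lu\|_{\mathcal{M}_{\psi_2}^{p,w}(B_r)}+\Lambda\sum_{i,j=1}^n\|D_{x'_i,x'_j}u\|_{\mathcal{M}_{\psi_2}^{p,w}(B_r)}\le C\|Lu\|_{\mathcal{M}_{\psi_1}^{p,w}(B_r)},
$$
again invoking the embedding from (\ref{inequal-comm-calder}) for the first summand. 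This completes the plan.
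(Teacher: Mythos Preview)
Your plan matches the paper's argument, which is the one-sentence remark preceding the theorem: apply Corollary~\ref{Cald-Cor-Kern} to the singular and commutator pieces of the local representation~(\ref{Local-Repr}), use the $VMO$ hypothesis on the $a_{hk}$ to make the commutator constant small, and then read off the $u_t$ bound from $u_t=Lu+\sum a_{ij}D_{x'_i,x'_j}u$. Your write-up supplies the Chiarenza--Frasca--Longo absorption step that the paper leaves entirely implicit, so the route is the same but considerably more fleshed out.

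One technical point deserves attention. Your localized commutator estimate lands the output in $\mathcal{M}_{\psi_2}^{p,w}(B_r)$ while the input $D_{x'_h,x'_k}u$ is measured in $\mathcal{M}_{\psi_1}^{p,w}(B_r)$; to absorb this into the left-hand $\mathcal{M}_{\psi_2}^{p,w}$-sum you would need $\|\,\cdot\,\|_{\mathcal{M}_{\psi_1}^{p,w}}\le C\|\,\cdot\,\|_{\mathcal{M}_{\psi_2}^{p,w}}$, which is the \emph{reverse} of the embedding $\mathcal{M}_{\psi_1}^{p,w}\hookrightarrow\mathcal{M}_{\psi_2}^{p,w}$ you actually invoke, and is not a consequence of~(\ref{inequal-comm-calder}) alone. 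The paper's sketch does not confront this mismatch either, so it is a gap shared by both presentations rather than a divergence in approach; the clean fix is to assume in addition that $(\psi_2,\psi_2)$ (or $(\psi_1,\psi_1)$) itself satisfies~(\ref{inequal-comm-calder}), so that the commutator bound can be taken with the same $\psi$ on both sides and the absorption closes as written.
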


\begin{theorem}
Let $n\geq 3, a_{ij}\in VMO(Q_T) \cap L^\infty (Q_T)$, and $B_r \subseteq\Omega$ a ball in 
$\mathbb{R}^n.$ For every $u$ with compact support in $B \times (0,T)$, the solution of 
$Lu=f$ such that $D_{x'_i,x'_j}u \in \mathcal{M}_\psi^{p_0}(0,T\mathcal{M}_{\psi_1}^{p,w}
(B_r))$ for $i,j=1,\cdots, n$, there exists $r_0$ such that if $r<r_0$, then
$$
\begin{aligned}
		& \|D_{x'_i, x'_j} u\|_{\mathcal{M}_\psi^{p_0}(0,T\mathcal{M}_{\psi_2}^{p,w}(B_r))} \leq \|Lu\|_{\mathcal{M}_\psi^{p_0}(0,T\mathcal{M}_{\psi_1}^{p,w}(B_r))}, \quad i,j =1, \cdots, n,\\
		& \|u_t\|_{\mathcal{M}_\psi^{p_0}(0,T\mathcal{M}_{\psi_2}^{p,w}(B_r))} \leq \|Lu\|_{\mathcal{M}_\psi^{p_0}(0,T\mathcal{M}_{\psi_1}^{p,w}(B_r))}.
	\end{aligned}
$$
\end{theorem}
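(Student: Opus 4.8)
The plan is to run the argument of the preceding theorem essentially verbatim, but with every generalized weighted Morrey norm $\|\cdot\|_{\mathcal{M}_{\psi_i}^{p,w}(B_r)}$ replaced by the generalized weighted mixed-Morrey norm $\|\cdot\|_{\mathcal{M}_\psi^{p_0}(0,T,\mathcal{M}_{\psi_i}^{p,w}(B_r))}$, and with the mixed-Morrey half of Corollary \ref{Cald-Cor-Kern} (which follows from Theorem \ref{Main-Calder-Mixed}, Theorem \ref{Main-Comm-Calder-Mixed} and Remark \ref{remark-mixed-morrey}) used in place of the Morrey half. First I would fix $i,j\in\{1,\dots,n\}$ and a function $u$ compactly supported in $B_r\times(0,T)$ with $B_r\subseteq\Omega$, and start from the local representation formula (\ref{Local-Repr}),
$$
D_{x'_i,x'_j}u(x)=\mathcal{R}_{ij}(Lu)(x)+\sum_{h,k=1}^n[a_{hk},\mathcal{R}_{ij}](D_{x'_h,x'_k}u)(x)+Lu(x)\int_{\Sigma}\Gamma_j(x,y)v_i\,d\sigma(y),
$$
where each $\mathcal{R}_{ij}$ is a singular integral operator with variable Calder\'on-Zygmund kernel in the sense of Definition \ref{Cald-Kern}, hence satisfies (\ref{sublinear}) with $\Omega$ constant a.e.

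Next I would estimate the three terms on the right-hand side in the norm $\|\cdot\|_{\mathcal{M}_\psi^{p_0}(0,T,\mathcal{M}_{\psi_2}^{p,w}(B_r))}$. The first term is handled directly by Corollary \ref{Cald-Cor-Kern}, giving $\|\mathcal{R}_{ij}(Lu)\|\leq C\|Lu\|$ between the spaces $\mathcal{M}_\psi^{p_0}(0,T,\mathcal{M}_{\psi_2}^{p,w}(B_r))$ and $\mathcal{M}_\psi^{p_0}(0,T,\mathcal{M}_{\psi_1}^{p,w}(B_r))$. The third term is pointwise multiplication of $Lu$ by the bounded function $x\mapsto\int_\Sigma\Gamma_j(x,y)v_i\,d\sigma(y)$, so it contributes at most $C\|Lu\|$ in the same norm. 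The commutator terms are the crux: since $a_{hk}\in VMO(Q_T)$, for any $\varepsilon>0$ there is $r_0>0$ such that the mean oscillation of $a_{hk}$ over every ball of radius $r<r_0$ is smaller than $\varepsilon$; rerunning the proofs of Proposition \ref{est-comm-sub-calder} and Theorem \ref{Main-Comm-Calder-Mixed} with the $BMO$ semi-norm of the symbol localized to $B_r$ then yields, for $r<r_0$,
$$
\|[a_{hk},\mathcal{R}_{ij}](D_{x'_h,x'_k}u)\|_{\mathcal{M}_\psi^{p_0}(0,T,\mathcal{M}_{\psi_2}^{p,w}(B_r))}\leq C\varepsilon\,\|D_{x'_h,x'_k}u\|_{\mathcal{M}_\psi^{p_0}(0,T,\mathcal{M}_{\psi_1}^{p,w}(B_r))}.
$$

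Then I would choose $\varepsilon$, and hence $r_0$, so small that $Cn^2\varepsilon<\tfrac12$, sum over $h,k$, and absorb the commutator contribution into the left-hand side; this is legitimate because $D_{x'_h,x'_k}u\in\mathcal{M}_\psi^{p_0}(0,T,\mathcal{M}_{\psi_1}^{p,w}(B_r))$ has finite norm by hypothesis. This gives $\|D_{x'_i,x'_j}u\|_{\mathcal{M}_\psi^{p_0}(0,T,\mathcal{M}_{\psi_2}^{p,w}(B_r))}\leq C\|Lu\|_{\mathcal{M}_\psi^{p_0}(0,T,\mathcal{M}_{\psi_1}^{p,w}(B_r))}$ for all $i,j$, which is the first asserted inequality. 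Finally, inserting this into $u_t=Lu+\sum_{i,j=1}^n a_{ij}D_{x'_i,x'_j}u$ and using $a_{ij}\in L^\infty(Q_T)$ (so that multiplication by $a_{ij}$ is bounded on each $\mathcal{M}^{p,w}(B_r)$, hence on the corresponding mixed-Morrey space) yields the bound for $u_t$, completing the proof. The main obstacle is precisely this absorption step: one must verify carefully that the operator norm of $[a_{hk},\mathcal{R}_{ij}]$ on the mixed-Morrey space over $B_r$ is controlled by the \emph{local} mean oscillation of $a_{hk}$ rather than by its global $BMO$ norm, which requires threading a localized $BMO$ bound through Proposition \ref{est-comm-sub-calder}, Theorem \ref{Main-Comm-Calder-Mixed} and Remark \ref{remark-mixed-morrey} — a bookkeeping exercise that is routine but delicate (cf.\ \cite{Bramanti,DiFazio1993}).
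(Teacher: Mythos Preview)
Your proposal is correct and follows essentially the same route the paper indicates: the paper does not spell out a proof for this theorem but states just before it that the result follows from Corollary \ref{Cald-Cor-Kern}, the boundedness of $K$ and $[b,K]$ on $L^{p,w}$ for $b\in VMO$, and the local representation formula (\ref{Local-Repr}), which is exactly the scheme you carry out (use the representation formula, bound $\mathcal{R}_{ij}(Lu)$ and the surface-integral term directly, exploit the $VMO$ smallness to make the commutator terms absorbable, then derive the $u_t$ estimate from $u_t=Lu+\sum a_{ij}D_{x'_i x'_j}u$). Your added remark about threading the \emph{local} mean oscillation through Proposition \ref{est-comm-sub-calder} and Theorem \ref{Main-Comm-Calder-Mixed} is precisely the standard Bramanti--Cerutti/Di~Fazio mechanism the paper is invoking by citing \cite{Bramanti}.
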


\begin{remark}
    Note that the last theorem generalizes Theorem 5.1 in \cite{Ragusa2017}.
\end{remark}

\bibliographystyle{amsplain}

\end{document}